\renewcommand{\part}{%
  \clearpage 
  \thispagestyle{plain} 
  \@startsection{part}{0}%
    \z@{\linespacing\@plus\linespacing}{.5\linespacing}%
    {\normalfont\Large\bfseries\centering}%
}
 \def\l@subsection{\@tocline{2}{0pt}{2pc}{6pc}{}} \makeatother
  \def\({}%
  \def\){}%
  \def\cite{}%
\numberwithin{equation}{section}
\declaretheorem[name=Theorem, numberwithin=section]{theorem}
\declaretheorem[name=Proposition, sibling=theorem]{proposition}
\declaretheorem[name=Lemma, sibling=theorem]{lemma}
\declaretheorem[name=Corollary, sibling=theorem]{corollary}
\declaretheorem[name=Claim, sibling=theorem]{claim}
\theoremstyle{definition}
\declaretheorem[name=Definition, sibling=theorem]{definition}
\declaretheorem[name=Example, sibling=theorem]{example}
\theoremstyle{remark}
\declaretheorem[name=Remark, sibling=theorem]{remark}
\declaretheorem[name=Terminology, sibling=theorem]{terminology}
\declaretheorem[name=Notation, sibling=theorem]{notation}
\Crefname{theorem}{Theorem}{Theorems}
\Crefname{lemma}{Lemma}{Lemmas}
\Crefname{proposition}{Proposition}{Propositions}
\Crefname{conjecture}{Conjecture}{Conjectures}
\Crefname{definition}{Definition}{Definitions}
\Crefname{corollary}{Corollary}{Corollaries}
\Crefname{remark}{Remark}{Remarks}
\Crefname{example}{Example}{Examples}
\Crefname{question}{Question}{Questions}
\newcommand{\R}{\mathbb{R}}
\newcommand{\C}{\mathbb{C}}
\newcommand{\D}{\mathbb{D}}
\newcommand{\Z}{\mathbb{Z}}
\newcommand{\ve}{\varepsilon}
\DeclarePairedDelimiter\abs{\lvert}{\rvert}
\DeclarePairedDelimiter\norm{\lVert}{\rVert}
\DeclareMathOperator{\id}{id}
\DeclareMathOperator{\Sk}{Sk}
\DeclareMathOperator{\PL}{PL}
\DeclareMathOperator{\inward}{in}
\DeclareMathOperator{\outward}{out}
\newcommand{\Ical}{\mathcal{I}}
\newcommand{\Scal}{\mathcal{S}}
\newcommand{\Tcal}{\mathcal{T}}
\newcommand{\Zcal}{\mathcal{Z}}
\newcommand{\s}{\vskip.1in}
\newcommand{\n}{\noindent}
\newcommand{\p}{\partial}
\newcommand{\bdry}{\partial}
\newcommand{\be}{\begin{enumerate}}
\newcommand{\ee}{\end{enumerate}}
\newcommand{\op}{\operatorname}
\definecolor{darkgreen}{RGB}{0,153,0}
\definecolor{divgreen}{RGB}{0,180,0}
\definecolor{darkred}{RGB}{204,0,0}
\definecolor{darkblue}{RGB}{0,51,204}
\begin{document}

\title{Convex hypersurface theory in contact topology}

\author{Joseph Breen}
\address{University of Alabama, Tuscaloosa, AL 35401}
\email{jjbreen@ua.edu} \urladdr{https://sites.google.com/view/joseph-breen}

\author{Austin Christian}
\address{California Polytechnic State University, San Luis Obispo, CA 93407}
\email{achris66@calpoly.edu} \urladdr{https://sites.google.com/view/austin-christian}

\author{Ko Honda}
\address{University of California, Los Angeles, Los Angeles, CA 90095}
\email{honda@math.ucla.edu} \urladdr{http://www.math.ucla.edu/\char126 honda}

\author{Yang Huang}
\address{Somewhere On Earth}
\email{hymath@gmail.com} \urladdr{https://sites.google.com/site/yhuangmath}

\thanks{JB was partially supported by NSF Grant DMS-2038103 and an AMS-Simons Travel Grant.  AC was partially supported by NSF Grant DMS-2532551 and an AMS-Simons Travel Grant. KH was partially supported by NSF Grants DMS-2003483 and DMS-2505876.  YH was partially supported by the grant KAW 2016.0198 from the Knut and Alice Wallenberg Foundation.}

\begin{abstract}
We lay the foundations of convex hypersurface theory in contact topology, extending the work of Giroux in dimension three. Specifically, we prove that any closed hypersurface in a contact manifold can be $C^0$-approximated by a convex one.  We also prove that a $C^0$-generic family of mutually disjoint closed hypersurfaces parametrized by $t\in[0,1]$ is convex except at finitely many times $t_1,\dots,t_N$, and that crossing each $t_i$ corresponds to a bypass attachment.  As an application, we prove the existence of compatible (relative) open book decompositions for contact manifolds.
\end{abstract}

\maketitle

\tableofcontents

\section{Introduction} \label{sec:intro}

\subsection{Convex contact structures}

Morse theory is a topologist's favorite tool for exploring the structure of manifolds. The significance of Morse theory --- here we mean the traditional finite-dimensional version, not Floer theory --- in contact and symplectic topology was advocated by Eliashberg and Gromov in \cite{EG91}. In particular, according to \cite[Definition 3.5.A]{EG91}, a contact manifold $(M,\xi)$ is \emph{convex} if there exists a Morse function, called a \emph{contact Morse function}, which admits a gradient-like vector field whose flow preserves $\xi$. Just as a manifold can be reconstructed from its Morse function by a sequence of handle attachments in traditional Morse theory, a contact manifold can be reconstructed from a contact Morse function by a sequence of contact handle attachments. The analogous theory in symplectic topology is known as the theory of Weinstein manifolds.

Eliashberg and Gromov asked in \cite{EG91} whether there exist non-convex contact manifolds. Around 2000 Giroux gave a negative answer to the question by showing that every closed contact manifold is convex; see \cite{Gi02}.  This can also be formulated as his celebrated correspondence between contact structures and open book decompositions. This is in sharp contrast to the theory of Weinstein manifolds, where it is relatively easy to see that any compatible Morse function cannot have critical points of index greater than half of the dimension of the manifold.

Giroux used two completely different sets of techniques in the $3$-dimensional and higher-dimensional cases to show that every contact manifold admits a compatible contact Morse function.

We first discuss the $3$-dimensional case.  In his thesis \cite{Gi91}, Giroux introduced what is now known as \emph{convex surface theory} into $3$-dimensional contact topology. It is an extremely powerful and efficient way of studying embedded surfaces in contact $3$-manifolds, and can recover most of the pioneering results of Bennequin \cite{Ben83} and Eliashberg \cite{Eli92}. Using convex surface theory, Giroux showed that for closed contact $3$-manifolds, there is a one-to-one correspondence between isotopy classes of contact structures and compatible open book decompositions up to positive stabilization.

Before moving onto higher dimensions, let us recall the definition of a convex hypersurface following \cite{Gi91}:

\begin{definition} \label{defn:convex hypersurface}
A hypersurface $\Sigma \subset (M,\xi)$ is \emph{convex} if there exists a contact vector field $v$, i.e., a vector field whose flow preserves $\xi$, which is transverse to $\Sigma$ everywhere.
\end{definition}

\noindent Observe that regular level sets of a contact Morse function are convex hypersurfaces.

The situation in dimensions $> 3$ is quite different. Besides the fact that convex hypersurfaces can be defined in any dimension, until now there has been no systematic \emph{convex hypersurface theory}.   Giroux's proof \cite{Gi02,Gi17} that every closed contact manifold is convex involves a completely different technology, i.e., Donaldson's \cite{Don96} technique of \emph{approximately holomorphic sections}, transplanted into contact topology independently by Ibort, Mart\'inez-Torres, and Presas \cite{IMP00}  and by Mohsen~\cite{Moh01,Moh19}.  Donaldson used the approximate holomorphic technology to construct real codimension $2$ symplectic hypersurfaces of a closed symplectic manifold as the zero locus of an approximately holomorphic section of a complex line bundle, while \cite{IMP00} and ~\cite{Moh01,Moh19} constructed certain codimension $2$ contact submanifolds of a closed contact manifold.  What Giroux realized is that \cite{Don96}, \cite{IMP00}, and \cite{Moh01,Moh19} could be used to produce compatible open book decompositions. Roughly speaking, given a closed contact manifold $(M,\xi=\ker\alpha)$, one considers the trivial line bundle $\underline{\C}$ on $M$ equipped with a suitable Hermitian connection determined by $\alpha$. Then there exists a section $s: M \to \underline{\C}$ whose zero locus $B \coloneqq s^{-1} (0)$ is a closed codimension $2$ contact submanifold called the \emph{binding}, and
$$\frac{s}{\abs{s}}: M \setminus B \to S^1$$
is a smooth fibration defining the compatible open book decomposition of $(M,\xi)$. As a consequence of using the approximate holomorphic technology, the higher-dimensional Giroux correspondence (see \cref{cor:OBD existence}) is a weaker statement compared to its $3$-dimensional counterpart.

\subsection{Main results}

The main goal of this paper is to systematically generalize Giroux's convex surface theory to all dimensions.  The main results of \emph{convex hypersurface theory} (CHT) are \cref{thm:genericity}, \cref{thm:family genericity}, and \cref{thm:BBC}. In fact, even in dimension $3$, our method (see \cref{sec:CST revisited}) differs somewhat from Giroux's original approach, is simpler, and is consistent with our more general approach in higher dimensions.

We first introduce some more terminology describing the anatomy of a convex hypersurface.

\begin{definition}
Let $\Sigma \subset (M,\xi=\ker \alpha)$ be a convex hypersurface with respect to a transverse contact vector field $v$. Define the \emph{dividing set} $\Gamma(\Sigma) \coloneqq \{\alpha(v)=0\}$ and $R_{\pm}(\Sigma) \coloneqq \{\pm \alpha(v)>0\}$ as subsets of $\Sigma$.
\end{definition}

It turns out that $\Gamma(\Sigma) \subset (M,\xi)$ is a codimension-$2$ contact submanifold, and $R_{\pm} (\Sigma)$ are (complete) Liouville manifolds with Liouville form given by a suitable rescaling of $\alpha|_{R_{\pm} (\Sigma)}$, respectively. Moreover, the isotopy classes of $\Gamma(\Sigma), R_{\pm} (\Sigma)$ are independent of the choices of $v$ and $\alpha$.

In dimensions $\geq 4$, there exist Liouville manifolds that are not Weinstein by McDuff \cite{McD91}, Geiges \cite{Gei94,Gei95}, Mitsumatsu \cite{Mit95}, and Massot, Nieder\-kr\"uger, and Wendl \cite{MNW13}. While these exotic Liouville manifolds work well as (counter-)examples, there currently is no systematic understanding of such non-Weinstein Liouville manifolds, partially because of the lack of an appropriate Morse theory on such manifolds.  This is slowly starting to change.  For example, it was recently shown in \cite{EOY20} that a stabilized Liouville manifold with the homotopy type of a half-dimensional CW-complex is symplectomorphic to a flexible Weinstein manifold and in \cite{breen2025torus} that the stabilizations of torus bundle Liouville domains constructed by Mitsumatsu and generalized by Huang \cite{huang2020dynamical} are Weinstein domains.\footnote{Note the latter involves Liouville {\em domains}, which is a distinct problem.} Additionally, there is a burgeoning understanding of certain non-Weinstein Liouville manifolds via the lens of Anosov flows, generalizing the work of Mitsumatsu; see \cite{cieliebak2022floertheoryanosovflows,hozoori2024symplectic,hozoori2025strongly,massoni2025symplectic}. This motivates the following definition:

\begin{definition}
 A convex hypersurface $\Sigma$ is \emph{Weinstein (resp.\ $1$-Weinstein) convex} if $R_+(\Sigma)$ and $R_-(\Sigma)$ are Weinstein (resp.\ $1$-Weinstein).
\end{definition}

\noindent Here a Liouville domain that admits a Liouville vector field which is gradient-like with respect to a ``$1$-Morse" function which also admits critical points of birth-death type will be called a \emph{$1$-Weinstein domain} (instead of a \emph{generalized Weinstein domain}) in this paper. Weinstein convex hypersurfaces admit a Morse-theoretic interpretation, given in \cref{prop:convex criterion}.

Now we are ready to state the foundational theorems of CHT.

\begin{theorem} \label{thm:genericity}
Any closed hypersurface in a contact manifold can be $C^0$-approxi\-mated by a  Weinstein convex one.
\end{theorem}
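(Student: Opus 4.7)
\bigskip

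\textbf{Proof proposal.}

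The plan is to reduce the global statement to a sequence of local normal-form and patching problems governed by a suitable Morse decomposition of $\Sigma$. The guiding principle is the following local model: near a convex hypersurface with transverse contact vector field $v=\partial_t$, a contact form can be written as $\alpha=u\,dt+\beta$ with $u\in C^\infty(\Sigma)$ and $\beta\in\Omega^1(\Sigma)$ both $t$-independent, and convexity is equivalent to the existence of such a germ $(u,\beta)$ along $\Sigma$ compatible with $\xi$. So approximation by a convex hypersurface amounts to $C^0$-perturbing $\Sigma$ and producing, in a tubular neighborhood, a normal form of this kind.

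First I would arrange the characteristic data on $\Sigma$ to be generic. After a $C^\infty$-small perturbation the \emph{contact locus} $\Sigma_{\rm sing}=\{p\in\Sigma:\xi_p=T_p\Sigma\}$ becomes a codimension-$2$ submanifold of $\Sigma$ in general position, and the characteristic hyperplane distribution $\xi\cap T\Sigma$ has Morse-type behavior along it. I would then choose an auxiliary Morse function $\phi:\Sigma\to\R$ adapted to $(\xi\cap T\Sigma,\Sigma_{\rm sing})$, producing a handle decomposition of $\Sigma$ whose strata will be convexified one at a time.

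Next I would build the convex approximation inductively on the index of $\phi$. On a neighborhood of the $0$-handles, one may directly write down a transverse contact vector field: the contact Hamiltonian $H$ determining $v_H$ can be chosen freely there, and a pointwise spanning argument (contact vector fields sweep out all of $TM$ at each point as $H$ varies) gives transversality after a $C^0$-small vertical adjustment of $\Sigma$ in a Darboux chart. For the inductive step, suppose a convex germ has been produced along $\Sigma$ intersected with the sublevel set $\{\phi\le c\}$, with transverse contact vector field $v_{<k}$ and normal form $\alpha=u_{<k}\,dt+\beta_{<k}$. To extend across a $k$-handle $H_k$, I would (i)~solve for an extension of the contact Hamiltonian $\alpha(v_{<k})$ across $H_k$ whose level sets continue to be transverse to the deformed $\Sigma$, (ii)~$C^0$-isotope $\Sigma$ inside the handle so that the new hypersurface is transverse to $v_H$, and (iii)~verify that the resulting half-regions $R_\pm$ still carry Liouville (in fact Weinstein, by \autoref{assump:Liouville is Weinstein}) forms, and that $\Gamma$ remains a contact submanifold. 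Finally, a corner-rounding procedure in the style of three-dimensional convex surface theory glues the local models along the handle attaching regions.

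The main obstacle is step (iii) at middle-index handles. When crossing a critical point of $\phi$ of index $k$ with $1\le k\le 2n-1$, the Liouville pieces $R_\pm$ undergo a surgery and the dividing set $\Gamma$ undergoes a corresponding contact surgery; these must be arranged to be genuine Weinstein/contact handle attachments rather than merely Liouville modifications, and they must be realized by a $C^0$-small ambient perturbation of $\Sigma$ inside $(M,\xi)$. Equivalently, one must show that the local "bypass"-type modification that interpolates between the two convex germs on either side of the critical level exists and can be installed by a $C^0$-small isotopy of the hypersurface; this is where a parametric $h$-principle for contact Hamiltonians with prescribed singularity structure is needed, and where the higher-dimensional analysis genuinely departs from Giroux's three-dimensional argument.
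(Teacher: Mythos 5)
There is a genuine gap: your step (iii) at middle-index handles is not a technical loose end but the entire content of the theorem, and you defer it to ``a parametric $h$-principle for contact Hamiltonians with prescribed singularity structure'' that is neither stated precisely nor known. An induction on the handle index of an auxiliary Morse function on $\Sigma$ gives no mechanism for producing that $h$-principle, and there is no reason the contact Hamiltonian $\alpha(v_{<k})$ extends across a $k$-handle with level sets transverse to a $C^0$-perturbed $\Sigma$ --- the obstruction is exactly the global dynamics of the characteristic foliation, which your scheme never engages. (A smaller inaccuracy: the locus $\{\xi_p=T_p\Sigma\}$ is generically a finite set of points, not a codimension-$2$ submanifold of $\Sigma$, since it is the zero set of the section $\alpha|_\Sigma$ of the rank-$2n$ bundle $T^*\Sigma$.)

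The paper's route is different in kind. It first reduces convexity to a purely dynamical condition on $\Sigma$ itself: if the characteristic foliation $\Sigma_\xi$ is directed by a Morse$^+$ gradient-like vector field, then $\Sigma$ is convex and $R_\pm$ are Weinstein (\autoref{prop:convex criterion}); no transverse contact vector field is constructed directly. The $C^0$-perturbation is then an explicit folding construction: one covers $\Sigma$ by a \emph{barricade} of foliated charts $[0,s_i]\times Y_i$ meeting every flow line (\autoref{lemma: barricade}), and replaces each chart by a \emph{plug} built from $C$-folds so that every flow line is trapped and converges to a critical point, verifying (M1)--(M3) of \autoref{prop:morse criterion}. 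The genuine induction is on the dimension of the contact manifold, not on handle index: building a $Y$-shaped plug requires a compatible (partial) open book on the $(2n-1)$-dimensional transversal $Y$ with quantitatively small Reeb action (\autoref{prop:quantitative stabilization}), which is where the real work lies. If you want to salvage your approach, you would need to either prove the $h$-principle you invoke or replace steps (i)--(iii) with an argument that controls the flow of $\Sigma_\xi$ globally; as written, the proposal restates the problem rather than solving it.
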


\begin{remark}
 The $C^\infty$-version (in fact, even the $C^2$-version) of \cref{thm:genericity} was recently shown to be false by Chaidez~\cite{Ch24}.  The prior candidate counterexample, due to Mori \cite{Mo11}, was shown in \cite{Br21} to actually admit a $C^\infty$-small approximation by a convex one. Work of Eliashberg-Pancholi \cite{EP2023} has provided additional perspective on convex approximation. 
\end{remark}

\begin{theorem} \label{thm:family genericity}
Let $\xi$ be a contact structure on $\Sigma \times [0,1]$ such that the hypersurfaces $\Sigma \times \{0,1\}$ are  Weinstein  convex. Then, up to a boundary-relative contact isotopy, there exists a finite sequence $0<t_1<\dots<t_N<1$ such that $\Sigma \times \{t\}$ is  $1$-Weinstein  convex if $t \neq t_i$ for any $1 \leq i \leq N$.  
\end{theorem}

\begin{theorem} \label{thm:BBC}
Let $\xi$ be a contact structure on $\Sigma \times [0,1]$ such that the hypersurfaces $\Sigma \times \{t\}$ are  $1$-Weinstein convex for $t\neq \frac{1}{2}$. Then $(\Sigma \times [0,1], \xi)$ is contactomorphic to a bypass attachment. 
\end{theorem}

An initial study of bypass attachments in higher dimensions first appeared in \cite{HH18}. In \cref{appendix} we reformulate and reprove all the necessary results, so that the present work does not depend on \cite{HH18}. A more precise and technical version of \cref{thm:BBC} will be stated later; c.f.\ \cref{prop:bb-correspondence}.

\begin{remark}
\cref{thm:family genericity} and \cref{thm:BBC} were conjectured by Paolo Ghiggini in the afternoon of April 10, 2015 in Paris.
\end{remark}

\subsection{Applications}

As an immediate application of the above foundational theorems, we can extend Giroux's $3$-dimensional approach to constructing compatible open book decompositions to higher dimensions. This is the content of the following three corollaries. We do not address the stabilization equivalence of the compatible open book decompositions in this paper, which is addressed in the sequel~\cite{BHH23}. 

\begin{corollary}[\cite{Gi02}] \label{cor:OBD existence}
Any closed contact manifold admits a compatible open book decomposition,  all of whose pages are $1$-Weinstein.
\end{corollary}

\begin{corollary} \label{cor:POBD existence}
Any compact contact manifold with convex boundary admits a compatible partial open book decomposition,  all of whose pages are $1$-Weinstein domains or $1$-Weinstein cobordisms. 
\end{corollary}

\begin{corollary} \label{cor: Legendrians on a page}
Given a possibly disconnected closed Legendrian submanifold $\Lambda$ in a closed contact manifold, there exists a compatible open book decomposition  such that all the pages are $1$-Weinstein and $\Lambda$ is contained in a page.
\end{corollary}

\begin{remark} 
Although \cref{cor: Legendrians on a page} does not appear in the literature, asymptotically holomorphic techniques in contact geometry from \cite{IMP00} and ~\cite{Moh01,Moh19} --- more specifically the combination of Giroux's existence theorem of compatible open book decompositions (see Presas~\cite{Pre14} for a published proof) and asymptotically holomorphic symplectic special position theorems for Lagran\-gians in symplectic manifolds \cite{AMP05} --- applied to the Lagrangianization of a Legendrian in the symplectization gives the result. 
\end{remark}

\s\n
{\em Organization.} \cref{part:CHT} is dedicated to proving the convex approximation results of \cref{thm:genericity} and \cref{thm:family genericity}. \cref{part:BBC} establishes the ``bypass-bifurcation correspondence'' of \cref{thm:BBC}, which connects the parametric genericity statement in \cref{thm:family genericity} with bypass attachments (and thus open book decompositions). \cref{part:CHT} and \cref{part:BBC} are entirely independent. Finally, \cref{part:OBD} proves the corollaries on existence of open book decompositions, combining the genericity results and the bypass-bifurcation correspondence of the previous parts.

The bypass-bifurcation correspondence naturally relies on an understanding of contact handles and bypass attachments in higher dimensions. An earlier version of this article appealed to many results from the temporal precursor \cite{HH18} in which this theory is developed; however, the present work should be considered the first in the series of papers establishing CHT in all dimensions (followed by \cite{HH18,BHH23}). As such, we have included \cref{appendix}, which reformulates and reproves the necessary results on contact handles and bypass attachments from \cite{HH18} in order to make this article the true introductory work.   

\s\n
{\em Acknowledgments.}  KH is grateful to Yi Ni and the Caltech Mathematics Department for their hospitality during his sabbatical. YH thanks the geometry group at Uppsala: Georgios Dimitroglou Rizell, Luis Diogo (his officemate), Tobias Ekholm, Agn\`es Gadbled, Thomas Kragh, Wanmin Liu, Maksim Maydanskiy and Jian Qiu for conversations about Everything during the period 2017--2019.  AC and JB thank John Etnyre for several helpful conversations. We thank Cheuk Yu Mak and Fran Presas for pointing out some errors and the anonymous referees for extensive comments during the revision of the paper.  

Finally, we thank Yasha Eliashberg, Fran\c cois-Simon Fauteux-Chapleau and Dishant Pancholi for finding a mistake in the previous version of the paper (i.e., partial mushrooms do not work), suggesting some simplifications which were implemented in the current version of the paper with their permission, and pointing out the reference \cite{Wi66}; and Emmanuel Giroux for providing a proof of \cref{lemma: Weinstein}.

\part{Convex hypersurface approximation}\label{part:CHT}

\section{A convexity criterion} \label{sec:convexity criterion}

Let $\Sigma \subset (M^{2n+1},\xi)$ be a closed cooriented hypersurface.  The goal of this section is to give a sufficient condition for the characteristic foliation $\Sigma_{\xi}$ on $\Sigma$ (see \cref{defn:char_foliation}) which guarantees the Weinstein convexity of $\Sigma$.

\subsection{Characteristic foliations} \label{subsec:char fol}

Let $\alpha$ be a contact form for $\xi$. Let $(-\epsilon,\epsilon) \times \Sigma$ be a {tubular} neighborhood of $\Sigma=\{0\}\times \Sigma \subset M$. Fix an orientation on $\Sigma$ such that the induced orientation on $(-\epsilon,\epsilon) \times \Sigma$ agrees with the orientation determined by $\alpha \wedge (d\alpha)^n$.  We now introduce the characteristic foliation $\Sigma_{\xi}$ on $\Sigma$.

\begin{definition} \label{defn:char_foliation}
The \emph{characteristic foliation} $\Sigma_{\xi}$ is an oriented singular line field on $\Sigma$ defined by $$\Sigma_{\xi} = \ker d\beta|_{\ker \beta},$$ where $\beta \coloneqq \alpha|_{\Sigma} \in \Omega^1(\Sigma)$. The orientation of $\Sigma_{\xi}$ is determined by requiring that the decomposition $T\Sigma = \Sigma_{\xi} \oplus \Sigma_{\xi}^{\bot}$ respect orientations, where the orthogonal complement $\Sigma_{\xi}^{\bot}$, taken with respect to an auxiliary Riemannian metric on $\Sigma$, is oriented by $\beta \wedge (d\beta)^{n-1} |_{\Sigma_{\xi}^{\bot}}$.
\end{definition}

\begin{remark}
The characteristic foliation depends only on the contact structure and the orientation of $\Sigma$, and \emph{not} on the choice of the contact form.
\end{remark}

Note that $x \in \Sigma$ is a singular point of $\Sigma_{\xi}$ if $T_x \Sigma = \xi_x$ as unoriented spaces. We say $x$ is \emph{positive} (resp.\ \emph{negative}) if $T_x \Sigma = \pm\xi_x$ as oriented spaces, respectively.

The significance of the characteristic foliation in $3$-dimensional contact topology is that it uniquely determines the germ of contact structures on any embedded surface. The corresponding statement for hypersurfaces in contact manifolds of dimension $>3$ is unlikely to hold, i.e., the characteristic foliation by itself is \emph{not} enough to determine the contact germ. Instead we have the following characterization of contact germs on hypersurfaces in any dimension. The proof is a standard application of the Moser technique and is omitted here.

\begin{lemma} \label{lem:char foliation}
Suppose $\xi_i = \ker\alpha_i, i=0,1$, are contact structures on $M$ such that $\beta_0 = g\, \beta_1 \in \Omega^1(\Sigma)$ for some $g: \Sigma \to \R_+$, where $\beta_i=\alpha_i|_{\Sigma}$. Then there exists an isotopy $\phi_s: M \stackrel\sim\to M, s \in [0,1]$, such that $\phi_0 = \id_M, \phi_s(\Sigma) = \Sigma$ and $(\phi_1)_{\ast} (\xi_0) = \xi_1$ on a neighborhood of $\Sigma$.
\end{lemma}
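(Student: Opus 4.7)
The strategy is Moser's trick, preceded by two preliminary normalizations that align $\alpha_0$ and $\alpha_1$ to first order along $\Sigma$.

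\emph{Step 1: conformal rescaling.} Extend $g \colon \Sigma \to \R_+$ to a smooth positive function $\tilde g \colon M \to \R_+$ supported in a collar of $\Sigma$, and replace $\alpha_0$ by $\tilde g^{-1}\alpha_0$; this still defines $\xi_0$, and now $\alpha_0|_{T\Sigma} = \alpha_1|_{T\Sigma} = \beta$, so the difference $\alpha_1 - \alpha_0$ annihilates $T\Sigma$ along $\Sigma$.

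\emph{Step 2: matching normal jets.} Fix a collar $(-\epsilon,\epsilon)\times\Sigma$ with coordinate $r$, and write $\alpha_i|_{\{r=0\}} = \beta + u_i\,dr$ with $u_i \colon \Sigma \to \R_+$ positive by coorientation. Build a diffeomorphism $\psi \colon M \to M$ with $\psi|_\Sigma = \id_\Sigma$ whose differential at $\Sigma$ rescales the normal direction by $u_0/u_1$: in the collar take $\psi(r,x) = (f(r,x),x)$ with $f(0,x) = 0$ and $\partial_r f(0,x) = u_0(x)/u_1(x)$, and extend by $\id_M$ outside. Then $\psi^*\alpha_1$ equals $\alpha_0$ as a section of $T^*M|_\Sigma$ along $\Sigma$, and $\psi$ is clearly isotopic to $\id_M$ through $\Sigma$-preserving diffeomorphisms. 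After replacing $\alpha_1$ by $\psi^*\alpha_1$ (a contact form for $\psi^*\xi_1$), we may assume $\alpha_0|_p = \alpha_1|_p$ in $T_p^*M$ for every $p \in \Sigma$.

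\emph{Step 3: Moser.} Let $\alpha_t \coloneqq (1-t)\alpha_0 + t\alpha_1$. A direct collar calculation yields
\begin{equation*}
\alpha_t\wedge(d\alpha_t)^n\big|_\Sigma \;=\; (1-t)\,\alpha_0\wedge(d\alpha_0)^n\big|_\Sigma + t\,\alpha_1\wedge(d\alpha_1)^n\big|_\Sigma,
\end{equation*}
a convex combination of positively oriented volume forms; hence $\alpha_t$ is contact on a uniform neighborhood $U$ of $\Sigma$ for every $t \in [0,1]$. Seek $X_t \in \ker\alpha_t$ solving $\mathcal L_{X_t}\alpha_t + (\alpha_1 - \alpha_0) = \mu_t\,\alpha_t$; contracting with the Reeb $R_t$ forces $\mu_t = (\alpha_1 - \alpha_0)(R_t)$, and the nondegeneracy of $d\alpha_t|_{\ker\alpha_t}$ then determines $X_t$ uniquely. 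Since $(\alpha_1 - \alpha_0)|_{TM|_\Sigma} \equiv 0$ after Step~2, we have $X_t|_\Sigma \equiv 0$. The time-$s$ flow $\phi_s$, extended to all of $M$ by cutoff outside $U$, fixes $\Sigma$ pointwise, has $\phi_0 = \id_M$, and satisfies $\phi_s^*\alpha_s = \lambda_s\alpha_0$ for some positive function $\lambda_s$. Concatenating this flow with an isotopy from $\id_M$ to $\psi$ through $\Sigma$-preserving diffeomorphisms yields the required isotopy, since $(\psi \circ \phi_1)_* \xi_0 = \xi_1$ on a neighborhood of $\Sigma$.

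The genuinely delicate point is Step~2: rescaling $\alpha_0$ by a positive function preserves $\xi_0$ but cannot alter its normal jet along $\Sigma$ independently of its tangential jet, so matching the full $1$-jet requires a $\Sigma$-preserving diffeomorphism of $M$. Once this is in hand, Step~3 is verbatim the standard Moser argument and tangency of $X_t$ to $\Sigma$ comes for free.
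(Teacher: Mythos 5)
Your overall strategy --- conformal rescaling so that $\beta_0=\beta_1$, linear interpolation of the contact forms, and the Gray--Moser trick --- is exactly the ``standard application of the Moser technique'' that the paper invokes, and your convexity identity in Step 3 is correct at points of $\Sigma$: contracting $\alpha_t\wedge(d\alpha_t)^n$ with $\partial_r$ and restricting to $T_p\Sigma$, every surviving term contains at most one factor of $\alpha_t(\partial_r)$ or $\iota_{\partial_r}d\alpha_t$ (each affine in $t$), while the purely tangential data $\alpha_t|_{T_p\Sigma}=\beta_p$ and $d\alpha_t|_{T_p\Sigma}=d\beta_p$ are $t$-independent. The genuine gap is in Step 2. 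The functions $u_i=\alpha_i(\partial_r)$ are \emph{not} positive ``by coorientation'': transversality of $\partial_r$ to $\Sigma$ says nothing about transversality to $\xi_i$, and $u_i$ vanishes exactly where $\partial_r\in\xi_i$, which for generic data is a nonempty hypersurface of $\Sigma$ (if $\partial_r$ happened to be a contact vector field, this zero locus would be precisely the dividing set). Where $u_1=0$ the quotient $u_0/u_1$ is undefined, and where $u_0,u_1$ have opposite signs it is negative, so the map $\psi$ with $\partial_r f(0,x)=u_0(x)/u_1(x)$ is not a diffeomorphism and Step 2 collapses. Nor can you evade this by choosing the collar cleverly: no transversal to $\Sigma$ avoids $\xi_1$ globally once $\Sigma_{\xi_1}$ has singular points of both signs.

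Fortunately Step 2 is unnecessary. After Step 1 alone, $\gamma\coloneqq\alpha_1-\alpha_0$ satisfies $\gamma|_{T_p\Sigma}=0$ for $p\in\Sigma$, i.e.\ $\gamma_p=c(p)\,dr$. In the Moser equation $\iota_{X_t}d\alpha_t=\mu_t\alpha_t-\gamma$ one then gets $d\alpha_t(X_t,v)=0$ for every $v\in\xi_{t,p}\cap T_p\Sigma$. If $\xi_{t,p}=T_p\Sigma$ then $X_t\in T_p\Sigma$ trivially; otherwise $\xi_{t,p}\cap T_p\Sigma$ is a hyperplane of the symplectic vector space $(\xi_{t,p},d\alpha_t)$, hence coisotropic, so its symplectic orthogonal --- which contains $X_t$ --- lies inside it, and again $X_t\in T_p\Sigma$. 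Thus the Moser flow preserves $\Sigma$ setwise (though not pointwise) with no jet-matching, and your Step 3 goes through verbatim once ``$X_t|_\Sigma\equiv 0$'' is replaced by this tangency argument. (If you do want to match the full $1$-jet of the forms along $\Sigma$, the diffeomorphism must be allowed to shear in the $\Sigma$-directions as well, solving $\partial_rf\cdot u_1+\beta(\partial_r g)=u_0$, which is always possible since $(u_1,\beta_p)\neq(0,0)$; the purely fiberwise ansatz cannot work.)
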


Generally speaking, $\Sigma_{\xi}$ can be rather complicated, even when $\Sigma$ is convex with Liouville $R_\pm(\Sigma)$. For our purposes of this paper, it is more convenient to regard $\Sigma_{\xi}$ as a vector field rather than an oriented line field. Of course there is no natural way to specify the magnitude of $\Sigma_{\xi}$ as a vector field, which motivates the following definition: Two vector fields $v_1,v_2$ on $\Sigma$ are \emph{conformally equivalent} if there exists a positive function $h: \Sigma \to \R_+$ such that $v_1 = hv_2$. This is clearly an equivalence relation among all vector fields, and we will \emph{not} distinguish conformally equivalent vector fields in the rest of the paper unless otherwise stated.

In order to state the convexity criterion, we need to prepare some generalities on gradient-like vector fields in the following subsection. Our treatment on this subject will be kept to a minimum. The reader is referred to the classical works of Cerf \cite{Cer70} and Hatcher-Wagoner \cite{HW73} for more thorough discussions. Indeed, the {adaptation} of the techniques of Cerf and Hatcher-Wagoner to CHT is carried out in \cite{BHH23}. Note that similar techniques in symplectic topology have been developed by Cieliebak-Eliashberg in \cite{CE12}.

\subsection{Morse and \(1\)-Morse vector fields}

Let $Y$ be a closed manifold of dimension $n$. A smooth function $f: Y \to \R$ is \emph{Morse} if all the critical points of $f$ (i.e., points $p\in Y$ such that $df(p)=0$) are \emph{nondegenerate}, i.e., there exist local coordinates $x_1,\dots,x_n$ about $p$ such that locally $f$ takes the form
\begin{equation} \label{eqn:morse local model}
-x_1^2-\dots-x_k^2+x_{k+1}^2+\dots+x_n^2.
\end{equation}
Here $k$ is called the \emph{Morse index}, or just the \emph{index}, of the critical point $p$.  We write $\op{ind}(p)=k$. 

A smooth function $f: Y \to \R$ is  \emph{$1$-Morse} if the critical points of $f$ are either nondegenerate or of \emph{birth-death type}. Here a critical point $p \in Y$ of $f$ is of \emph{birth-death type} (also called \emph{embryonic}) if there exist local coordinates $x_1,\dots,x_n$ about $p$ such that $f$ takes the form 
\begin{equation} \label{eqn:1-morse local model}
-x_1^2-\dots-x_k^2+x_{k+1}^2+\dots+x_{n-1}^2+x_n^3.
\end{equation}
Similarly, $k$ is defined to be the \emph{(Morse) index} of $p$. The birth-death type critical point fits into a $1$-parameter family of $1$-Morse functions
\begin{equation*}
-x_1^2-\dots-x_k^2+x_{k+1}^2+\dots+x_{n-1}^2+tx_n+x_n^3,
\end{equation*}
such that for $t<0$, there exist two nondegenerate critical points of indices $k$ and $k+1$; for $t=0$, there exists a birth-death type critical point; and for $t>0$, there are no critical points.

It is a well-known fact due to Morse that any smooth function can be $C^{\infty}$-approximated by a Morse function. Moreover, Cerf proved that any $1$-parameter family of smooth functions can be $C^{\infty}$-approximated by a family of $1$-Morse functions, where the birth-death type critical points as above occur only at isolated moments.

Given a $1$-Morse function $f: Y \to \R$, we say a vector field $v$ on $Y$ is \emph{gradient-like} for $f$ if the following two conditions are satisfied:
\begin{itemize}
	\item[(GL1)] Near each critical point of $f$, $v=\nabla f$ with respect to some Riemannian metric; and
	\item[(GL2)] $f$ is strictly increasing along (non-constant) flow lines of $v$.
\end{itemize}

\begin{definition} \label{defn:morse vector field} $\mbox{}$
\be
\item A vector field $v$ on $Y$ is \emph{Morse} (resp.\ {\em $1$-Morse}) if there exists a Morse (resp.\ $1$-Morse) function $f: Y \to \R$ such that $v$ is gradient-like for $f$.  
\item  A $1$-parameter family of vector fields $(v_t)_{t\in[0,1]}$ is  {\em a $1$-Morse family} if each $v_t$ is $1$-Morse and there exist $t_1<\dots< t_k\in (0,1)$ such that the birth-death type singularities occur only at $t_i$ and there is a single birth-death type singularity at each $t_i$. 
\ee
\end{definition}

 We also make the slightly nonstandard definition: 

\begin{definition}
 A Liouville domain is {\em $1$-Weinstein} if its Liouville vector field is gradient-like with respect to a $1$-Morse function.
\end{definition}

\begin{remark}
$1$-Morse functions will be sufficient for the purposes of this paper since we will only encounter $1$-parameter families of functions. In \cite{BHH23}, it is necessary to deal with generic $2$-parameter families of functions (called $2$-Morse functions) where new singularities, i.e., the swallowtails, appear. 
\end{remark}

\begin{definition} 
 A {\em flow line} $\ell: (a,b)\to Y$ of a vector field $v$ on $Y$ is assumed to be a {\em maximal} oriented smooth trajectory $\R\to Y$ that has been precomposed with an orientation-preserving reparametrization $(a,b)\stackrel\sim\to \R$.  A {\em partial flow line} is the restriction of a flow line to a subinterval. A {\em broken flow line} (resp.\ {\em possibly broken flow line}) of a vector field $v$ on $Y$ is a continuous map $\ell: [a,b] \to Y$ such that there exists an increasing sequence $a=a_0<a_1< \dots <a_m=b$ with $m> 1$ (resp.\ $m\geq 1$) such that $\ell(a_j)$, $j=0,\dots, m$, are zeros of $v$ and $\ell|_{(a_j,a_{j+1})}$, $j=0,\dots, m-1$, are flow lines of $v$. We may also replace $[a,b]$ by half-open or open intervals. 
\end{definition} 

In the rest of this subsection, we present a simple criterion for a vector field to be Morse which will be useful for our later applications. The corresponding version for $1$-Morse vector fields is left to the reader as an exercise.

\begin{proposition} \label{prop:morse criterion}
A vector field $v$ on a closed manifold $Y$ is Morse  (resp.\ $1$-Morse)  if and only if the following conditions are satisfied:
	\begin{enumerate}
		\item[(M1)] For any point $x  \in Y$ with $v(x)=0$, there exists a neighborhood of $x$ and a locally defined function $f$ of the form given by \eqref{eqn:morse local model}  (resp.\ \eqref{eqn:morse local model} or \eqref{eqn:1-morse local model})  such that $v=\nabla f$.
		\item[(M2)] For any point $x \in Y$ with $v(x) \neq 0$, the flow line of $v$ passing through $x$ converges to zeros of $v$ in both forward and backward time.
		\item[(M3)] There exist no  \emph{possibly broken loops}, i.e., a possibly broken flow line $\ell:[0,1]\to Y$ such that $\ell(0)=\ell(1)$.
	\end{enumerate}
\end{proposition}

\begin{proof}
The ``only if" direction is clear. To prove the ``if" direction, let $Z(v) = \{x_1, \dots, x_k \}$ be the finite set of zeros of $v$, where the finiteness is guaranteed by (M1) and the compactness of $Y$. Then we define a partial order on $Z(v)$ such that $x_i \prec x_j$ if there exists a flow line of $v$ from $x_i$ to $x_j$. The fact that $\prec$ is a partial order follows from (M3).

We then construct a handle decomposition of $Y$ starting from the minimal elements $Z_0$ of $Z(v)$ (note that a minimal element of $Z(v)$ has index $0$ by (M2))  and inductively attaching handles as follows: Starting with a standard neighborhood of $Z_0$, suppose we have already attached the handles corresponding to $Z_j$.  Then we attach the handles corresponding to the minimal elements of $Z(v)-Z_j$, and then let $Z_{j+1}$ be the union of $Z_j$ and the minimal elements of $Z(v)-Z_j$.
\end{proof}

\subsection{A convexity criterion}

The goal of this subsection is to give a sufficient condition for a hypersurface to be Weinstein convex.  To this end, we introduce the notions of \emph{Morse} and \emph{Morse}$^+$ hypersurfaces whose characteristic foliations have particularly simple dynamics.

\begin{definition} \label{defn:morse-type hypersurface} $\mbox{}$
\be
\item A hypersurface $\Sigma \subset (M,\xi)$ is \emph{Morse}  (resp.\ {\em $1$-Morse}) if there exists a representative $v$ in the conformal equivalence class of $\Sigma_{\xi}$  which is a Morse  (resp.\ {\em $1$-Morse})  vector field on $\Sigma$. We say $\Sigma$ is \emph{Morse}$^+$ (resp.\ {\em $1$-Morse$^+$}) if, in addition, there exist no flow trajectories from a negative singular point of $v$ to a positive one. 
\item  A $1$-parameter family of hypersurfaces $(\Sigma_t)_{t \in [0,1]}$ is a {\em $1$-Morse family}, if $((\Sigma_t)_{\xi})_{t\in [0,1]}$ is represented by a $1$-Morse family. 
\ee
\end{definition}

\begin{lemma} \label{lem:C^infty perturb}
If $\Sigma$ is a Morse hypersurface, then a $C^{\infty}$-small perturbation of $\Sigma$ is Morse$^+$.
\end{lemma}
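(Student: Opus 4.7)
The plan is to combine a Liouville-type index constraint at the singular points of $\Sigma_\xi$ with a standard Kupka--Smale transversality argument achieved by a small perturbation of $\Sigma$.

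First I would establish the index constraint. Near any singular point $x_0$, pick a Darboux chart in which $\Sigma = \{z = h(x,y)\}$ with $dh(0)=0$, so that $\beta = dh + \sum x_i\, dy_i$ and $d\beta$ equals the standard symplectic form $\omega_{std}$ on a whole neighborhood of $x_0$. The characteristic foliation $\Sigma_\xi$ is then conformally equivalent to the Liouville vector field $X_\beta$ (defined by $\iota_{X_\beta}\omega_{std} = \beta$) at a positive singular point, and to $-X_\beta$ at a negative one. From $\mathcal{L}_{X_\beta}(d\beta) = d\beta$, one deduces that the eigenvalues of $A = DX_\beta(x_0)$ occur in pairs $(\lambda, 1-\lambda)$, so in every such pair at least one eigenvalue has positive real part. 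Consequently the unstable subspace of $X_\beta$ at $x_0$ has dimension $\geq n$. Translating to the Morse index $k$ of $\Sigma_\xi$ (using $\dim W^u(\Sigma_\xi) = 2n - k$, since $\Sigma_\xi = \nabla f$ in the conventions of this paper), I obtain $k^+ \leq n$ at positive singular points and $k^- \geq n$ at negative ones.

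Next I would parameterize $C^\infty$-small perturbations of $\Sigma$ by small functions $g \in C^\infty(\Sigma)$ via graphs $\{z = \epsilon g(x)\}$ in a contact collar. Being Morse is an open condition, so $\Sigma_g$ remains Morse for small $g$, its singular points depend smoothly on $g$, and the Liouville pairing---hence the index bounds from the previous paragraph---persists. A parametric Sard--Smale/Kupka--Smale argument, using the fact that local perturbations of $g$ surjectively deform the flow of $(\Sigma_g)_\xi$ along any prescribed arc, then shows that for a $C^\infty$-dense set of such $g$ the Morse--Smale condition holds: $W^u(x)$ and $W^s(y)$ intersect transversally for every pair of singular points. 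For any such $g$, a putative trajectory from a negative singular point $x^-$ to a positive one $x^+$ would sit inside $W^u(x^-)\cap W^s(x^+)$, a transverse intersection of dimension $(2n-k^-) + k^+ - 2n = k^+ - k^- \leq 0$ by the index bounds; but a nontrivial flow orbit is at least $1$-dimensional, forcing the intersection to be empty. Hence $\Sigma_g$ is Morse${}^+$.

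The main obstacle is the transversality step. One must verify that perturbations of the hypersurface itself, rather than arbitrary perturbations of the characteristic vector field, furnish enough freedom in the family $\{X_{\beta_g}\}$ to drive the Kupka--Smale argument. Concretely, I would compute the first-order variation of $g \mapsto X_{\beta_g}$ along a flow segment and check that compactly supported bumps in $g$ produce arbitrary transverse deformations of that segment; a standard Baire argument over the (finite) collection of ordered pairs of singular points then completes the proof.
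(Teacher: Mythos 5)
Your proposal is correct and follows essentially the same route as the paper: the paper's (very terse) proof consists precisely of the index bounds $\operatorname{ind}(x)\leq n$ at positive and $\operatorname{ind}(x)\geq n$ at negative singularities (quoted from \cite[Proposition 11.9]{CE12}, which is exactly your Liouville eigenvalue-pairing argument) followed by ``the usual transversality argument,'' which is your Kupka--Smale perturbation plus the dimension count $\dim\left(W^u(x^-)\pitchfork W^s(x^+)\right)=k^+-k^-\leq 0$. You correctly identify, and sketch how to handle, the one point the paper leaves implicit, namely that perturbing the hypersurface (rather than the vector field directly) provides enough freedom to achieve the transversality.
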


\begin{proof}
Choose a contact form $\xi = \ker \alpha$. It suffices to observe that $d\alpha|_{\Sigma}$ is non-degenerate on a neighborhood of the singular points of $\Sigma_{\xi}$. It is a standard fact (see e.g. \cite[Proposition 11.9]{CE12}) that the Morse index $\text{ind}(x) \leq n$ if $x$ is a positive singular point of $\Sigma_\xi$, and $\text{ind}(x) \geq n$ if $x$ is negative. The claim therefore follows from the usual transversality argument.
\end{proof}

The following proposition gives a sufficient condition for convexity:

\begin{proposition} \label{prop:convex criterion} \mbox{} 
\be
\item A $1$-Morse$^+$ hypersurface $\Sigma$ is convex. 
\item A hypersurface $\Sigma$ is Weinstein convex if and only if it is Morse$^+$.
\ee
\end{proposition}

\begin{proof}
(1) is a straightforward generalization of the usual proof for surfaces due to Giroux that $\Sigma$ is convex if it has a Morse$^+$ characteristic foliation. 

Let $\mathbf{x} = \{x_1, \dots, x_m \}$ (resp.\ $\mathbf{y} = \{y_1, \dots, y_\ell \}$) be the positive (resp.\ negative) singular points of $\Sigma_{\xi}$. Then  $d\beta$ is nondegenerate on a small open neighborhood $U(\mathbf{x})$ of $\mathbf{x}$, where $\beta \coloneqq \alpha|_{\Sigma}$.  Let $W_{x_i}$ be the stable manifold of $x_i$ with respect to the gradient of the $1$-Morse function and let the $i$th skeleton $\op{Sk}_i$ be the closure of $W_{x_1}\cup\dots\cup W_{x_i}$. We order the points of $\mathbf{x}$ so that $W_{x_{i+1}}$ intersects the boundary of a small open neighborhood of $\op{Sk}_i$ along a sphere if $x_{i+1}$ is a Morse critical point and along a disk if $x_{i+1}$ is an embryonic point.  In particular we necessarily have $\text{ind}(x_1)=0$, but we do \emph{not} require $\text{ind}(x_i) \geq \text{ind}(x_j)$ for $i > j$. Such an arrangement is possible thanks to the assumption that there is no flow line of $\Sigma_{\xi}$ going from ${\bf y}$ to $\mathbf{x}$.

 There exists a  conformal modification $\beta\rightsquigarrow g \beta$, where $g$ is a positive function, so that it becomes Liouville on a tubular neighborhood $U(\op{Sk}_{m})$ of $\op{Sk}_{m}$ and $\p  U(\op{Sk}_m)$ is contact: Arguing by induction, suppose that $\beta$ is Liouville on $U(\op{Sk}_{i})$ such that $\p U(\op{Sk}_{i})$ is contact. {During the induction we will often reset notation, i.e., modify $\beta\rightsquigarrow g\beta$ and call the result the new $\beta$. We will explain the case where $x_{i+1}$ is Morse and $W_{x_{i+1}}\cap \p U(\op{Sk}_{i})$ is a Legendrian sphere $\Lambda \subset \p U(\op{Sk}_{i})$; the other cases are similar.} Using the flow of $\Sigma_{\xi}$, we may identify a tubular neighborhood of $W_{x_{i+1}} \setminus (U(\op{Sk}_{i}) \cup U(x_{i+1}))$ with $ [0,1]_r \times Y$, where $Y$ is an open neighborhood of the $0$-section in $J^1(\Lambda)$ such that:
	\begin{itemize}
		\item $ \{0\} \times Y \subset \p U(\op{Sk}_{i})$;
		\item $ \{1\}\times Y  \subset \p U(x_{i+1})$; and
		\item $\p_r$ is identified with $\Sigma_{\xi}$.
	\end{itemize}
It follows that one can write $\beta = g\lambda$ on $[0,1]\times Y$, where $\lambda$ is a contact form on $Y$ and $g$ is a positive function on $[0,1] \times Y$. Note that
$$d\beta = \p_r g \, dr \wedge \lambda + d_Y g \wedge \lambda +g \, d\lambda$$
is symplectic if $\p_r g>0$. By assumption we have $\p_r g>0$ when $r$ is close to 0 or 1. Rescaling {$\beta|_{U(x_{i+1})}$} by a large constant $K \gg 0$, we can extend $\beta|_{U(\op{Sk}_{i}) \cup U(x_{i+1})}$ to a Liouville form on $U(\op{Sk}_{{i+1}})$. Moreover, we can assume $\p U(\op{Sk}_{{i+1}})$ is transverse to $\Sigma_{\xi}$ by slightly shrinking $U(\op{Sk}_{{i+1}})$. Hence by induction we can arrange so that $\beta$ is a Liouville form on $U(\op{Sk}_{m})$.

The treatment of the negative singular points of $\Sigma_{\xi}$ is similar. Let $\op{Sk}'_{\ell}$ be the closure of the union of the unstable manifolds of $\mathbf{y}$. Then by the same argument we can assume that $\beta$ is a Liouville form on $-U(\op{Sk}'_{\ell})$, where the minus sign indicates the opposite orientation.

Using the flow of $\Sigma_{\xi}$, we can identify $\Sigma \setminus (U(\op{Sk}_{m}) \cup U(\op{Sk}'_{\ell}))$ with $\Gamma\times [-1,1]_s$ such that:
	\begin{itemize}
		\item $\Gamma\times \{-1\}$ is identified with $\p U(\op{Sk}_{m})$;
		\item $\Gamma \times \{1\}$ is identified with $\p U(\op{Sk}'_{\ell})$; and
		\item $\R\langle\p_s\rangle=\Sigma_{\xi}$.
	\end{itemize}
We can write $\beta = h\eta$ near $ \Gamma\times \{-1,1\}$, where $\eta$ is a contact form on $\Gamma$ and $h=h(s)$ is a positive function such that $h'(s) > 0$ near $\Gamma\times\{-1\}$ and $h'(s) < 0$ near $\Gamma\times\{1\}$. Extend $h$ to a positive function $\Gamma\times[-1,1]\to \R$ such that $h'(s)>0$ for $s<0$, $h'(0)=0$, and $h'(s)<0$ for $s>0$.
Let $f=f(s): \Gamma\times[-1,1]\to \R$ be {a nonincreasing function of $s$ such that $f(-1)=1$, $f(0)=0$, $f(1)=-1$, $f'(0)<0$, and $f^{(n)}(-1)=0=f^{(n)}(1)$ for all $n\geq 1$.} Then define $\rho= f\, dt + h\, \eta$ on $\R_t \times \Gamma\times[-1,1]$, $\rho=dt + \beta$ on $\R\times U(\op{Sk}_{m})$, and $\rho=-dt+\beta$ on $\R\times U(\op{Sk}'_{{\bf y}_\ell})$.  We leave it to the reader to check that $\rho$ is contact and that $\rho|_{\{0\}\times \Sigma}$ agrees with $\alpha|_{\Sigma}$ up to an overall positive function. (1) now follows from \cref{lem:char foliation}.

 (2) The ``if" direction follows from the proof of (1) and the ``only if" direction is clear.
\end{proof}

\section{Construction of mushrooms in dimension \(3\)} \label{sec:C-fold 3d}

In order to make a hypersurface $\Sigma \subset (M,\xi)$ Weinstein convex, we would like to modify the characteristic foliation $\Sigma_\xi$ so it is directed by a Morse vector field and then apply \cref{prop:convex criterion}.  (Note that going from Morse to Morse$^+$ is a $C^\infty$-generic  operation which can always be done by \cref{lem:C^infty perturb}.)  This will be achieved by certain $C^0$-small perturbations of $\Sigma$ which we call {\em mushrooms}. 
The mushrooms are most easily described in dimension $3$ and the general case will be constructed in \cref{sec:C-fold hd} using $3$-dimensional mushrooms. It turns out that mushrooms alone are enough to make any $\Sigma_\xi$ Morse if $\dim \Sigma=2$. If $\dim \Sigma>2$, then mushrooms are not quite sufficient and we will need an additional technical construction in \cref{sec:plug}.

The standard model of a mushroom will be constructed in a Darboux chart
\[
(\R^3_{z,s,t},\xi=\ker\alpha),\quad\alpha = dz + e^s \, dt.
\]
Let $\Sigma = \{z=0\}$ be the surface under consideration with normal orientation $\bdry_z$ and characteristic foliation $\Sigma_\xi$ directed by $\p_s$. The goal of this section is to ``fold" $\Sigma$ to obtain another surface $Z$ which coincides with $\Sigma$ outside of a compact set, and analyze the change in the dynamics of the characteristic foliations.

In \cref{subsec:PL fold} we construct a piecewise linear (PL) model $Z_{\PL}$ and then in \cref{subsec:smoothing PL fold} we round the corners of $Z_{\PL}$ to obtain a suitably generic smooth surface $Z$ such that the characteristic foliation $Z_{\xi}$ has the desired properties. 

\begin{remark}
 In an earlier version of the paper we constructed a mushroom whose base was smaller than the cap and discussed ``mushroom packing ratios."  In the current version they approximately have the same size and can be packed tightly.
\end{remark}

\subsection{Constructing the PL fold} \label{subsec:PL fold}

Choose a rectangle $\square = [0,s_0] \times [0,t_0] \subset \Sigma$, where $s_0,t_0>0$. We define $Z_{\PL}$ to coincide with $\Sigma$ outside of $\square$.

\begin{remark} \label{rmk: slightly more general}
 The more general case $[s_{-1},s_0]\times[t_{-1},t_0]$ can be computed analogously.  In what follows we can replace $1-e^{-s_0}$ by $e^{-s_{-1}}-e^{-s_0}$ and the $s$-width $\mathcal{S}(Z_{PL})$ becomes $s_0-s_{-1}$. 
\end{remark}

Choose $z_0>0$. We construct three rectangles $P_0,P_2,P_4$ and two {parallelograms} $P_1,P_3$ in $\R^3$, which, together with $\R^2_{s,t} \setminus \square$, glue to give $Z_{\PL}$, i.e., we define
\begin{itemize}
	\item $P_0 \coloneqq [0,s_0] \times   [-e^{-s_0/2}z_0, -e^{-s_0/2}z_0 + t_0]  \subset \{z=z_0\}$; 
	\item $P_i, i=1,\dots,4$, are the faces ($\not= P_0,\square$) of the convex hull of $P_0\cup \square$  (which is a parallelepiped), ordered counterclockwise so that $P_1 \subset \{s=0\}$.
\end{itemize}

\begin{figure}[ht]
	\begin{overpic}[scale=.5]{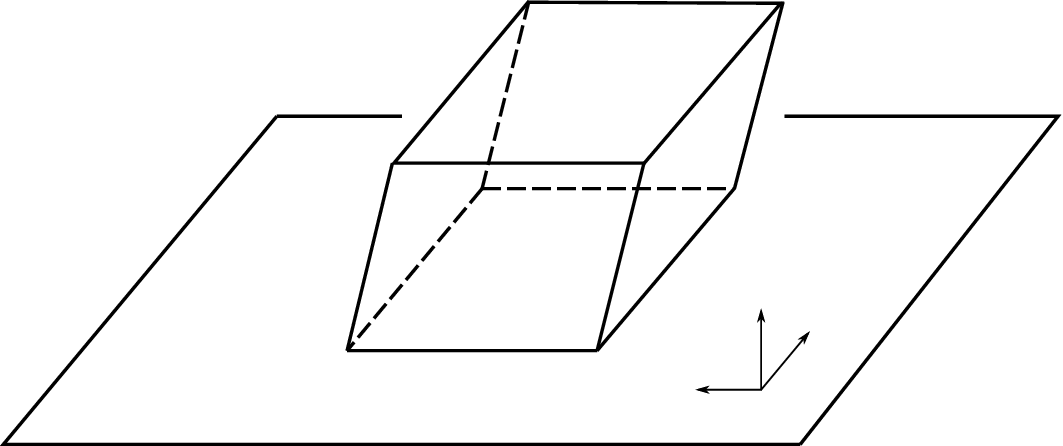}
		\put(50,15.5){$\square$}
		\put(53,33){$P_0$}
		\put(76.7,9){$s$}
		\put(65,1){$t$}
		\put(68,11){$z$}
	\end{overpic}
	\caption{The PL model $Z_{\PL}$. Here $P_0$ is the top face; $P_1$ and $P_3$ are the front and back faces, respectively; $P_2$ and $P_4$ are the right and left faces, respectively; and $\square$ is the bottom face which is \emph{not} part of $Z_{\PL}$.}
	\label{fig:PL_mushroom}
\end{figure}

\begin{definition} \label{defn:Z_PL fold}
We define the PL surface
$$Z_{\PL} \coloneqq (\Sigma \setminus \square) \cup {(\cup_{0 \leq i \leq 4} P_i).}$$
The rectangle $\square \subset \Sigma$ (resp.\ {$P_0$)} is called the  \emph{base (resp.\ cap) of the PL mushroom}  and $\cup_{0 \leq i \leq 4} P_i$ is called the {\em PL mushroom}. We refer to the modification $\Sigma\rightsquigarrow Z_{\PL}$ as ``growing a PL mushroom." 
\end{definition}

Observe that, away from the corners, the characteristic foliation $(Z_{\PL})_{\xi}$ on $Z_{\PL}$ satisfies
\begin{itemize}
	\item $(Z_{\PL})_{\xi} =\R\langle \p_s\rangle$ on $\Sigma \setminus \square$ and $P_0$;
	\item  on $P_2$ (resp.\ $P_4$), $(Z_{\PL})_{\xi}$ is directed by $\bdry_s$ (resp.\ $-\bdry_s$) for $s\in[0,\tfrac{s_0}{2})$, is singular along $s=\tfrac{s_0}{2}$, and is directed by $-\bdry_s$ (resp.\ $\bdry_s$) for $s\in(\tfrac{s_0}{2},s_0]$;  
	\item $(Z_{\PL})_{\xi}$ is the linear foliation on $P_1$ and $P_3$ with ``slopes" $-1$ and $-e^{-s_0}$, respectively, where ``slope" refers to the value of $dt/dz = -e^{-s}$. See \cref{fig:linear_foliation}.
\end{itemize}
 We refer to the singular line segments on $P_2$ and $P_4$ by $\mathcal{S}_-$ and $\mathcal{S}_+$ indicating their signs. 

\begin{figure}[ht]
	\begin{overpic}[scale=.5]{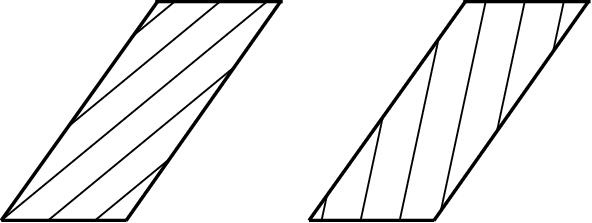}
		
	\end{overpic}
	\caption{The linear characteristic foliations on $P_1$ (left) and $P_3$ (right)  where $Z_{\PL}$ is sufficiently thin.}
	\label{fig:linear_foliation}
\end{figure}

We now analyze the dynamics of the PL flow on $Z_{\PL}$. Note that the flow lines are not necessarily uniquely determined by the initial conditions due to the presence of corners.

We begin by introducing a few quantities which characterize the various sizes of the mushrooms.

\begin{definition} \label{defn:parameters of fold}
Given $Z_{\PL}$ as above, its {\em $z$-height}, {\em $s$-width}, and {\em $t$-width} are given by:
$$\Zcal(Z_{\PL}) \coloneqq z_0, \quad \Scal(Z_{\PL}) \coloneqq s_0,\quad \Tcal(Z_{\PL}) \coloneqq t_0.$$
\end{definition}

The following lemma characterizes a key feature of $(Z_{\PL})_{\xi}$ when the parameters of the mushrooms are appropriately adjusted.

\begin{lemma} \label{lemma: char foliation on 3d PL-fold}
Fix $s_0,z_0>0$. If $t_0 < (1 - e^{-s_0}) z_0$, then
\be
\item  the unique  flow line of $(Z_{\PL})_{\xi}$ passing through $(-1,a)\in \R^2_{s,t}$, where $a \in (0,t_0)$,  either hits $P_0 \cap P_2$ or converges to $\mathcal{S}_-$  in forward time;
\item  the unique  flow line of $(Z_{\PL})_{\xi}$ passing through $(s_0+1,a), a \in (0,t_0)$,  either hits $P_0 \cap P_4$ or converges to $\mathcal{S}_+$  in backward time; and
\item  all the flow lines of $(Z_{\PL})_{\xi}$ passing through $(-1,a)$, $a\not\in [0,t_0]$, or $(s_0+1,a)$, $a\not\in [0,t_0]$, are unaffected.
\ee
\end{lemma}

\begin{proof}
(1) Since $t_0 - z_0 (1- e^{-s_0}) < 0$, it follows that the unique flow line passing through the point $(-1,a) \in \Sigma$ with $a \in (0,t_0)$  does one of three things in forward time:
\begin{itemize}
\item travels over $P_0$, enters $P_1$, and ends at $\mathcal{S}_-$;
\item travels over $P_0$ into $P_0\cap P_2$ (this happens with only one flow line); or
\item travels over $P_0,P_1,P_2$ in that order, enters $P_1$, and ends at $\mathcal{S}_-$.  
\end{itemize}
See \cref{fig:PL_mushroom_flow}. Claim (2) is similar and (3) is clear.
\end{proof}

\begin{figure}[ht]
	\begin{overpic}[scale=.5]{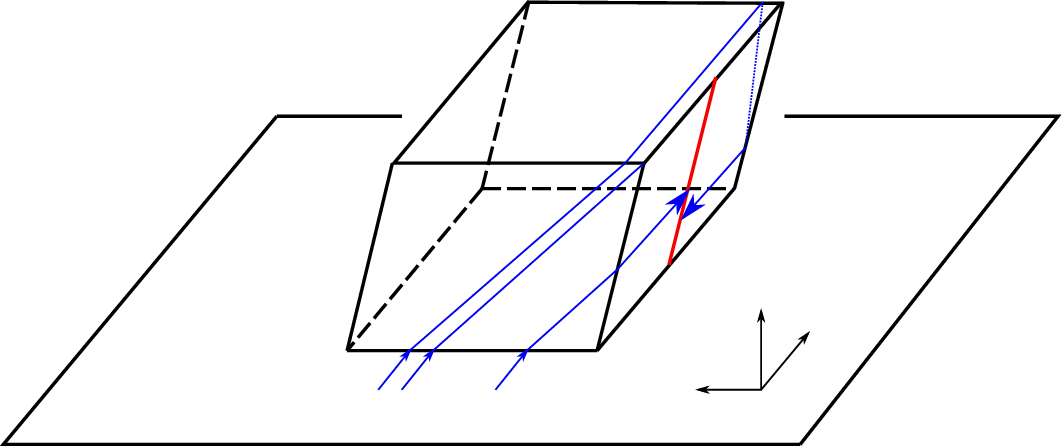}
	\end{overpic}
	\caption{Flow lines of $(Z_{\PL})_{\xi}$ limiting to $\mathcal{S}_-$ in red (the generic case) and one flow line limiting to $P_0 \cap P_2$.}
	\label{fig:PL_mushroom_flow}
\end{figure}

\subsection{Smoothing the PL fold} \label{subsec:smoothing PL fold}

In this subsection we construct a smoothing of $Z_{\PL}$.

\s\n
{\em Construction of the smoothing $Z$.}
Choose a  {\em small smoothing parameter} $\delta>0$ and a smooth ``profile function" $\phi: [0,z_0] \to [-\delta,\delta]$ such that $\phi(0)=\delta$, $\phi(z_0)= -\delta$ and has ``derivative $-\infty$'' at $z=0, z_0$.  

For each $z'\in (0,z_0)$, the slices $R_{z'}:=Z_{\PL}\cap \{z=z'\}$ are rectangles. When $z'=0$ or $z_0$, we take $R_{z'}=\bdry (Z_{\PL}\cap \{z=z'\})$.  For each  $\delta'\in \R$ with $|\delta'|$ small, let  $R_{z'}^{\delta'}\subset \{z=z'\}$ be the rectangle concentric to $R_{z'}$ and whose side lengths are $\delta'$ larger.  Then let $\widetilde R_{z'}^{\delta'}\subset \{z=z'\}$ be a smoothing of $R_{z'}^{\delta'}$ contained in the closure of the region between $R_{z'}^{\delta'}$ and $R_{z'}^{\delta'-\delta}$, which:
\be
\item[(R1)] agrees with $R_{z'}^{\delta'}$ on the complement of the open $\delta$-neighborhoods of the edges of $R_{z'}^{\delta'}$ parallel to $t=const$;
\item[(R2)] has nonzero curvature on these $\delta$-neighborhoods and is tangent to $R_{z'}^{\delta'}$ precisely at the midpoints of the edges of $R_{z'}^{\delta'}$ (i.e., when $s=s_0/2$); and
\item[(R3)] is smoothly varying with $z'$ and $\delta'$. 
\ee

The {\em $\phi$-smoothing $Z$ of $Z_{\PL}$ with smoothing parameter $\delta>0$ and profile function $\phi$} is obtained by modifying $Z_{\PL}$ as follows:
\be
\item replace $R_{z'}$ by $\widetilde R_{z'}^{\phi(z')}$ for $z\in (0,z_0)$;
\item remove the bounded component of  {$\{z=0\} \setminus \widetilde R_0^{\phi(0)}$;} and
\item adjoin the {closure of the} bounded component of {$\{z=z_0\}\setminus \widetilde R_{z_0}^{\phi(z_0)}$.}
\ee
The \emph{base of the mushroom} $\widetilde{\square} \subset \Sigma$ of $Z$ is the closure of $\Sigma\setminus Z$ and the {\em mushroom} is the closure of $Z\setminus \Sigma$. By construction $\widetilde{\square}$ converges to $\square$ when all the parameters tend to zero.

\s
The following proposition describes the key dynamical properties of $Z_{\xi}$. 

\begin{proposition} \label{prop:char foliation on 3d C-fold}
 Given $Z_{PL}$ with parameters $s_0,z_0,t_0,\epsilon$ satisfying $t_0 < (1-e^{-s_0}) z_0$, there exists a smoothing $Z$ of $Z_{PL}$ with small smoothing parameter $\delta>0$ and profile function $\phi: [0,z_0] \to [-\delta,\delta]$  whose vector field $Z_{\xi}$ satisfies the following {properties:}
\begin{itemize}
\item[(TZ0)]  $Z_{\xi}$ is gradient-like with respect to a Morse function $f_a: \widetilde Z\to \R$ which agrees with $s$ outside of $\widetilde\square$.
\item[(TZ1)] $Z_{\xi}$ has four nondegenerate singularities:  a positive source $e_+$ and a positive saddle $h_+$ near the midpoint of $\R^2_{s,t} \cap P_4$, and a negative sink $e_-$ and a negative saddle $h_-$ near the midpoint of $\R^2_{s,t}\cap P_2$.
\item[(TZ2)] {There is a unique flow line each from $e_+$ to $h_+$, from $e_+$ to $h_-$, from $h_+$ to $e_-$, and from $h_-$ to $e_-$, and the four flow lines bound a quadrilateral whose interior consists of flow lines from $e_+$ to $e_-$.}
\item[(TZ3)]  There exist $\kappa_1>\kappa_2>\kappa_3>0>\kappa_4>\kappa_5>\kappa_6$ such that all $\kappa_i\to 0$ as $\delta \to 0$ and the following hold:
\be
\item the stable manifold of $h_+$ (resp.\ $h_-$) intersects the line $\{s=-1\}$ at $(-1,t_0+\kappa_2)$ (resp.\ $(-1,\kappa_4)$),
\item the unstable manifold of $h_+$ (resp.\ $h_-$) intersects the line $\{s=s_0+1\}$ at $(s_0+1,t_0+\kappa_3)$ (resp.\ $(s_0+1,\kappa_5)$),
\item any flow line passing through $(-1,a)$, $a\in (\kappa_4,t_0+\kappa_2)$, converges to $e_-$ in forward time,
\item any flow line passing through $(s_0+1,a)$, $a\in (\kappa_5,t_0+\kappa_3)$, converges to $e_+$ in backward time,
\item a flow line passes through $(-1,a)$, $a\not\in [\kappa_6,t_0+\kappa_1]$, if and only if it passes through $(s_0+1,a)$, $a\not\in [\kappa_6,t_0+\kappa_1]$,
\item a flow line passes through $(-1,a)$, $a\in (t_0+\kappa_2,t_0+\kappa_1)$, if and only if it passes through $(s_0+1,a')$, $a'\in (t_0+\kappa_3,t_0+\kappa_1)$,
\item a flow line passes through $(-1,a)$, $a\in (\kappa_6,\kappa_4)$ if and only if it passes through $(s_0+1,a')$, $a'\in (\kappa_6,\kappa_5)$.
\ee 
\item[(TZ4)]  The flow lines described in (TZ2) and (TZ3) are all the flow lines that nontrivially intersect the mushroom. 
\end{itemize}
\end{proposition}

 In words, $Z_\xi$ blocks all flow lines that pass through an open interval that is close to $\{-1\}\times (0,t_0)$, slightly bends unblocked flow lines that pass through points close to $(-1,0)$ and $(-1,t_0)$ (with bending $\to 0$ as the smoothing parameter $\delta\to 0$), and leaves all other flow lines passing through $s=-1$ untouched. 

See \cref{fig:Z char foliation} for an illustration of the effect of a mushroom on the characteristic foliation  and also the values of $t$ where certain flow lines in (TZ3) intersect $s=-1$ or $s=s_0+1$.

\begin{figure}[ht]
	\begin{overpic}[scale=.55]{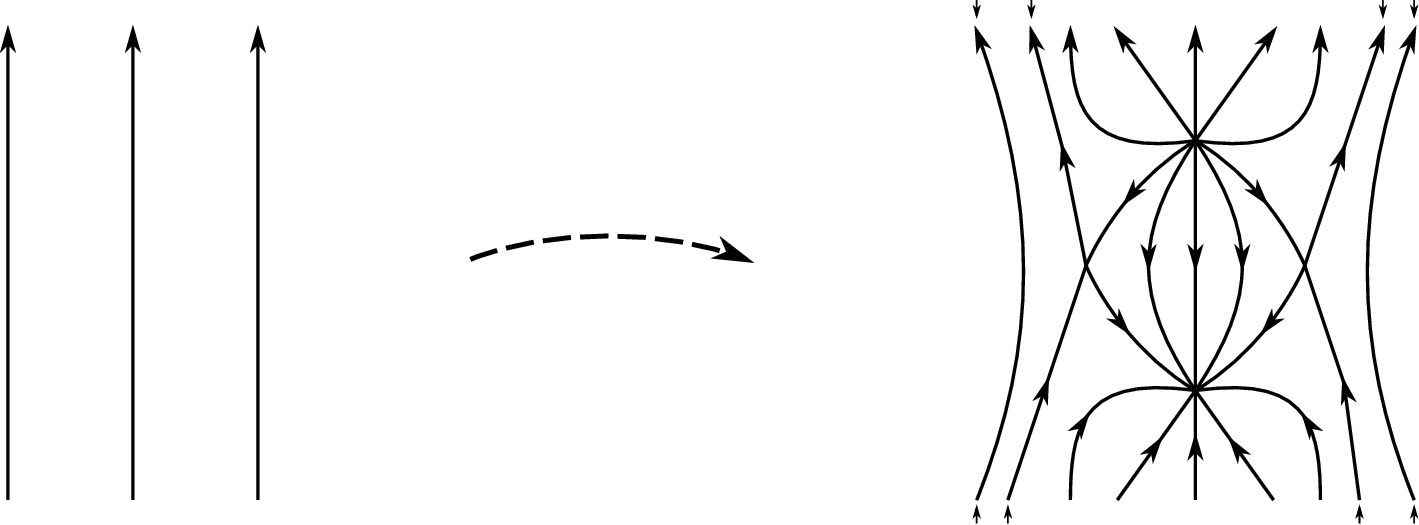}
		\put(36,21){\mbox{mushroom}}
		\put(73,17){\tiny{$h_+$}}
		\put(92.7,17){\tiny{$h_-$}}
		\put(86,28){\tiny{$e_+$}}
		\put(86,7.8){\tiny{$e_-$}}
	\put(62.7,-1.2){\tiny $t_0+\kappa_1$} \put(70.5,-1.2){\tiny $t_0+\kappa_2$} \put(95,-1.2){\tiny $\kappa_4$} \put(98.7,-1.2){\tiny $\kappa_6$}
	\put(62.7,37.7){\tiny $t_0+\kappa_1$} \put(72,37.7){\tiny $t_0+\kappa_3$} \put(96.5,37.7){\tiny $\kappa_5$} \put(98.7,37.7){\tiny $\kappa_6$}
	\end{overpic}
	\caption{The characteristic foliations before and after a mushroom.  The numbers $t_0+\kappa_1$ etc.\ are the $t$-coordinates where the indicated flow lines intersect $s=-1$ or $s=s_0+1$; see (TZ3). }
	\label{fig:Z char foliation}
\end{figure}

\begin{proof} 
 Let $Z$ be the smoothing of $Z_{PL}$ from above. 

(TZ1) We determine the singular points of $Z_\xi$ as follows: The first requirement for $\xi$ to be tangent to $Z$ is for $Z$ to be tangent to $\bdry_s$. Since $\widetilde R_{z'}^{\delta'}$ is tangent to $\bdry_s$ exactly at two points by (R2), i.e., when $s=s_0/2$, the singular points lie on the restriction of $Z$ to the slice $s=s_0/2$.  

We now choose $\phi$ such that $\phi'<0$ on $(0,c_0)$ and $(c_1,z_0)$ and $\phi'>0$ on $(c_0,c_1)$, where $0<c_0<c_1\ll z_0$. By the choice of $\phi$, there are four points where $Z\cap \{s=s_0/2\}$ is tangent to $\xi\cap \{s=s_0/2\}$. They occur as described in (TZ1) since $\phi'=0$ at $z=c_0,c_1$, which are both close to $z=0$. 

 (TZ0),  (TZ2)--(TZ5) then follow from \cref{lemma: char foliation on 3d PL-fold}.  The conditions $\kappa_2>\kappa_3$ and $\kappa_4>\kappa_5$ are required since the surface $Z$ was obtained from $\Sigma$ by pushing in the positive $z$-direction. 
\end{proof}

 The following remark will also be very useful later:

\begin{remark} \label{rmk: variant of prop:char foliation on 3d C-fold}
 \cref{prop:char foliation on 3d C-fold} also holds with (TZ1) replaced by:
\be
\item[(TZ1')] $Z_{\xi}$ has two singularities:  a positive birth-death singularity near the midpoint of $\R^2_{s,t} \cap P_4$ and a negative birth-death singularity near the midpoint of $\R^2_{s,t}\cap P_2$.
\ee
Moreover, there is a foliated $1$-parameter family of surfaces from $\{z=0\}$ to a slight upward translate of $Z$ (i.e., in the $z$-direction) whose characteristic foliations have no singularities except for $Z$.  For this $Z$ we take $\phi$ such that $\phi'<0$ for $z\not=c_0$ and $\phi'(c_0)=0$. Note that the singularities vanish if we take $\phi$ such that $\phi'<0$ for all $z$. 
\end{remark}

In view of \cref{prop:char foliation on 3d C-fold}, from now on we assume that all mushrooms satisfy $t_0 < (1-e^{-s_0}) z_0$.

\section{Convex surface theory revisited} \label{sec:CST revisited}

The goal of this section is  to give elementary, Morse-theoretic proofs  of \cref{thm:genericity} and \cref{thm:family genericity} in dimension $3$ using the folding techniques developed in \cref{sec:C-fold 3d}. In dimension $3$, \cref{thm:genericity} was proved by Giroux in \cite{Gi91} in a stronger form where $C^0$ is replaced by $C^\infty$.  \cref{thm:family genericity} can be inferred from Giroux's work on bifurcations \cite{Gi00} and the bypass-bifurcation correspondence. The technical heart of Giroux's work is based on the study of dynamical systems of vector fields on surfaces, a.k.a., Poincar\'e-Bendixson theory. In particular, one invokes a deep theorem of Peixoto \cite{Pei62} to prove the $C^\infty$-version of \cref{thm:genericity} and much more work to establish \cref{thm:family genericity}.

Our  proof  strategy is the following: First apply a $C^\infty$-small perturbation of $\Sigma \subset (M^3,\xi)$ such that the singularities of $\Sigma_{\xi}$ become Morse. There exists a finite collection of pairwise disjoint transverse arcs $\gamma_i, i \in I$, in $\Sigma$ such that any flow line of $\Sigma_{\xi}$ passes through some $\gamma_i$. In \cref{subsec:3d plug} we will construct a $3$-dimensional plug supported on a {small flow box $B_i=[0,s_i]\times [0,t_i]$, where $\gamma_i=\{\tfrac{s_i}{2}\}\times (\epsilon,t_i-\epsilon)$ and $\epsilon>0$ is small, such that no flow line that enters through $\{0\}\times (\epsilon,t_i-\epsilon)$ can leave through $\{s_0\}\times [0,t_i]$,} i.e., they all necessarily converge to singularities in the plug. Each plug consists of a large number of mushrooms constructed in \cref{sec:C-fold 3d}. This proves \cref{thm:genericity}. To prove \cref{thm:family genericity}, we slice $\Sigma \times [0,1]$ into thin layers using $\Sigma_i \coloneqq \Sigma \times \{\tfrac{i}{N}\}, 0 \leq i \leq N$, for large $N$ such that the difference between $(\Sigma_i)_{\xi}$ and $(\Sigma_{i+1})_{\xi}$ is small. (By ``small'' we mean the vector fields in question are $C^0$-close to each other. The global dynamics of $(\Sigma_i)_{\xi}$ may still drastically differ from that of $(\Sigma_{i+1})_{\xi}$.) Within each layer we insert plugs on $\Sigma_i$ as in the case of a single surface so that the isotopy from $\Sigma_i$ to $\Sigma_{i+1}$ is through a  $1$-Morse family of surfaces, i.e., $(\Sigma_t)_{\xi}$ is a $1$-Morse family  for all $\tfrac{i}{N} \leq t \leq \tfrac{i+1}{N}$. For technical reasons, it is desirable to eliminate the plugs created on $\Sigma_i$ when we reach $\Sigma_{i+1}$, replacing them by new plugs on $\Sigma_{i+1}$, so that one can inductively run from $i=0$ to $i=N$ and make all intermediate surfaces  $1$-Morse.   Then the only obstructions to convexity occur at finitely many instances where the surface is  $1$-Morse but not $1$-Morse$^+$,  corresponding to bypass attachments.

This section is organized as follows: In \cref{subsec:3d plug} we describe $3$-dimensional plugs and in \cref{subsection: installing uninstalling plugs} we explain how to ``install'' and ``uninstall'' plugs. The higher-dimen\-sional plugs will be described in \cref{sec:plug}. We then use this technology to prove \cref{thm:genericity} in \cref{subsec:genericity in 3d} and \cref{thm:family genericity} in \cref{subsec:family genericity in 3d}.

\subsection{\(3\)-dimensional plugs} \label{subsec:3d plug}

The construction of a plug is local. Consider $M = [-z_0,z_0] \times [0,s_0] \times [0,t_0]$ with coordinates $(z,s,t)$, contact form $\alpha = dz + e^s dt$, {and contact structure $\xi=\ker \alpha$.} Here $z_0,s_0,t_0>0$ are arbitrary, but for most of our applications, we should think of $z_0,s_0$ as being much smaller than $t_0$. In other words,  the condition $t_0 < (1-e^{-s_0}) z_0$ in \cref{prop:char foliation on 3d C-fold}  will not be satisfied.

Consider the surface $B = \{0\} \times [0,s_0] \times [0,t_0]$ with $B_{\xi} =\R\langle \p_s\rangle$.   We will refer to $(M,\alpha)$ as a {\em standard contact neighborhood of $B$ with parameters $s_0,t_0,z_0$.}  Let $\p_{-} B = \{0\} \times \{0\} \times [0,t_0]$ and $\p_{+} B = \{0\} \times \{s_0\} \times [0,t_0]$.  Pick a large integer $N$ such that $N\equiv 3 \op{mod } 4$. Let $\square_{k,l} \subset B$ be boxes defined by
\begin{equation*}
\square_{k,l} \coloneqq \left[ \frac{2l-1}{5} s_0, \frac{2l}{5} s_0 \right] \times \left[ \frac{4k+2l-1}{N} t_0, \frac{4k+2l+2}{N} t_0 \right],
\end{equation*}
where $0 \leq k < \lfloor N/4 \rfloor, l=1,2$. See \cref{fig:mushroom stack}.  In words, since the mushrooms can be packed ``tightly", it suffices to arrange two rows of mushrooms. 

\begin{figure}[ht]
	\begin{overpic}[scale=1.5]{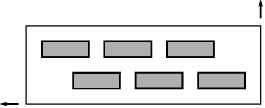}
		\put(3,3){$t$}
		\put(94,36){$s$}
	\end{overpic}
	\caption{Bases of the mushrooms on $B$. Here {$N=15$.}}
	\label{fig:mushroom stack}
\end{figure}

Applying \cref{prop:char foliation on 3d C-fold}, we install pairwise disjoint mushrooms $Z_{k,l}$ on $B$ such that the base of each $Z_{k,l}$ approximately equals $\square_{k,l}$.   The key property of the dynamics of the resulting $B^{\vee}_{\xi}$ is as follows: Suppose $\epsilon>0$ is small and we choose $N\gg 1/\epsilon$. Let $x =(z(x), s(x), t(x))\in \p_{-} B$. 
\be
\item If $t(x) \in (\epsilon, t_0-\epsilon)$, then {the maximal} possibly broken flow line of $B^{\vee}_{\xi}$ passing through $x$ converges to a sink in forward time.
\item  If a flow line passes through $x$ and does not converge to sink, then it exits along $y\in \bdry_+ B$ and $|t(y)-t(x)|<\epsilon$.
\ee

\subsection{Barricades} \label{subsection: barricades}

{\em In this subsection only, let $\Sigma$ be a manifold of dimension $m$ and $v$ a vector field on $\Sigma$ with only Morse singularities.}  We also fix a Riemannian metric on $\Sigma$. 

\begin{definition}
A {\em flow box} is an embedded cylinder $B=[0,s_0]\times D^{m-1}\subset \Sigma$ with coordinates $(s,x)$ over the disk $D^{m-1}$ such that $v|_B=\bdry_s$.
\end{definition}

Given $\epsilon>0$ small, let $B^\epsilon= [\epsilon,s_0-\epsilon]\times D^{m-1}_\epsilon$ be a slightly smaller flow box (called a {\em shrinkage of $B$}), where $D^{m-1}_\epsilon \subset D^{m-1}$ is a disk such that every point $x\in D^{m-1}\setminus D^{m-1}_\epsilon$ has metric distance $<\epsilon$ from $\bdry D^{m-1}_\epsilon$ and $\bdry D^{m-1}$.  

\begin{definition} [Barricade] 
A collection $B_I=\{B_i= [0,s_i] \times D^{m-1}\}_{i\in I}$ of pairwise disjoint flow boxes for $\Sigma$ is a \emph{barricade for $v$} if for each $B_i$ there exist a small constant $\epsilon_i>0$ and a shrinkage $B_i^{\epsilon_i}$ and the following hold:
\be
\item[(*)] each flow line of $v$ intersects some $B_i^\epsilon$ and for any $x \in \Sigma$ which neither is a singularity of $v$ nor is contained in any $B_i^{\epsilon}$, the flow line of $v$ passing through $x$ enters some $B_i^{\epsilon}$ or limits to some Morse singularity in forward time (resp.\ in backward time). 
\item[(**)] $B_I$ is locally finite, i.e., each compact subset of $\Sigma$ intersects only a finite number of $B_i$.
\ee
\end{definition}

The following theorem of Wilson~\cite[Theorem A]{Wi66}, slightly adapted to our situation, guarantees the existence of barricades:

\begin{theorem}[Wilson] \label{thm: barricade}
A vector field $v$ on a manifold $\Sigma$ with only Morse singularities has a barricade $B_I$.
\end{theorem}

The following lemma on taking refinements is immediate:

\begin{lemma}[Refinement] \label{lemma: refinement}
Given a flow box $B=[0,s_0]\times D^{m-1}$, a properly embedded submanifold $Z$ of $D^{m-1}$, and $\epsilon,\delta>0$ small, there exists a finite disjoint collection $\mathcal{B}$ of flow boxes $B_{1j}$, $j=1,\dots, k_1$, and $B_{2j}$, $j=1,\dots, k_2$, such that:
\be
\item $B_{1j}\subset (0,s_0/2)\times D^{m-1}$ and $B_{2j}\subset (s_0/2,s_0)\times D^{m-1}$;
\item $\pi_{D^{m-1}}(\cup_{j=1}^{k_1} B_{1j})$ does not intersect $Z$ and $\pi_{D^{m-1}}(\cup_{j=1}^{k_2} B_{2j})$ is contained in a $\delta$-neighborhood of $Z$; and
\item $\mathcal{B}$ is a barricade for the shrinkage $B^\epsilon$. 
\ee 
Here $\pi_{D^{m-1}}: B\to D^{m-1}$ is the projection onto the second factor. 
\end{lemma}

\subsection{Convex approximation in dimension \(3\)} \label{subsec:genericity in 3d}

Here we prove \cref{thm:genericity} in dimension $3$.

Given  any closed surface $\Sigma \subset (M,\xi)$, it is well-known (see e.g.\ \cite[Section 4.6]{Gei08}) that, after a $C^\infty$-small perturbation, we can assume that $\Sigma_{\xi}$  has only Morse type singularities.  By \cref{thm: barricade}, a barricade $B_I=\{B_i=[0,s_i]\times[0,t_i]\}_{i\in I}$ exists for $\Sigma_\xi$; moreover $I$ can be taken to be finite since $\Sigma$ is closed. Each $B_i$ has a standard contact neighborhood with parameters $s_i,t_i,z_i$, where $z_i>0$ is small and we  construct a $C^0$-small modification  $\Sigma^{\vee}$ of $\Sigma$ by replacing every $B_i$ by the plug $B_i^{\vee}$. The characteristic foliation $\Sigma^{\vee}_{\xi}$ satisfies Conditions (M1)--(M3) of \cref{prop:morse criterion}  and is Morse.

After a further $C^\infty$-small perturbation if necessary,  $\Sigma^{\vee}$ can be made  Morse$^+$:  If there exists a ``retrogradient" flow line $\ell$ from a negative index $1$ singularity to a positive index $1$ singularity, we take a flow box $B=\{0\}\times [0,s_0]\times[0,t_0]\subset \Sigma^\vee$ such that $\ell$ intersects $B^\epsilon$ exactly once and $B$ has a standard contact neighborhood with parameters $s_0,t_0,z_0$. The retrogradient flow line can be eliminated by taking a small {\em nonnegative} function $h:B\to \R_{\geq 0}$ with support on $B^{\epsilon/2}$ and replacing $B$ by $z=h(s,t)$; the modification has the effect of pushing the holonomy from $s=0$ to $s=s_0$ in the negative $t$-direction. 

Since $\Sigma^\vee$ is now Morse$^+$, it is Weinstein convex by \cref{prop:convex criterion}.

\subsection{Installing and uninstalling plugs} \label{subsection: installing uninstalling plugs}

The construction of a plug $B^\vee$ was sufficient to prove \cref{thm:genericity} in dimension $3$. In order to prove \cref{thm:family genericity}, we also need to interpolate between $B$ and $B^{\vee}$ with some control of the intermediate dynamics. We now explain this procedure.

Let $(M=[-z_0,z_0]\times[0,s_0]\times[0,t_0],\xi=\ker (dz+e^sdt))$ be as before and let $B_z \coloneqq \{z\} \times [0,s_0] \times [0,t_0]$. Replace $B_{z_0/2}$ by a plug $B^{\vee}_{z_0/2}$ such that its analogously defined $z$-height satisfies $\Zcal(B^{\vee}_{z_0/2}) \ll \tfrac{z_0}{2}$ and in particular $B^{\vee}_{z_0/2}$ is still contained in $M$.

\begin{figure}[ht]
	\begin{overpic}{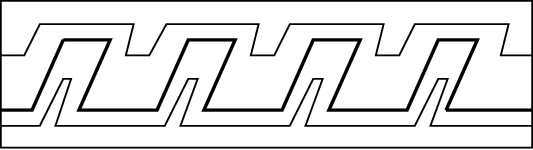}
		\put(101,-1){$B_0$}
		\put(101,6.5){$B^{\vee}_{z_0/2}$}
		\put(101,26){$B_{z_0}$}
	\end{overpic}
	\caption{The interpolation between $B_0, B^{\vee}_{z_0/2}$ and $B_{z_0}$; {the $z$-height of $B^{\vee}_{z_0/2}$ not drawn to scale.}}
	\label{fig:plug unplug}
\end{figure}

For the moment consider the PL model of the plug $B^{\vee}_{z_0/2}$, i.e., each mushroom $Z$ involved in the construction is replaced by the corresponding $Z_{\PL}$. It is fairly straightforward to foliate the regions bounded between $B_0$ and $B^{\vee}_{z_0/2}$ and between $B^{\vee}_{z_0/2}$ and $B_{z_0}$ by a family of PL surfaces; see \cref{fig:plug unplug} for a schematic picture. Then one can apply the smoothing scheme from \cref{subsec:smoothing PL fold} to smooth the corners of the leaves simultaneously and obtain the desired foliation $M = \cup_{0 \leq a \leq z_0} \widetilde{B}_a$, where $\widetilde{B}_0 = B_0$, $\widetilde{B}_{z_0} = B_{z_0}$, and $\widetilde{B}_{z_0/2}$ is the smoothed version of $B^{\vee}_{z_0/2}$.

To analyze the dynamics of $(\widetilde{B}_a)_{\xi}$ for each $a \in [0,z_0]$, we introduce the partially defined and possibly multiple-valued holonomy map $\rho_a: \p_{-} \widetilde{B}_a \dashrightarrow \p_{+} \widetilde{B}_a$, where  $\p_+ \widetilde{B}_a$ (resp.\ $\p_- \widetilde{B}_a$) is the side $s=s_0$ (resp.\ $s=0$)  of $\widetilde{B}_a$ as before: Given $x \in \p_{-} \widetilde{B}_a$,  if there exists a possibly broken partial flow line of $(\widetilde{B}_a)_{\xi}$ starting from $x$ and ending at $y \in \p_{+} \widetilde{B}_a$, then $y\in \rho_a(x)$. Note that such $y$ may not be unique. If there is no such flow line, then $\rho_a (x)$ is not defined.

Define the  {\em internal discrepancy} 
\begin{equation*}
\norm{\rho_a} \coloneqq \sup_{x \in \p_{-} \widetilde{B}_a} \abs{t(x) - t(\rho_a(x))},
\end{equation*}
where $t(x)$ refers to the $t$-coordinate of the point $x$; $\abs{t(x) - t(\rho_a(x))}=0$ if $\rho_a(x)$ is not defined; and the supremum is taken over all possible $\rho_a(x)$ if $\rho_a$ is not single-valued at $x$.

The following lemma will be important for our applications.

\begin{lemma} \label{lem:small holonomy 3d}
	The internal discrepancies $\sup_{0 \leq a \leq z_0} \norm{\rho_a} \to 0$ as $N \to \infty$.
\end{lemma}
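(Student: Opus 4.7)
The plan is to bound $\abs{t(x) - t(\rho_a(x))}$ uniformly over $x \in \p_- \widetilde{B}_a$ and $a \in [0, z_0]$. The key observation is that $\widetilde{B}_a$ is obtained from a flat copy of $B_a$ by inserting (possibly partial) $C$-folds supported over the thin rectangles $\square_{k,l}$, whose $t$-widths are approximately $2t_0/N$, while outside of these supports the characteristic foliation agrees with $\p_s$ and hence preserves $t$.

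I would begin by reducing to the case where $\rho_a(x)$ is defined, since undefined values contribute zero to $\norm{\rho_a}$ by convention. Fixing a representative (possibly broken) flow line $c$ of $(\widetilde{B}_a)_\xi$ from $x$ to $\rho_a(x)$, I decompose $c$ into maximal subpieces, each lying either on a flat region of $\widetilde{B}_a$ (where $t$ is preserved) or inside a single (possibly partial) $C$-fold. Thus the total $t$-displacement of $c$ equals the sum of the $t$-displacements accumulated over the individual fold traversals.

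The central estimate is: during a single traversal of one $C$-fold (of full or partial height), the $t$-coordinate of $c$ changes by at most the $t$-width of the fold's $(s,t)$-projection, which is of order $t_0/N$ by the plug parameter choices of \autoref{subsec:3d plug}. This holds because while $c$ is inside the fold, its image under $\pi \colon \R^3 \to \R^2_{s,t}$ lies in the projection of the fold, a strip of $t$-width at most $\Tcal + o(1) = O(t_0/N)$. Since $s \circ c$ is monotonically nondecreasing along smooth pieces and a broken flow line cannot reverse the $s$-direction at a saddle without being trapped in a sink (cf.\ (TZ2) of \autoref{prop:char foliation on 3d C-fold}), $c$ visits at most one fold at each of the six $l$-levels, giving at most $6$ crossings and hence $\abs{t(x) - t(\rho_a(x))} = O(t_0/N)$.

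The main subtlety I expect is verifying the $t$-confinement bound for partial folds at intermediate values of $a$, where the fold has not fully formed and the trapping dynamics described in (TZ2) may fail. Here I would exploit the fact that, in the PL interpolation preceding the smoothing, each slice of $\widetilde{B}_a$ projects into the $(s,t)$-support of the corresponding fully developed fold, and the smoothing only enlarges this projection by $O(\delta)$; consequently the $t$-width bound persists uniformly for all $a \in [0, z_0]$. Taking the supremum over $x$ and $a$ then yields $\sup_{0 \leq a \leq z_0} \norm{\rho_a} \leq C t_0/N$ for a constant $C$ independent of $N$, which tends to $0$ as $N \to \infty$.
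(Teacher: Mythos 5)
Your argument is correct. The paper offers no proof at all here -- it simply declares the lemma obvious -- so your write-up supplies the justification the authors had in mind: outside the $(s,t)$-projections of the (possibly partially grown) $C$-folds the characteristic foliation is exactly $\p_s$ and preserves $t$, each fold's projection is a strip of $t$-width $O(t_0/N)$ by \autoref{assump: bound on width} and the choice of $\square_{k,l}$, and the (weak) monotonicity of $s$ along possibly broken flow lines caps the number of fold crossings by a constant independent of $N$. You also correctly flag the only point requiring care, namely that the intermediate leaves $\widetilde{B}_a$ still confine their non-flat parts to these same strips; since the interpolating foliation is something one constructs (the paper calls it ``fairly straightforward''), one is free to arrange exactly this containment, so the uniform bound $\sup_a\norm{\rho_a}\leq Ct_0/N$ follows as you claim.
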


\begin{proof}
 The lemma is not a statement about blocking and is rather a statement about the $t$-widths of the mushrooms $Z_{k,l}$: We will treat the case where $a\in[0,\tfrac{z_0}{2}]$. If a flow line enters a box $\square_{k,l}$ along the bottom and exits from the top, the maximum it is moved in the $t$-direction is the width $\tfrac{3t_0}{N}$ of the box.  Since a flow line or broken flow line passes through at most $2$ boxes, $\norm{\rho_a}\leq \tfrac{6t_0}{N}$.  
\end{proof}

We call the foliation from  $\widetilde B_0=B_0$ to $\widetilde B_{z_0/2}$  \emph{installing a plug} and the foliation from  $\widetilde B_{z_0/2}$ to $\widetilde B_{z_0}=B_{z_0}$  \emph{uninstalling a plug}. Then \cref{lem:small holonomy 3d} basically says that neither installing nor uninstalling a plug affects the local holonomy by much. For the rest of \cref{sec:CST revisited}, we assume that $N \gg 0$ without further mention.

 The following is based on the construction of mushrooms in \autoref{subsec:smoothing PL fold} and its slight generalization to $1$-parameter families:

\begin{lemma} \label{lemma: only 1-Morse singularities}
$(\widetilde B_a)_\xi$, $a\in[0,z_0]$, is gradient-like with respect to a $1$-Morse function $f_a: \widetilde B_a\to \R$ which agrees with $s$ on $\bdry \widetilde B_a$.
\end{lemma}

\subsection{Parametric convex approximation in dimension \(3\)} \label{subsec:family genericity in 3d}

Here we prove \cref{thm:family genericity} in dimension $3$.

Consider a contact structure $\xi$ on $\Sigma \times [0,1]$ such that $\Sigma \times \{0,1\}$ is Morse$^+$ in the sense of \cref{defn:morse-type hypersurface}. The goal is to show that up to an isotopy relative to the boundary,  $((\Sigma_t)_{\xi})_{t\in[0,1]}$ is a $1$-Morse family, where  $\Sigma_t \coloneqq \Sigma \times \{t\}$.

Define $L \coloneqq  \cup_{t\in[0,1]}\{x \in \Sigma_t  ~|~ \xi_x = T_x  \Sigma_t  \}$. Up to a $C^\infty$-small perturbation of $\xi$, we can assume that $L$ satisfies the following:
\begin{enumerate}
	\item[(S1)] $L$ is a properly embedded 1-submanifold; 
	\item[(S2)] the singularities of $(\Sigma_t)_{\xi}$ are $1$-Morse 
for all $t$; and
	\item[(S3)] the restricted coordinate function $t|_L: L \to [0,1]$ is Morse and all its critical points have distinct critical values.
\end{enumerate}

Suppose $0<a_1<\dots<a_m<1$ are the critical values of $t|_L$, which we assume to be irrational.  For each $t\in[0,1]$ there exists a barricade $B_{I^t}$ for $\Sigma_t$; moreover $B_{I^t}$ is a barricade for any vector field that is  sufficiently  close to $(\Sigma_t)_\xi$.  By the compactness of $[0,1]$, there exists  an integer $K \gg 0$ such that, for $i=0,1,\dots, K$, $B_{I_i}$, $I_i=I^{i/K}$, is a barricade for all $\Sigma_t$, $t\in[\tfrac{i-1}{K}, \tfrac{i+1}{K}]\cap [0,1]$.  Note that,  for each $a_j$, there exist unique $j_+$, $j_-$ such that $j_+ = j_- + 1$ and $\tfrac{j_-}{K} < a_j < \tfrac{j_+}{K}$.

Let $\pi: \Sigma\times[0,1] \to \Sigma$ be the projection onto the first factor. We claim that for each $i=0,\dots, K-1$ there are refinements of $B_{I_{i}}$ and $B_{I_{i+1}}$ (by abuse of notation we keep the same notation for the refinements) such that
\begin{equation} \label{eqn: disjoint}
\pi(B_{I_{i+1}})\cap \pi(B_{I_{i}})=\emptyset,
\end{equation}
and moreover we may choose the refinement so that the new $B_{I_{i+1}}$ remains a barricade for all $\Sigma_t$, $t\in[\tfrac{i}{K}, \tfrac{i+2}{K}]\cap [0,1]$.
The claim follows from viewing $B_{I_{i}}$ and $B_{I_{i+1}}$ as thin neighborhoods of collections $\gamma_i$ and $\gamma_{i+1}$ of arcs, taking their intersection $Z=\gamma_i\cap \gamma_{i+1}$ which we may take to be transverse, and applying \cref{lemma: refinement}.

We divide the proof into several steps.

\s\n
\textsc{Step 1.} \emph{From $\Sigma_0$ to $\Sigma_{1/(N'K)}^{\vee}$ where $N'>0$ is a large integer.}

\s
 Consider a flow box $B_i=[0,s_0] \times [0,t_0]$ of $B_{I_0}$. Let $ \p_{+} B_{i} = \{s_0\} \times [0,t_0]$ and $\p_-B_{i}= \{0\} \times [0,t_0]$.

For each positive integer $r$, define the \emph{external holonomy} $\widehat{\rho}_{i,r}: \p_{+} B_{i} \dashrightarrow \p_{-} B_{i}$ --- a multiple-valued, partially defined, $r$th return map from $\p_+ B_{i}$ to $\p_- B_{i}$ of $(\Sigma_0)_{\xi}$ ---  as follows:  For any $x \in \p_{+} B_{i}$, a point $y \in \p_{-} B_{i}$ is in the image $\widehat{\rho}_{i,r}(x)$ if there exists a possibly broken flow line $c:[0,1] \to \Sigma_0 $ such that $c(0)=x$, $c(1)=y$, and $c$ passes through $\op{int}(B_i)$ $(r-1)$ times. 
Of course $\widehat{\rho}_{i,r}$ is not necessarily defined on all of $\p_{+} B_{i}$ and when it is defined, it is not necessarily single-valued.

Since $(\Sigma_0)_{\xi}$ is Morse by assumption, (A) there exists $\delta>0$ such that
\begin{equation*}
\norm{\widehat{\rho}_{i,r}} \coloneqq  \inf_{x \in \p_{+} B_{i}}  \abs{t(x) - t(\widehat{\rho}_{i,r}(x))} > \delta,
\end{equation*}
 where we are taking $t(\widehat{\rho}_{i,r}(x))=\infty$ if $\widehat{\rho}_{i,r}(x)$ does not exist. 
Otherwise, there is a sequence of points $x_j\in \p_{+} B_{i}$ such that $\abs{t(x_j) - t(\widehat{\rho}_{i,r}(x_j))}\to 0$ and the compactness of the sequence of broken flow lines gives us $x_\infty\in  \p_{+} B_{i}$ such that $\abs{t(x_\infty) - t(\widehat{\rho}_{i,r}(x_\infty))}= 0$, which contradicts (M3) from \cref{prop:morse criterion}.   Moreover, the Morse condition implies that (B) there exists $r_0>0$ finite such that $\norm{\widehat{\rho}_{i,r_0}}=\infty$ for all $i\in I$. 

 We then install plugs $B_{I_0}^\vee$ on $B_{I_0}$ as described in \cref{subsec:3d plug} and \cref{subsection: installing uninstalling plugs} and obtain a foliation between $\Sigma_0$ and $\Sigma_0^{\vee}$.  As long as we take $N\gg 0$, i.e., the individual mushrooms are very small,  the internal discrepancies are $\ll \delta$ by \cref{lem:small holonomy 3d}. Together with (A) and (B), it follows that all the leaves of the foliation are Morse. 

{\em For convenience we assume that $\Sigma_0^{\vee}$ agrees with $\Sigma_0$ on the complement of $B_{I_0}$, and that the difference is contained in a small invariant neighborhood of $B_{I_0}$.  Also, below we construct $1$-parameter families of embedded surfaces that are disjoint away from a subset on which they all agree; the perturbation into a family of disjoint embedded surfaces is done by flowing in the transverse direction for a short time and will not be done explicitly.}

In order to interpolate between $\Sigma_0^{\vee}$ and $\Sigma_{1/(N'K)}^\vee$ for a large integer $N'>0$  which we take to be odd,  we use
$B_{I_1}$ satisfying \eqref{eqn: disjoint}. If $N'\gg 0$, then there is a $1$-parameter family of  embedded  surfaces $F_s \subset \Sigma \times [0,\tfrac{1}{N'K}]$, $s \in [0,1]$, such that $F_0 = \Sigma_0 \setminus N(B_{I_0})$, $F_1 \cap \Sigma_{1/(N'K)} \supset N(B_{I_1})\times\{\tfrac{1}{N'K}\}$, $\p F_s = \p N(B_{I_0})\times\{0\}$ for all $s \in [0,1]$,  the interiors of $F_s$ are disjoint,  and the $(F_s)_{\xi}$, $s\in[0,1]$, are $\epsilon$-close to one other so that $B_{I_0}$ is a barricade for all $(N(B_{I_0})\times\{0\})\cup F_s$; in particular, no new singularities are introduced in this process. The barricading condition can be guaranteed by having chosen $N'\gg 0$.
See the upper-left corner of \cref{fig:first bump} for an illustration of this procedure. By the barricading condition the surfaces  $(N(B_{I_0})\times\{0\})^\vee \cup F_s$ are Morse for all $s \in [0,1]$, where $(N(B_{I_0})\times\{0\})^\vee$ is $N(B_{I_0})\times\{0\}$ with $B_{I_0}^\vee$ installed.  

\begin{figure}[ht]
	\begin{overpic}[scale=.67]{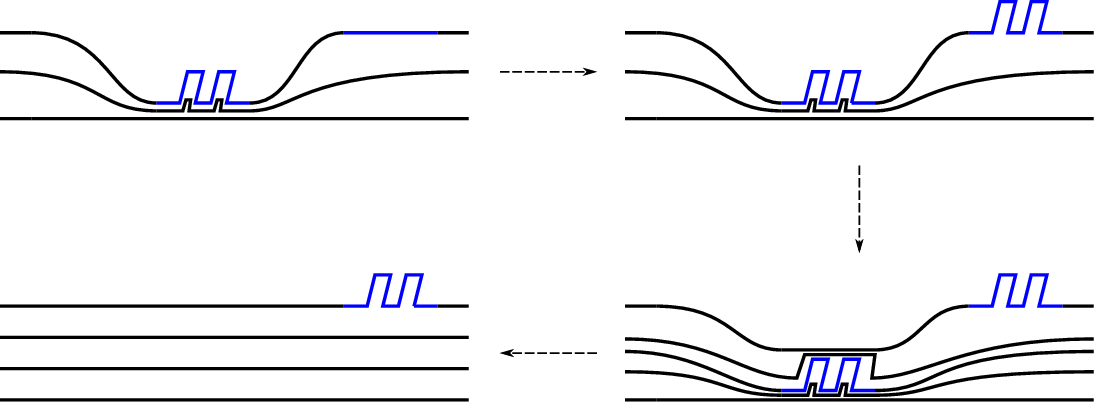}
		\put(-2,25){\small{$0$}}
		\put(-6.2,34){\small{$\frac{1}{N'K}$}}
	\end{overpic}
	\caption{Interpolation between $\Sigma_0$ and $\Sigma_{1/(N'K)}^{\vee}$ by Morse surfaces. The blue parts represent $B_{I_i}, i=0,1$.}
	\label{fig:first bump}
\end{figure}

Next we install a plug on $B_{I_1}\times \{\tfrac{1}{N'K}\} \subset  (N(B_{I_0})\times\{0\})^\vee \cup F_1 $, uninstall the plug on $B_{I_0}^{\vee}$, and lift the resulting surface up to $\Sigma_{1/(N'K)}$, as shown in the upper-right, lower-right, and lower-left corners of \cref{fig:first bump}, respectively. Moreover all the intermediate surfaces are Morse by analogous reasons. This finishes our construction of the foliation from $\Sigma_0$ to $\Sigma_{1/(N'K)}^{\vee}$.

\s\n
\textsc{Step 2.} \emph{From $\Sigma_{1/(N'K)}^{\vee}$ to $\Sigma_{1_{-}/K}^{\vee}$, where $1_{-}/K < a_1< 1_+/K$.}

\s
 Switching back and forth between  $B_{I_0}$ and $B_{I_1}$, we similarly construct the Morse foliation from $\Sigma_{1/(N'K)}^\vee$ to $\Sigma_{2/(N'K)}^\vee$, from $\Sigma_{2/(N'K)}^\vee$ to $\Sigma_{3/(N'K)}^\vee$, and so on as in Step 1, until we get to $\Sigma_{1/K}^\vee$.  Between $\Sigma_{1/K}^\vee$ and $\Sigma_{2/K}^\vee$ we use $B_{I_1}$ and $B_{I_2}$. Continuing in this manner we get to $\Sigma_{1_{-}/K}^{\vee}$. 

\s\n
\textsc{Step 3.} \emph{From $\Sigma_{1_{-}/K}^{\vee}$ to $\Sigma_{1_{+}/K}^{\vee}$.}

\s
The only modification needed in this step is due to the fact that the vector fields $(\Sigma_{1_{-}/K})_{\xi}$ and $(\Sigma_{1_{+}/K})_{\xi}$ are not $C^\infty$-close to each other in the usual sense. Rather, one observes either the birth or the death of a pair of nearby Morse singularities as we go from $(\Sigma_{1_{-}/K})_{\xi}$ to $(\Sigma_{1_{+}/K})_{\xi}$. In either case, we slightly modify the notion of barricades $B_{I_{1_{\pm}}}$ so that the unique (short) flow line connecting the pair of Morse singularities is the only flow line that does not pass through $B_{I_{1_{\pm}}}$. Similar remarks apply to all $a_i, 1 \leq i \leq m$.

\s\n
\textsc{Step 4.} \emph{From $\Sigma_{(K-1)/K}^{\vee}$ to $\Sigma_1$.}

\s
In this final step, the only new ingredient is to uninstall the plugs as we go from $\Sigma_1^{\vee}$ to $\Sigma_1$. By assumption $\Sigma_1$ is Morse and in fact convex. Hence by the same holonomy bound as in Step 1, all the intermediate surfaces are Morse.

\s
Finally we have foliated $\Sigma\times[0,1]$ by surfaces of the form $\Sigma_t$ which are all Morse. The only obstruction to convexity occurs when $(\Sigma_t)_\xi$ is Morse but not Morse$^+$ (and this corresponds to a bypass attachment; see \cref{prop:bb-correspondence3D}). This concludes the proof of \cref{thm:family genericity} in dimension $3$.

\subsection{Further remarks}

Compared to earlier groundbreaking works of Bennequin \cite{Ben83} and Eliashberg \cite{Eli92}, convex surface theory is  a more systematic framework for studying embedded surfaces in contact $3$-manifolds. It is sufficiently powerful that basically all known classification results of contact structures or Legendrian knots in this dimension follow from this theory.

The only ``drawback'' of convex surface theory, at least in its original form \cite{Gi91,Gi00}, is that the monster of dynamical systems on surfaces is always lurking behind the story. More precisely, if one just wants to classify contact structures or Legendrian knots up to isotopy, then the problem often reduces to a combinatorial one by combining Giroux's theory with, say, the bypass approach of \cite{Hon00}. However, if one wants to obtain higher homotopical information of the space of contact structures (say $\pi_n$ for $n\geq 1$), then some serious work on higher codimensional degenerations of Morse-Smale flows seems inevitable.

As an example, in \cite{Eli92} Eliashberg outlined the proof that the space of tight contact structures on $S^3$ is homotopy equivalent to $S^2$. This particular result is based on the study of characteristic foliations on $S^2 \subset S^3$, which is particularly simple since we never have periodic orbits. In more general contact manifolds such as $T^3$, one cannot necessarily rule out periodic orbits from characteristic foliations, and hence the bifurcation theory quickly becomes unwieldy (the work \cite{Ngo05} probably comes close to the limit of what one can do). However, in light of our reinterpretation/simplification of Giroux's theory, it suffices to understand the space of Morse gradient vector fields, instead of general Morse-Smale vector fields.

We hope our techniques can be applied to future studies of homotopy types of the space of contact structures. This topic however will not be pursued any further in this paper.

\section{Construction of mushrooms in dimension \(>3\)} \label{sec:C-fold hd}

The goal of this section is to generalize the construction of mushrooms in dimension $3$ in \cref{sec:C-fold 3d} to higher dimensions. Throughout this section, we will write $Z^3 \subset \R^3$ for the mushroom constructed in \cref{sec:C-fold 3d} and write $Z$ for the higher-dimensional mushroom to be constructed.

\subsection{Introduction} \label{subsec:intro to mushrooms in dimension greater than 3}
 We first introduce some notation which will be used throughout this paper.
 
\begin{definition}[Contact handlebodies and generalized contact handlebodies] $\mbox{}$
\be
\item	A {\em contact handlebody over a Weinstein domain $(X,\mu)$} is a contact manifold contactomorphic to 
	$$([0,C]_t\times X,\, \ker(dt+\mu)),$$ 
	where $C>0$ is the {\em thickness} of the handlebody.
\item A {\em generalized contact handlebody over a Weinstein domain $(X,\mu)$} is a contact manifold contactomorphic to
  \[
  \{(t,x)~|~ f_0(x) < t< f_1(x)\}\subset (\R_t\times X, \,\ker(dt+\mu)),
  \]
 where there exists a $1$-parameter family $f_t:X\to \R$, $t\in[0,1]$, of smooth functions such that $f_t(x)<f_{t'}(x)$ for all $t<t'$, $x\in X$ and the graphs $\{t=f_{t_0}(x)\}$ are Weinstein for all $t_0\in[0,1]$.
\ee
\end{definition}

A contact handlebody is a compact contact manifold with a contact form such that all its Reeb orbits are chords of the same length and a generalized contact handlebody is one such that all the Reeb orbits are chords but they need not have the same length.

\s
Let $(W,\lambda)$ be a complete Weinstein manifold of dimension $2n-2>0$ and $\R_t \times W$ be the contactization of $W$ with contact form $\beta = dt+\lambda$. Let $W^c \subset W$ be a compact subdomain such that $W = W^c \cup ([0,\infty)_{\tau} \times \Gamma)$, $\Gamma \coloneqq \p W^c$ is the contact boundary, and $[0,\infty)_{\tau} \times \Gamma$ is the positive half-symplectization of $\Gamma$. Let $\eta := \lambda|_{\Gamma}$ be the contact form on $\Gamma$; then $\lambda|_{[0,\infty) \times \Gamma}=e^{\tau} \eta$.   For $\tau'>0$ we also define
$$W^c_{\tau'}:= W^c \cup ([0,\tau']\times \Gamma).$$

The ambient contact manifold of a mushroom is
$$(M = \R^3_{z,s,t} \times W,\, \xi = \ker\alpha), \quad \alpha = dz + e^s\, \beta.$$
The hypersurface on which we construct the mushroom is $\Sigma = \{z=0\} \subset M$ with characteristic foliation $\Sigma_{\xi} = \p_s$. 
\begin{remark}
{For ease of} notation, we will not distinguish between the characteristic foliation, which is an oriented singular line field, and a trivializing vector field.
\end{remark}

A {\em mushroom $Z$ with contact handlebody profile $H=([0,t_0] \times W^c_{\tau_0},\, dt+\lambda)$, $\tau_0>0$,} is constructed by first taking the product hypersurface $Z^3 \times W^c$, where $Z^3_{PL}$ has base $[0,s_0]\times[0,t_0]$, and then damping out the $Z^3$-factor on $W^c_{\tau_0}-W^c$.  Roughly speaking, the goal is to fold $\Sigma$ using $Z$, so that the resulting characteristic foliation cannot pass through a region which approximates $H$. 

\begin{remark}
	One can think of the constructions in \cref{sec:C-fold 3d} as a special case where $W$ is a point and $H=[0,t_0]$ is equipped with the contact form $dt$.
\end{remark}

\subsection{Product hypersurface} \label{subsubsec:product hypersurface}

Recall that in \cref{sec:C-fold 3d} we constructed the mushroom
$$Z^3\subset (\R^3_{z,s,t},\, \ker(dz+e^s\, dt))$$
which agrees with $\R^2_{s,t}$ outside of a rectangle $\square = [0,s_0] \times [0,t_0]$.  Let $Z^3_{\xi}$ be the characteristic foliation on $Z^3$.

We will compute the characteristic foliation $Z'_{\xi}$ on the product hypersurface $Z' \coloneqq Z^3 \times W^c \subset M$. Choose vector fields $v$ on $Z^3$,  defined  away from the singularities of $Z^3_\xi$, such that  $\alpha|_{Z^3}(v)=1$  and $w$ on $W^c$, defined away from the zero set of $\lambda$, such that $\lambda(w)=1$.

\begin{lemma} \label{lemma: char foliation calculation}
Away from the zeros of $\alpha|_{Z^3}$ and $\lambda$, the characteristic foliation $Z'_\xi$ is given by
\begin{equation} \label{eqn:char foliation on product}
Z'_{\xi} = \R \langle Z^3_{\xi} \,+\, dz \wedge ds (Z^3_{\xi}, v)\, X_{\lambda} \rangle,
\end{equation}
where $X_\lambda$ is the Liouville vector field of $\lambda$.
\end{lemma}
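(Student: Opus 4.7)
The plan is a direct computation from the definition $Z'_\xi = \ker d\gamma|_{\ker\gamma}$, where $\gamma := \alpha|_{Z'} = \alpha_1 + e^s\lambda$ with $\alpha_1 := (dz + e^s\,dt)|_{Z^3}$ (and $\lambda$ pulled back from $W^c$). Writing $d\gamma = d\alpha_1 + e^s\,ds\wedge\lambda + e^s\,d\lambda$, it suffices to exhibit a nonzero vector $V \in TZ'$ satisfying $\gamma(V)=0$ and $\iota_V d\gamma = h\gamma$ for some function $h$.

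I will make the ansatz $V = Z^3_\xi + c\,X_\lambda$ and determine $c$. The kernel condition is immediate: $\alpha_1(Z^3_\xi) = 0$ by the defining property of the characteristic foliation on the surface $Z^3$, while $\lambda(X_\lambda) = d\lambda(X_\lambda, X_\lambda) = 0$ is the standard identity for a Liouville vector field (apply $\iota_{X_\lambda}$ to $\iota_{X_\lambda}d\lambda = \lambda$ and use antisymmetry). Hence $\gamma(V) = 0$ for every $c$.

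For the proportionality, I compute $\iota_V d\gamma$ piece by piece. Since $X_\lambda$ is tangent to $W^c$, it kills $d\alpha_1$, $ds$, and $\lambda$, while $\iota_{X_\lambda}d\lambda = \lambda$; this gives $\iota_{X_\lambda}d\gamma = e^s\lambda$. For the $Z^3_\xi$-contribution, $\lambda(Z^3_\xi) = 0$ and $\iota_{Z^3_\xi}d\lambda = 0$ since $Z^3_\xi$ is tangent to $Z^3$, so the $W^c$-terms reduce to $e^s\,ds(Z^3_\xi)\,\lambda$. On the $Z^3$ side, since $Z^3$ is $2$-dimensional with $\alpha_1(Z^3_\xi) = 0$ and $\alpha_1(v) = 1$, any $1$-form on $Z^3$ vanishing on $Z^3_\xi$ is a multiple of $\alpha_1$, so $\iota_{Z^3_\xi}d\alpha_1 = d\alpha_1(Z^3_\xi, v)\,\alpha_1$. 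Assembling,
\begin{equation*}
\iota_V d\gamma = d\alpha_1(Z^3_\xi, v)\,\alpha_1 + e^s\bigl(ds(Z^3_\xi) + c\bigr)\lambda.
\end{equation*}
Matching against $h(\alpha_1 + e^s\lambda)$ forces $c = d\alpha_1(Z^3_\xi, v) - ds(Z^3_\xi)$.

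The last step is to recognize this as the advertised coefficient $dz\wedge ds(Z^3_\xi, v)$. Substituting $dz = \alpha_1 - e^s\,dt$ (valid as $1$-forms on $Z^3$) into $dz\wedge ds(Z^3_\xi, v) = dz(Z^3_\xi)ds(v) - ds(Z^3_\xi)dz(v)$ and using $\alpha_1(Z^3_\xi) = 0$, $\alpha_1(v) = 1$ collapses the expression to $e^s(ds\wedge dt)(Z^3_\xi, v) - ds(Z^3_\xi) = d\alpha_1(Z^3_\xi, v) - ds(Z^3_\xi)$, as wanted. There is no real obstacle beyond bookkeeping; the one mildly unexpected feature is the appearance of the asymmetric coefficient $dz\wedge ds(Z^3_\xi,v)$, which arises naturally from the mismatch between $ds(Z^3_\xi)$ and $d\alpha_1(Z^3_\xi,v)$ when enforcing proportionality to $\gamma$.
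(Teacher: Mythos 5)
Your proof is correct. It computes the same thing the paper does but organizes the argument differently: the paper first identifies $T Z' \cap \xi = \R\langle Z^3_\xi,\, w-e^s v,\, \ker\lambda\rangle$ and then extracts the kernel of $d\alpha$ on the relevant three-dimensional span via the cofactor formula $K=\langle Y,Z\rangle X+\langle Z,X\rangle Y+\langle X,Y\rangle Z$, whereas you posit the ansatz $V=Z^3_\xi+cX_\lambda$ and determine $c$ from the proportionality criterion $\iota_V d\gamma = h\gamma$ on $TZ'$. Your route is slightly leaner — you never need the vector $w-e^s v$ or the pairing $\langle X_\lambda, w-e^s v\rangle=1$ — at the price of having to guess the shape of the answer in advance; the paper's route derives it. The closing identity you use to rewrite $d\alpha_1(Z^3_\xi,v)-ds(Z^3_\xi)$ as $dz\wedge ds(Z^3_\xi,v)$ is exactly the paper's relation $(ds\wedge\alpha+dz\wedge ds-e^s ds\wedge dt)(Z^3_\xi,v)=0$. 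Two small points of hygiene: the vanishing of $\lambda(X_\lambda)$ comes from the Liouville identity (which you correctly invoked in the kernel-condition paragraph), not from tangency to $W^c$ as your later sentence suggests; and your step $\iota_{Z^3_\xi}d\alpha_1 = d\alpha_1(Z^3_\xi,v)\,\alpha_1$ uses that $\alpha_1\neq 0$ on $TZ^3$, which is precisely where the hypothesis ``away from the zeros of $\alpha|_{Z^3}$'' enters — worth saying explicitly.
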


\begin{proof}
One can easily check that
\begin{equation*}
T(Z^3 \times W^c) \cap \xi = \R \langle Z^3_{\xi}, w-e^s\, v, \ker\lambda \rangle.
\end{equation*}

Basically the calculation of $Z'_\xi$ is reduced to computing the kernel $K=aX+bY+cZ$ of the $3$-dimensional vector space $\R\langle X,Y,Z\rangle$ with a maximally nondegenerate alternating $2$-form $\langle\cdot,\cdot\rangle$.  One can easily verify that
$$K=\langle Y,Z\rangle X  +\langle Z,X\rangle Y + \langle X,Y\rangle Z$$
works. We have $e^{-s}\, d\alpha = ds \wedge dt + ds \wedge \lambda + d\lambda$, and  if we write $\langle \cdot,\cdot \rangle := e^{-s} \,d\alpha(\cdot,\cdot)$ and $Z'_\xi=aX+bY+cZ +dA$, where $X=Z^3_\xi$, $Y=X_\lambda$, $Z=w-e^s\,v$, and $A\in \ker \lambda$ and is not parallel to $X_\lambda$, then $d=0$ since otherwise there exists $B\in \ker \lambda$ such that $\langle A,B\rangle\not=0$ and $\langle w,B\rangle =0$. The remaining/relevant part  of the pairing is given as follows:
\begin{align*}
\langle Z^3_{\xi}, X_{\lambda} \rangle &= 0, \\
\langle Z^3_{\xi}, w-e^s \,v \rangle &= ds(Z^3_{\xi}) - e^s\, ds \wedge dt (Z^3_{\xi}, v), \\
\langle X_{\lambda}, w-e^s \, v \rangle &= 1.
\end{align*} 
Hence $Z'_\xi=K = Z^3_\xi -( ds(Z^3_{\xi}) - e^s\, ds \wedge dt (Z^3_{\xi}, v)) X_\lambda.$

Finally, since $\alpha-(dz+e^s \,dt)=0$ when evaluated on vectors on $Z^3$ and hence
$$(ds\wedge \alpha + dz\wedge ds -e^s\, ds\wedge dt)(Z^3_\xi, v)=0,$$
it follows that $Z'_\xi= Z^3_\xi + dz \wedge ds (Z^3_{\xi}, v) X_\lambda$.
\end{proof}

At the zeros of $\alpha|_{Z^3}$ and $\lambda$, \eqref{eqn:char foliation on product} can be interpreted as saying that $Z'_\xi$ contains the limit of the right-hand side as the points on $Z^3\times W^c$ approach the zero.

\begin{remark}
\cref{lemma: char foliation calculation} is rather general and  holds for  $Z^3$ replaced by any surface in $\R^3_{z,s,t}$.
\end{remark}

\subsection{Dynamics of the fold} \label{subsection: dynamics of z prime xi}

We now investigate the dynamics of $Z'_{\xi}$. Let us first consider the PL case $Z'_{PL}= Z^3_{PL}\times W^c$.

\begin{lemma} \label{lemma: Z prime PL}
The flow lines of $(Z'_{PL})_\xi$ passing through $\{-1\}_s\times(0,t_0)_t\times W^c$ eventually limit to a negative singularity of $(Z'_{PL})_\xi$ and in particular do not leave $Z'_{PL}$.
\end{lemma}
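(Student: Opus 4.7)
The plan is to combine the explicit formula from \autoref{lemma: char foliation calculation} with the 3-dimensional spiral dynamics from \autoref{lemma: char foliation on 3d PL-fold} to show that a flow line of $(Z'_{PL})_\xi$ projects to a spiraling trajectory on $Z^3_{PL}$ and is simultaneously contracted in the $W^c$-factor toward the Weinstein skeleton each time its projection crosses the face $P_2$.

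By \autoref{lemma: char foliation calculation}, applied face by face, the characteristic foliation on the product $Z'_{PL}=Z^3_{PL}\times W^c$ is directed on the interior of each $2$-dimensional face by $Z^3_\xi+c\,X_\lambda$, where $c\coloneqq dz\wedge ds(Z^3_\xi,v)$ depends only on the $\R^3$-coordinates. A short face-by-face computation shows:
\be
	\item On the base $\R^2_{s,t}\setminus\square$ and on the roof $P_0$, the form $dz$ vanishes on tangent vectors, so $c=0$.
	\item On $P_1\subset\{s=0\}$ and $P_3\subset\{s=s_0\}$, the form $ds$ vanishes on tangent vectors, so $c=0$.
	\item On the slanted back face $P_2$, contained in the plane $t+e^{-s_0-\epsilon}z=t_0$, choosing $v$ tangent to $P_2$ with $\alpha(v)=1$ yields $c<0$.
	\item On the slanted front face $P_4$, the analogous computation yields $c>0$.
\ee
Thus the $W^c$-component of a trajectory of $(Z'_{PL})_\xi$ is stationary on the horizontal and vertical faces, flows along $-X_\lambda$ (into the skeleton of $W^c$) while its projection lies on $P_2$, and flows along $+X_\lambda$ (toward $\p W^c$) while its projection lies on $P_4$.

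By \autoref{lemma: char foliation on 3d PL-fold}, the $Z^3_{PL}$-projection of a flow line starting in $\{-1\}\times(0,t_0)\times W^c$ crosses $P_1,P_0,P_3,P_2$ in this order and then cycles through them repeatedly, spiraling onto the sink edge $P_0\cap P_2$; crucially, it never meets $P_4$. Hence the $W^c$-component of the lifted trajectory is only ever pushed in the inward $-X_\lambda$ direction, so it stays compactly contained in the interior of $W^c$ and the flow line never leaves $Z'_{PL}$. Since the spiral accumulates an infinite amount of anti-Liouville flow on the $W^c$-factor and $-X_\lambda$ is gradient-like for the negative of the Weinstein Morse function, the $W^c$-component converges to a zero of $X_\lambda$ on the skeleton. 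Combined with the $3$-dimensional convergence to $P_0\cap P_2$, the entire flow line limits to a point $(p,q)$ with $p\in P_0\cap P_2$ and $q$ a critical point of the Weinstein Morse function; this is a negative singularity of $(Z'_{PL})_\xi$ because $P_0\cap P_2$ is the negative sink edge of $(Z^3_{PL})_\xi$ (cf.\ (TZ1) of \autoref{prop:char foliation on 3d C-fold}).

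The main obstacle is the sign verification $c<0$ on $P_2$: writing $v=a\p_s+b(\p_z-e^{-s_0-\epsilon}\p_t)$ tangent to $P_2$ and solving $\alpha(v)=b(1-e^{s-s_0-\epsilon})=1$ forces $b>0$ since $s<s_0+\epsilon$, and then $dz\wedge ds(\p_s,v)=-b<0$. With this sign secured, everything else is bookkeeping around the PL spiral and the contracting nature of $-X_\lambda$ on the compact Weinstein domain.
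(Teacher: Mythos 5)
Your proof has exactly the same architecture as the paper's: (i) the $X_\lambda$-coefficient $c=dz\wedge ds(Z^3_\xi,v)$ vanishes on $\R^2_{s,t}\setminus\square$, $P_0$, $P_1$, $P_3$, is negative on $P_2$ and positive on $P_4$; (ii) by \autoref{lemma: char foliation on 3d PL-fold} the projection of the relevant flow lines never meets $P_4$, so the $W^c$-component is never pushed outward and the flow line cannot escape. That is precisely the paper's two-observation proof, and the vanishing computations on the horizontal and vertical faces are correct.

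However, the one computation you single out as ``the main obstacle'' contains two compensating errors. First, the face you call $P_2$, namely the one in the plane $t+e^{-s_0-\epsilon}z=t_0$ over the edge $\{t=t_0\}$ of $\square$, is actually $P_4$: the inequality $t_0-z_0+z_0e^{-s_0}<0$ in the proof of \autoref{lemma: char foliation on 3d PL-fold} forces the spiral to drift toward \emph{negative} $t$ and to exit $P_3$ through the face over $\{t=0\}$ (the plane $t=-e^{\epsilon}z$), so that face is $P_2$. Second, you take $Z^3_\xi=+\p_s$ on the slanted face, but the forward characteristic foliation on both slanted faces points in the $-\p_s$ direction (the spiral traverses $P_2$ from the $P_3$-side to the $P_1$-side, landing on $P_1\cap P_2$). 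Carrying out your computation on the correct face with the correct orientation: $v=a\p_s+b(\p_z-e^{\epsilon}\p_t)$ with $\alpha(v)=b(1-e^{s+\epsilon})=1$ forces $b<0$, and $dz\wedge ds(-\p_s,v)=b<0$, which recovers the paper's sign on $P_2$ (and, symmetrically, $c>0$ on $P_4$). As written, your two errors cancel so the asserted signs agree with the paper's, but the verification itself does not establish them; under either labeling read consistently, one of your two observations fails. Separately, the claim that the spiral ``accumulates an infinite amount of anti-Liouville flow'' so that the $W^c$-component converges to the skeleton is unjustified and likely false: the spiral converges to $P_0\cap P_2$ geometrically, so the $P_2$-segments shrink geometrically and the total $X_\lambda$-displacement is finite. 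Fortunately this refinement is not needed: the lemma only requires that the flow line be trapped and absorbed by the sink locus over $P_0\cap P_2$, which follows from (i) and (ii) alone, as in the paper.
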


\begin{proof}
The lemma follows from two observations:
(i) Since $dz \wedge ds  ((Z^3_{PL})_{\xi}, v)$ is positive on $P_4$, negative on $P_2$, and vanishes on $P_0 \cup P_1 \cup P_3$, the term $dz \wedge ds  ((Z^3_{PL})_{\xi}, v) X_{\lambda}$ in \eqref{eqn:char foliation on product} is  a positive multiple of $X_\lambda$ on $P_4$, a negative multiple of $X_\lambda$ on $P_2$, and zero on $P_0\cup P_1\cup P_3$. [Sample sign calculation on $P_4$ (it is useful to refer to \cref{fig:linear_foliation}): $(Z^3_{PL})_\xi=-\bdry_s$ and $v$, which we take to be parallel to the $zt$-plane, has positive $\bdry_z$-component.  Hence $dz \wedge ds  ((Z^3_{PL})_{\xi}, v)>0$ on $P_4$.]  (ii) By \cref{lemma: char foliation on 3d PL-fold}, if a flow line of $(Z'_{PL})_\xi$ passes through $\{-1\}_s\times(0,t_0)_t\times W^c$, then its projection to $Z^3_{PL}$ only passes through  $P_0$, $P_1$, $P_2$, and $P_3$.
\end{proof}

Next we describe the smoothed version $Z'_\xi$. We identify the singular points of $Z'_{\xi}$: Recall from \cref{lemma: char foliation on 3d PL-fold} that $Z^3_{\xi}$ has four singular points $e_{\pm}, h_{\pm}$. By the sign calculations of $dz \wedge ds  ((Z^3_{PL})_{\xi}, v)$ from the proof of \cref{lemma: Z prime PL} and continuity, $dz \wedge ds (Z^3_{\xi}, v)>0$ on neighborhoods of $e_+, h_+$ and $<0$ on neighborhoods of $e_-,h_-$.  Hence for each singular point $x \in W^c$ of the Liouville vector field $X_{\lambda}$, there exist four singular points $e^x_{\pm}, h^x_{\pm}$ of $Z'_{\xi}$ whose Morse indices are given by:
\begin{align*}
\text{ind}(e^x_+) &= \text{ind}_W (x), &\text{ind}(h^x_+) &= \text{ind}_W (x)+1,\\
\text{ind}(e^x_-) &= 2n-\text{ind}_W (x), &\text{ind}(h^x_-) &= 2n-1-\text{ind}_W (x),
\end{align*}
where $\text{ind}_W (x)$ is the Morse index of $x \in W^c \subset W$ and  we recall that $\dim W=2n-2$. 

See the top left figure in \cref{fig:C-fold thermal} for $Z^3_\xi$ and the regions indicating the signs of $dz\wedge ds(Z^3_\xi,v)$. The red (resp.\ blue, white) region indicates where $dz\wedge ds(Z^3_\xi,v)$ or $dz\wedge ds(S_{\tau,\xi},v)$ is positive (resp.\ negative, zero). 

\begin{remark} \label{remark: birth-death}
In view of \cref{rmk: variant of prop:char foliation on 3d C-fold}, we may replace the nondegenerate singular points by birth-death singularities as in the top right of \cref{fig:C-fold thermal}. The advantage of the birth-death singularities is that \cref{lemma: function sigma 0} still holds but the singular points can be immediately eliminated; this will be useful for example when damping out in \cref{subsec: damping}.
\end{remark}

\begin{figure}[ht]
\s
	\begin{overpic}[scale=.3]{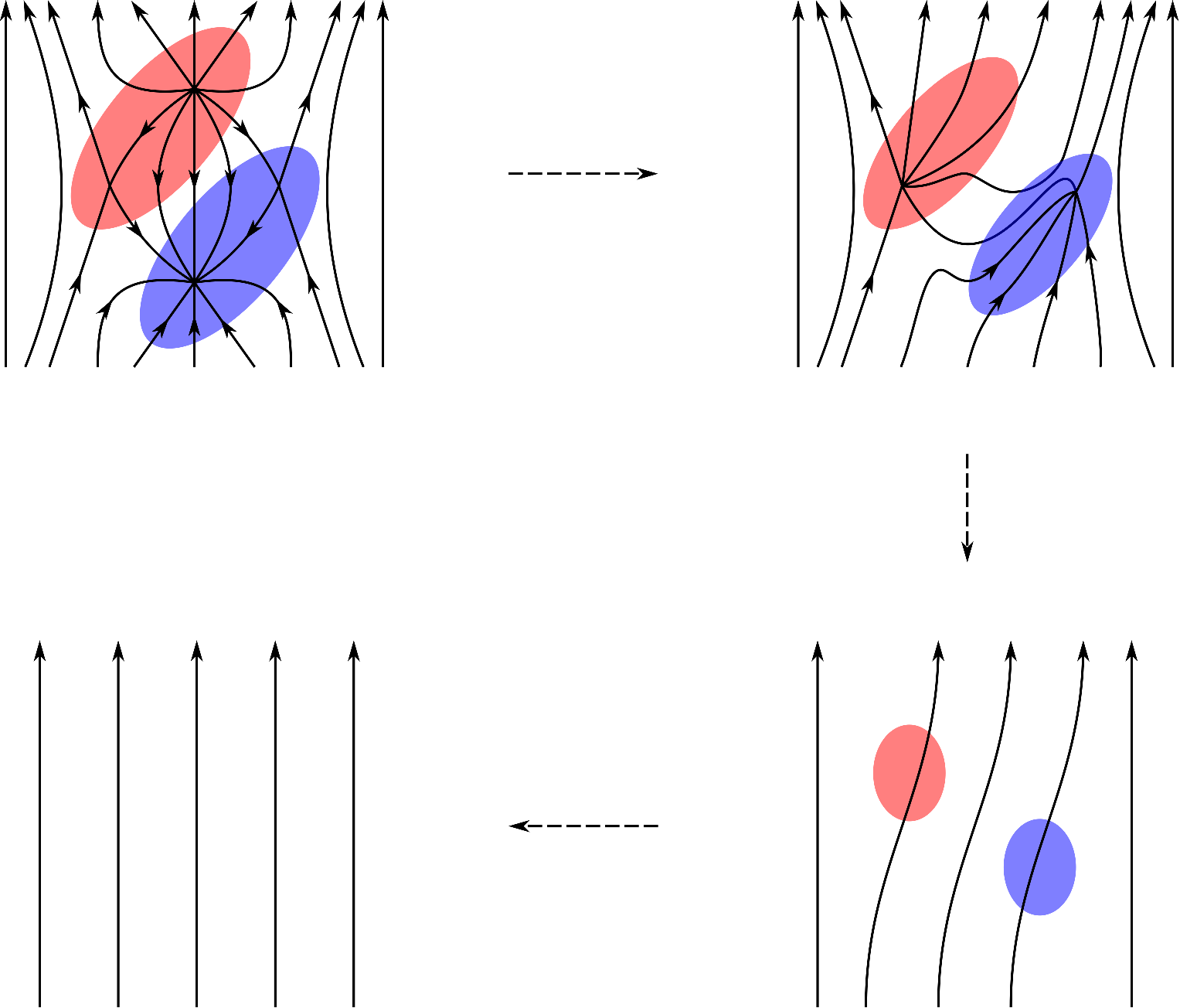}
	\end{overpic}
	\caption{The top left is $Z^3_\xi$ and the top right is an alternate perturbation of $Z^3_{PL,\xi}$ corresponding to $\tau=0$. The top right, bottom right, and bottom left are $S_{\tau,\xi}$ for some as $\tau$ goes from $0$ to $\tau_0$.}
	\label{fig:C-fold thermal}
\end{figure}

Let $\Sk(W)$ be the isotropic skeleton of $W^c$ with respect to $X_\lambda$.    Let $\kappa_1>\kappa_2>\kappa_3>0>\kappa_4>\kappa_5>\kappa_6$ with all $\kappa_i$ small as in  \cref{prop:char foliation on 3d C-fold}, and let $a\geq 0$ be small.  We define
\begin{gather*}
I_{a}^-:= \{-1\}\times [{\kappa_4},t_0+\kappa_2 -a]\subset \R^2_{s,t},\\
I_{a}^+:=\{s_0+1\}\times [{\kappa_5}-a,t_0+\kappa_3]\subset \R^2_{s,t},
\end{gather*}
so that $I_0^-$ (resp.\ $I_0^+$)  is the maximal interval with the  property that any flow line of $Z^3_{\xi}$ passing through the interval converges to a singularity of $Z^3_{\xi}$ in forward  (resp.\ backward) time. 

We now give a description of all the flow lines passing through $Z^3\times W^c$: 

\begin{lemma}[ Description of all flow lines passing through $Z^3\times W^c$] \label{lemma: function sigma 0}
There exist functions $\sigma_0^-,\sigma_0^+:  W^c \to \R_{\geq 0}$,  which vanish exactly on $\Sk(W)$ such that:
\be
\item each flow line of $Z'_{\xi}$ passing through $I_0^- \times \Sk(W)$  (resp.\ $I_0^+ \times \Sk(W)$)  converges to a singularity of $Z'_{\xi}$ in forward  (resp.\ backward)  time;
\item for $x \in W^c \setminus \Sk(W)$, each flow line passing through $I_{\sigma_0^- (x)}^- \times \{x\}$  (resp.\ $I_{\sigma_0^+ (x)}^+ \times \{x\}$)   converges to a singularity of $Z'_{\xi}$ in forward (resp.\ backward) time;
\item for $x \in W^c \setminus \Sk(W)$, each flow line passing through $(I_0^- \setminus I_{\sigma_0^- (x)}^-)\times \{x\}$  (resp.\ $(I_0^+\setminus I_{\sigma_0^+ (x)}^+) \times \{x\}$)  exits $Z^3 \times W^c$ along $Z^3 \times \p W^c$ in finite  forward (resp.\ backward)  time;
\item  for $x\in W^c$, each flow line passing through $\{-1\}\times(t_0+\kappa_2,t_0+\kappa_1]\times \{x\}$ (resp.\ $\{s_0+1\}\times [\kappa_6,\kappa_5)\times \{x\}$) exits from either $\{s_0+1\}\times (t_0+\kappa_3,t_0+\kappa_1]\times W^c$ (resp.\ $\{-1\}\times [\kappa_6,\kappa_4)\times W^c$) or $Z^3\times \bdry W^c$ in finite forward (resp.\ backward) time; in the former case, the $W^c$-coordinate $x'$ of the exit point of the flow line is on the time $\geq 0$ flow line of $X_\lambda$ starting at $x$;
\item  for $x\in W^c$, each flow line passing through $\{-1\}\times [\kappa_6,\kappa_4)\times \{x\}$ (resp.\ $\{s_0+1\}\times (t_0+\kappa_3,t_0+\kappa_1]\times \{x\}$) exits from $\{s_0+1\}\times [\kappa_6,\kappa_5)\times W^c$ (resp.\ $\{-1\}\times (t_0+\kappa_2,t_0+\kappa_1]\times W^c$) in finite forward (resp.\ backward) time; the $W^c$-coordinate $x'$ of the exit point of the flow line is on the time $\leq 0$ flow line of $X_\lambda$ starting at $x$;
\item each flow line outside of $\R_s\times[\kappa_6,t_0+\kappa_1]\times W^c$ has trivial holonomy;
\item all other flow lines are (i) flow lines between singularities, (ii) flow lines from a singularity to $Z^3\times \bdry W^c$, or (iii) flow lines from $Z^3\times \bdry W^c$ to a singularity.

\ee
Moreover, as $Z^3\to Z^3_{PL}$,  all $\kappa_i\to 0$ and $|\sigma_0^\pm|_{C^0}\to 0$. 
\end{lemma}

\begin{proof}
 This is an immediate consequence of \cref{lemma: char foliation calculation} and \cref{lemma: Z prime PL}, taking the limit $Z^3\to Z^3_{PL}$, and a case-by-case analysis of the various regions of the top left figure of \cref{fig:C-fold thermal}. 

Suppose the flow line passes through the red region times $W^c$.  Then either the flow line exits from $Z^3\times \bdry W^c$ or escapes to the white region times $W^c$. Once in the white region, the flow line either reaches $s=s_0+1$ or enters the blue region times $W^c$ and reaches a negative singularity. 

Suppose the flow line passes through the white region (e.g., passes through $s=-1$). Then the flow line reaches $s=s_0+1$, enters the blue region times $W^c$ (and hence reaches a negative singularity), or enters the red region times $W^c$ (already considered).

All $\kappa_i\to 0$ and $|\sigma_0^\pm|_{C^0}\to 0$ as $Z^3\to Z^3_{PL}$ by construction.
\end{proof}

Technically, the functions $\sigma_0^\pm$ account for the speed of convergence of flow lines of $(Z^3)_{\xi}$ towards its singularities and those of $X_{\lambda}$ in $W^c$ towards $\Sk(W)$.

\subsection{Damping} \label{subsec: damping} 

In order for the mushroom to be the image of a continuous map $\Sigma\to M$, one must damp out the  $Z^3$-fiber over $W^c_{\tau_0} \setminus \op{int}(W^c)=[0,\tau_0]\times \Gamma$ as $\tau$ grows. Recall the notation from \cref{subsec:intro to mushrooms in dimension greater than 3}. 

The damping procedure amounts to choosing an isotopy of surfaces $S_\tau$, $\tau\in[0,\tau_0]$, in $\R^3_{z,s,t}$ from $S_0=Z^3$ to the flat $S_{\tau_0}=\R^2_{s,t}$. We take $S_0=Z^3$ to have birth-death type singular points as in \cref{fig:C-fold thermal}; see \cref{remark: birth-death}.  In practice we also take $\tau_0>0$ to be arbitrarily small. We then set
$$\Ical_0 := \cup_{0 \leq \tau \leq \tau_0 } (S_{\tau} \times \{\tau\}) \subset \R^4_{z,s,t,\tau}$$
and the actual hypersurface in $M$ will be $\Ical_0 \times \Gamma$.

The PL model of $S_{\tau}$ is obtained by replacing $P_0$ by the rectangle $$[0,s_0]\times[-e^{-s_0/2}z, -e^{-s_0/2}z+t_0]$$ as in \cref{subsec:PL fold}, where the parameter $z$ ranges from $z_0$ to $0$ as $\tau$ goes from $0$ to $\tau_0$ (in other words, we are pushing the top face of the parallelepiped into the parallelepiped); its smoothing for $\tau>0$ will use a profile function $\phi$ such that $\phi'<0$ everywhere so that there are no singularities of the characteristic foliation. We have
\begin{equation*}
T\Ical_0 = \R\langle TS_{\tau}, \p_\tau+fw_0 \rangle,
\end{equation*}
where $w_0=\bdry_z-K_0\bdry_t$ is parallel to $P_2$ and $P_4$ and $f \leq 0$ is a $\tau$-dependent smooth function on $\R^3_{z,s,t}$ which vanishes when $\tau$ is close to $\{0,\tau_0\}$ or $z=0$.

We are now ready to compute the characteristic foliation $(\Ical_0 \times \Gamma)_{\xi}$. Let $S_{\tau,\xi}$ be the characteristic foliation on $S_\tau$, i.e., $\alpha|_{\R^3} (S_{\tau,\xi}) = 0$, and let $v$ be a vector field on $S_\tau$, defined away from the singularities of $S_{\tau,\xi}$, such that $\alpha|_{\R^3} (v)=1$.

\begin{lemma} \label{lemma: char fol calc 2}
The characteristic foliation $(\Ical_0 \times \Gamma)_\xi$ is given by
\begin{align} \label{eqn:char foliation damping 1}
(\Ical_0 \times \Gamma)_{\xi}
& = S_{\tau,\xi} + dz \wedge ds (S_{\tau,\xi}, v) (\p_\tau + f\,w_0)  + f \left( ds(S_{\tau,\xi})\, v - ds(v)\, S_{\tau,\xi} \right) \\
&\quad  +e^{-\tau}f(- ds \wedge dt (S_{\tau,\xi}, v)+ K_0\, dz\wedge ds(S_{\tau,\xi},v))\, R_{\eta}, \nonumber
\end{align}
 on the subset of $\Ical_0\times \Gamma$ where $v$ is defined. 
\end{lemma}

\begin{proof}
This is similar to the calculation of \cref{lemma: char foliation calculation}. We compute
\begin{equation*}
T(\Ical_0 \times \Gamma) \cap \xi = \R \langle S_{\tau,\xi}, e^{\tau+s} \,v - R_{\eta},\, \p_\tau + fw_0 +(-e^{-\tau-s} + K_0 e^{-\tau}) f\, R_\eta,\, \ker\eta \rangle,
\end{equation*}
where $R_\eta$ is the Reeb vector field of $\eta$.  Next we have
\begin{align*}
\alpha&=dz + e^s\,(dt+ e^\tau \eta),\\
e^{-s}\,d\alpha&=ds\wedge dt + e^\tau \,ds \wedge\eta + e^\tau \,(d\tau \wedge\eta + d\eta).
\end{align*}
Setting $X= S_{\tau,\xi}$, $Y=e^{\tau+s} v-R_\eta$, $Z=\p_\tau + f\,w_0 +(-e^{-\tau-s} + K_0 e^{-\tau}) f\, R_\eta$,
\begin{align*}
\langle X,Y\rangle &= e^{\tau+s}\, ds\wedge dt (S_{\tau,\xi},v) - e^\tau \, ds(S_{\tau,\xi})= e^\tau ( dz\wedge ds (S_{\tau,\xi},v)),\\
\langle X,Z\rangle &= e^\tau ds (S_{\tau,\xi}) (-e^{-\tau-s} f)= -e^{-s} f \,ds(S_{\tau,\xi}),\\
\langle Y,Z\rangle &= e^{\tau+s} ( e^\tau ds(v)(-e^{-\tau-s} f)) +e^\tau=e^\tau(1-f ds(v)).\\
(\Ical_1 \times \Gamma)_\xi& =  (1-f ds(v))\,S_{\tau,\xi} +  e^{-\tau-s} f \,ds(S_{\tau,\xi}) (e^{\tau+s} \,v-R_\eta)\\
& \quad + dz\wedge ds (S_{\tau,\xi},v)(\p_\tau + f\,w_0 +(-e^{-\tau-s} + K_0 e^{-\tau}) f\,R_\eta).
\end{align*}
A rearrangement of the terms gives the lemma.
\end{proof}

Note that \eqref{eqn:char foliation damping 1} agrees with \eqref{eqn:char foliation on product} at $\tau = 0$. The first two terms of $(\Ical_0 \times \Gamma)_{\xi}$ are analogous to those of $Z'_\xi$; see \cref{lemma: char foliation calculation}.  The third term $f(ds(S_{\tau,\xi}) v - ds(v) \,S_{\tau,\xi})$ lies in $\ker ds$ and, away from the corners,
\begin{itemize}
\item vanishes on $P_1 \cup P_3$,
\item has negative $\bdry_t$-component on $P_0 \cup P_4$, and
\item has positive $\bdry_t$-component on $P_2$.
\end{itemize}
See \cref{fig:flatten foliation}. In other words, the third term, when we project out the $s$- and $W$-directions, is a flow in the clockwise direction around $\bdry P_1$ as seen in the picture.  The last term of \eqref{eqn:char foliation damping 1} has a substantial contribution in the $R_\eta$-direction when the damping happens quickly, i.e., when $\tau_0$ is small and $f$ is large.  This is something we need to be careful about, but ultimately can be finessed away by stacking the mushrooms in a particular way in \cref{sec:plug}.

\begin{figure}[ht]
	\begin{overpic}[scale=.9]{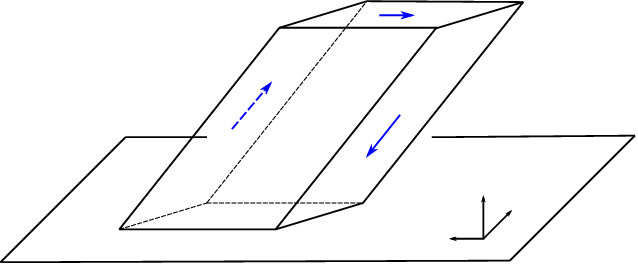}
		\put(68.5,3){\small{$t$}}
		\put(81,7){\small{$s$}}
		\put(73.2,10){\small{$z$}}
	\end{overpic}
	\caption{The vector field $f(ds(S_{\tau,\xi}) v - ds(v) S_{\tau,\xi})$ is depicted in blue.}
	\label{fig:flatten foliation}
\end{figure}

\subsection{Description of the characteristic foliation of the mushroom} \label{subsection: desc of char fol of mushroom}

In this subsection we summarize the dynamics of the characteristic foliation of the mushroom $Z_H$ of $\Sigma=\{z=0\}\subset M=\R^3_{z,s,t}\times W$ with profile $H=[0,t_0]\times W^c_{\tau_0}$. 

\begin{definition} \label{defn:Z fold along H}
	The \emph{mushroom of $\Sigma$ with profile $H$} is the hypersurface
	\begin{equation} \label{eqn:Z_H}
	Z_H := \left( \Sigma \setminus (\square \times W^c_{\tau_0}) \right)  \cup Z'_{PL} \cup (\Ical_0 \times \Gamma)_{PL},
	\end{equation}
	modulo  smoothing. (The smoothed versions do not have the subscripts PL.)  The region $\square \times W^c_{\tau_0} \subset \Sigma$,  where $\square=[0,s_0]\times[0,t_0]$, is the \emph{base} of $Z_H$, and the region $\Ical_0 \times \Gamma$ is the {\em damping region}.
\end{definition}

Let $\tau'_0 \in (0, \tau_0)$,  let $\kappa_1>\kappa_2>\kappa_3>0>\kappa_4>\kappa_5>\kappa_6$ with all $\kappa_i$ small as in \cref{prop:char foliation on 3d C-fold},  and let ${\sigma_1^\pm, \sigma_2^\pm:} W^c_{\tau'_0} \to \R_{\geq 0}$ be functions such that:
\begin{itemize}
	\item {$\sigma_1^\pm$} vanishes exactly on $\Sk(W)$ and {$\sigma_2^\pm$} vanishes on $W^c$;
    \item on $\{0 \leq \tau \leq \tau'_0 \}$, both {$\sigma_1^\pm=\sigma_1^\pm(\tau)$ and $\sigma_2^\pm = \sigma_2^\pm (\tau)$} are strictly increasing and reach their maximum at $\tau = \tau'_0$;
	\item  $\sigma_1^- (\tau'_0) + \sigma_2^- (\tau'_0) = t_0+\kappa_2-\kappa_4$ and $\sigma_1^+ (\tau'_0) + \sigma_2^+ (\tau'_0) = t_0+\kappa_3-\kappa_5$. 
\end{itemize}

As the smooth version of $Z_H$ limits to the PL version, all $\kappa_i\to 0$ and $|\sigma_i^\pm|_{C^0}\to 0$ on $W^c_{\tau_0}$.

We then define the compact submanifolds
\begin{gather*}
H_{\inward} := \{(t,x) ~|~ x \in W^c_{\tau'_0}, {\kappa_4} +\sigma_2^-(x) \leq t \leq t_0+{\kappa_2}-\sigma_1^-(x) \},\\
H_{\outward} := \{(t,x) ~|~ x \in W^c_{\tau'_0}, {\kappa_5} +\sigma_1^+(x) \leq t \leq t_0+{\kappa_3}-\sigma_2^+(x) \},
\end{gather*}
which approximate $H$ when all the smoothing parameters involved in the construction tend to $0$.  See \cref{fig:Z fold along H}.  We use the notation $X^{\circ}$ (and also $\op{int}(X)$) as in $H_{\inward}^\circ$ to denote the interior of a space $X$.

 Note that $\bdry H$ (which we assume has rounded corners) is convex; this follows from observing that $([-1,1]_t\times W,dt+\lambda)$ has contact vector field $t\bdry_t +X_\lambda$.

\begin{proposition} \label{prop:dynamics C-fold}
Assuming all the corner rounding parameters are sufficiently small, there exists a tubular neighborhood $[-\epsilon,\epsilon]_\ell\times \bdry H$ of $\bdry H=\{0\}\times \bdry H$ and $H_{\inward}$ and $H_{\outward}$ that approximate $H$ such that: 
\begin{itemize}
\item[(Z1)] $\bdry H_{\inward},\bdry H_{\outward}\subset [-\epsilon,\epsilon]\times \bdry H$ are graphical over $\bdry H$.
\item[(Z2)]  $Z_{H,\xi}$ is ``$1$-Morse" in the following sense: it satisfies (M1) and (M3) of \cref{prop:morse criterion} and
\be
\item[(M2')] every flow line passing through $x\in Z_H$ with $Z_{H,\xi}(x)\not=0$ converges to a singularity or goes to $\{s=+\infty\}$ in forward time and converges to a singularity or goes to $\{s=-\infty\}$ in backward time.
\ee 
\item[(Z3)] Any flow line of $Z_{H,\xi}$ that passes through $H^{\circ}_{\inward} \subset \{s=-1\}$ converges to a negative singularity of $Z_{H,\xi}$ in forward time. Similarly, any flow line of $Z_{H,\xi}$ that passes through $H^{\circ}_{\outward} \subset \{s=s_0+1\}$ converges to a positive singularity of $Z_{H,\xi}$ in backward time.
\item[(Z4)] Any flow line of $Z_{H,\xi}$ that does not pass through $H\cup ([-\epsilon,\epsilon]\times \bdry H)\subset \{s=-1\}$ has trivial holonomy.
\item[(Z5)] There exists a Morse function $F$ on $\bdry H$ such that $\bdry H_\xi$ is gradient-like for $F$ (and hence flows ``from $R_+(\bdry H)$ to $R_-(\bdry H)$'') and such that any flow line of $Z_{H,\xi}$ that passes through $(\ell,x)\in [-\epsilon,\epsilon]\times \bdry H\subset \{s=-1\}$ and does not converge to a singularity of $Z_{H,\xi}$: 
\be
\item passes through $(\ell', y)\in [-\epsilon,\epsilon]\times \bdry H\subset \{s=s_0+1\}$ with $F(y)\geq F(x)$; and
\item is parallel to $X_\lambda$ (resp.\ $-X_\lambda$) on $[-\epsilon,\epsilon]\times W^c_+$ (resp.\ $[-\epsilon,\epsilon]\times W^c_-$) when projected to $[-\epsilon,\epsilon]\times\bdry H$. 
\ee
Here $W^c_+$ is the portion of $\bdry H$ corresponding to $\{t_0\}\times W^c$ and $W^c_-$ is the portion corresponding to $\{0\}\times W^c$.
\end{itemize}
\end{proposition}

\begin{proof}
 (Z1) is by construction.  (Z4) is clear.  (Z2)--(Z5) follow from \cref{lemma: function sigma 0} and \cref{lemma: char fol calc 2}.  In (Z5) we take $\{\tau\}\times \Gamma$ to be level sets of $F$ so that the component of $Z_{H,\xi}$ in the direction of the Reeb vector field $R_\eta$ vanishes on $dF$.  
\end{proof}

\begin{figure}[ht]
	\begin{overpic}[scale=.8]{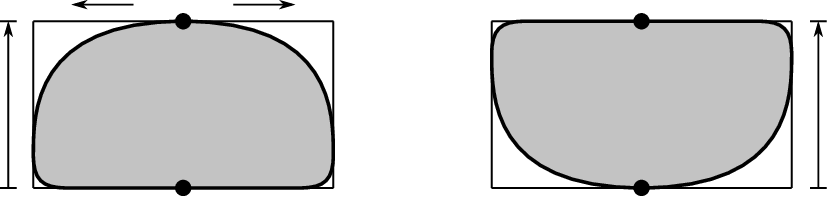}
		\put(21,10){$H_{\inward}$}
		\put(77.5,10){$H_{\outward}$}
		\put(-3,10){$t$}
		\put(-4.2,0){$ \kappa_4$}
		\put(-10,21){\small{$t_0+ \kappa_2$}}
		\put(13,25){$\tau$}
		\put(32,25){$\tau$}
		\put(19,24.5){\tiny{$\Sk(W)$}}
		\put(101,0){$ \kappa_5$}
		\put(101,21){\small{$t_0+ \kappa_3$}}
	\end{overpic}
	\caption{The shaded regions are $H_{\inward}$ and $H_{\outward}$, respectively. The area of the complements of $H_{\inward}$ and $H_{\outward}$ in the rectangles tend to 0 as all the parameters involved in the construction tend to 0.}
	\label{fig:Z fold along H}
\end{figure}

\section{Quantitative stabilization of open book decompositions} \label{sec:quantitative stab}

\subsection{Some definitions} \label{subsec: qstab basic definitions}

Let $M$ be a closed manifold. An \emph{open book decomposition} (abbreviated OBD) of $M$ is a pair $(B,\pi)$,
where $B \subset M$ is a closed codimension $2$ submanifold and 
$$\pi: M \setminus B \to S^1\subset \C$$
is a fibration which agrees with the angular coordinate $\theta$ on a neighborhood $B\times D^2$ of $B=B\times\{0\}$. We call $S_{\theta}:=\pi^{-1}(e^{i\theta}), e^{i\theta} \in S^1$, the \emph{pages} of the OBD, and call $B$ the \emph{binding}.

Let $\xi$ be a contact structure on $M$.

\begin{definition} $\mbox{}$
\be
\item An OBD $(B,\pi)$ is \emph{$\xi$-compatible} if there exists a contact form $\alpha$ for $\xi$, called an {\em adapted} contact form, such that the Reeb vector field $R_\alpha$ of $\alpha$ is transverse to all the pages and is tangent to $B$, and $\alpha|_B$ is a contact form on $B$. We also say ``$\alpha$-compatible" or simply ``compatible" if the contact structure is understood.
\item An $\alpha$-compatible $(B,\pi)$ is {\em strongly Weinstein} if all its pages $(S_\theta,\alpha|_{S_\theta})$ are Weinstein.
\ee
\end{definition}

Let $(B,\pi)$ be an $\alpha$-compatible OBD. Let $\op{arg}: S^1\to \R/2\pi\Z$ be the map $e^{i\theta}\mapsto \theta$. Then define $\rho:M \setminus B \to \R_{>0}$ by
\begin{equation*}
\rho(x) = d(\op{arg}\circ \pi)(R_\alpha(x)).
\end{equation*}
Roughly speaking, $\rho(x)$ measures infinitesimally how fast the orbit of $R_\alpha$ through $x$ traverses the pages.

\begin{definition} \label{defn:infinitesimal variation}
The \emph{infinitesimal variation} on the page $S_{\theta}$ is
\begin{equation*}
V_{\theta} \coloneqq \sup_{x \in S_{\theta}} \rho(x) / \inf_{x \in S_{\theta}} \rho(x) \in [1,\infty),
\end{equation*}
and the \emph{total infinitesimal variation} {is} $V \coloneqq \sup_{\theta \in [0,2\pi]} V_{\theta}$.
\end{definition}

The following is standard:

\begin{lemma}\label{lemma:criterion for contact handlebody}
If $V\equiv 1$, then $M\setminus \overline{S}_0$ the interior of a contact handlebody.
\end{lemma}

\begin{proof}
Let $t$ be the coordinate obtained by flowing in the direction of $R_\alpha$ starting from $S_0$.  Then $M\setminus \overline{S}_0\simeq (0,C)_t\times S_0$ and $\alpha=f_t\, dt + \beta_t$, where $f_t$ (resp.\ $\beta_t$) is a function (resp.\ $1$-form) on $S_0$ that depends on $t$. 

We claim that $R_\alpha=\bdry_t$ implies that $f_t=1$ and $\dot\beta_t=0$, where the dot denotes the derivative in the $t$-direction: Since $\alpha(R_\alpha)=1$, we have $f_t=1$. Then $d\alpha$ becomes $dt\wedge \dot\beta_t +d_{S_0}\beta_t$, where $d_{S_0}$ is the exterior derivative in the $S_0$-direction.  Finally, $i_{R_\alpha} d\alpha=0$ forces $\dot\beta_t=0$.
\end{proof}

\subsection{Quantitative stabilization for spheres} \label{subsection: qstab for sphere}

Before starting, we warn the reader that {\em the type of stabilization in this section is different from the notion of stabilization of an OBD in the Giroux correspondence which involves changing the topology of the page by a handle attachment and composing the monodromy with a suitable Dehn twist.}

Let $\xi_{std}$ be the standard contact structure on $S^{2n-1}=\{|z_1|^2+\dots +|z_n|^2=1\}\subset \C^n$ given by the restriction of $\alpha=\tfrac{1}{2}\sum_{i=1}^n(x_idy_i-y_idx_i)$ which we denote by $\alpha_{std}$. The standard OBD for $\xi_{std}$ can be constructed as follows: Starting with $z_1: S^{2n-1}\to \C$ which is a submersion away from $|z_1|=1$, we set
$B=z_1^{-1}(0)=S^{2n-3}=\{|z_2|^2+\dots + |z_n|^2=1\}$ and $$\pi=\tfrac{z_1}{|z_1|}: S^{2n-1}\setminus B \to S^1\subset \C.$$  
The pages $S_\theta$ are Weinstein $(2n-2)$-disks and the Reeb vector field is 
$$R_{\alpha_{std}}=\textstyle\sum_{i=1}^n (x_i\,\bdry_{y_i}-y_i\,\bdry_{x_i})=\textstyle\sum_{i=1}^n \bdry_{\theta_i},$$ where $\theta_i$ is the $i$th angular coordinate. Then $\rho(z_1,\dots,z_n)= d\theta_1 (\bdry_{\theta_1})=1$, $V\equiv 1$, and $S^{2n-1}\setminus \overline{S}_0$ is the interior of a genuine contact handlebody of thickness $2\pi$  by \cref{lemma:criterion for contact handlebody}.  Note that $B$ has an analogous OBD derived from $z_2: S^{2n-3}=\{|z_2|^2+\dots + |z_n|^2=1\}\to \C$. 

\begin{lemma}\label{lemma: qstab for S 2n-1}
For any positive integer $k>0$ and $\epsilon'>0$ small, there exists an $\alpha_{std}$-compatible, strongly Weinstein OBD $(B_k,\pi_k)$ of $S^{2n-1}$ such that each page is $C^\infty$-close to 
$$S_{\theta_0}\cup S_{\theta_0+2\pi/k}\cup \dots \cup S_{\theta_0+2\pi(k-1)/k}$$ 
(i.e., the union of $k$ evenly spaced pages for some $\theta_0$) outside an $\epsilon'$-small neighborhood of $B$ (with respect to the standard Euclidean metric on $\C^n$) and such that $S^{2n-1}$, cut open along a new page, is the interior of a contact handlebody of thickness $2\pi/k$.
\end{lemma}

\begin{proof}
We would like to ``stabilize" $(B,\pi)$ by replacing $z_1$ by $z_1^k$.  However, since $0$ is not a regular value of $z_1^k$, we use 
$$f_k:=z_1^k+\epsilon z_2^k+ \epsilon^2 z_3^k+\dots + \epsilon^{n-1}z_n^k,$$ 
where $\epsilon>0$ is small. We are thinking of $f_k$ as inductively defined as $z_1^k$ plus $\epsilon$ times $f_k$ corresponding to the binding $|z_2|^2+\dots+|z_n|^2=1$. We write $R=R_{\alpha_{std}}$.

\s\n
\textsc{Step 1.}{\em Verification that $0$ is a regular value of $f_k$.} Let $F_k$ be $f_k$ viewed as a map $\C^n\to \C$.  Then $dF_k(z_1,\dots,z_k)=k(z_1^{k-1},\epsilon z_2^{k-1},\dots,\epsilon^{n-1}z_n^{k-1})$. Next we precompose with the derivative of the inclusion map $i:S^{2n-1}\hookrightarrow \C^n$.  Let $z=(z_1,\dots,z_n)\in S^{2n-1}$.  Suppose there exists $z_j\not=0,1$.  Then there exists $v\in T_z S^{2n-1}$ with a nontrivial component in the $z_j$-direction and $df_k$ is surjective since $\epsilon^{j-1}z_j^{k-1}\not=0$.  Otherwise, some $z_j=1$ and $z_i=0$ for all $i\not=j$. Then $f_k(z)=\epsilon^{j-1}z_j^k\not=0$.   Hence $0$ is a regular value of $f_k$.  

We set $B_k=f_k^{-1}(0)$ $\pi_k=\tfrac{f_k}{|f_k|}: S^{2n-1}\setminus B_k\to S^1$, and $S_{k,\theta}= \pi_k^{-1}(\theta)$.

\s\n
\textsc{Step 2.}{\em Computation of $df_k(R)$.}  For $(z_1,\dots,z_k)\in S^{2n-1}\setminus B_k$, we use polar coordinates $(r_i,\theta_i)$ for $z_i$ and compute:
\begin{align} \nonumber
df_k(R)&= d(r_1^ke^{ik\theta_1}+\epsilon r_2^ke^{ik\theta_2}+\dots +\epsilon^{n-1}r_n^k e^{ik\theta_n})(\textstyle\sum_{j=1}^n \bdry_{\theta_j}) \\
\label{eqn: calc of dfkR}
&=ik (r_1^ke^{ik\theta_1}+\epsilon r_2^ke^{ik\theta_2}+\dots +\epsilon^{n-1}r_n^k e^{ik\theta_n})=ikf_k.
\end{align}
Observe that this equation is the version of the equation in \cite[p.411, last line]{Gi00} when $F_k$ is holomorphic.

\s\n
\textsc{Step 3.}{\em Verification of the properties.} The calculation \eqref{eqn: calc of dfkR} implies that $R$ is tangent to $B_k$ and transverse to $S_{k,\theta}$ and that $\pi_k$ is a fibration.  Moreover, for $(B_k,\pi_k)$, $\rho(z)=1$ and $V\equiv 1$, and $S^{2n-1}\setminus \overline{S}_{k,0}$ is the interior of a contact handlebody of thickness $2\pi/k$.  Since $R$ is tangent to $B_k$, for each $z\in B_k$, $df_k(\ker \alpha_{std}(z))=\C$. This, together with the invariance of $\ker\alpha_{std}$ under the standard almost complex structure on $\C^n$, implies that $B_k$ is a codimension two contact submanifold of $\ker \alpha_{std}$.  

Next we apply \cref{lemma: Weinstein}, proved in \cref{subsection: verification}, to show that $S_{k,\theta}$ is Weinstein after perturbing $f_k$ by adding $\sum_{i=1}^n c_i z_i^k$ for $c_i$ small.  Using the notation from \cref{subsection: verification}, we have $d\theta(R)=1$ and $d\phi(R)=0$; hence $d\phi(X_\beta)=0$ if and only if $d\phi=0$, where $\phi$ is viewed as a function on $S_{k,\theta=0}$.  One can compute that if $z\in S_{k,0}$ is a critical point of $\phi$, then all $|z_i|\not=0$; at such a point $d(z_1^k),\dots,d(z_n^k)$ are linearly independent.  
This provides enough perturbations to make $\phi$ Morse.

Finally, the $C^\infty$-closeness property is immediate from taking $\epsilon>0$ small.  
\end{proof}

\subsection{Verification of the Weinstein property} \label{subsection: verification}

Let $f: \C^n\to \C$ be a holomorphic function such that $f(0)=0$ with an isolated critical point at the origin.  For $\kappa>0$ sufficiently small, $f$ defines an OBD $(B,\theta)$ of the sphere $S_\kappa$ of radius $\kappa$, where $B= S_\kappa\cap f^{-1}(0)$ and $\theta=\op{arg} f: S_\kappa\setminus B\to \R/2\pi\Z$.  Also let $\phi:=- \log|f|$ on $S_\kappa\setminus B$. 

Let
\begin{itemize}
\item $\alpha=\tfrac{1}{2}\sum_{i=1}^n(x_i\,dy_i-y_i\,dx_i)=\tfrac{1}{2}\sum_{i=1}^n r_i^2\,d\theta_i$ be the standard Liouville form on $\C^n$ with Liouville vector field $X_\alpha= \tfrac{1}{2}\sum_{i=1}^n r_i \,\tfrac{\bdry}{\bdry r_i}$, where $(r_i,\theta_i)$ are polar coordinates corresponding to $(x_i,y_i)$;
\item $\alpha_{std}$ be the induced contact form on $S_\kappa$ for $\kappa>0$ small and $R=\tfrac{2}{r^2}\sum_i \tfrac{\bdry}{\bdry \theta_i}$ be the Reeb vector field on $S_\kappa$, where $r^2=\sum_i r_i^2$ (note that $R$ is defined on all of $\C^n$, not just on $S_\kappa$);
\item $\beta$ be the $1$-form induced by $\alpha_{std}$ on any page of the OBD with Liouville vector field $X_\beta$; we view $X_\beta$ as a vector field on $S_\kappa\setminus B$ that is tangent to the pages;
\item $X_\theta$ be the Hamiltonian vector field of $\theta$ (satisfying $i_{X_\theta}d\alpha= d\theta$), viewed as a function on $\C^n\setminus f^{-1}(0)$; and 
\item $J$ be the standard complex structure on $\C^n$.
\end{itemize}

The following is due to Emmanuel Giroux (presented here with his permission):

\begin{lemma}[Giroux]\label{lemma: Weinstein}
On $S_\kappa\setminus B$, if $d\phi(R)\equiv 0$, then $d\phi(X_\beta)= \tfrac{1}{d\theta(R)}|d\phi|^2$. 
\end{lemma}

\begin{proof}
We first claim that, at every point of $S_\kappa\setminus B$, the following identity holds:
\begin{equation} \label{eqn: Liouville restriction}
X_\alpha= X_\beta + a\, X_\theta + b\,R,
\end{equation}
where $a= \tfrac{1}{d\theta(R)}$ and $b= \tfrac{d\theta(X_\alpha)}{d\theta(R)}$. First note that the $d\alpha$-symplectic orthogonal complement of the tangent space $TS$ of a page $S$ is spanned by $R$ and $X_\theta$.  [Verification: $d\alpha(R,TS)= 0$ since $R, TS$ are tangent to $S_\kappa$; $d\alpha(X_\theta,TS)=d\theta(TS)=0$; and $d\alpha(X_\theta,R)=d\theta(R)>0$.] Hence we can write $X_\alpha= Y + aX_\theta + bR$ for some $Y\in TS$. Evaluating on $TS$ gives $Y=X_\beta$.  We can then determine $a$ and $b$ by applying $d\theta$ and $d( r^2)$ to \eqref{eqn: Liouville restriction}:
$d(r^2)(X_\alpha) = 2rdr (\tfrac{1}{2}\sum_i r_i \tfrac{\bdry}{\bdry r_i})= r^2$ and $d(r^2)(X_\alpha)=a d(r^2)(X_\theta)= -a d\theta(X_{r^2})=-a d\theta(-r^2R) $, so $a=  \tfrac{1}{d\theta(R)}$. Similarly, $d\theta(X_\alpha)= b d\theta(R)$, so $b= \tfrac{d\theta(X_\alpha)}{d\theta(R)}$.

It follows from \eqref{eqn: Liouville restriction} that 
$$ d\phi(X_\beta) = -\tfrac{d\phi(X_\theta)}{d\theta(R)} + d\phi(X_\alpha) - d\phi(R) \tfrac{d\theta(X_\alpha)}{d\theta(R)}.$$
Now, since the function $-\phi+i\theta$ is holomorphic when viewed as a function on $\C^n\setminus \{f=0\}$, we have $d\phi\circ J=d\theta$. Hence 
\begin{align*} 
d\phi(X_\alpha)&=- d\phi\circ J (JX_\alpha)=-d\theta (\tfrac{1}{2}\textstyle\sum_i \tfrac{\bdry}{\bdry \theta_i})=-\tfrac{r^2}{4}d\theta( R),\\
d\phi(X_\theta)&=- d\phi\circ J(JX_\theta) = -d\theta( JX_\theta)= -d\alpha(X_\theta,JX_\theta)\\
&= -|X_\theta|^2=-|d\theta|^2=-|d\phi|^2,\\
d\phi(R)&= -d\theta(J R)= d\theta(\tfrac{4}{r^2} X_\alpha),
\end{align*}
and 
$$ d\phi(X_\beta)= \tfrac{1}{d\theta(R)}( |d\phi|^2 - (d\phi(\tfrac{r}{2}R))^2 - (d\phi(\tfrac{2}{r}X_\alpha))^2).$$
Observe that $\tfrac{2}{r} X_\alpha$ and $\tfrac{r}{2} R$ are orthonormal unit vectors.  
Finally, since $d\phi(R)=0$, $d\phi(X_\beta)= \tfrac{1}{d\theta(R)} |d\phi|^2$ on each page. 
\end{proof}

\section{Construction of the plug} \label{sec:plug}

The goal of this section is to generalize the $3$-dimensional plug constructed in \cref{subsec:3d plug} to higher dimensions. This is the key construction that will allow us to prove \cref{thm:genericity} and \cref{thm:family genericity} (modulo the bypass claim, to be considered in \cref{part:BBC}) in \cref{sec:proof of genericity} in essentially the same way as in the $3$-dimensional case.

\subsection{Definition of a plug}\label{subsec: defn of plug}

Let us rephrase the $3$-dimensional case considered in \cref{subsec:3d plug} in a way that is amenable to higher-dimensional generalization. Consider the standard contact space $(\R^3,\ker(dz+e^s dt))$ and the surface $\Sigma= \{z=0\}\subset \R^3$. The plug is obtained by growing a mushroom along a box $U= [0,s_0] \times [0,t_0]\subset \Sigma$, where we are viewing $U$ as the truncated symplectization of the $1$-dimensional compact contact manifold $\p_{-} U =\{0\} \times [0,t_0]$ with contact form $dt$.

In higher dimensions, let $(Y,\ker\eta)$ be a compact contact manifold of dimension $2n-1$ with  convex  boundary. Let $$({N_{\epsilon_0}} (Y) \coloneqq Y \cup ([0,\epsilon_0] \times \p Y),\ker \eta)$$
be a small extension of $(Y,\ker\eta)$. Now we consider
$$(M^{2n+1} \coloneqq \R^2_{z,s} \times N_{\epsilon_0} (Y), \xi=\ker(dz+e^s \eta))$$
and the hypersurface $\Sigma \coloneqq \{z=0\}$. Let $U \coloneqq [0,s_0] \times N_{\epsilon_0} (Y)$ and let
$$\p_{-} U \coloneqq \{-1\} \times N_{\epsilon_0} (Y) \quad \mbox{and }\quad \p_{+} U \coloneqq \{s_0+1\} \times N_{\epsilon_0} (Y).$$

{\em From now on, we fix a Riemannian metric on $M$, which induces a metric on any submanifold  and such that $[0,\epsilon_0]\times \p Y$ has thickness $\epsilon_0$ with respect to this metric.}

\begin{definition} \label{defn: Y-shaped plug}
A {\em $Y$-shaped plug  with parameter $\epsilon>0$} is a $C^0$-small perturbation $\widetilde U$ of $U$ supported in the interior $U^{\circ}$ of $U$ such that:
\be
\item all the flow lines of $\widetilde U_\xi$ that pass through $\{-1\}\times Y^{\circ}$ flow to a negative singularity;
\item all the flow lines of $\widetilde U_\xi$ that pass through $\{s_0+1\}\times Y^{\circ}$ flow from a positive singularity;
\item for all  possibly broken  flow lines of $\widetilde U_\xi$ that go from $\p_- U$ to $\p_+ U$, the holonomy map is $\epsilon$-close to the identity when defined;
\item   $\widetilde U_\xi$ is gradient-like with respect to a Morse function $f: \widetilde U\to \R$ which agrees with $s$ on $\bdry \widetilde U$; in particular there are no possibly broken loops of $\widetilde U_\xi$.
\ee
\end{definition}

\begin{definition} \label{defn: Y-shaped pre-plug}
 A {\em $Y$-shaped pre-plug} $\widetilde U$ satisfies \cref{defn: Y-shaped plug} with (3) replaced by:
\be
\item[(3')] for each possibly broken flow line of $\widetilde U_\xi$ that goes from $\p_- U$ to $\p_+ U$, the holonomy map is obtained by following a small perturbation of $(\bdry Y)_{\ker \eta}$. 
\ee 
\end{definition}

\subsection{A Peter-Paul contactomorphism} \label{subsec:peter paul}

Let $(Y,\eta)$ be a contact manifold with a fixed choice of contact form $\eta$. Let $S$ be a hypersurface of $Y$ transverse to the Reeb vector field $R_\eta$.  Then $S$ has a neighborhood $S\times[-\epsilon,\epsilon]_\tau\subset Y$ on which $R_\eta=\bdry_\tau$.

The following is well-known:

\begin{lemma} \label{lemma: contactization}
If $R_\eta=\bdry_\tau$ on $S\times[a,b]_\tau\subset Y$, $a<b$, then $\eta= d\tau + \beta$, where $\beta$ is the pullback of a $1$-form on $S$. Moreover, $d\beta$ is symplectic on $S$.
\end{lemma}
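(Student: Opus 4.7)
The plan is to use the two defining properties of the Reeb vector field together with a Cartan-formula argument to get $\tau$-invariance of $\eta$, and then extract the claimed normal form.

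First I would recall that $R_\eta$ is characterized by the two conditions $\eta(R_\eta) = 1$ and $\iota_{R_\eta} d\eta = 0$. Given $R_\eta = \partial_\tau$ on $S \times [a,b]$, Cartan's formula immediately yields
\begin{equation*}
\mathcal{L}_{\partial_\tau} \eta = \iota_{\partial_\tau} d\eta + d(\iota_{\partial_\tau} \eta) = 0 + d(1) = 0,
\end{equation*}
so $\eta$ is $\tau$-invariant on $S \times [a,b]$.

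Next I would decompose $\eta$ according to the product structure, writing $\eta = f\, d\tau + \beta_\tau$, where $\beta_\tau$ is a (possibly $\tau$-dependent) $1$-form on $S$ (containing no $d\tau$ component). Evaluating on $\partial_\tau$ gives $f = \eta(\partial_\tau) = 1$. The $\tau$-invariance established above then forces $\partial_\tau \beta_\tau = 0$, so $\beta_\tau = \beta$ is the pullback of a single $1$-form on $S$. This proves the first claim: $\eta = d\tau + \beta$.

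Finally, for the symplectic assertion, note that $d\eta = d\beta$ is itself the pullback of a $2$-form on $S$. Writing $\dim Y = 2n+1$ so $\dim S = 2n$, I would compute
\begin{equation*}
\eta \wedge (d\eta)^n = (d\tau + \beta) \wedge (d\beta)^n = d\tau \wedge (d\beta)^n,
\end{equation*}
since $\beta \wedge (d\beta)^n$ is a $(2n+1)$-form pulled back from the $2n$-dimensional $S$ and therefore vanishes. The contact condition $\eta \wedge (d\eta)^n \neq 0$ is thus equivalent to $(d\beta)^n \neq 0$ on $S$, which is precisely the condition that $d\beta$ is a symplectic form on $S$. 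There is no real obstacle here; the only subtle step is the Cartan-formula computation giving $\tau$-invariance, but this is standard.
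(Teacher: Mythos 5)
Your proof is correct and follows essentially the same route as the paper's: decompose $\eta = f\,d\tau + \beta_\tau$, use $\eta(R_\eta)=1$ to get $f=1$, use $\mathcal{L}_{R_\eta}\eta=0$ (which you correctly derive via Cartan's formula) to get $\tau$-independence of $\beta$, and read off symplecticity of $d\beta$ from the contact condition. You simply spell out the last step $\eta\wedge(d\eta)^n = d\tau\wedge(d\beta)^n$ more explicitly than the paper does.
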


In other words, $\eta$ is the {\em contactization of $(S,\beta)$}.  In particular, if $(S,\beta)$ is Weinstein then $S\times[a,b]$ is a contact handlebody.

\begin{proof}
We first write $\eta= fd\tau +\beta$, where $ f(\tau)  \in \Omega^0(S)$ and $\beta(\tau)\in \Omega^1(S)$.  Since $\eta(R_\eta)=1$, we have $f=1$.  Also, since $\mathcal{L}_{R_\eta}\eta=0$, $\beta(\tau)$ must be $\tau$-independent.  Finally, $d\beta$ is symplectic on $S$ due to the contact condition on $Y$.
\end{proof}

Given $(Y,\eta)$,  let $(M,\alpha)=(\R^2_{z,s} \times Y,dz + e^s \eta)$  and let $\phi_t: Y \stackrel\sim\to Y$ be the time-$t$ flow of $R_{\eta}$.

\begin{lemma}\label{lemma: peter paul 1}
The diffeomorphism
\begin{gather} \label{eqn:peter paul}
\Psi: M\stackrel\sim\to M,\quad  (z,s,y) \mapsto \left( e^{(-1+1/C)s} \cdot Cz, s/C, \phi_{(1-C) e^{-s}z} (y) \right),
\end{gather}
where $C>0$, is a contactomorphism.
\end{lemma}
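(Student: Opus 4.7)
The plan is a direct computation of $\Psi^{*}\alpha$, verifying that it is a positive conformal multiple of $\alpha = dz + e^{s}\eta$.

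First I would set $u(z,s) := (1-C)e^{-s}z$, so the $Y$-component of $\Psi$ is $y \mapsto \phi_{u(z,s)}(y)$. The key input is that $\phi_{t}$ is the Reeb flow of $\eta$ and hence $\phi_{t}^{*}\eta = \eta$ for every fixed $t$; however, here $u$ depends on $(z,s)$, so pulling back $\eta$ along the map $F(z,s,y) := \phi_{u(z,s)}(y)$ picks up an extra term from the $R_{\eta}$-component of $dF$. Concretely, for any tangent vector I get $dF = d\phi_{u}(\,\cdot\,) + du \cdot R_{\eta}$, and since $\eta(R_{\eta})=1$ and $\phi_{u}^{*}\eta = \eta$, this gives
\begin{equation*}
F^{*}\eta \;=\; \eta + du \;=\; \eta + (1-C)e^{-s}\,dz - (1-C)e^{-s}z\,ds.
\end{equation*}

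Next I would differentiate $z' = Ce^{(-1+1/C)s}z$ and $s' = s/C$ to get
\begin{equation*}
dz' = Ce^{(-1+1/C)s}dz + (1-C)e^{(-1+1/C)s}z\,ds, \qquad e^{s'} = e^{s/C}.
\end{equation*}
Writing $E := e^{(-1+1/C)s} = e^{s/C - s}$ so that $e^{s/C} = E \cdot e^{s}$, I would then assemble
\begin{equation*}
\Psi^{*}\alpha \;=\; dz' + e^{s'} F^{*}\eta \;=\; CE\,dz + (1-C)Ez\,ds + e^{s/C}\eta + (1-C)E\,dz - (1-C)Ez\,ds.
\end{equation*}
The $ds$ terms cancel exactly, and the $dz$ coefficients add to $(C + (1-C))E = E$, leaving
\begin{equation*}
\Psi^{*}\alpha \;=\; E\,dz + Ee^{s}\eta \;=\; e^{(-1+1/C)s}\bigl(dz + e^{s}\eta\bigr) \;=\; e^{(1/C-1)s}\,\alpha.
\end{equation*}
Since the conformal factor is a nowhere vanishing positive function, $\Psi$ preserves $\ker\alpha$.

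There is no real obstacle here; the only mild subtlety is the observation that, although $u$ is not constant, the failure of $F^{*}\eta$ to equal $\eta$ is exactly the exact $1$-form $du$, and this exact correction is precisely what is needed for the $ds$ and $dz$ terms to line up after multiplying by $e^{s/C}$. Finally, $\Psi$ is smooth with smooth inverse obtained by solving for $(z,s,y)$ from $(z',s',y')$ (replacing $C$ by $1/C$ and flowing by $-u$), so $\Psi$ is a diffeomorphism and hence a contactomorphism.
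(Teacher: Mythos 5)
Your proof is correct and follows essentially the same route as the paper: a direct computation of $\Psi^{*}\alpha$ showing it equals $e^{(-1+1/C)s}\alpha$. The only (cosmetic) difference is in justifying $F^{*}\eta=\eta+du$: the paper invokes its local normal form $\eta=d\tau+\beta$ from the contactization lemma, whereas you derive it intrinsically from the chain rule and $\eta(R_{\eta})=1$; both are valid.
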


\begin{proof}
We compute
\begin{align*}
\Psi^\ast (\alpha) &= d ( e^{(-1+1/C)s} \cdot Cz ) + e^{s/C} ( \eta + d( (1-C)e^{-s}z )) \\
&= e^{(-1+1/C)s} (1-C)z ds + e^{(-1+1/C)s} Cdz + e^{s/C} \eta \\
&\quad\quad  -e^{s/C} (1-C) e^{-s} zds + e^{s/C} (1-C) e^{-s} dz \\
&= e^{(-1+1/C) s} ( dz + e^s \eta ) = e^{(-1+1/C) s} \alpha.
\end{align*}
We explain the first line: By \cref{lemma: contactization}, $\eta$ can locally be written as $d\tau + \beta$, where $\beta$ is a $1$-form on a hypersurface $S\subset Y$ transverse to $R_\eta=\bdry_\tau$.  (Note that we can use the immersion $i:S\times\R_\tau\to Y$ with $i^*R_\eta=\bdry_\tau$ instead in \cref{lemma: contactization}.)  Then $\phi_{(1-C) e^{-s}z} (\tau,x)= (\tau+(1-C)e^{-s}z, x)$, where $x$ is the coordinate on $S$, and {$\phi_{(1-C) e^{-s}z}^*\eta= \eta + d((1-C)e^{-s}z)$.}
\end{proof}

 Let $M_{(z_0,s_0)} = [-z_0,z_0] \times [0,s_0] \times Y\subset (M,\alpha)$.  As an immediate corollary, by taking $C \gg 0$, we have:

\begin{lemma} \label{lem:peter paul}
For any $0< s'_0 \leq s_0$  and $z'_0>0$,  there exists $0 < z_0 \leq z'_0$, such that $M_{(z_0,s_0)}$ contactly embeds into $M_{(z'_0,s'_0)}$  and takes $\{z=0\}\cap M_{(z_0,s_0)}$ to $\{z=0\}\cap M_{(z'_0,s'_0)}$.
\end{lemma}

We call the contactomorphism $\Psi$ given by \eqref{eqn:peter paul} a \emph{Peter-Paul contactomorphism} for the following reason: In \cref{lem:peter paul}, $\Sigma = \{0\} \times [0,s] \times Y  \subset M_{(z_0,s_0)}$ is the hypersurface on which we want to create mushrooms. The length of the interval  $[-z_0,z_0]$  can be regarded as the given size of a neighborhood of $\Sigma$.  The map $\Psi$ then allows us to rob  the (already small) size of the neighborhood of $\Sigma$ to pay for a large size in the $s$-direction.

Observe that the Peter-Paul contactomorphism was not needed in \cref{sec:CST revisited} to make any $2$-dimensional surface convex.

\subsection{A pre-plug} \label{subsection: pre-plug}

Given {\em a standard Darboux ball $(Y^{2n-1},\eta')$ with convex boundary,} we explain how to construct a $Y$-shaped pre-plug $\widetilde U$  on 
$$\Sigma \coloneqq \{z=0\}\subset M=\R^2_{z,s}\times N_{\epsilon_0} (Y).$$ 
The $Y$-shaped pre-plug will be upgraded to a $Y$-shaped plug with parameter $\epsilon>0$ in the next two subsections. 

Modulo corner rounding, we may assume that $(Y,\ker \eta')$ is contactomorphic to $(Y_0\cup Y_1,\ker\eta)$ such that:
\be
\item $Y_0= (z_1^{-1}(\{|\zeta|\leq \epsilon'\}),\eta=\alpha_{std})$, where $z_1: S^{2n-1}\to \C_{\zeta}$ and $\alpha_{std}=\tfrac{1}{2}\sum_i (x_idy_i-y_idx_i)$ are as in \cref{subsection: qstab for sphere} and $\epsilon'>0$ is small;
\item $Y_1= ([0,2\pi]_t\times D^{2n-2},\eta=dt+\beta_{D^{2n-2}})$, where $\beta_{D^{2n-2}}$ is a standard Liouville form on $D^{2n-2}$ with one elliptic singular point; and
\item for each $t\in [0,2\pi]$, $\{t\}\times \bdry D^{2n-2}$ is glued to $z_1^{-1}(\epsilon'e^{it})$ so that the contact forms (and the Reeb vector fields) match.
\ee
(In particular, $Y_0\cup Y_1$ has a partial open book structure, where $Y_0$ is a neighborhood of the binding and $\{t\}\times D^{2n-2}$ is a retraction of a page; see \cref{defn: POBD}.)

We now apply the Peter-Paul contactomorphism to realize $(Y_0\cup Y_1,\eta)$ as a transverse slice (i.e., transverse to the characteristic foliation) on $\Sigma$; {\em we then blur the distinction between $Y$ and $Y_0\cup Y_1$ and write $Y=Y_0\cup Y_1$.} The price that we pay is that we lose control of the $z$-height. (Recall that the $z$-height restricts the thickness of the contact handlebody of the mushroom that we want to grow.)

To remedy this we apply quantitative stabilization (\cref{lemma: qstab for S 2n-1}) with $k\gg 0$ and $0<\epsilon\ll \epsilon'$ to $Y_0$ to obtain $\pi_k: Y_0\to S^1$. After a $C^\infty$-small perturbation of $Y_1$ whose size depends on $k$ and which we still denote by $Y_1$, there is a smooth extension $\pi_k: Y\to S^1$ which agrees with $(t,x)\mapsto kt$ on $Y_1$.  

Choose a finite number, say $N=5$, and a small constant $0<\epsilon''<\epsilon'$. Then, for $j=0,\dots, 4$, let $H'_j$ be the sector $\pi_k^{-1}([\tfrac{2\pi j}{5},\tfrac{2\pi (j+1)}{5}])$. Assuming we chose a suitable extension of $\pi_k$,  $H'_j$ is a (complete) contact handlebody of thickness $\tfrac{2\pi}{k}$. Next let $H_j$ be a slight modification of $H'_j$, $j=0,\dots,4$, obtained by removing an $\tfrac{\epsilon''}{2}$-neighborhood $\pi_k^{-1}(\{z\leq \tfrac{\epsilon''}{2}\})\cap Y_0$ of the binding $B_k$ and thickening the contact handlebody by flowing forward and backward by $\tfrac{\epsilon''}{k}$ in the Reeb direction.

Finally, we slightly (i.e., in a $C^\infty$-small manner) modify $\eta$ by ``shifting the binding" $B_k$ away from $H_0$ using a contactomorphism $\phi$ and construct the analogous contact handlebody $\phi(H_0)$ of thickness $\tfrac{2\pi+2\epsilon''}{k}$ such that $\phi(H_0)$ contains the $\epsilon''$-neighborhood of $B_k$, as follows:

\s\n
{\em Brief explanation of ``shifting the binding".}
By flowing along the characteristic foliation of each page, one can normalize $\alpha_{std}$ on a small neighborhood $B_k\times D^2$ of the binding $B_k$ as 
$$\alpha_{std}=f\beta + g (xdy-ydx),$$ 
where $D^2$ is a disk with Euclidean/polar coordinates $(x,y)$/$(r,\theta)$ and small radius; $\beta=\alpha_{std}|_{B_k}$; $f=f(r)$ and $f(0)=1$; $g=g_0 r^2+O(r^3)$ and $g_0$ is a nonvanishing function on $B_k$; and {\em the pages are still $\theta=\mbox{const}$.} Letting 
$$\alpha:= f^{-1}\alpha_{std}= \beta+ h(xdy-ydx),\quad \mbox{where} \quad h=f^{-1}g,$$ and $X=\tfrac{\bdry}{\bdry x}- h y R_\beta$, where $R_\beta$ is the Reeb vector field of $\beta$, we compute: 
\begin{gather*}
d\alpha= d\beta + dh\wedge(xdy-ydx) +2h dxdy,\\
i_X d\alpha= 2h dy + O(r), \quad d (\alpha(X))= -2h dy + O(r),
\end{gather*}
and hence $\mathcal{L}_X \alpha = O(r)$. Therefore, the translation by $aX$, $a<0$ small, on $B_k\times D^2$ (where defined), is close to a contactomorphism and the modification needed to make it into a contactomorphism $\phi$ is on the order of $a\cdot O(r)$, which is an order of magnitude smaller. Hence $\phi(H_0)$ has the property of being close to the $x\mapsto x+a$-translate of $H_0$ with error much smaller than $a$; in particular $\phi(H_0)$ contains the $\epsilon''$-neighborhood of $B_k$.  Finally, one may adjust the contact form on $\phi(H_0)$ by multiplying by a function that is close to $1$ so that $\phi(H_0)$ becomes a contact handlebody.

\s
In what follows we abuse notation and refer to $\phi(H_0)$ by $H_0$.

The pre-plug $\widetilde U$ consists of mushrooms $Z_{j}$, $0\leq j \leq 4$, with bases
$$B_{j} \coloneqq [\tfrac{8-2j}{10},\tfrac{9-2j}{10}]_s \times H_{j};$$
see \cref{fig:open book dynamics}. (Strictly speaking, the base $B_0$ is obtained by shifting $B'_0=[\tfrac{8}{10}, \tfrac{9}{10}]\times H_0$ by the map $(s,x)\mapsto (s+f_0(x),x)$, where $f_0$ is a $C^0$-small smooth function on $H_0$. We will assume that we started with a slightly smaller $B'_0= [\tfrac{8}{10}+\delta, \tfrac{9}{10}-\delta]\times H_0$ and that the base for the contact handlebody $H_0$ {\em contains} $B_0$.) We also require the damping of $Z_1,\dots, Z_4$ to occur inside the $\tfrac{3\epsilon''}{4}$-neighborhood of $B_k$ so that the damping regions of $H_1,\dots,H_4$ are contained in $H_0$ and the damping region of $H_0$ to occur inside $H_2$.

\begin{figure}[ht]
	\begin{overpic}[scale=.35]{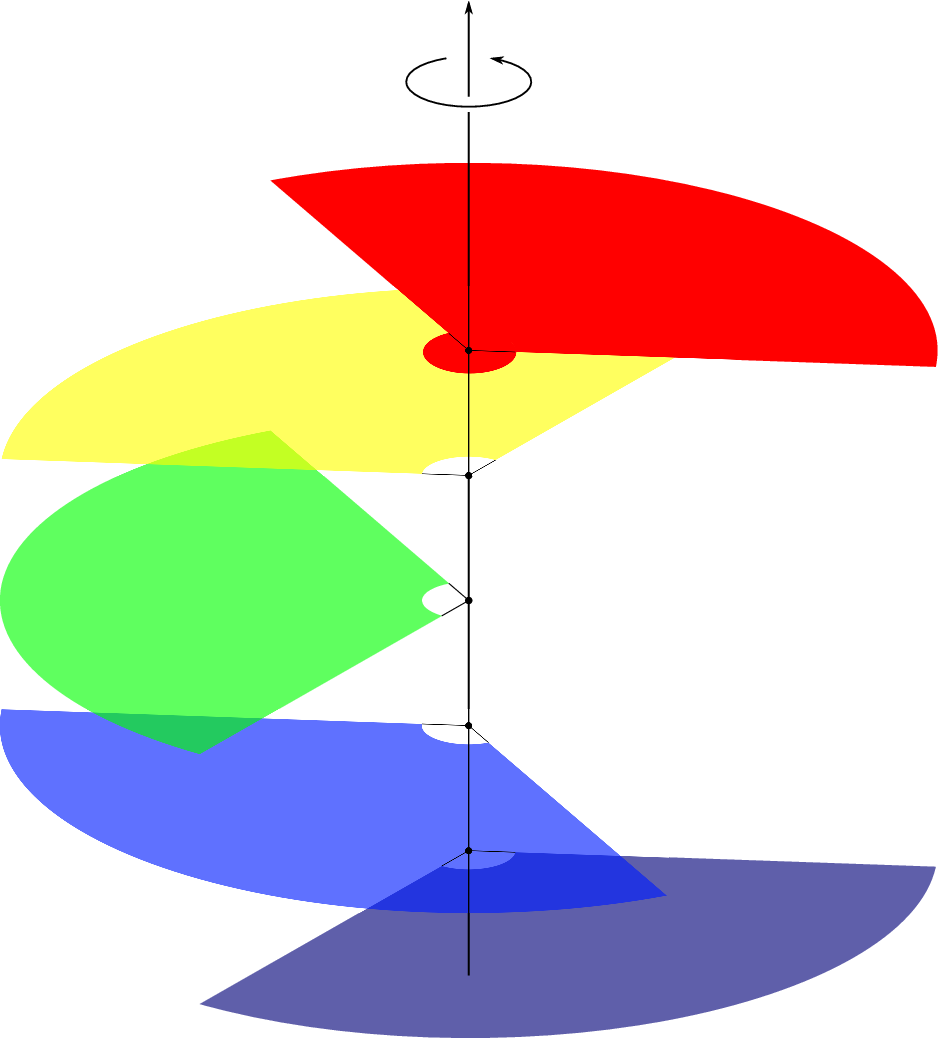}
		\put(41.5,99){$s$}
		\put(52.5,90.5){$\theta$}
	\end{overpic}
\caption{$H_0, \dots, H_4$, from top to bottom.}
\label{fig:open book dynamics}
\end{figure}

\n
{\em Verification of (3') in \cref{defn: Y-shaped pre-plug}.} This is a direct consequence of \cref{prop:dynamics C-fold} and our choice of the ordering of the mushrooms in the $s$-direction.

We first give names to regions of $\bdry_- U$: Viewing $H_j$ as a subset of $\bdry_- U$, let $\widetilde H_j$ be the closure of the union of $H_j$ and the set of points such that the holonomy from $s=\tfrac{8-2j}{10}-\delta$ to $\tfrac{9-2j}{10}+\delta$ (for $\delta>0$ small) is not trivial or does not exist; note that $\widetilde H_j$ is contained in a small neighborhood of $H_j$.  We denote the portion of $\widetilde H_j$ that closely approximates it and acts as a sink by $H_{j,\op{in}}$ and {$\widetilde H_j \setminus H_{j,\op{in}}$} by $H_{j, \bdry}$, and the corresponding products with $[\tfrac{8-2j}{10},\tfrac{9-2j}{10}]$ by $B_{j,\op{in}}$ and $B_{j,\bdry}$. 

The dynamics of $\widetilde U$ in forward time is described as follows: Let $x\in \bdry_-U$ and let $\ell_x$ be the flow line of $\widetilde U$ passing through $x$. 
\s\n
(A) If $\ell_x$ enters $B_{j,\op{in}}$, $j=0,\dots,4$, then $\ell_x$ converges to a singularity in $Z_j$.

\s\n
(B) If $\ell_x$ enters $B_{j,\bdry}$, then $\ell_x$ exits $Z_j$ at a point near $\bdry H_j$ --- {\em recall that as $\ell_x$ passes through $Z_j$ it flows ``from $R_+(\bdry H_j)$ to $R_-(\bdry H_j)$" in the sense of (Z5) of \cref{prop:dynamics C-fold} with possibly large nontrivial components in the Reeb direction of $\Gamma_{\bdry H_j}$ on the damping region} --- and one of the following will happen:
\begin{enumerate}
	\item $\ell_x$ follows $\p_s$ until $\{s=1\}$ and exits the plug; the holonomy map is obtained by following a small perturbation of $(\bdry Y)_{\ker \eta}$;
	\item $\ell_x$ follows $\p_s$ until $B_{i,\op{in}}$ for $i<j$; we then apply (A) after letting the new $j$ equal $i$; 
	\item $\ell_x$ follows $\p_s$ until $B_{i,\bdry}$ for $i<j$; we then apply (B) after letting the new $j$ equal $i$.
\end{enumerate}
Note that $H_0$ was chosen so that $Z_0$ captures all the trajectories that enter $\bdry_-U$, ``survives to'' $s= \tfrac{8}{10}$, and is not close to $\bdry Y$. A similar analysis can be applied to the dynamics of $\widetilde U$ in backward time and (3') of \cref{defn: Y-shaped pre-plug} then holds.

In light of \cref{prop:morse criterion} and \cref{lem:C^infty perturb}, we conclude that $\widetilde U$ is Morse.

\subsection{\texorpdfstring{$\epsilon$}{\textepsilon}-short hypersurfaces} \label{subsec:epsilon convexity}

In this subsection we strengthen \cref{thm:genericity} in a quantitative way. Namely, in addition to requiring that $\Sigma_{\xi}$ be Morse$^+$ or  $1$-Morse$^+$,  we also require all the smooth flow lines of $\Sigma_{\xi}$ to be short. 

\begin{definition} \label{defn:epsilon convexity}
A closed hypersurface $\Sigma \subset (M,\xi)$ is {{\em $\epsilon$-short}} if the length of any smooth flow line of $\Sigma_{\xi}$ is shorter than $\epsilon$ with respect to the induced metric on $\Sigma$.
\end{definition}

{Observe that any closed Morse$^+$ hypersurface} $\Sigma$ is {$\epsilon$-short for a sufficiently} large $\epsilon$ which depends on $\Sigma$.  For our purposes we take $\epsilon>0$ to be a small number which is independent of the choice of convex hypersurface. \cref{thm:genericity} can be strengthened as follows:

\begin{theorem} \label{thm:epsilon genericity}
Given $\epsilon>0$, any closed hypersurface $\Sigma$ in a contact manifold can be $C^0$-approximated by  an $\epsilon$-short and Morse$^+$ one. Moreover, if $\Sigma$ is Weinstein convex, then there exists a $t$-invariant neighborhood $[-\delta,\delta]_t\times \Sigma$ of $\{0\}\times \Sigma$ and a $1$-parameter family of pairwise disjoint embeddings $\phi_t: \Sigma\to [-\delta,\delta]\times \Sigma$, $t\in [-\delta,0]$, such that: 
\be
\item $\phi_{-\delta}(\Sigma)=\{-\delta\}\times \Sigma$, 
\item $\phi_t(\Sigma)$ is $C^0$-close to $\{t\}\times \Sigma$ for all $t\in[-\delta,0]$, 
\item $\phi_{0}(\Sigma)$ is $1$-Morse$^+$ and $\epsilon$-short,  
and 
\item $\phi_t(\Sigma)$ have the same number and type of singular points for all $t\in[-\delta,0)$ and the whole family $\phi_t(\Sigma)$, $t\in[-\delta,0]$ is Weinstein convex.
\ee
\end{theorem}

 In words, what (3) and (4) are saying is that the singular points of the characteristic foliation of $\phi_t(\Sigma)$ remain the same for $t<0$ and all the singular points --- necessarily of birth-death type --- are created at the same time when $t$ reaches $0$. 

\cref{thm:epsilon genericity} holds in dimension $3$ by \cref{subsec:genericity in 3d},  together with a slightly more careful analysis of the characteristic foliation when installing a mushroom.  This will be the base case of our inductive argument.

\subsection{Construction of the plug} \label{subsec:hd plug}

The goal of this subsection is to prove the following: 

\begin{theorem} \label{thm: existence of Y-shaped plug}
Let $Y$ be the standard Darboux ball of dimension $2n-1$ with convex boundary. Then  for any $\epsilon>0$ small there exists a $Y$-shaped plug with parameter $\epsilon$,  provided \autoref{thm:epsilon genericity} holds for contact manifolds of any dimension $\leq 2n-1$.
\end{theorem}

{\em We adopt a simplification due to Eliashberg, Fauteux-Chapleau, and Pancholi, used with their permission, while retaining certain elements of the original version of the paper.} 

\begin{proof}
 Given $\epsilon>0$ small, choose $0<\epsilon_0\ll \epsilon$. Let $\Sigma$ be the pushoff of $\bdry Y$ towards $\bdry N_{\epsilon_0}(Y)$ such that $\Sigma$ is the {\em standard Weinstein convex sphere}, which we take to satisfy:
\be
\item[(Sph1)] $\Sigma\simeq S^{2n-2}$ and $\Gamma(\Sigma)\simeq S^{2n-3}$, where $\simeq$ means ``diffeomorphic";
\item[(Sph2)] $R_\pm(\Sigma)\simeq D^{2n-2}$ is a Weinstein domain with precisely one singular point, and the singular point on $R_+(\Sigma)$ (resp.\ $R_-(\Sigma)$) has index $0$ (resp.\ index $2n-2$). 
\ee

Applying \cref{thm:epsilon genericity}, there exists a foliation by $\Sigma_t':=\phi_t(\Sigma)$ for $t\in[-\delta,0]$ satisfying (1)--(4). We are starting with $\Sigma_{-\delta}'$ which is the standard Weinstein convex sphere by (1); for $t\in[-\delta,0)$, $\Sigma_t'$ remains the standard Weinstein convex sphere by (4); and for $t=0$, $\Sigma_0'$ is $\epsilon$-short and Weinstein convex with birth-death singular points by (3). 

\s\n
{\em Setup for $\widetilde U$.}
The $Y$-shaped plug $\widetilde U$ consists of two mushrooms $Z_0$, $Z_2$ with contact handlebody profiles $H_0$, $H_2$; a pre-plug $Z_1$ with profile $H_1$;   and bases $[\tfrac{2j}{6}s_0,  \tfrac{2j+1}{6}s_0]\times H_j$ (i.e., $Z_0$ has the smallest $s$-value, followed by $Z_1$, and then by $Z_2$), such that the following hold:
\be
\item[(P1)] $N_{\epsilon_0}(Y)\supset H_0\cup H_1\cup H_2 \supset Y$ and $H_0\cap H_2=\emptyset$;
\item[(P2)] the damping region of $H_0$ is a subset of $H_{1,\op{in}}$ and the damping region of $H_2$ is a subset of $H_{1,\op{out}}$.  
\ee
Note that $H_1$ does not need a damping region.
See \cref{fig: plug-new-version}.  Recall that on the damping regions the characteristic foliation may have large components in the Reeb direction of the dividing set. Since these can potentially create trouble with the $\epsilon$-shortness, we require (P2).  
\begin{figure}[ht]
	\begin{overpic}[scale=.8]{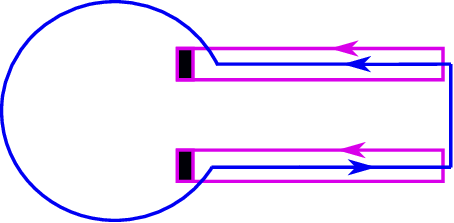}
	\put(80,40){$H_0$}
	\put(5,30){$H_1$}
	\put(80,3){$H_2$}
	\end{overpic}
	\caption{A schematic diagram of the plug, with the $s$-direction projected out (the boundaries on the right-hand side do not actually exist). The arrows indicate the direction of the flow along the characteristic foliations of the handlebodies (i.e., from the positive side to the negative side).  The circular region represents $N(\Gamma)$ and the shaded regions are the damping regions.}
	\label{fig: plug-new-version}
\end{figure}

\s\n
{\em Analysis of the dynamics of $\widetilde U_\xi$.}  We will specify $H_0,H_1,H_2$ later, but for the moment we analyze the effect of the plug $\widetilde U$ when we stack the mushrooms as above. 
\be
\item  Given a flow line of $\widetilde U_\xi$ that enters $Z_0$, either it flows to a negative singularity of $Z_0$ or passes through $Z_0$ after flowing parallel to the characteristic foliation of $\bdry H_0$ as given by (Z5) of \cref{prop:dynamics C-fold}.
\item  Given a flow line of $\widetilde U_\xi$ that enters $Z_1$ (including one that exits $Z_0$ in the previous step), either it flows to a negative singularity of $Z_1$ or passes through $Z_1$ after flowing parallel to the characteristic foliation of $\bdry H_1$.  Notice that all the potentially problematic orbits that had large components in the Reeb direction in $Z_0$ (i.e., those that flowed out of the damping region of $H_0$ or near $R_-(\bdry H_0)$) flow to a negative singularity of $Z_1$ by (P2).  
\item A similar consideration holds for a flow line of $\widetilde U_\xi$ that enters $Z_2$ (including one that exits $Z_1$ in the previous step).  
\item Backwards flow lines can be analyzed similarly. 
\ee
The orbits that pass through $\widetilde U$ flow along the characteristic foliations of $\bdry H_0,\bdry H_1$, and $\bdry H_2$ without ever entering damping regions; this means they flow closely along $\bdry (H_0\cup H_1\cup H_2)$. 

\s\n
{\em Description of $H_0,H_1,H_2$.}  Let us write $\Gamma=\Gamma(\Sigma_0')$ and let $N(\Gamma)=\Gamma\times D^2_{\rho,\theta}$ be a tubular neighborhood of $\Gamma$ with contact form $\beta_\Gamma + C (\tfrac{\rho^2}{2}d\theta)$ for $C>0$ small and such that the thickness in the $D^2$-direction is $\leq \epsilon''$ for $\epsilon''>0$ small.  We take $H_0$ (resp.\ $H_2$) to be a thin contact handlebody over an $\epsilon'''$-retraction of $R_+(\Sigma_0')$ (resp.\ $R_-(\Sigma_0')$), where $0< \epsilon'''\ll \epsilon''$, and take the damping region to be small. There exists $\delta_0'>0$ small such that 
$$R_+(\Sigma'_{-\delta_0})\setminus N(\Gamma)\subset H_{0,\op{out}} \quad \mbox{and}\quad  R_-(\Sigma'_{-\delta_0})\setminus N(\Gamma)\subset H_{2,\op{in}}.$$
We then take $H_1$ to be union of $N(\Gamma)$ and the region of $N_{\epsilon_0}(Y)$ bounded by $\Sigma'_{-\delta_0}$. There is some flexibility in choosing the dividing set and we take $\Gamma_{\bdry H_1}$ and also the damping region to be a subset of $H_{2,\op{in}}$. (P1) and (P2) hold by construction and also $\bdry (H_0\cup H_1\cup H_2)$ is $\epsilon$-short.  

\begin{lemma} \label{lemma: contactmorphic to standard Darboux ball}
The contact manifold $H_1$, after a $C^\infty$-small perturbation, is contactomorphic to a standard Darboux ball with convex boundary. 
\end{lemma}

\begin{proof}[Proof of \cref{lemma: contactmorphic to standard Darboux ball}]
Let $H_1':=[0,1]_t\times D$, where $D=D^{2n-1}$, and let $D_t:=\{t\}\times D$. 
We sketch the proof that if $\alpha$ is a contact form on $H_1'$ such that:
\be
\item on a neighborhood of $[0,1]\times \bdry D_t$, $\alpha=dt+\beta$, where $\beta$ is independent of $t$ and is the symplectization of a standard contact form on $\bdry D$.
\item the characteristic foliation on each $D_t$ consists of a positive elliptic singularity $e_t$ and all its trajectories go from $e_t$ to $\bdry D_t$;
\ee
then $(H_1',\ker\alpha)$ is contactomorphic to $(H_1',\ker (dt+\beta))$ for some extension of $\beta$ to $D$. 
The assumptions imply that $\alpha=f_t dt+ \beta_t$, where $f_t$ is a function on $D_t$ and $\beta_t$ is a $1$-form on $D_t$. We slightly perturb $\alpha$ so that the $(N(e_t),\beta_t|_{N(e_t)})$ are all diffeomorphic, where $N(e_t)\subset D_t$ is a neighborhood of $e_t$, and apply a $1$-parameter family of diffeomorphisms to straighten the characteristic foliation and make $\beta_t$ independent of $t$, which we now write as $\beta'$.   The contact condition implies that $f_t>0$; applying the Reeb flow gives the normalization $fdt+\beta'$; and we divide by $f$. 

The lemma then follows modulo adjusting/rounding corners.
\end{proof}

In view of \cref{lemma: contactmorphic to standard Darboux ball} and the construction of a pre-plug from \cref{subsection: pre-plug}, $\widetilde U$ is a $Y$-shaped plug with parameter $\epsilon$. 
\end{proof}

\section{Approximation by convex hypersurfaces} \label{sec:proof of genericity}

In this section we complete the proofs of \cref{thm:epsilon genericity} and \cref{thm:family genericity}. The main technical ingredient is the higher-dimensional plug constructed in \cref{sec:plug}.  In fact, our proofs are basically the same as those for the $3$-dimensional case discussed in \cref{sec:CST revisited}.

\begin{proof}[Proof of \cref{thm:epsilon genericity}]
 The proof is by induction on the dimension of $M$. 

Fix a Riemannian metric on $M$. Given a closed hypersurface $\Sigma \subset (M,\xi)$, we may assume that the singularities of $\Sigma_{\xi}$ are isolated and Morse after a $C^{\infty}$-small perturbation.  By \cref{thm: barricade} a finite barricade $B_I=\{B_i=[0,s_i]\times Y_i\}_{i\in I}$ exists for $\Sigma_\xi$, where each $Y_i$ is a standard $(2n-1)$-dimensional Darboux ball. 
	
 Choose $\epsilon'>0$ much smaller than the sizes of the $Y_i$. We then replace each $B_i$ with a $Y_i$-shaped plug with parameter $\epsilon'$ constructed in \cref{sec:plug};  note that the construction of the $Y_i$-shaped plug uses the inductive step of \cref{thm:epsilon genericity} for $\op{dim}(M)-2$.   Let $\Sigma^{\vee}$ be the resulting hypersurface.   A trajectory that passes near $Y_i$ is either trapped by $Y_i$ or has holonomy at most $\epsilon'$.  Since $\epsilon'$ is small, the trajectory that continues is close to the original trajectory on $\Sigma_\xi$ and will be trapped by some other $Y_j$ by the positioning of the barricade.  Hence  $\Sigma^{\vee}_{\xi}$ satisfies Conditions (M1)--(M3) of \cref{prop:morse criterion}.  A  further $C^{\infty}$-small perturbation of $\Sigma^{\vee}$ will make it convex by \cref{prop:convex criterion}. 

 The $\epsilon$-Morse$^+$ property is guaranteed if all the $B_i, i \in I$, have diameters $<\tfrac{\epsilon}{3}$ and all the trajectories of $\Sigma_\xi\setminus (\cup_{i\in I} B_i)$ have lengths $< \tfrac{\epsilon}{3}$. 

 Finally, the second statement (for $\Sigma$ already Weinstein convex) follows from observing that all singularities of all the $Y_i$-shaped plugs can be turned on simultaneously; see \cref{rmk: variant of prop:char foliation on 3d C-fold} which also holds for higher dimensions.  
\end{proof}

\begin{proof}[Proof of \cref{thm:family genericity}]
Let $(\Sigma \times [0,1],\xi)$ be a contact manifold such that the hypersurfaces $\Sigma_i$, $i=0,1$, are {Weinstein convex.} The proof from \cref{subsec:family genericity in 3d} carries over almost verbatim with ``surface" replaced by ``hypersurface."  We use \cref{lemma: refinement} to construct refinements of barricades so that we can install/uninstall two sets of barricades.  

Suppose for simplicity that the barricade consists of one  flow box $B = [0,s_0] \times D^{2n-1}$.  Even though the folded hypersurface $\Sigma^{\vee}$ can be made convex, the intermediate hypersurfaces appearing in the procedure of installing and uninstalling  a $D^{2n-1}$-shaped plug  need \emph{not} be Morse.
To remedy this defect, we take a cover  $D^{2n-1} = \cup_{1 \leq i \leq K} U_i$ by a finite number of balls of small diameter $\tfrac{1}{N}$  for which there exists a partition
$${\mathcal{P}:}\{1,\dots,  K\}\to\{1,\dots,2n\}$$
such that $U_i\cap U_j=\emptyset$ if $\mathcal{P}(i)=\mathcal{P}(j)$. Now we choose $2n$ pairwise distinct values in $(0,s_0)$ and position $U_i$ along $[0,s_0]$ such that all the $U_i$ with the same $\mathcal{P}$-value have the same $s$-value.    When we install/uninstall all the $U_i$-shaped plugs, the interior discrepancy for $B$ goes to zero as $N\to\infty$ by the analog of \cref{lem:small holonomy 3d} and the fact that the $\epsilon$ that appears in \cref{thm: existence of Y-shaped plug} is one order of magnitude smaller than the sizes of $U_i$. 

Finally, proceeding as in \cref{subsec:family genericity in 3d}, we can foliate $\Sigma\times[0,1]$ by hypersurfaces of the form $\Sigma_t$,  which we may assume are $1$-Morse by the smallness of the interior discrepancy and Step 1 of \cref{subsec:family genericity in 3d}.   The only obstruction to convexity occurs when $(\Sigma_t)_\xi$ fails to be Morse$^+$, which occurs at isolated moments.
\end{proof}

\newpage

\part{Bypass-bifurcation correspondence}\label{part:BBC}

\section{Folded Weinstein hypersurfaces}\label{sec:folded-weinstein}

In this section, we repackage the Morse theory on Morse hypersurfaces from \cref{sec:convexity criterion} in terms of \emph{folded Weinstein hypersurfaces}, a generalization of convexity.

\subsection{Definitions}

\begin{definition} \label{defn:folded Weinstein hypersurface}
An oriented hypersurface $\Sigma \subset (M,\xi)$ is a \emph{folded Weinstein hypersurface} if the characteristic foliation $\Sigma_{\xi}$ satisfies the following properties:
\begin{itemize}
\item[(FW1)] There exist pairwise disjoint closed codimension-$1$ submanifolds $K_i \subset \Sigma$ cutting $\Sigma$ into $2m$ pieces for some $m\geq 1$, i.e.,
\[
\Sigma = W_1 \cup_{K_1} \dots \cup_{K_{2m-1}} W_{2m},
\]
where $W_i$ are compact cobordisms with boundary $\partial W_i=K_i\cup K_{i-1}$. For notational convenience, we set $K_0 = K_{2m} = \emptyset$. We call $K_i$ the \emph{folding loci} of $\Sigma$. 

\item[(FW2)] The singular points of $\Sigma_{\xi}$ in each $W_i$ have the same sign, and the sign changes when crossing $K_i$. We assume the singular points in $W_1$ are positive and that each $W_i$ has at least one singular point. 

\item[(FW3)] There exists a Morse function $f_i$ on each $W_i$ such that $K_{i-1}$ and $K_i$ are regular level sets and $(W_i)_{\xi}$ is gradient-like with respect to $f_i$. In particular, $\Sigma_{\xi}$ is transverse to each $K_i$.
\end{itemize}
\end{definition}

\begin{figure}[ht]
	\centering
	\begin{overpic}[scale=.38]{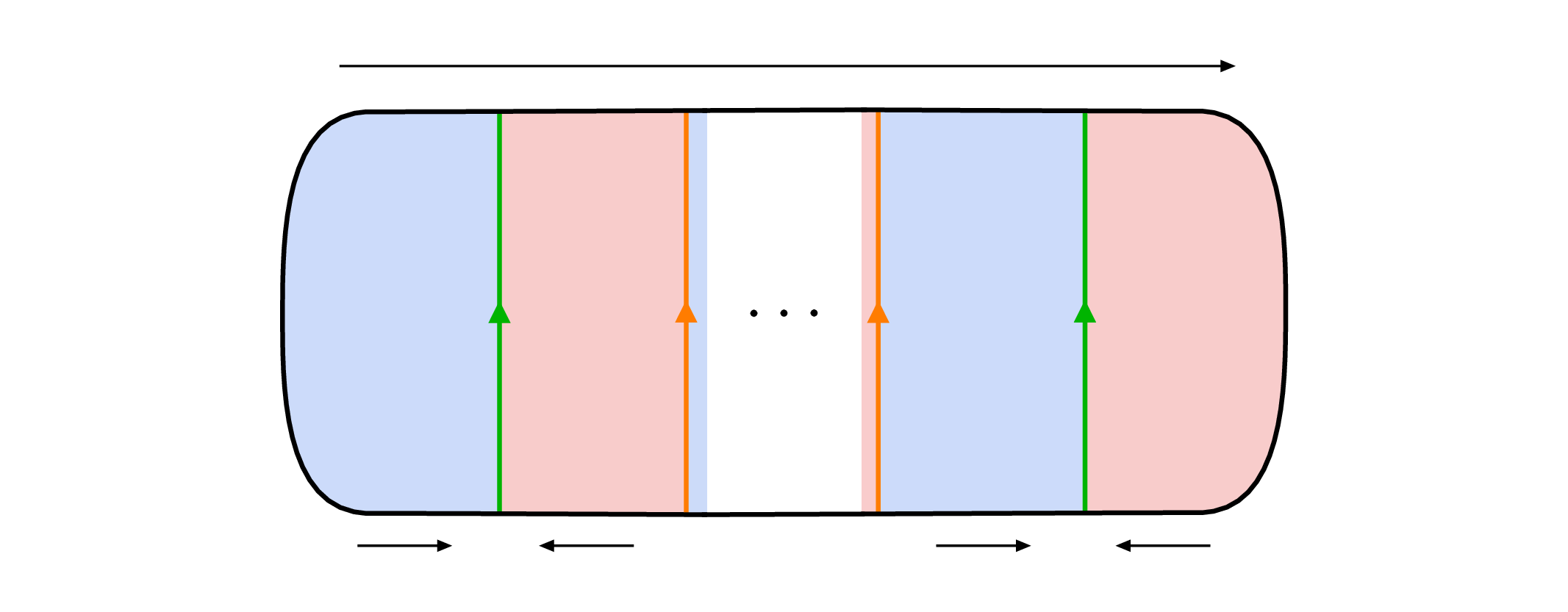}
        \put(23,18){$W_1$}
        \put(36,18){$W_2$}
        \put(59,18){$W_{2m-1}$}
        \put(73,18){$W_{2m}$}
	\end{overpic}
	\caption{A schematic depiction of a folded Weinstein hypersurface. Positive (resp. negative) regions are blue (resp. red), the oriented characteristic foliation goes from left to right, and the arrows below indicate the direction of the Liouville vector fields. Dividing sets are green, anti-dividing sets are orange.}
	\label{fig:fw1}
\end{figure}

By definition, any folded Weinstein hypersurface is Morse, and conversely any Morse hypersurface can be equipped with the structure of a folded Weinstein hypersurface. The folded Weinstein framework is useful because the argument in \cref{prop:convex criterion} furnishes a contact form $\alpha$ for $\xi$ such that $(-1)^{i+1}W_i$ is a Weinstein cobordism with Weinstein structure $(\lambda = \alpha\mid_{W_i}, f_i)$. In particular, the orientation on $W_i$ given by the Weinstein structure agrees with (resp.\ is opposite to) the orientation inherited from $\Sigma$ if the singular points of $(W_i)_{\xi}$ are positive (resp.\ negative); folded Weinstein structures are thus a specification of folded symplectic structures \cite{cannasdasilva2010folded,breen2024folded}.

As we will see below, if $K_i$ is a folding locus with $i$ odd (hence $W_i$ is a \emph{positive region} and $W_{i+1}$ is a \emph{negative region}), the contact germ near $K_i$ is locally indistinguishable from that of a dividing set of a convex hypersurface. There are differences in the case where $i$ is even (i.e., when $W_i$ is negative and $W_{i+1}$ is positive). For this reason, we refer to an odd-indexed folding locus as a \emph{dividing set} and an even-indexed folding locus as an \emph{anti-dividing set}. Before investigating these differences, we discuss some examples.  

\begin{example}
If $\Sigma$ is a convex hypersurface such that $R_{\pm} (\Sigma)$ are Weinstein manifolds, then  $\Sigma$ can be given the structure of a folded Weinstein hypersurface where the folding locus coincides with the dividing set $\Gamma_{\Sigma}$. There are no anti-dividing set components. On the other hand, a folded Weinstein hypersurface is not always convex because there may exist trajectories of $\Sigma_{\xi}$ from a negative singularity to a positive one. Nevertheless, since a $C^\infty$-small perturbation of a Morse hypersurface is Morse$^+$ by \cref{lem:C^infty perturb}, any folded Weinstein hypersurface is $C^{\infty}$-generically convex by \cref{prop:convex criterion}.
\end{example}

\begin{example}
Let $\Sigma = T^2$ be a torus with the characteristic foliation as depicted in \cref{fig:fw_ex2}. The characteristic foliation is Morse, but thanks to the retrogradient trajectory it is not Morse$^+$ (and $\Sigma$ is not convex). A folded Weinstein structure is indicated in the figure. 
\end{example}

\begin{figure}[ht]
	\centering
	\begin{overpic}[scale=.43]{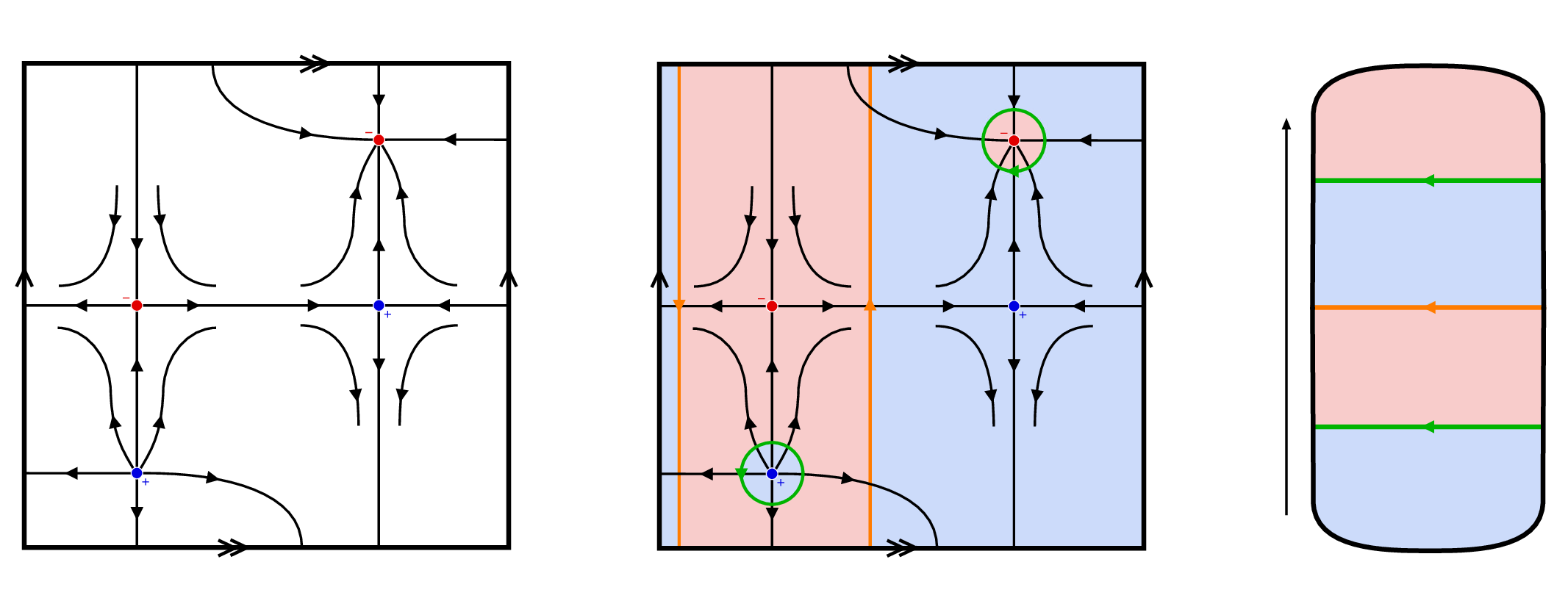}
        \put(89.5,7.5){$W_1$}
        \put(89.5,15){$W_2$}
        \put(89.5,22.5){$W_3$}
        \put(89.5,30){$W_4$}
	\end{overpic}
	\caption{A folded Weinstein $T^2$. The folding loci $K_1$ and $K_3$ in green are dividing sets, while $K_2$ in orange is an anti-dividing set.}
	\label{fig:fw_ex2}
\end{figure}

We introduce the following terminology:

\begin{terminology}
Given a folded Weinstein hypersurface $\Sigma\subset(M,\xi)$ and a critical point $p$ of $\Sigma_\xi$:
    \be
    \item The \emph{characteristic-stable manifold} $\op{stab}_\Sigma(p)$ of $p$ is the stable manifold of $p$ in $\Sigma$ with respect to $\Sigma_\xi$ with the usual orientation.
    \item The {\em Weinstein-stable manifold} $\op{stab}_{W_i}(p)$ of $p\in W_i$ is the stable manifold of $p$ in $\overline{W}_i$ with respect to the Liouville vector field on $W_i$.
\ee
The characteristic-unstable and Weinstein-unstable manifolds are defined similarly and are denoted $\op{unstab}_\Sigma(p)$ and $\op{unstab}_{W_i}(p)$, respectively.
\end{terminology}

Observe that $\op{stab}_\Sigma(p)$ properly contains $\op{stab}_{W_i}(p)$ if $p\in W_i$ with $i$ odd, while $\op{unstab}_\Sigma(p)$ properly contains $\op{stab}_{W_i}(p)$ if $p\in W_i$ with $i$ even.

\subsection{Normalization near a folded Weinstein hypersurface} \label{subsec:contact form on folded Weinstein}

Recall that if $\Sigma \subset (M,\xi)$ is a convex hypersurface, then there exists a standard tubular neighborhood $U(\Sigma)\simeq \R_t\times \Sigma$ of $\Sigma$ such that $\xi|_{U(\Sigma)} = \ker (fdt+\beta)$, where $f \in C^{\infty} (\Sigma)$ and $\beta \in \Omega^1(\Sigma)$. In particular, $\xi$ is $t$-invariant and $U(\Sigma)$ has infinite size. 

The goal of this subsection is to generalize this to folded Weinstein hypersurfaces. To a folded Weinstein surface $\Sigma \subset (M,\xi=\ker \alpha)$ we associate a \emph{standard tubular neighborhood} $U(\Sigma)$ of $\Sigma$ and a contactomorphism $\phi: U(\Sigma)\stackrel\sim\to ((-\epsilon,\epsilon)_t \times \Sigma, \ker\alpha_{\Sigma})$, where $\alpha_\Sigma$ is a normalized contact form to be specified below, $\epsilon>0$ is sufficiently small, and $\phi(\Sigma)=\{0\}\times \Sigma =: \Sigma_0$. Importantly, $\xi$ will not be $t$-invariant, nor will the neighborhood have infinite size. Both of these features arise because of the contact germ near an anti-dividing set.

Given a folded Weinstein hypersurface $\Sigma = W_1 \cup_{K_1} \cup \dots \cup_{K_{2m-1}} W_{2m}$, we will construct the normalized contact form $\alpha_{\Sigma}$ on $\R\times\Sigma$ such that $\alpha_{\Sigma} |_{\Sigma} = \alpha|_{\Sigma_0}$, up to rescaling by a positive function. \cref{lem:char foliation} then gives the desired contactomorphism.

\s\n
\textsc{Step 1.} \emph{Construct the contact form away from the folding loci.}
\s

For each $i$, let $U(K_i)\subset \Sigma$ be a collar neighborhood of $K_i$ with coordinates $U(K_i) = [-1,1]_{\tau} \times K_i$, such that the oriented characteristic foliation $\Sigma_{\xi}$ is directed by $\partial_{\tau}$ on $U(K_i)$. Note that this implies $\{-1\} \times K_i \subset W_i$ and $\{1\} \times K_i \subset W_{i+1}$. 

Let $W_i^{\circ} := W_i \setminus (U(K_{i-1}) \cup U(K_i))$. Because $\Sigma$ is folded Weinstein, we may rescale the ambient contact form by a positive function if necessary and assume that $\beta_i := \alpha\mid_{W_i^{\circ}}$ is a Liouville form on $W_i^{\circ}$ for all $i$. By extending $W_i^{\circ}$ slightly into each neighborhood of the folding loci, we can also make the following assumption about the Liouville forms $\beta_i$ (which are purely technical, facilitating the gluing of contact forms later on). 

\begin{itemize}
    \item If $i$ is odd (so that $W_i$ is a positive region), near $\partial U(K_i)$ the Liouville vector field is $X_{\beta_i} = -\frac{1}{2\tau}\, \partial_{\tau}$.  
    
    \item If $i$ is even (so that $W_i$ is a negative region, hence is an oppositely oriented Weinstein cobordism), near $\partial U(K_i)$ the Liouville vector field is $X_{\beta_i} = \frac{1}{2\tau}\, \partial_{\tau}$.  
\end{itemize}
We define
\begin{equation} \label{eqn:contact form in middle}
\alpha_{\Sigma} := (-1)^{i+1} dt + \beta_i
\end{equation}
on $\R \times (\Sigma \setminus \cup_{i=1}^{2m-1} U(K_i))$. Everything in this step is consistent with the approach taken in normalizing the contact structure near a convex hypersurface, where the contact form rescales to $\pm dt + \beta$ on $R_{\pm}$. 

\s\n
\textsc{Step 2.} \emph{Construct the contact form near dividing sets.}
\s

For $i$ odd, the construction of the contact form on $\R \times U(K_i)$ is analogous to the convex case. First, assume after rescaling that $\alpha\mid_{U(K_i)} = e^{-\tau^2}\, \eta$, where $\eta$ is a contact form on $K_i$. We have $d(e^{-\tau^2}\, \eta) = -2\tau e^{-\tau^2}\, d\tau\, \eta + e^{-\tau^2}\, d\eta$, so near $\partial U(K_i)$ this defines a Liouville form with Liouville vector field $-\frac{1}{2\tau}\, \partial_{\tau}$. Note that 
\[
d(e^{-\tau^2}\, \eta)^n = (-2\tau e^{-\tau^2}\, d\tau\, \eta + e^{-\tau^2}\, d\eta)^n = -2n\tau e^{-n\tau^2}  \, d\tau\, \eta\, (d\eta)^{n-1}
\]
hence the orientation of $U(K_i)$ induced by the orientation of $\Sigma$, which agrees with the Liouville orientation for $\tau < 0$, is given by $d\tau\, \eta\, (d\eta)^{n-1}$. In $\R_t \times U(K_i)$, define
\begin{equation} \label{eqn:contact form near max}
\alpha_{\Sigma} := f(\tau)\, dt + e^{-\tau^2}\, \eta
\end{equation}
where $f:[-1,1]_{\tau} \to \R$ is a function which is decreasing, odd, and equals $\pm 1$ when $\tau$ is close to $\mp 1$. The reader may verify that the contact condition is $f'(\tau) + 2\tau f(\tau) < 0$, which holds as $f'(\tau) < 0$ and $2\tau f(\tau) \leq 0$. 

Note that $\alpha$ is $t$-invariant in $\R_t \times U(K_i)$, and in particular $U(K_i)$ is convex with respect to $\partial_t$. This is in contrast to the behavior near an anti-dividing set, as considered in the final step. 

\s\n
\textsc{Step 3.} \emph{Construct the contact form near anti-dividing sets.}
\s

For $i$ even, assume after rescaling that $\alpha\mid_{U(K_i)} = e^{\tau^2}\, \eta$, where $\eta$ is a contact form on $K_i$. Note that 
\[
d(e^{\tau^2}\, \eta)^n = (2\tau e^{\tau^2}\, d\tau\, \eta + e^{\tau^2}\, d\eta)^n = 2n\tau e^{n\tau^2}\, d\tau\, \eta\, (d\eta)^{n-1}
\]
hence the orientation of $U(K_i)$ induced by the orientation of $\Sigma$, which agrees with the Liouville orientation for $\tau > 0$, is again given by $d\tau\, \eta\, (d\eta)^{n-1}$. Let $f:[-1,1]_{\tau}\to \R$ be the same function as before, and let $g:[-1,1]_{\tau}\to \R$ be a smooth nonpositive function to be determined later (according to the contact condition) which is $0$ near $\tau = \pm 1$; see \cref{fig:function graph near min}. 

\begin{figure}[ht]
	\begin{overpic}[scale=.4]{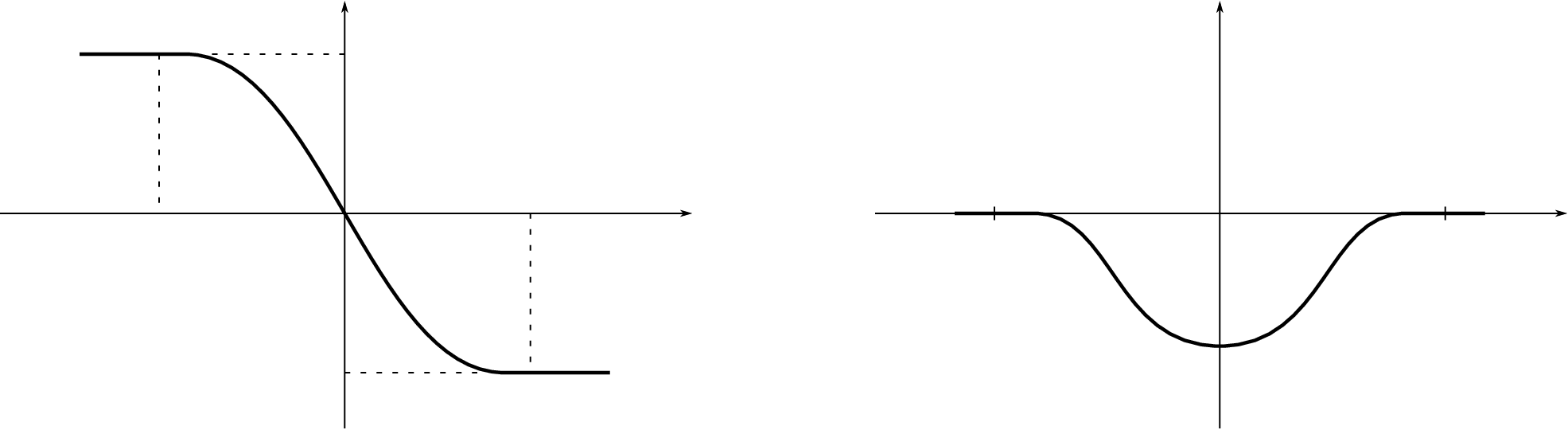}
		\put(7.5,11){$-1$}
		\put(31.7,11){$1$}
		\put(17.5,2.5){$-1$}
		\put(19.5,25){$1$}
		\put(61.5,11){$-1$}
		\put(91.5,11){$1$}
		\put(45,12){$\tau$}
		\put(101,12){$\tau$}
		\put(6,26){$f(\tau)$}
		\put(83,4.5){$g(\tau)$}
	\end{overpic}
	\caption{The graph of functions used in the contact form given by \eqref{eqn:contact form near min}.}
	\label{fig:function graph near min}
\end{figure}

On $\R_t\times U(K_i)$, define
\begin{equation} \label{eqn:contact form near min}
\alpha_{\Sigma} = -f(\tau)\, dt - tg(\tau)\, d\tau + e^{\tau^2} \, \eta.
\end{equation}
As preliminary observations, note that $\alpha_{\Sigma}$ is not $t$-invariant, that $dt$ is scaled by $-f(\tau)$ instead of $f(\tau)$ to correctly glue the contact form with that of \textsc{Step 1}, and that $\alpha_{\Sigma}\mid_{U(K_i)} = e^{\tau^2}\, \eta = \alpha\mid_{U(K_i)}$. This time, the reader may verify the contact condition $f'(\tau) - 2\tau f(\tau) - g(\tau) > 0.$ Given $f$, one can choose the function $g$ as above to satisfy this condition.

For later use, note that $\alpha_{\Sigma}$ restricts to the Liouville form $\beta_{i,t}=- tg(\tau) d\tau + e^{\tau^2} \eta$ on $\{t\} \times (U(K_i) \setminus K_i)$ for any $t \in \R$.  We compute the Liouville vector fields
\begin{equation*}
X_{\beta_{i,t}} = 1/(2\tau) (\partial_{\tau} + te^{-\tau^2} g(\tau) R_{\eta}),
\end{equation*}
where $R_{\eta}$ denotes the Reeb vector field on $(K_i, \eta)$. It follows that
\begin{equation} \label{eqn:char foliation near min}
U(K_i)_{t,\xi} := \left( \{t\} \times U(K_i) \right)_{\xi} = \partial_{\tau} + te^{-\tau^2} g(\tau) R_{\eta}.
\end{equation}

To illustrate the difference between a dividing set and an anti-dividing set, consider the case $2n+1 = 3$. On the folding locus $K_i$, choose a coordinate $\theta$ so that the induced contact form is $\eta = d\theta$. The folded Weinstein surface is locally $\Sigma = \{0\}_t \times [-1,1]_{\tau} \times S^1_{\theta}\subset \R_t \times [-1,1]_{\tau} \times S^1_{\theta}$, oriented as $d\tau \wedge d\theta$, and the characteristic foliation is directed by $\partial_{\tau}$.

If $K_i$ is a dividing set, the contact form in \eqref{eqn:contact form near max} is, to first order, 
\[
\alpha = f(\tau)\, dt + e^{-\tau^2}\, \eta \approx -\tau\, dt + d\theta.
\]
The ($t$-invariant) contact structure is drawn along a flow line of the characteristic foliation on the left of \cref{fig:fw-3d}. In contrast, if $K_i$ is an anti-dividing set, the contact form in \eqref{eqn:contact form near min} has the first order approximation
\[
\alpha = -f(\tau)\, dt - tg(\tau)\, d\tau + e^{\tau^2}\, \eta \approx \tau\, dt + a_0t\, d\tau + d\theta
\]
where $a_0 := |g(0)|$ is a constant. The contact condition $f'(\tau) - 2\tau f(\tau) - g(\tau) > 0$ in general implies $|g(\tau)|> 1 + 2\tau f(\tau)$, so assume that $a_0 = 1 + \ve$ for some $\ve > 0$; indeed, one may check that (to first order) $\alpha \wedge d\alpha = \ve\, dt\, d\tau\, d\theta$. Along a flow line of the characteristic foliation, the contact structure rotates in a right-handed manner relative to $\Sigma$; along a vertical Legendrian $\R_t \times \{0\}_{\tau}\times \{\theta_0\}_{\theta}$ sitting over the anti-dividing set, the contact structure rotates in a left-handed manner. These phenomena are drawn on the right side of \cref{fig:fw-3d}.

\begin{figure}[ht]
	\centering
    \vskip-0.5cm
	\begin{overpic}[scale=.43]{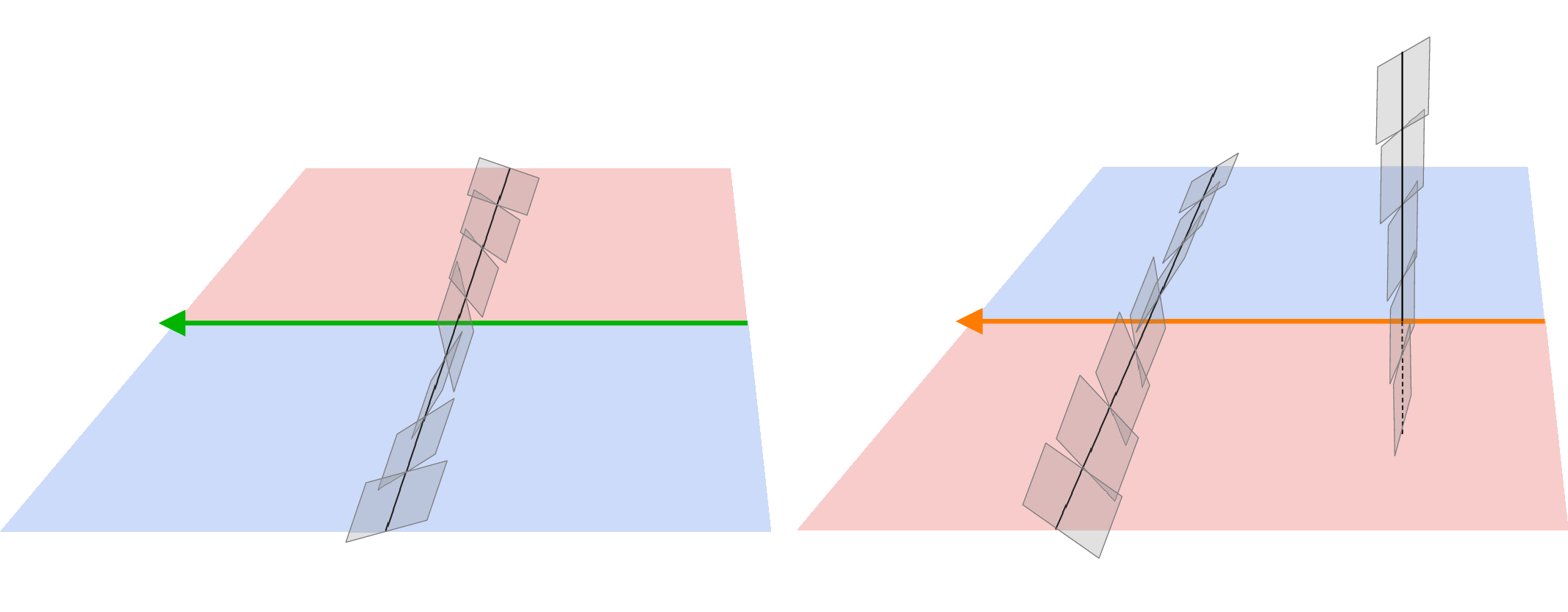}
       
	\end{overpic}
    \vskip-0.2cm
	\caption{The contact structure near a dividing set on the left and an anti-dividing set on the right. In both figures, $\tau$ points into the page, $\theta$ points from right to left, and $t$ points vertically. }
	\label{fig:fw-3d}
\end{figure}

\subsection{The Creation Lemma}\label{sec:creation-lemma}

It will be advantageous in \cref{sec:BBC_high} to systematically perturb the characteristic foliation of a folded Weinstein hypersurface by creating additional critical points. Rather than use the non-graphical mushrooms of \cref{sec:C-fold hd}, a simpler graphical model, the \emph{box fold}, is sufficient. The PL construction and smoothing of box folds was developed in detail in \cite{breen2025torus}; here we briefly describe the PL model in \cref{subsubsec:box-folds} and provide the precise statement from \cite{breen2025torus} needed for later application in \cref{subsubsec:statement-creation-lemma}.

\subsubsection{Box folds}\label{subsubsec:box-folds}

In dimension $3$, the construction of a box fold begins with the same initial model as in \cref{sec:C-fold 3d}: a flow box $[0,s_0]\times [0,\tilde{t}_0]$ of the characteristic foliation, thought of as the symplectization of the contact manifold $([0,\tilde{t}_0], \, d\tilde{t})$. In the setting of a folded Weinstein hypersurface, this should be thought of as a small flow box contained inside a positive subcobordism region, so that its neighborhood is a further contactization $([0,z_0]\times [0,s_0]\times [0,\tilde{t}_0], \, dz + e^s\, d\tilde{t})$.

\begin{figure}[ht]
	\begin{overpic}[scale=.5]{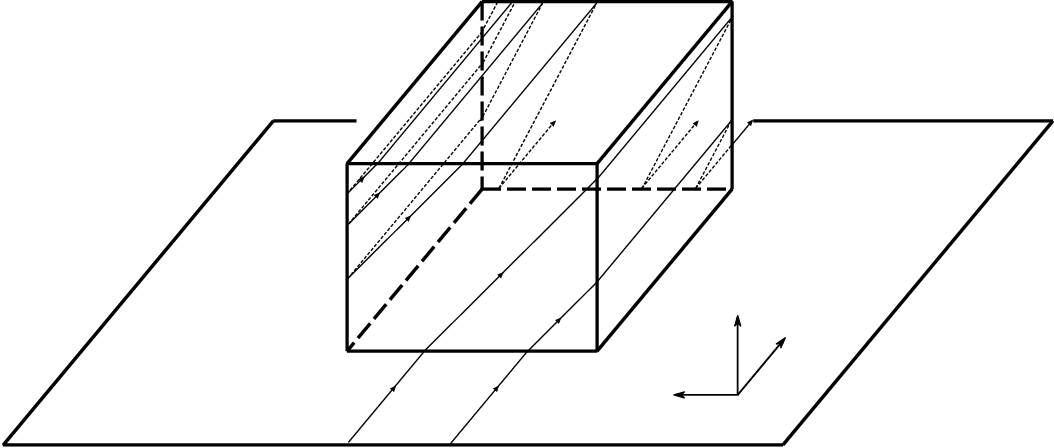}
		\put(61.3,4){$\tilde t$}
		\put(67,12){$z$}
		\put(75.5,10){$s$}
	\end{overpic}
\caption{The PL model of a box fold in dimension $3$.}
\label{fig:box bump}
\end{figure}

We then consider a PL box with vertical sides and a height $z_0$; see \cref{fig:box bump}. By performing the same characteristic foliation analysis as in \cref{sec:C-fold 3d} --- which is done carefully in \cite{breen2025torus} --- one can determine that the upper edge of the box fold at $\tilde{t} = \tilde{t}_0$ attracts the characteristic foliation in backward time, exactly as in a mushroom. Upon smoothing, this edge corresponds to a pair of positive index $0$ and index $1$ singular points of the characteristic foliation; see \cref{fig:box char foliation}. Consequent of the lack of a non-vertical side is the lack of negative singular points, and a box fold may informally be thought of as ``only the positive part'' of a mushroom.

\begin{figure}[ht]
	\begin{overpic}[scale=.5]{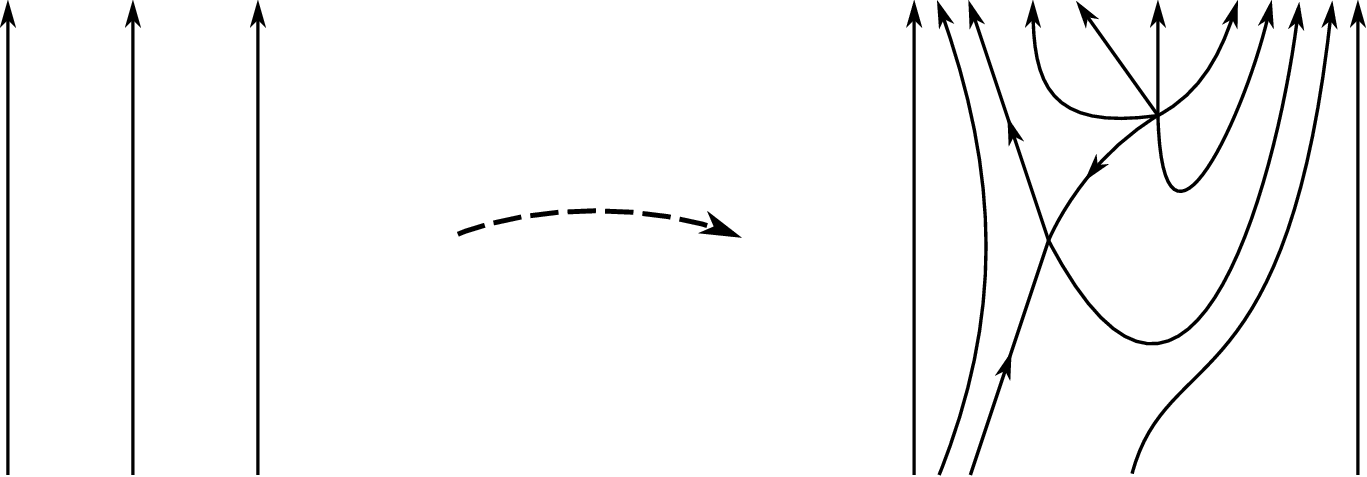}
		\put(85.6,25){\tiny{$e_+$}}
		\put(78,16){\tiny{$h_+$}}
	\end{overpic}
\caption{The characteristic foliation before and after installing a smoothed box fold; c.f.\ \cref{fig:Z char foliation}.}
\label{fig:box char foliation}
\end{figure}

The higher-dimensional model replaces the contact manifold $([0,\tilde{t}_0], \, d\tilde{t})$ by a contact handlebody $([0,\tilde{t}_0] \times W, \, d\tilde{t} + \lambda)$ based over a Weinstein domain $(W, \lambda)$. Upon box folding, each critical point of the underlying Weinstein structure gives birth to a pair of positive singular points in the characteristic foliation. The result is then stated precisely in the following:

\subsubsection{Statement of the Creation Lemma}\label{subsubsec:statement-creation-lemma}

Let $(W, \lambda)$ be a Weinstein domain of dimension $2n-2$ and set $V := [0,s_0]\times [0,\tilde{t}_0] \times W$, so that $(V, \, e^s\, (d\tilde{t} + \lambda))$ is compact symplectization of a contact handlebody. For a smooth function $F:V \to \R$, let $\lambda_{F} := dF + e^s\, (d\tilde{t} + \lambda)$ denote a Liouville form on $V$ and $X_{\lambda_F} = \partial_s + X_{F}$ its Liouville vector field. The following is a restatement of \cite[Theorem 4.1(1)]{breen2025torus}:

\begin{lemma}[Creation Lemma]\label{lem:creation-lemma}
For $V$ as above, there is a smooth function $F:V \to [0,\infty)$, compactly supported in the interior of $V$, such that 
\begin{enumerate}
    \item the Liouville vector field $X_{\lambda_{F}}$ is Morse, and 
    \item for each critical point of index $k$ in the underlying Weinstein domain $(W_, \lambda)$, $X_{\lambda_{F}}$ has critical points of index $k$ and index $k+1$.
\end{enumerate}
\end{lemma}

In contrast to previous instances of creation and elimination of critical points, in which two points along a single trajectory are created or eliminated, \cref{lem:creation-lemma} creates (and in reverse, eliminates) many critical points at once; c.f.\ \cite[Proposition 12.21]{CE12} for Weinstein homotopies, or \cite[Lemma 3.3]{Gi91}, \cite[\S 3.3]{Eli92} for characteristic foliations in dimension $3$.

\begin{remark} \label{rmk: parametric box fold}
    Compared to the creation of mushrooms, it is much more straightforward to apply the Creation Lemma (i.e., install box folds) parametrically given the graphical nature of the fold. In particular, if $\Sigma \times [0,1]_t$ is a neighborhood of a folded Weinstein hypersurface on which we wish to apply a box fold, then with $F$ denoting the function furnished by \cref{lem:creation-lemma}, the family of functions $t\, F$ induces a parametric installation from $t=0$ to $t=1$ which preserves the folded Weinstein decomposition at each time. 
\end{remark}

\section{Bypass-bifurcation correspondence in dimension \(3\)}

Informally, the \textit{bypass-bifurcation correspondence} says that the $1$-parametric failure of convexity due to the existence of a retrogradient on a hypersurface is witnessed by a bypass attachment. In this section we focus on dimension $2n+1=3$ (see \cref{prop:bb-correspondence3D} below for a precise statement). 

There is a different proof that we briefly outline, previously known to experts and with details recently recorded in Scharitzer's master’s thesis \cite{scharitzer2022thesis}. A bypass attachment is a local operation, modifying a convex surface only in a neighborhood of the attaching arc of the bypass half-disk. Letting $B\subset \Sigma_0$ denote a tubular neighborhood of the assumed retrogradient, the question is then reduced to understanding the contact structure on $B\times [-1,1]$.

At this point, there are two $3$-dimensional ``miracles." The first is that one can take $\partial B$ to be Legendrian using the \emph{Legendrian realization principle} (see \cite[Theorem 3.7]{Hon00}). This gives good control over the contact structure near $\partial B\times [-1,1]$. The second and more significant miracle is Eliashberg's theorem (see \cite[Theorem 2.1.3]{Eli92}) on the uniqueness of tight contact structures on the $3$-ball. Using these two facts, one can prove \cref{prop:bb-correspondence3D} in dimension $3$ by arguing that both the bifurcation and the bypass attachment produce tight contact structures on the $3$-ball $B\times [-1,1]$ with the same boundary conditions, hence must coincide.

Unfortunately, both of the above-mentioned miracles fail in dimension $>3$: the first for dimensional reasons and the second by results of \cite{Eli91,Ust99}. Nevertheless, our proof of the higher-dimensional bypass-bifurcation correspondence will follow the same general outline as in dimension $3$ by replacing the Legendrian boundary condition on $\partial B$ with a transverse boundary condition and Eliashberg's theorem with a direct proof  (using ideas that are present in the proof of Eliashberg's theorem) that both the bifurcation and the (trivial) bypass attachment produce the standard ball in a Darboux chart. 

The resulting proof is fairly involved. In this section we prove the $3$-dimensional statement, using the same strategy employed in higher dimensions for clarity and also for the independent interest of readers with a low-dimensional preference. We will prove the correspondence in all dimensions in \cref{sec:BBC_high}.

\begin{proposition}\label{prop:bb-correspondence3D}
Let $(M,\xi)$ be a contact $3$-manifold and let $(M_\Sigma,\alpha_\Sigma):=(\Sigma\times[-\delta,\delta],\alpha_\Sigma)$ be a standard neighborhood of a folded Weinstein surface
\begin{equation}\label{eq:folded-decomposition3D}
\Sigma_0 = W_1\cup_{K_1} W_2\cup_{K_2} W_3\cup_{K_3} W_4
\end{equation}
in $(M,\xi)$, where:
\begin{enumerate}[label={(\arabic*)}]
    \item $W_2$ and $W_3$ are Weinstein cobordisms associated with index 1 critical points $q_-$ and $q_+$, respectively; and\label{part:bb-3d-cobordisms}
    \item the closed intervals $\mathrm{stab}_{W_2}(q_-)$ and $\mathrm{stab}_{W_3}(q_+)$ meet at a single point in $K_2$. \label{part:bb-3d-intersection}
\end{enumerate}
Then $(M_\Sigma,\alpha_\Sigma)$ is contactomorphic, relative boundary, to the bypass cobordism associated to some bypass data in $\Sigma\times \{-\delta\}$.
\end{proposition}

We first introduce notation, with respect to which we can identify the specific bypass attachment data in $\Sigma\times \{-\delta\}$:
\begin{itemize}
    \item For each $t\in[-\delta,\delta]$, we denote by $\Sigma_t$ the surface $\Sigma\times\{t\}$.  As seen in \cref{sec:folded-weinstein}, $\Sigma_t$ admits a folded Weinstein structure compatible with \eqref{eq:folded-decomposition3D} for all $t\in [-\delta, \delta]$, not just $t=0$.  When the slice $\Sigma_t$ under consideration is clear, we will refer to the Weinstein cobordisms $W_j$, $1\leq j\leq 4$, without specifying the value of $t$.

    \item On $\Sigma_t$, the Weinstein-stable manifold of the unique critical point of $W_2$ is denoted $D_{-,t}^\dagger$, while the analogous stable manifold in $W_3$ is denoted $D_{+,t}^\dagger$.  The corresponding Weinstein-unstable manifolds are denoted $D_{-,t}$ and $D_{+,t}$.

    \item The disk $(D_{+,-\delta})^\epsilon\subset \Sigma_{-\delta}$, obtained as a small positive Reeb pushoff of $D_{+,-\delta}$, is denoted $D_+$, while $(D_{-,-\delta})^{-\epsilon}\subset \Sigma_{-\delta}$ is denoted $D_-$, and we write $\Lambda_{\pm}$ for $\partial D_{\pm}$. 
\end{itemize}
There is a unique intersection point between the Weinstein-stable manifolds $D_{-,0}^\dagger$ and $D_{+,0}^\dagger$ by \hyperref[part:bb-3d-intersection]{\cref*{prop:bb-correspondence3D}\ref*{part:bb-3d-intersection}} and $(\Lambda_{\pm};D_{\pm})$ provides the data along which the bypass is attached to $\Sigma_{-\delta}$.

\begin{figure}[ht]
    \centering
    \begin{overpic}[width=0.8\textwidth]{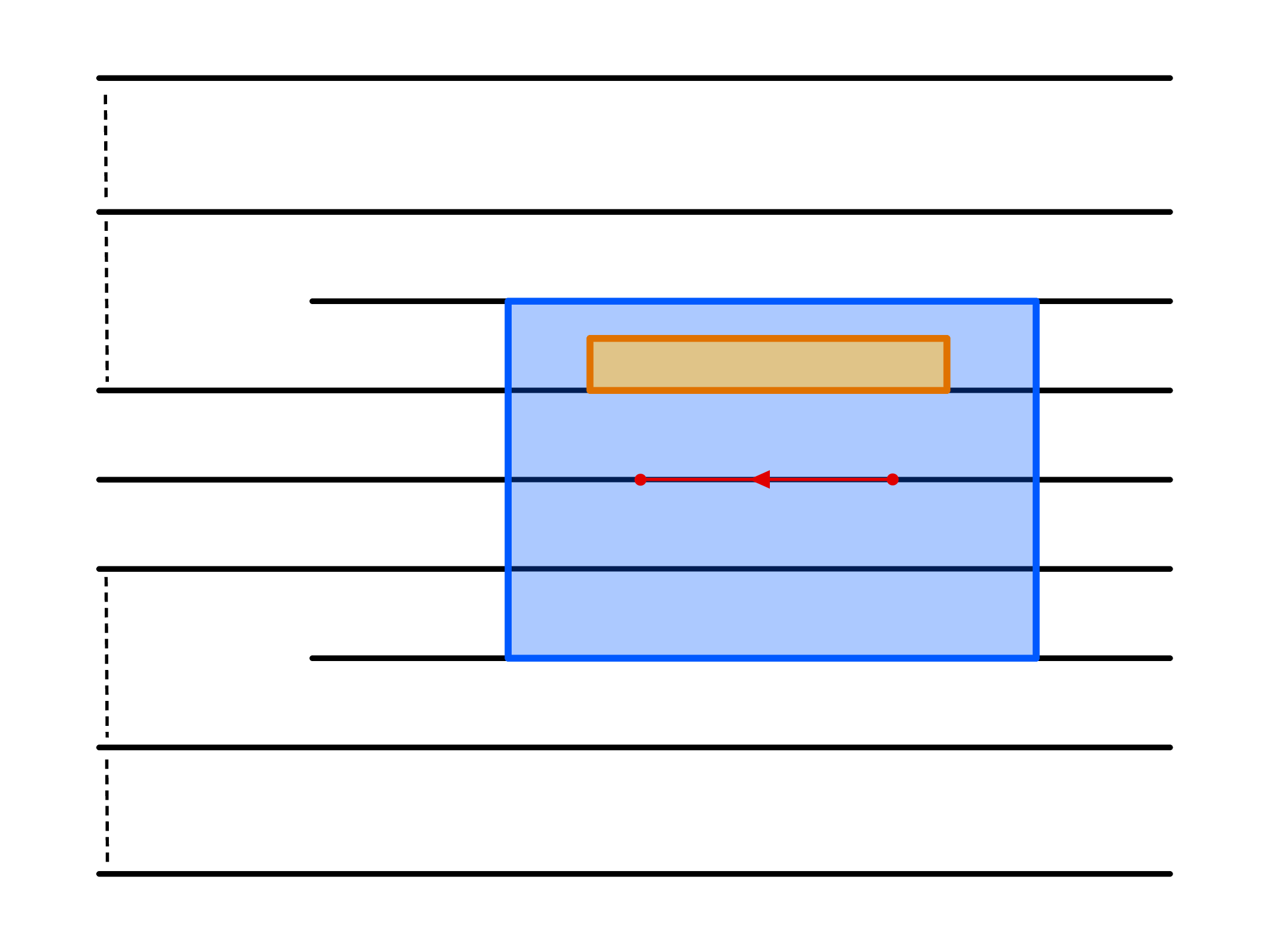}
        \put(94,5){\footnotesize $\Sigma_{-\delta}$}
        \put(94,15){\footnotesize $\Sigma_{-\delta/2}$}
        \put(94,22.25){\footnotesize $\Sigma_{-t_0}$}
        \put(94,29.5){\footnotesize $\Sigma_{-t_0/2}$}
        \put(94,36.5){\footnotesize $\Sigma_{0}$}
        \put(94,43.75){\footnotesize $\Sigma_{t_0/2}$}
        \put(94,50.75){\footnotesize $\Sigma_{t_0}$}
        \put(94,58){\footnotesize $\Sigma_{\delta/2}$}
        \put(94,68){\footnotesize $\Sigma_{\delta}$}

        \put(43,25.75){\textcolor{darkblue}{$T$}}
        \put(58,19.5){\footnotesize \textcolor{darkblue}{$\widehat{B}_{-t_0}$}}
        \put(58,53){\footnotesize \textcolor{darkblue}{$\widehat{B}_{t_0}$}}
        \put(59,45.5){\footnotesize \textcolor{Orange}{$G$}}
        \put(45,38.5){\footnotesize \textcolor{darkred}{appearance of retrogradient}}
        \put(49.5,35){\footnotesize \textcolor{darkred}{$q_+$}}
        \put(69.5,35){\footnotesize \textcolor{darkred}{$q_-$}}

        \put(10, 10){\footnotesize box fold installation}
        \put(10, 63){\footnotesize box fold removal}

        \put(10, 22.5){\footnotesize $t$-invariant}
        \put(10, 50.75){\footnotesize $t$-invariant}

    \end{overpic}
    \caption{A schematic for our proof of the bypass-bifurcation correspondence.}
    \label{fig:schematic-3d}
\end{figure}

Our argument roughly proceeds as follows (see \cref{fig:schematic-3d}):
\begin{enumerate}
    \item We identify a ball $\widehat{B}\subset\Sigma_0$ containing the retrogradient in \cref{prop:bb-correspondence3D}, such that $(\Sigma_0)_\xi$ points transversely out of $\partial \widehat{B}$, $\xi$ is $t$-invariant on 
    $$(\Sigma\times[-\delta/2,\delta/2])\setminus (\widehat{B}\times[-t_0,t_0])$$ 
    for some $t_0<\delta/2$,  and $\Sigma_t$ is convex for $t_0\leq |t|\leq \delta$.
    This requires the use of ``buffer regions" near $\Sigma\times\{\pm\delta\}$, on which we modify the characteristic foliation of $\Sigma_t$. (In  \cref{fig:schematic-3d} these regions are labeled ``box fold installation" and ``box fold removal".)
    \item We decompose $\widehat{B}\times[-t_0,t_0]$ into a Darboux ball $G$ on which $\xi$ is $t$-invariant and its complement $T=(\widehat{B}\times[-t_0,t_0])\setminus G$, which is foliated by $2$-spheres $S_\mu$, $\mu\in [0,1]$, such that $S_{\mu=0}$ is the ``outer boundary'' of $T$ and each $R_+(S_\mu)$, $R_-(S_\mu)$, $\mu\in[0,1]$, is deformation equivalent to the standard Weinstein ball with a unique critical point (\cref{lemma:standard-sphere-3d} and \cref{lemma:family-of-spheres3D}).
    \item We show that $T$ is contactomorphic to the cobordism obtained by attaching the bypass to $S_{\mu=0}$ along $(\Lambda_{\pm};D_{\pm})$ (which has been moved up from $\Sigma_{-\delta}$ to $\Sigma_{-t_0/2}$).
\end{enumerate}

\s\n
\textbf{Construction of the ball $\mathbf{\widehat{B}\subset\Sigma_0}$.}
\s

The ball $\widehat{B}\subset\Sigma_0$ we build will have a boundary which is transverse to $(\Sigma_0)_\xi$.  As a first approximation to $\widehat{B}$, let us consider a small neighborhood $B$ of the union of $D^\dagger_{-,0}=\mathrm{stab}_{W_2}(q_-)$ and $D^\dagger_{+,0}=\mathrm{stab}_{W_3}(q_+)$. By \hyperref[part:bb-3d-intersection]{\cref*{prop:bb-correspondence3D}\ref*{part:bb-3d-intersection}}, $D^\dagger_{-,0}\cap D^\dagger_{+,0}$ is a single point and $D^\dagger_{-,0}\cup D^\dagger_{+,0}$ is a closed arc with endpoints lying in $K_2$ and intersecting $K_2$ in exactly one other, interior point.  
More explicitly, we construct $B$ as follows: Letting $\Lambda^\dagger_\pm:=\partial D^\dagger_{\pm,0}$, the union $\Lambda^\dagger_-\cup\Lambda^\dagger_+$ consists of three points in the contact $1$-manifold $K_2$, and we denote by $C_2\subset K_2$ a standard contact neighborhood of $\Lambda^\dagger_-\cup\Lambda^\dagger_+$, as depicted in \cref{fig:folded-retrogradient-3d}.  
\begin{figure}[ht]
	\centering
    \begin{subfigure}[t]{0.48\textwidth}
    \centering
    \begin{overpic}[width=0.8\textwidth]{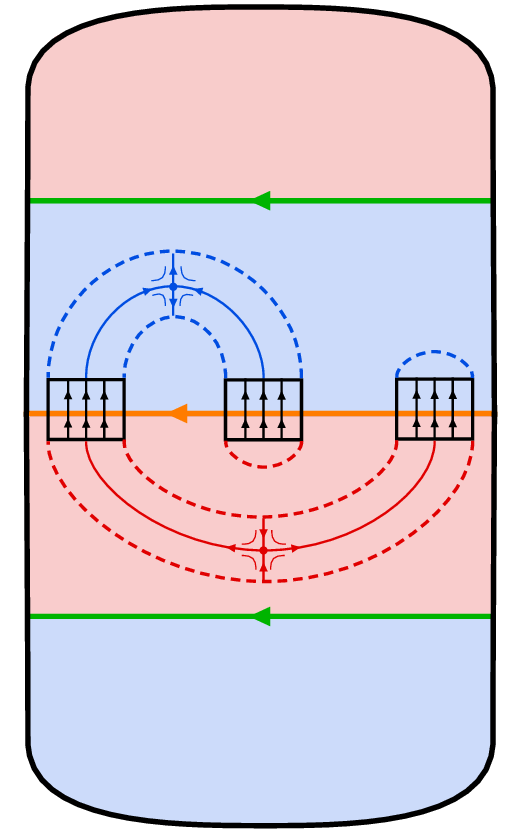}
        \put(-4,15){$W_1$}
        \put(-4,35){$W_2$}
        \put(-3.5,49){\small \textcolor{Orange}{$K_2$}}

        \put(-3.5,25){\small \textcolor{divgreen}{$K_1$}}
        \put(-3.5,75){\small \textcolor{divgreen}{$K_3$}}
        \put(-4,62){$W_3$}
        \put(-4,84){$W_4$}
        \put(33.25,32){\tiny \textcolor{darkred}{$q_-$}}
        \put(23,67.35){\tiny \textcolor{darkblue}{$q_+$}}
    \end{overpic}
	\caption{The folded Weinstein structure on $\Sigma_0$ hypothesized by \cref{prop:bb-correspondence3D}. In this schematic, $C_1$ is dashed in red, $C_2$ is the intersection of $K_2$ with the boxes, $C_3$ is dashed in blue, and $C_h$ is the union of the vertical sides of the boxes.}
	\label{fig:folded-with-cps-3d}
    \end{subfigure}\hfill
    \begin{subfigure}[t]{0.48\textwidth}
    \centering
	\begin{overpic}[width=\textwidth]{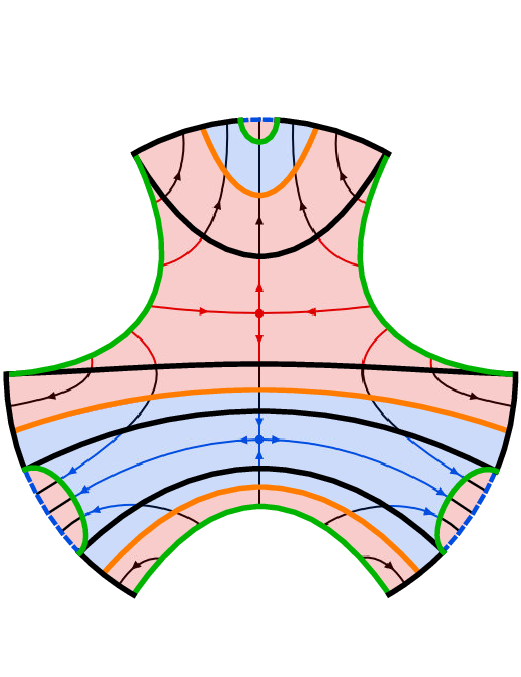}
        \put(33.25,32.5){\tiny \textcolor{darkblue}{$q_+$}}
        \put(33.75,51.5){\tiny \textcolor{darkred}{$q_-$}}
        \put(2,20.5){\textcolor{darkblue}{$C_3$}}
        \put(-1.5,28){\small \textcolor{divgreen}{$K_3$}}
        \put(11,11){$C_h$}
        \put(36,20){\textcolor{divgreen}{$C_1$}}

        \put(57,56){\textcolor{divgreen}{$C_1$}}
        \put(16,56){\textcolor{divgreen}{$C_1$}}
        \put(74.5,28){\small \textcolor{divgreen}{$K_3$}}
        \put(71,20.5){\textcolor{darkblue}{$C_3$}}
        \put(32,85){\textcolor{divgreen}{$K_3$}}
        \put(39,85){\textcolor{darkblue}{$C_3$}}
    \end{overpic}
	\caption{A schematic of the neighborhood $B$ after additional modifications (tilting and flowing), depicting the characteristic foliation $B_\xi$.}
	\label{fig:b-3d}
    \end{subfigure}
    \caption{Constructing a neighborhood in $\Sigma_0$ of the stable/unstable disks along which the retrogradient occurs.}
    \label{fig:folded-retrogradient-3d}
\end{figure}
The neighborhood $B$ of $D^\dagger_{-,0}\cup D^\dagger_{+,0}$ may be chosen so that its intersection with a neighborhood of $K_2\subset\Sigma_0$ has the form $[-1,1]_\tau\times C_2$, where $\tau$ is the coordinate in the hypothesized folded Weinstein standard neighborhood. Consequently, $\alpha_\Sigma$ restricts to $[-1,1]_\tau\times C_2$ as $e^{\tau^2}\eta$, where $\eta$ is the induced contact form on $K_2$, the characteristic foliation is directed by $\partial_{\tau}$, and
\[
([-1,0]_\tau\times C_2,e^{\tau^2}\eta)
\quad\text{and}\quad
([0,1]_\tau\times C_2,e^{\tau^2}\eta)
\]
are Weinstein cobordisms (with corners), the former corresponding to the intersection of $[-1,1]_\tau\times C_2$ with $W_2$ and the latter to the intersection with $W_3$.  We obtain the ball $B$ from $[-1,1]_\tau\times C_2$ by attaching:
\be
\item cornered Weinstein cobordisms to $\{-1\}\times\Lambda^\dagger_-$ consisting of a Weinstein $1$-handle associated to $q_-$ and a small thickening of the remaining component of $\{-1\}\times C_2$; and 
\item cornered Weinstein cobordisms to  $\{1\}\times\Lambda^\dagger_+$ consisting of a Weinstein $1$-handle associated to $q_+$ and a small thickening of the remaining component of $\{1\}\times C_2$;
\ee 
see \cref{fig:folded-with-cps-3d}.

Now the boundary of $B$ naturally decomposes as
\[
\partial B = C_1 \cup C_h \cup C_3,
\]
where:
\begin{itemize}
    \item $C_h=[-1,1]\times\partial C_2$;
    \item $C_1$ is the portion of $(\bdry B)\setminus C_h$ in $W_2$; and
    \item $C_3$ is the portion of $(\bdry B)\setminus C_h$ in $W_3$.
\end{itemize}

By flowing $C_1$ backward along $(\Sigma_0)_\xi$, we may assume that $C_1$ is contained in $K_1$. Likewise, by flowing $C_3$ forward along $(\Sigma_0)_\xi$, we may assume that $C_3\subset \overline{W}_4$ and transversely intersects $K_3$ along $\bdry C_3$. Notice that $(\Sigma_0)_\xi$:
\begin{itemize}
    \item points into $B$ along $C_1$;
    \item is parallel to $\partial B$ along $C_h$; and
    \item points out of $B$ along $C_3$.
\end{itemize}
By tilting $\partial B$ along $C_h$, we may assume that $(\Sigma_0)_\xi$ points out of $B$ along $C_h\cup C_3$.  The resulting Weinstein cobordism is depicted schematically in \cref{fig:b-3d}, with $\partial_{\mathrm{in}}B=C_1$ and $\partial_{\mathrm{out}}B=C_h\cup C_3$.

\begin{figure}[ht]
	\centering
    \begin{subfigure}[t]{0.48\textwidth}
    \centering
    \begin{overpic}[width=\textwidth]{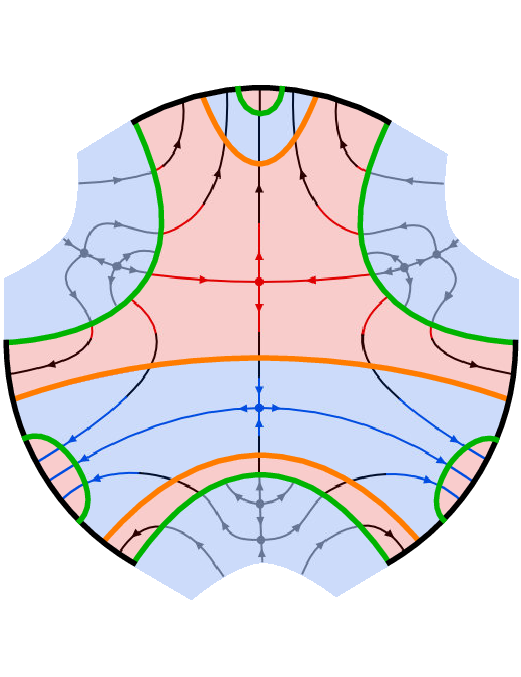}
        \put(34.25,37.25){\tiny \textcolor{darkblue}{$q_+$}}
        \put(34.5,56.5){\tiny \textcolor{darkred}{$q_-$}}

        \put(39.7,23.5){\tiny \textcolor{gray}{$e_+$}}
        \put(16,62.5){\tiny \textcolor{gray}{$e$}}
        \put(59,62.5){\tiny \textcolor{gray}{$e_-$}}

    \end{overpic}
	\caption{The ball with the additional collar neighborhood $[-s_0,0]\times \partial_{\mathrm{in}} B$, in which the Creation Lemma has been applied.}
	\label{fig:nontransverse}
    \end{subfigure}\hfill
    \begin{subfigure}[t]{0.48\textwidth}
    \centering
	\begin{overpic}[width=\textwidth]{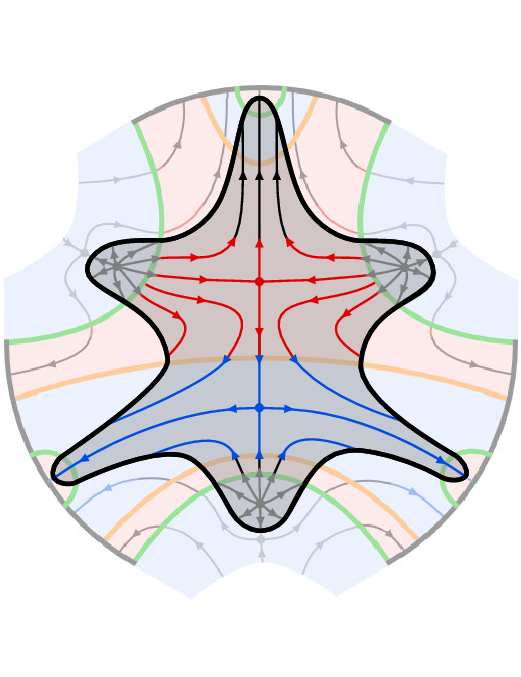}
        \put(34.25,37.25){\tiny \textcolor{darkblue}{$q_+$}}
        \put(34.5,56.5){\tiny \textcolor{darkred}{$q_-$}}
    \end{overpic}
	\caption{The ball $\widehat{B}$ with boundary transverse to the characteristic foliation.}
	\label{fig:transverse}
    \end{subfigure}
    \caption{}
    \label{fig:non-transverse-ball-3d}
\end{figure}

Next we apply a perturbation of $\Sigma_0$ which is supported on $W_1$ and reverses the characteristic foliation along $\partial_{\mathrm{in}}B$. Recall that $\partial_{\mathrm{in}}B$ consists of three disjoint intervals in $K_1$ transverse to $(\Sigma_0)_\xi$, and there exists a (one-sided) neighborhood $N(\partial_{\mathrm{in}}B)=[-s_0,0]\times\partial_{\mathrm{in}}B$ of $\partial_{\mathrm{in}}B$ in $W_1$ to which the contact form restricts as $e^s\eta$.  Applying the Creation Lemma (\cref{lem:creation-lemma}) performs a $C^0$-small isotopy of $\Sigma_0$, supported in $N(\partial_{\mathrm{in}}B)$, such that the resulting characteristic foliation includes one elliptic and one hyperbolic critical point, connected by a flow line of $(\Sigma_{0})_\xi$, for each of the three components of $\partial_{\mathrm{in}}B$.  We denote the resulting elliptic critical points by $e$, $e_-$, and $e_+$. Specifically, referring to \cref{fig:nontransverse},  the bottom elliptic point is $e_+$, and the top left (resp.\ right) elliptic point is $e$ (resp.\ $e_-$). The result of the perturbation will be denoted $\Sigma_0$ from now on.

\begin{remark}
All three of the critical points $e$, $e_-$, and $e_+$ are positive, but the notation foreshadows the roles that these points will play below: flow lines connecting (analogs of) $e_\pm$ to (analogs of) $q_\pm$ will allow us to apply the Elimination Lemma to these pairs, leaving the index 0 critical point $e$.  In higher dimensions, we will consider a pair of index $n-1$ critical points $e_\pm$ along with an index 0 critical point $e$.
\end{remark}

Finally, $\widehat{B}$ is obtained from $B$ by adjoining the Weinstein-unstable manifolds of the newly created elliptic points $\{e,e_-,e_+\}$ and slightly retracting so that $(\Sigma_0)_\xi$ is transverse to $\bdry \widehat{B}.$


\s\n
\textbf{Decomposing the region $\mathbf{\widehat{B}\times[-t_0,t_0]}$.}
\s

Since the box folds used in the Creation Lemma are graphical, the construction of $\widehat{B}$ can be carried out $t$-parametrically. Hence there exists a $C^0$-small isotopy of $\Sigma\times[-\delta,\delta]$ which is supported on $W_1\times(-\delta,\delta)$,\footnote{In particular this means $(\Sigma_{\delta})_\xi$ and $(\Sigma_{-\delta})_\xi$ are unperturbed.} such that, after the isotopy: 
\be
\item[(B1)] the resulting characteristic foliation $(W_1)_\xi$ on $W_1$ is $t$-invariant for $t\in[-\delta/2,\delta/2]$;
\item[(B2)] $W_1$ is Weinstein for all $t\in[-\delta,\delta]$;
\item[(B3)] for each $t\in[-\delta/2,\delta/2]$, $(W_1)_\xi$ includes elliptic critical points $e^t, e_-^t$, and $e_+^t$, as well as a canceling set of three hyperbolic critical points; 
\item[(B4)] $(\Sigma_t\setminus\widehat{B})_\xi$ is $t$-invariant for $-\delta/2\leq t\leq \delta/2$; and
\item[(B5)] $(\Sigma_t)_\xi$ is $t$-invariant for $t_0/2\leq|t|\leq\delta/2$, where $0<t_0< \delta/2$ is small.
\ee
Indeed, in view of \cref{rmk: parametric box fold}, we can use the interval $-\delta\leq t\leq -\delta/2$ to ``install the box fold," interpolating from the original characteristic foliation on $W_1$ to the foliation resulting from the Creation Lemma, and then use the interval $\delta/2\leq t \leq \delta$ to ``uninstall the box fold."
See the schematic in \cref{fig:schematic-3d}.

Since $(\Sigma_t)_\xi$ is not perturbed outside of $W_1$ during the Creation Lemma isotopy, it continues to be the case that:
\be
\item[(B6)] $\Sigma_t$ is convex for all $t\neq 0$.
\ee  
Indeed, $(\Sigma_t)_\xi$ contains critical points $q_+^t$ and $q_-^t$ projecting to $q_+$ and $q_-$ in $\widehat{B}_0$, respectively, and 
$\op{stab}_{W_3}(q_+^t)\cap \op{stab}_{W_2}(q_-^t)\not=\emptyset$ precisely when $t=0$; see \cref{fig:b-hat-foliation-3d}.
\begin{figure}[ht]
	\centering
    \begin{subfigure}{0.33\textwidth}
    \centering
    \begin{overpic}[width=\textwidth]{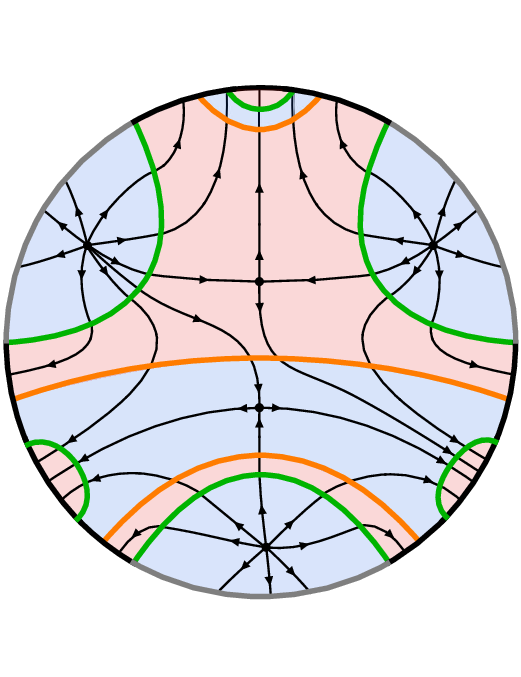}
        \put(7,18.5){\footnotesize \textcolor{divgreen}{$K_3$}}
        \put(55,11.5){\footnotesize \textcolor{divgreen}{$K_1$}}
        \put(15.5,48){\footnotesize \textcolor{Orange}{$C_2$}}
        \put(39.7,25){\tiny $e_+^t$}
        \put(12,72.5){\tiny $e^t$}
        \put(60,72.5){\tiny $e_-^t$}
        \put(33,35.5){\tiny $q_+^t$}
        \put(33,54){\tiny $q_-^t$}
    \end{overpic}
	\caption{$t<0$}
	\label{fig:b-hat-neg-3d}
    \end{subfigure}\hfill
    \begin{subfigure}{0.33\textwidth}
    \centering
	\begin{overpic}[width=\textwidth]{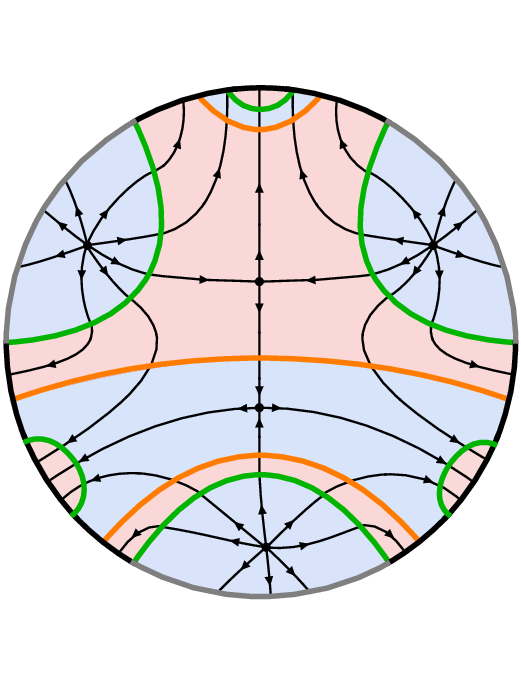}
        \put(7,18.5){\footnotesize \textcolor{divgreen}{$K_3$}}
        \put(55,11.5){\footnotesize \textcolor{divgreen}{$K_1$}}
        \put(15.5,48){\footnotesize \textcolor{Orange}{$C_2$}}
        \put(39.7,25){\tiny $e_+^t$}
        \put(12,72.5){\tiny $e^t$}
        \put(60,72.5){\tiny $e_-^t$}
        \put(33,35.5){\tiny $q_+^t$}
        \put(33,54){\tiny $q_-^t$}
    \end{overpic}
	\caption{$t=0$}
	\label{fig:b-hat-0-3d}
    \end{subfigure}
    \hfill
    \begin{subfigure}{0.33\textwidth}
    \centering
	\begin{overpic}[width=\textwidth]{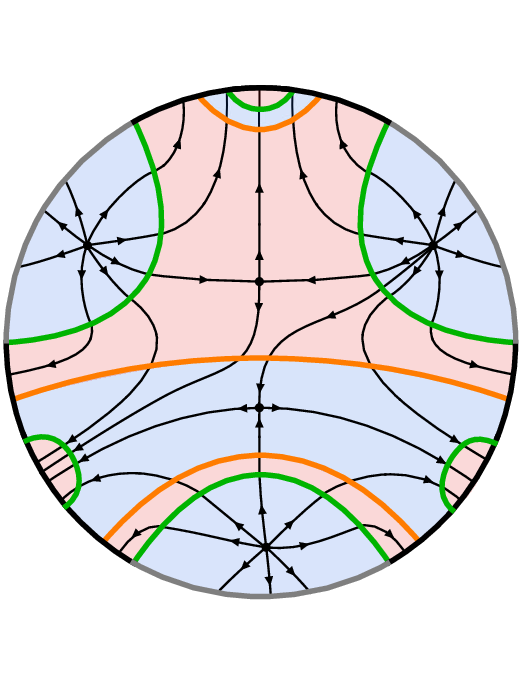}
        \put(7,18.5){\footnotesize \textcolor{divgreen}{$K_3$}}
        \put(55,11.5){\footnotesize \textcolor{divgreen}{$K_1$}}
        \put(15.5,48){\footnotesize \textcolor{Orange}{$C_2$}}
       \put(39.7,25){\tiny $e_+^t$}
        \put(12,72.5){\tiny $e^t$}
        \put(60,72.5){\tiny $e_-^t$}
        \put(33,35.5){\tiny $q_+^t$}
        \put(33,54){\tiny $q_-^t$}
    \end{overpic}
	\caption{$t>0$}
	\label{fig:b-hat-pos-3d}
    \end{subfigure}
    \caption{The characteristic foliation on $\widehat{B}_t$ for various values of $t$.} 
    \label{fig:b-hat-foliation-3d}
\end{figure}

We now restrict attention to $\Sigma\times I_{t_0}$, where $I_{t_0}$ denotes the interval $[-t_0,t_0]$. On this neighborhood of $\Sigma_0$, the characteristic foliation $(W_1)_\xi$ is $t$-invariant by (B1). 
For all $t\in I_{t_0/2}$ we may assume that
\[
\op{stab}_{W_3}(q_+^t)\cap C_2 = \bdry D^\dagger_{+,t} = (\Lambda^\dagger_+)^{t}
\quad\text{and}\quad
\op{stab}_{W_2}(q_-^t)\cap C_2 = \bdry D^\dagger_{-,t} = (\Lambda^\dagger_-)^{-t},
\]
where $(\Lambda^\dagger_+)^{t}$ is the $t$-Reeb pushoff of $\Lambda^\dagger_+$ and $(\Lambda^\dagger_-)^{-t}$ is the $(-t)$-Reeb pushoff of $\Lambda^\dagger_-$.

In preparation for decomposing $\widehat{B}\times I_{t_0}$, we want to show that $S:=-\bdry (\widehat{B}\times I_{t_0})$, after rounding, is a convex surface whose positive and negative regions $R_+(S)$ and $R_-(S)$ are standard.  Here we orient $S$ so that the orientations of $S$ and $\Sigma_{-t_0}$ agree on their overlap.

\begin{lemma}\label{lemma:standard-sphere-3d}
The surfaces $R_+(S)$ and $R_-(S)$ are each deformation equivalent to the standard Weinstein ball with a unique critical point.
\end{lemma}

Let us write $\widehat{B}_t:=\widehat{B}\times\{t\}$, where $t\in I_{t_0}$. 

\begin{proof}
We first identify the critical points of $S_\xi$.  The characteristic foliation of the piecewise-smooth surface
\[
-\partial(\widehat{B}\times I_{t_0}) = -\widehat{B}_{t_0} \cup ((\partial\widehat{B})\times I_{t_0}) \cup \widehat{B}_{-t_0}
\]
agrees with that of $\widehat{B}_{-t_0}$, and differs from that of $\widehat{B}_{t_0}$ by an orientation change.  Because the characteristic foliation of $\widehat{B}$ is transverse to $\partial \widehat{B}$, the characteristic foliation of the vertical boundary $(\partial\widehat{B})\times I_{t_0}$ is nonsingular. From top to bottom, the holonomy $\partial \widehat{B}_{t_0} \cong S^1 \to S^1 \cong \widehat{B}_{-t_0}$ of the vertical foliation is constant over the intersection points with the dividing and anti-dividing sets and clockwise along $\bdry \widehat{B}\cap W_4$.
This (bottom-to-top) holonomy is depicted in the green highlighted segments of \cref{fig:family-of-spheres-3d}. By rounding in a sufficiently small neighborhood of the corners, we may assume that the critical points of $S_\xi$ are contained in the disks $\widehat{B}_{\pm t_0}$.  Then $R_+(S)$ contains five critical points:
\begin{itemize}
    \item hyperbolic critical points $q_+^{-t_0}\in\widehat{B}_{-t_0}$ and $q_-^{t_0}\in -\widehat{B}_{t_0}$;
    \item elliptic critical points $e^{-t_0},e_-^{-t_0},e_+^{-t_0}\in\widehat{B}_{-t_0}$.
\end{itemize}
Analogously, the critical points of $R_-(S)$ are:
\begin{itemize}
    \item hyperbolic critical points $q_-^{-t_0}\in\widehat{B}_{-t_0}$ and $q_+^{t_0}\in -\widehat{B}_{t_0}$;
    \item elliptic critical points $e^{t_0},e_-^{t_0},e_+^{t_0}\in -\widehat{B}_{t_0}$.
\end{itemize}
Note that the signs of these critical points are determined using the orientation of $S$.

Now the Weinstein homotopy type of $R_+(S)$ is determined by a neighborhood of the union of the closures of the characteristic-stable manifolds of the positive critical points of $S_\xi$.  The points $e^{-t_0},e_-^{-t_0}$, and $e_+^{-t_0}$ have index $0$, and thus comprise their own characteristic-stable manifolds. Next, $\op{stab}_S(q_+^{-t_0})$ consists of $q_+^{-t_0}$ along with two flow lines: one which converges in backward time to $e_+^{-t_0}$ and another which converges in backward time to $e^{-t_0}$.  Finally, $\op{stab}_S(q_-^{t_0})$ is made up of $q_-^{t_0}$ along with a pair of flow lines converging in backward time to $e_-^{-t_0}$ and $e^{-t_0}$, respectively.  See the blue highlighted flow lines in the leftmost column of \cref{fig:family-of-spheres-3d}.  By applying the Elimination Lemma to the pairs $\{q_+^{-t_0},e_+^{-t_0}\}$ and $\{q_-^{t_0},e_-^{-t_0}\}$, we see that $R_+(S)$ is deformation equivalent to a neighborhood of the index 0 critical point $e^{-t_0}$, which is a standard Weinstein ball.  

Similarly, the pairs $\{q_+^{t_0},e_+^{t_0}\}$ and $\{q_-^{t_0},e_-^{t_0}\}$ in $R_-(S)$ cancel via the Elimination Lemma, rendering $R_-(S)$ deformation equivalent to a standard ball; see the red highlighted flow lines in the leftmost column of \cref{fig:family-of-spheres-3d}.
\end{proof}

\begin{figure}[ht]
	\centering
    \begin{overpic}[width=\textwidth]{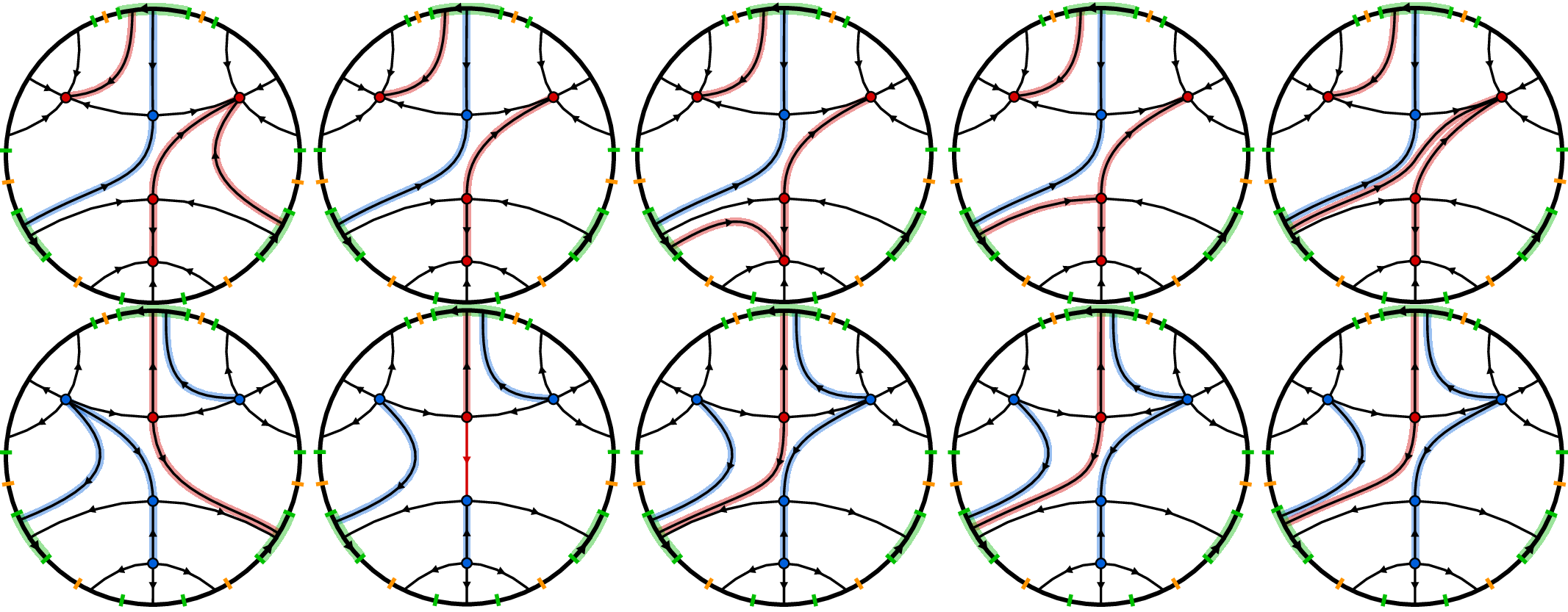}
        \put(11,3.5){\tiny \textcolor{darkblue}{$e_+^{-t_0}$}}
        \put(6,5.25){\tiny \textcolor{darkblue}{$q_+^{-t_0}$}}
        \put(5.5,14.25){\tiny \textcolor{darkblue}{$e^{-t_0}$}}
        \put(11.1,15.55){\tiny \textcolor{darkblue}{$e_-^{-t_0}$}}
        \put(10.75,10.55){\tiny \textcolor{darkred}{$q_-^{-t_0}$}}

        \put(10.75,23){\tiny \textcolor{darkred}{$e_+^{t_0}$}}
        \put(7,24.25){\tiny \textcolor{darkred}{$q_+^{t_0}$}}
        \put(3.5,30.5){\tiny \textcolor{darkred}{$e^{t_0}$}}
        \put(11.75,33.4){\tiny \textcolor{darkred}{$e_-^{t_0}$}}
        \put(7,29.5){\tiny \textcolor{darkblue}{$q_-^{t_0}$}}

        \put(27.5,-1.5){\tiny $\mu =\tfrac{2}{3}$}
        \put(68,-1.5){\tiny $\mu =\mu^*$}
	\end{overpic}
	\caption{The characteristic foliation on $S_\mu$, for various values of $0\leq \mu\leq 1$. The top row gives the upper hemispheres, while the bottom row gives the lower hemispheres; in each column, the boundary circles are identified via a bottom-to-top holonomy map, which is explicitly counterclockwise in the regions highlighted in green.}
	\label{fig:family-of-spheres-3d}
\end{figure}

We now construct the region $T\subset\widehat{B}\times I_{t_0}$ identified in \cref{fig:schematic-3d} by considering a 1-parameter family of pairwise disjoint spheres $S_\mu$, $0\leq \mu\leq 1$, where $S_\mu$ is obtained by slightly retracting $\widehat{B}$ by an amount that is proportional to $\mu$ and rounding the corners of $-\partial(\widehat{B}\times[t_-(\mu),t_+(\mu)])$. Here the monotonic functions $t_\pm\colon[0,1]\to[-t_0,t_0]$, 
\[
t_-(\mu):= -t_0+\tfrac{3}{2}\mu t_0
\qquad\text{and}\qquad
t_+(\mu):= t_0-\tfrac{1}{4}\mu t_0
\]
simultaneously raise the lower hemisphere and lower the upper hemisphere of $S_0=S$.  By defining
\[
T := S\times[0,1]_\mu := \bigcup_{0\leq \mu\leq 1}S_\mu,
\]
we ensure that the complementary region $G:=(\widehat{B}\times I_{t_0})\setminus T$ is $t$-invariant, being a subset of $\Sigma\times[t_0/2,t_0]$.

At last, we verify that $T$ is contactomorphic to a trivial bypass attached to the sphere $S$.

\begin{lemma}\label{lemma:family-of-spheres3D}
The spheres $S_\mu$ can be made simultaneously convex after a small perturbation of $\xi$ supported in the interior of $T$.
\end{lemma}

\begin{proof}
First, we will show that for all but one or two values of $0\leq\mu\leq 1$, $S_\mu$ is Morse${}^+$, and therefore convex. One value is $\mu=\tfrac{2}{3}$, where the lower hemisphere of $S_\mu$ is approximated by $\widehat{B}_0$ and its characteristic foliation includes the retrogradient from the negative critical point $q_-^0$ to the positive critical point $q_+^0$; this is the convexity-breaking solid red flow line in the second column of \cref{fig:family-of-spheres-3d}. Another bifurcation among negative critical points may occur at some value $\tfrac{2}{3}<\mu^*<1$, as shown in the fourth column of the same figure. This does not affect convexity. We will argue that the first convexity-breaking bifurcation can be perturbed away.

We start by analyzing the characteristic-stable manifolds of the positive critical points. As in the proof of \cref{lemma:standard-sphere-3d}, the critical points of $(S_\mu)_\xi$ can be assumed to lie on the disks $\widehat{B}_{t_+(\mu)}$ and $\widehat{B}_{t_-(\mu)}$. The lower hemisphere $\widehat{B}_{t_-(\mu)}$ contains the positive critical points $q_+^{t_-(\mu)}$, $e^{t_-(\mu)}$, $e_-^{t_-(\mu)}$, and $e_+^{t_-(\mu)}$, as well as the stable manifold of $q_+^{t_-(\mu)}$, for all $0\leq\mu\leq 1$.  The remaining positive critical point of $(S_\mu)_\xi$ is $q_-^{t_+(\mu)}$, which lies in the upper hemisphere $\widehat{B}_{t_+(\mu)}$ and has stable manifold including two flow lines converging in backward time to $e_-^{t_-(\mu)}$ and $e^{t_-(\mu)}$ for all $0\leq\mu\leq 1$.  The stable manifold of the positive critical point $q_+^{t_-(\mu)}$, on the other hand, includes flow lines converging in backward time to
\begin{itemize}
    \item $e^{t_-(\mu)}$ and $e_+^{t_-(\mu)}$ for $0\leq\mu<\tfrac{2}{3}$;
    \item $q_-^{t_-(\mu)}$ and $e_+^{t_-(\mu)}$ for $\mu=\tfrac{2}{3}$;
    \item $e_-^{t_-(\mu)}$ and $e_+^{t_-(\mu)}$ for $\tfrac{2}{3}<\mu\leq 1$.
\end{itemize}
While a retrogradient appears when $\mu=\tfrac{2}{3}$, the salient feature for our purposes is the existence of a flow line from $e_+^{t_-(\mu)}$ to $q_+^{t_-(\mu)}$, for all $0\leq \mu\leq 1$.  In particular, the flow lines connecting $e_+^{t_-(\mu)}$ to $q_+^{t_-(\mu)}$ and $e_-^{t_-(\mu)}$ to $q_-^{t_+(\mu)}$ in $R_+(S_\mu)$ vary continuously with respect to $\mu$, so the Elimination Lemma produces a $C^0$-small perturbation of $T$ which simultaneously eliminates the pairs of critical points connected by these flow lines.

Next we explain the potential second bifurcation, affecting only the dynamics of $R_-(S)$, which could occur at some value $\tfrac{2}{3}<\mu^*<1$.  In this bifurcation, the characteristic-unstable manifold of the negative critical point $q_-^{t_-(\mu)}$ includes flow lines converging to
\begin{itemize}
    \item $e^{t_+(\mu)}$ and $e_+^{t_+(\mu)}$ for $\tfrac{2}{3}<\mu<\mu^*$;
    \item $e^{t_+(\mu)}$ and $q_+^{t_+(\mu)}$ for $\mu=\mu^*$;
    \item $e^{t_+(\mu)}$ and $e_-^{t_+(\mu)}$ for $\mu^*<\mu\leq 1$.
\end{itemize}
Such a bifurcation would occur as the lower hemisphere $\widehat{B}_{t_-(\mu)}$ comes very close to the upper hemisphere $\widehat{B}_{t_+(\mu)}$, and whether this occurs depends on the particular choice of $t_0$ above. This is of no concern, because after continuously eliminating the trajectory from $q_+^{t_+(\mu)}$ to $e_+^{t_+(\mu)}$ for all $0\leq \mu \leq 1$, this bifurcation no longer exists and the characteristic-unstable flow line of $q_-^{t_-(\mu)}$ in question converges to $e_-^{t_+(\mu)}$ for all $\tfrac{2}{3}< \mu\leq 1$. In fact, after the continuous elimination of the flow line from $e_+^{t_-(\mu)}$ to $q_+^{t_-(\mu)}$ in $R_+(S_{\mu})$, the same characteristic-unstable flow line of $q_-^{t_-(\mu)}$ then converges to $e_-^{t_+(\mu)}$ for all $0\leq \mu \leq 1$, allowing for a further continuous cancellation of these points.

The result is that, after applying the Elimination Lemma to two flow lines --- first in $R_+(S_\mu)$ and then in $R_-(S_\mu)$ --- each characteristic foliation $(S_\mu)_\xi$ has precisely two critical points, elliptic and of opposite sign, meaning that $S_\mu$ is Morse${}^+$, and thus convex.
\end{proof}

Following the perturbation identified in \cref{lemma:family-of-spheres3D}, $\xi$ is a $\mu$-invariant structure on $T\simeq S\times[0,1]_\mu$, and it follows from \cref{prop:bypass_attachment} that $T$ is contactomorphic to a trivial bypass attachment.  This fact will be used below to identify $(M_\Sigma,\alpha_\Sigma)$ with a bypass attachment to $\Sigma_{-\delta}$.

\s\n
\textbf{Comparing to the bypass attachment along $\mathbf{(\Lambda_{\pm};D_{\pm})}$.}
\s

We have now normalized and decomposed $(M_\Sigma=\Sigma\times[-\delta,\delta],\xi=\ker\alpha_\Sigma)$ into three regions:
\begin{itemize}
    \item[(R1)] $G$, on which $\xi$ is $t$-invariant;
    \item[(R2)] $T=S\times [0,1]_\mu$, on which, by \cref{lemma:family-of-spheres3D}, $\xi$ is standard in the sense that each $S_\mu$ is convex and $R_\pm(S_\mu)$ is deformation equivalent to the standard Weinstein ball with a unique critical point;
    \item[(R3)] $M_\Sigma\setminus(G\cup T)$, on which $\xi$ is standard in the sense that each surface $\Sigma_t\setminus(G\cup T)$ is convex.
\end{itemize}
The final step in our proof of \cref{prop:bb-correspondence3D} is to demonstrate that the bypass attachment to $\Sigma_{-\delta}$ along $(\Lambda_{\pm};D_{\pm})$ is compatible with this decomposition.

The abstract bypass cobordism $(M_\Sigma,\xi'=\ker\alpha'_\Sigma)$ associated to the data $(\Lambda_{\pm};D_{\pm})\subset \Sigma_{-\delta}$ consists of four steps:
\begin{itemize}
    \item a $t$-invariant thickening of $\Sigma_{-\delta}$;
    \item the attachment of a contact $1$-handle along a sphere determined by $(\Lambda_{\pm};D_{\pm})$;
    \item the attachment of a smoothly canceling contact $2$-handle along a sphere determined by $(\Lambda_{\pm};D_{\pm})$; and
    \item a $t$-invariant thickening of the resulting ``top surface," which is diffeomorphic to $\Sigma_{-\delta}$.
\end{itemize}
Without loss of generality we may assume the following:
\begin{enumerate}[label=(\alph*)]
    \item $\xi=\xi'$ on $M_\Sigma\setminus T$;
    \item the bypass is attached to $\widehat{B}_{-t_0/2}\subset \Sigma_{-t_0/2}$ with data $(\Lambda_{\pm};D_{\pm})$; and
    \item the contact $1$- and $2$-handles determined by $(\Lambda_{\pm};D_{\pm})$ are contained in $T$.
\end{enumerate} 

It remains to show that $\xi|_T$ and $\xi'|_T$ are contact isotopic relative to the boundary. The key observation is that, {\em as bypass data on $S$,} $(\Lambda_{\pm};D_{\pm})$ is trivial.  This can be seen by noting that $\Lambda_+$ is below $\Lambda_-$ in the sense of \cref{def:bypass} or, for those more familiar with the $3$-dimensional theory, by noting that $D_+\cup D_-$ produces a trivial attaching arc.  See \cref{fig:bypass-data-on-s-3d}. A contact structure $\xi'$ on $S\times[0,1]$ obtained by a trivial bypass attachment also satisfies (R2) above by the last sentence of \cref{prop:bypass_attachment}. Hence $\xi|_T$ and $\xi'_T$ are contact isotopic rel boundary.

\begin{figure}[ht]
	\centering
    \begin{overpic}[width=0.75\textwidth]{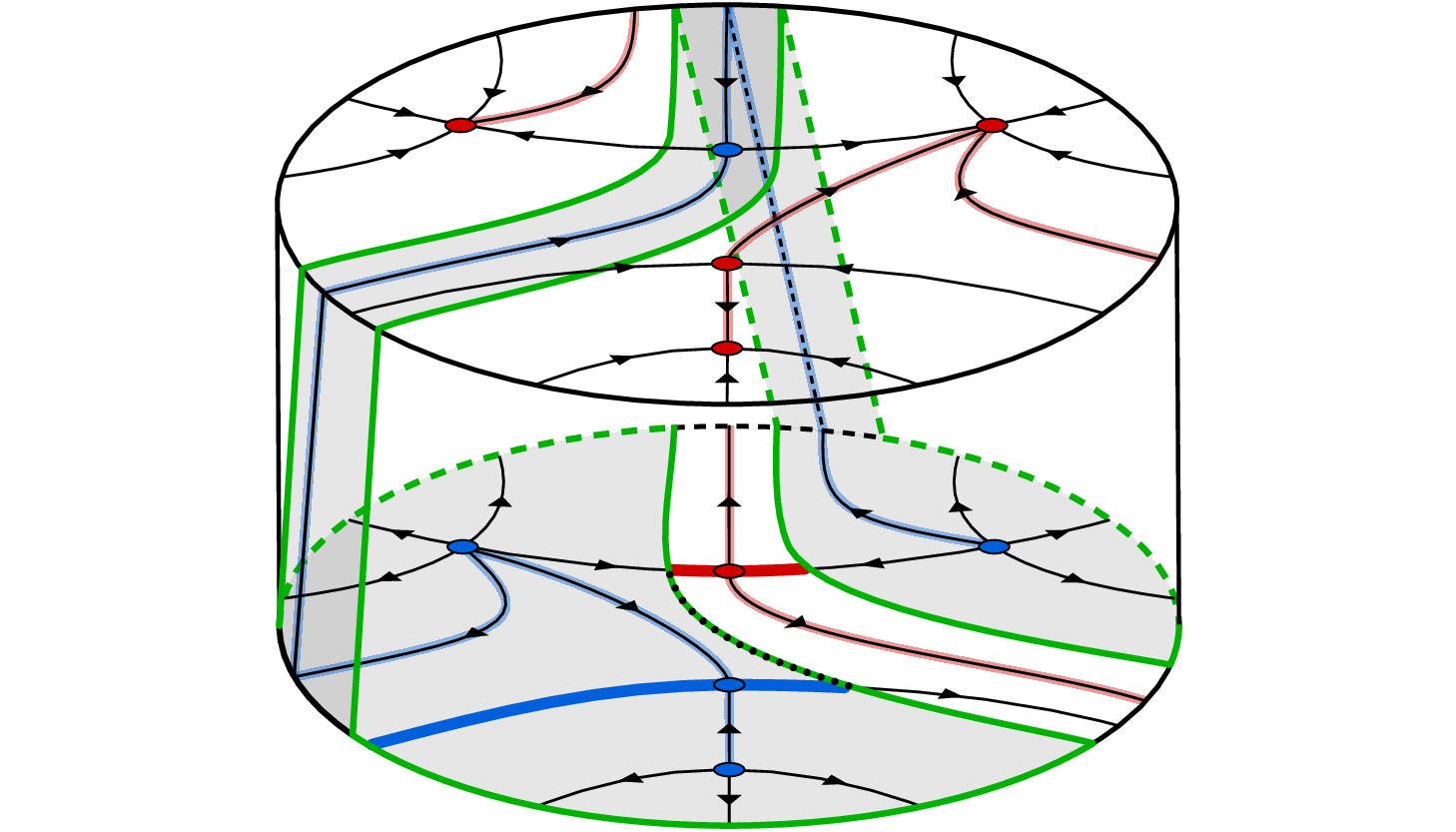}
        \put(20,1){\small $\hat{B}_{-t_0}$}
        \put(77,50){\small $-\hat{B}_{t_0}$}
        \put(40.5,11.75){\footnotesize \textcolor{darkblue}{$D_+^{-\epsilon}$}}
        \put(46,20.25){\footnotesize \textcolor{darkred}{$D_-^{\epsilon}$}}
	\end{overpic}
	\caption{The bypass data $(\Lambda_{\pm};D_{\pm})$ depicted on $S$, which is oriented as the boundary of $M_\Sigma\setminus(G\cup T)$.  The shaded region is $R_+(S)$, and the dividing set $\Gamma_S$ is depicted in green. The disks $D_{\pm}^{\mp \epsilon}$ are in solid blue and red, respectively, on the lower hemisphere $\hat{B}_{-t_0}$. In dotted black we see a short Reeb chord indicating that $\Lambda_+$ is below $\Lambda_-$. The classical (trivial) bypass attaching arc is a smoothing of the union of $D_+^{-\epsilon}$, $D_-^{\epsilon}$, and the dotted black Reeb chord.}
	\label{fig:bypass-data-on-s-3d}
\end{figure}

This concludes the proof of \cref{prop:bb-correspondence3D}.

\section{Bypass-bifurcation correspondence in dimension \(>3\)}\label{sec:BBC_high}

In this section we prove the bypass-bifurcation correspondence in all dimensions, stated precisely as follows (c.f. \cref{prop:bb-correspondence3D} and its subsequent discussion).

\begin{proposition}\label{prop:bb-correspondence}
Let $(M,\xi)$ be a contact $(2n+1)$-manifold and let $(M_\Sigma,\alpha_\Sigma):=(\Sigma\times[-\delta,\delta],\alpha_\Sigma)$ be a standard neighborhood of a folded Weinstein hypersurface
\begin{equation}\label{eq:folded-decomposition}
\Sigma_0 = W_1\cup_{K_1} W_2\cup_{K_2} W_3\cup_{K_3} W_4
\end{equation}
in $(M,\xi)$, where:
\be
    \item $W_2$ and $W_3$ are Weinstein cobordisms associated with index $n$ critical points $q_-$ and $q_+$, respectively;
    \item the Legendrian spheres $\mathrm{stab}_{W_2}(q_-)\cap K_2$ and $\mathrm{stab}_{W_3}(q_+)\cap K_2$ intersect $\xi_{K_2}$-transversely at a single point in $K_2$.
\ee
Then $(M_\Sigma,\alpha_\Sigma)$ is contactomorphic, relative boundary, to the bypass cobordism associated to the bypass data $(\Lambda_{\pm}, D_{\pm})\subset \Sigma_{-\delta}$ where, letting $D_{\pm,-\delta}$ denote the Weinstein-unstable disks of $q_{\pm}$, we have $D_{\pm} := (D_{\pm,-\delta})^{\pm \epsilon}$.
\end{proposition}

The proof of the three-dimensional case, given in the previous section, translates seamlessly to the higher-dimensional case. The strategy is identical, and many of the details generalize naturally when interpreted in the correct dimension. We proceed by retracing the steps above, being brief when there are no additional complications or observations worth addressing. 

\s\n
\textbf{Construction of the ball $\mathbf{\widehat{B}\subset\Sigma_0}$.}
\s

Let $D_{\pm,0}^{\dagger}$ denote the Weinstein-stable manifold of $q_{\pm}$ and let $\Lambda_{\pm}^{\dagger}:= \partial D_{\pm,0}^{\dagger}$.  There exists a single a retrogradient from $q_-$ to $q_+$ determined by the $\xi_{K_2}$-transverse intersection of the Legendrian $(n-1)$-spheres $\Lambda_+^{\dagger} \cap \Lambda_-^{\dagger}\subset K_2$. We define a small tubular neighborhood $B$ of $D_{+,0}^{\dagger} \cup D_{-,0}^{\dagger}$ as follows:

Since $\Lambda_+^{\dagger} \cup \Lambda_-^{\dagger}$ is a $\xi_{K_2}$-transverse intersection of Legendrian $(n-1)$-spheres, a standard tubular neighborhood of $\Lambda_+^{\dagger} \cup \Lambda_-^{\dagger} \subset K_2$ is given by a contact handlebody $$(C_2 := [-c,c]_t\times A_2,dt+\beta)$$ over the Weinstein domain $(A_2,\beta)$, obtained by plumbing sufficiently small disk cotangent bundles $\D^*\Lambda_+^{\dagger}$ and $\D^*\Lambda_-^{\dagger}$.  Using the standard Gray/Moser technique, we can arrange so that $\alpha_{\Sigma}\mid_{C_2}=C (dt+\beta)$, $C>0$, after a slight perturbation of $K_2$ transverse to $(\Sigma_0)_\xi$. From our description of the standard neighborhood of a folded Weinstein hypersurface, a tubular neighborhood of $K_2$ in $\Sigma_0$ is of the form $([-1,1]_{\tau}\times K_2, e^{\tau^2} \eta)$, where $\eta :=\alpha_{\Sigma}\mid_{K_2}$. Thus, we take as the starting point for $B$ the thickening $[-1,1]_{\tau}\times C_2$. We obtain $B$ by attaching neighborhoods of $D_{+,0}^{\dagger}$ and $D_{-,0}^{\dagger}$ along $\{1\}\times C_2$ and $\{-1\}\times C_2$, respectively. Such neighborhoods are symplectomorphic to Weinstein $n$-handles with cores $D_{+,0}^{\dagger}\setminus([-1,1]_\tau\times C_2)$ and $D_{-,0}^{\dagger}\setminus([-1,1]_\tau\times C_2)$ and attaching spheres $\{1\}\times \Lambda_+^{\dagger}$ and $\{-1\}\times \Lambda_-^{\dagger}$, respectively. See \cref{fig:BhighD}, and compare with \cref{fig:folded-with-cps-3d} in dimension $2n+1 = 3$.

\begin{figure}[ht]
	\centering
    \begin{overpic}[width=0.75\textwidth]{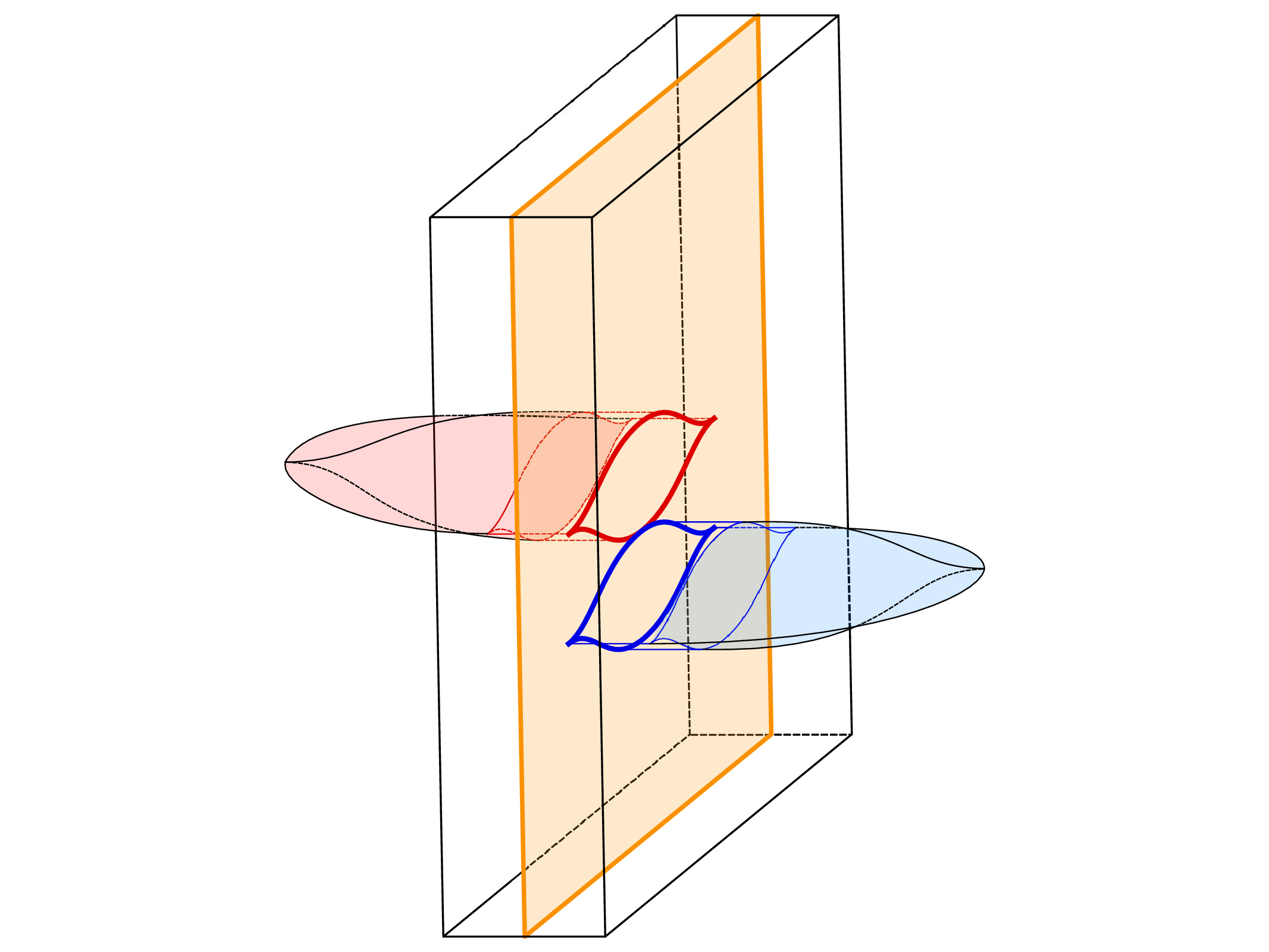}
       \put(36,10){\textcolor{Orange}{$K_2$}}
       \put(42,20){\small \textcolor{darkblue}{$\Lambda_{+}^{\dagger}$}}
       \put(55,45){\small \textcolor{darkred}{$\Lambda_{-}^{\dagger}$}}
	\end{overpic}
	\caption{The construction of $B$, depicted schematically for $2n+1 = 5$. The $\tau$-coordinate is left to right, the black box is $[-1,1]_{\tau}\times K_2$, and the shaded disks are slight retractions of $D_{\pm,0}^{\dagger}$. The orange plane is a front projection of $K_2$, and the entire figure may be taken as a front projection of $M$.}
	\label{fig:BhighD}
\end{figure}

Note that $B$ is diffeomorphic to a $2n$-dimensional ball: $[-1,1]\times C_2$ is smoothly built from a $0$-handle (a neighborhood of $\Lambda_+^{\dagger}\cap \Lambda_-^{\dagger}$) and two $(n-1)$-handles (with punctured spheres $\Lambda_{\pm}^{\dagger}\setminus \{pt\}$ as cores). Then $B$ is obtained by attaching two $n$-handles which by construction smoothly cancel the $(n-1)$-handles. 

As in the three-dimensional case, the piecewise-smooth boundary of $B$ decomposes as $\partial B = C_1 \cup C_h \cup C_3$ where:
\begin{itemize}
    \item $C_h=[-1,1]\times\partial C_2$;
    \item $C_1$ is the result of contact $(-1)$-surgery on $\{-1\}\times C_2$ along $\{-1\}\times\Lambda^\dagger_-$;
    \item $C_3$ is the result of contact $(-1)$-surgery on $\{1\}\times C_2$ along $\{1\}\times\Lambda^\dagger_+$.
\end{itemize}
By an identical modification as in dimension $3$ --- namely, flowing $C_1$ backward and $C_3$ forward along $(\Sigma_0)_\xi$ and tilting $C_h$, we may assume that $C_1\subset K_1$, $C_3\subset \overline{W}_4$ and transversely intersects $K_3$ along $\bdry C_3$, and $(\Sigma_0)_\xi$ points out of $\partial_{\mathrm{out}}B:= C_h \cup C_3$ and into $\partial_{\mathrm{in}} B := C_1$. 

To construct the ball $\widehat{B}$ whose characteristic foliation $\widehat{B}_\xi$ is positively transverse to $\bdry \widehat{B}$, we install a box fold near $\partial_{\mathrm{in}} B$, supported on a one-sided neighborhood $N(\partial_{\mathrm{in}} B) = [-s_0,0]\times \partial_{\mathrm{in}} B$ of $\partial_{\mathrm{in}} B$ in $W_1$. By construction, $\partial_{\mathrm{in}} B$ is obtained from the contact handlebody $C_2$ by a contact $(-1)$-surgery along $\Lambda^\dagger_-$, viewed as a subset of $\{0\}\times A_2\subset [-c,c]\times A_2$.

\begin{claim} \label{claim: generalized contact handlebody}
    The incoming region $\bdry_{\op{in}}B$ is a generalized contact handlebody 
    $$\{(t,x)~|~  -c \leq t\leq  f(x)\}\subset (\R\times A_2, \,\ker(dt+\beta)),$$ 
    where $f(x)$ is close to $c$.
\end{claim}

\begin{proof}
    We consider the slightly simpler model of applying a contact $(-1)$-surgery to $([-1,1]\times \D^* S^{n-1},dt+\beta)$ along $\{0\}$ times the zero section $S^{n-1}$.
    Let $\tau$ be a symplectic Dehn twist along $S^{n-1}$ (in particular $\tau^* d\beta=d\beta$), supported on a small neighborhood $N(S^{n-1})$ of $S^{n-1}\subset \D^* S^{n-1}$.  In the process of removing $[-1/2,1/2]\times N(S^{n-1})$ and regluing, we must replace $dt+\beta$ by $dt+ \beta_t$, where $\beta_t$ interpolates between $\tau_*\beta$ and $\beta$. The claim then follows by observing that the Reeb vector field of $dt+\beta_t$ is close to $\bdry_t$ since $\beta$ and $\tau_*\beta$ are close to zero on $N(S^{n-1})$.
\end{proof} 

Since $\partial_{\mathrm{in}} B$ is close to being a contact handlebody, we can apply the Creation Lemma and install a box fold in $N(\partial_{\mathrm{in}} B) = [-s_0,0]\times \partial_{\mathrm{in}} B$.  (Note that, since $\bdry_{\op{in}} B\subset K_1$, which is a dividing set, there are no contact width issues and we may assume that $\alpha_\Sigma|_{\bdry_{\op{in}} B}$ is some positive multiple of the contact form from \cref{claim: generalized contact handlebody}.)  The Weinstein domain $(A_2,\beta)$ has three critical points: a critical point of index $0$ and two critical points of index $n-1$. 
Since each index $k$ critical point of $A_2$ yields critical points of index $k$ and $k+1$ in the box fold, the box fold has six critical points, with indices $0,1$, $n-1$, $n$, $n-1$, and $n$. Let $e$ denote the index $0$ critical point and $e_{\pm}$ denote the index $n-1$ critical point associated to $e_{\pm}'$.

Finally, $\widehat{B}$ is obtained from $B$ as in dimension $3$ by adjoining the Weinstein-unstable manifolds of $e$, $e_+$, and $e_-$ and slightly retracting so that $(\Sigma_0)_\xi$ is transverse to $\bdry \widehat{B}$.

\s\n
\textbf{Decomposing the region $\mathbf{\widehat{B}\times[-t_0,t_0]}$.}
\s

The discussion preceding \cref{lemma:standard-sphere-3d} works for $\op{dim} M$ greater than $3$ with minimal change.  Summarizing that discussion:
\begin{itemize}
    \item We repeat the construction of $\widehat{B}$ in a $t$-parametric manner, performing a $C^0$-small isotopy of $\Sigma\times[-\delta,\delta]$ which is supported on $W_1\times(-\delta,\delta)$, such that (B1)--(B6) hold after isotopy, with (B3) replaced by:
    \be
    \item[(B3')] for each $t\in[-\delta/2,\delta/2]$, there are three canceling pairs of critical points on $W_1$; one pair will have indices $0$ and $1$ while the other two pairs have indices $n-1$ and $n$.  
    \ee
    The index $0$ critical point will be denoted $e^t$ and the index $n-1$ critical points will be denoted $e^t_+$ and $e^t_-$. See \cref{fig:b-hat-foliation-3d} for the $n=1$ case.

    \item We then restrict attention to the neighborhood $\Sigma\times I_{t_0}$ of $\Sigma_0$. Each $(\Sigma_t)_\xi$, $t\in I_{t_0}$, contains index $n$ critical points $q^t_+$ and $q^t_-$, corresponding to $q_+$ and $q_-$ in $\widehat{B}_0$, respectively.  Moreover, $\op{stab}_{W_3}(q_+^t)\cap \op{stab}_{W_2}(q_-^t)\not=\emptyset$ precisely when $t=0$ and 
    \begin{equation}\label{bbc:eq:b-hat-dynamics}
    \op{stab}_{W_3}(q_+^t)\cap C_2 = \partial D^\dagger_{+,t} = (\Lambda^\dagger_+)^{t}
    \quad\text{and}\quad
    \op{stab}_{W_2}(q_-^t)\cap C_2 = \partial D^\dagger_{-,t} = (\Lambda^\dagger_-)^{-t},
    \end{equation}
    for all $t\in I_{t_0/2}$, where $\Lambda^\dagger_\pm\subset C_2\subset K_2$ is the Legendrian $(n-1)$-sphere identified above.
\end{itemize}

Let $S$ be the hypersurface obtained from $-\bdry (\widehat{B}\times I_{t_0})$ by rounding corners.
The following is the higher-dimensional analog of \cref{lemma:standard-sphere-3d}:

\begin{lemma}\label{lemma:standard-sphere-all-dim}
The hypersurface $S$ can be taken to be convex, with the regions $R_\pm(S)$ each deformation equivalent to the standard Weinstein $2n$-ball with a unique critical point.
\end{lemma}

\begin{proof}
The hypersurface $S$ is obtained by rounding the piecewise-smooth hypersurface
\[
-\partial(\widehat{B}\times I_{t_0}) = -\widehat{B}_{t_0} \cup ((\partial\widehat{B})\times I_{t_0}) \cup \widehat{B}_{-t_0}.
\]
Since the rounding occurs on a small neighborhood of the corners, our analysis of $S_\xi$ will follow from an analysis of the characteristic foliation of this piecewise-smooth hypersurface.  

The critical points of $S_\xi$ are all contained in $\widehat{B}_{\pm t_0}$, five of which are positive:
\begin{itemize}
    \item critical points $q_+^{-t_0}\in\widehat{B}_{-t_0}$ and $q_-^{t_0}\in\widehat{B}_{t_0}$ of index $n$, projecting to $q_+$ and $q_-$ in $\widehat{B}_0$, respectively;
    \item critical points $e_+^{-t_0},e_-^{-t_0}\in\widehat{B}_{-t_0}$ of index $n-1$, projecting to  $e_+$ and $e_-$ in $\widehat{B}_0$, respectively;
    \item a critical point $e^{-t_0}\in\widehat{B}_{-t_0}$ of index $0$, projecting to $e$ in $\widehat{B}_0$;
\end{itemize}
and five of which are negative:
\begin{itemize}
    \item critical points $q_-^{-t_0}\in\widehat{B}_{-t_0}$ and $q_+^{t_0}\in\widehat{B}_{t_0}$ of index $n$, projecting to $q_-$ and $q_+$ in $\widehat{B}_0$, respectively;
    \item critical points $e_+^{t_0},e_-^{t_0}\in\widehat{B}_{t_0}$ of index $n-1$, projecting to  $e_+$ and $e_-$ in $\widehat{B}_0$, respectively;
    \item a critical point $e^{t_0}\in\widehat{B}_{t_0}$ of index $0$, projecting to $e$ in $\widehat{B}_0$.
\end{itemize}

Next we analyze the characteristic-stable manifolds $\op{stab}_S(p)$ of the positive critical points $p$ of $S$ and show that $R_+(S)$ is deformation equivalent to the standard Weinstein $2n$-ball; the $R_-(S)$ case is similar.  Observe that the Weinstein- and character\-istic-stable manifolds coincide for positive critical points.  

(1) $\op{stab}_S(e^{-t_0})=\{e^{-t_0}\}$, since $\op{ind}(e^{-t_0})=0$.  

(2) The characteristic-stable manifolds $\op{stab}_S(e_-^{-t_0})$ and $\op{stab}_S(e_+^{-t_0})$ are punctured copies of the spheres $\Lambda_-\subset K_1$ and $(\Lambda_+^\dagger)'\subset K_1$, which is the image of $(\Lambda_+^\dagger)^{-t_0/2}\subset K_2$ under the backwards flow of $S_\xi$ to $K_1$, respectively. We write
\[
\tilde{\Lambda}_- = \op{stab}_S(e_-^{-t_0})\cup\{e^{-t_0}\}
\quad\text{and}\quad
\tilde{\Lambda}_+^\dagger = \op{stab}_S(e_+^{-t_0})\cup\{e^{-t_0}\},
\]
where the notation $\tilde{\Lambda}_-$ and $\tilde{\Lambda}_+^\dagger$ indicates that these spheres are contained in the interior of $W_1\subset \widehat{B}_{-t_0}$.  Moreover, the characteristic-\emph{un}stable manifolds of $e_-^{-t_0}$ and $e^{-t_0}$ together form a neighborhood of $\tilde{\Lambda}_-$ and, similarly, the characteristic-unstable manifolds of $e_+^{-t_0}$ and $e^{-t_0}$ together form a neighborhood of $\tilde{\Lambda}_+^\dagger$.  See \cref{fig:b-hat-neg-3d} for the $n=1$ case, where these spheres are given by $\tilde{\Lambda}_-=\{e^{-t_0},e_-^{-t_0}\}$ and $\tilde{\Lambda}_+^\dagger=\{e^{-t_0},e_+^{-t_0}\}$.

(3)  The characteristic-stable manifolds $\op{stab}_S(q_+^{-t_0})$ and $\op{stab}_S(q_-^{t_0})$ of $q_+^{-t_0}\in\widehat{B}_{t_0}$ and $q_-^{t_0}\in-\widehat{B}_{t_0}$ are $n$-disks which, according to \eqref{bbc:eq:b-hat-dynamics}, satisfy 
\begin{gather} \label{eqn: stab S}
\op{stab}_S(q_+^{-t_0})\cap C_2 = (\Lambda_+^\dagger)^{-t_0/2}\subset \widehat{B}_{-t_0},\\
\label{eqn: stab S2}\op{stab}_S(q_-^{t_0})\cap C_2  = (\Lambda_-^\dagger)^{-t_0/2}\subset -\widehat{B}_{t_0}.
\end{gather}
The exponent $-t_0/2$ on the right-hand side comes from observing that \eqref{bbc:eq:b-hat-dynamics} holds for $|t|<t_0/2$ and that the characteristic foliation is $t$-invariant for $t_0/2\leq |t|\leq \delta$. When identifying the Reeb shifts of the Legendrian spheres in $C_2$, we use the induced contact form on $C_2$ inherited from the global contact form; see \cref{fig:analysis}.

(3A) Flowing $\op{stab}_S(q_+^{-t_0})\cap C_2$ backwards across $W_2$ and into $W_1$,  $\bdry(\op{stab}_S(q_+^{-t_0}))$ is precisely the $(n-1)$-sphere $\tilde{\Lambda}_+^\dagger$ identified above.  Moreover, because $e_+^{-t_0}$ is the index-$(n-1)$ critical point of $\tilde{\Lambda}_+^\dagger$, $\op{stab}_S(q_+^{-t_0})$ contains a unique flow line from $e_+^{-t_0}$ to $q_+^{-t_0}$.

(3B) It remains to consider the most complicated case $\op{stab}_S(q_-^{t_0})$.
\begin{figure}[ht]
	\centering
    \begin{overpic}[width=\textwidth]{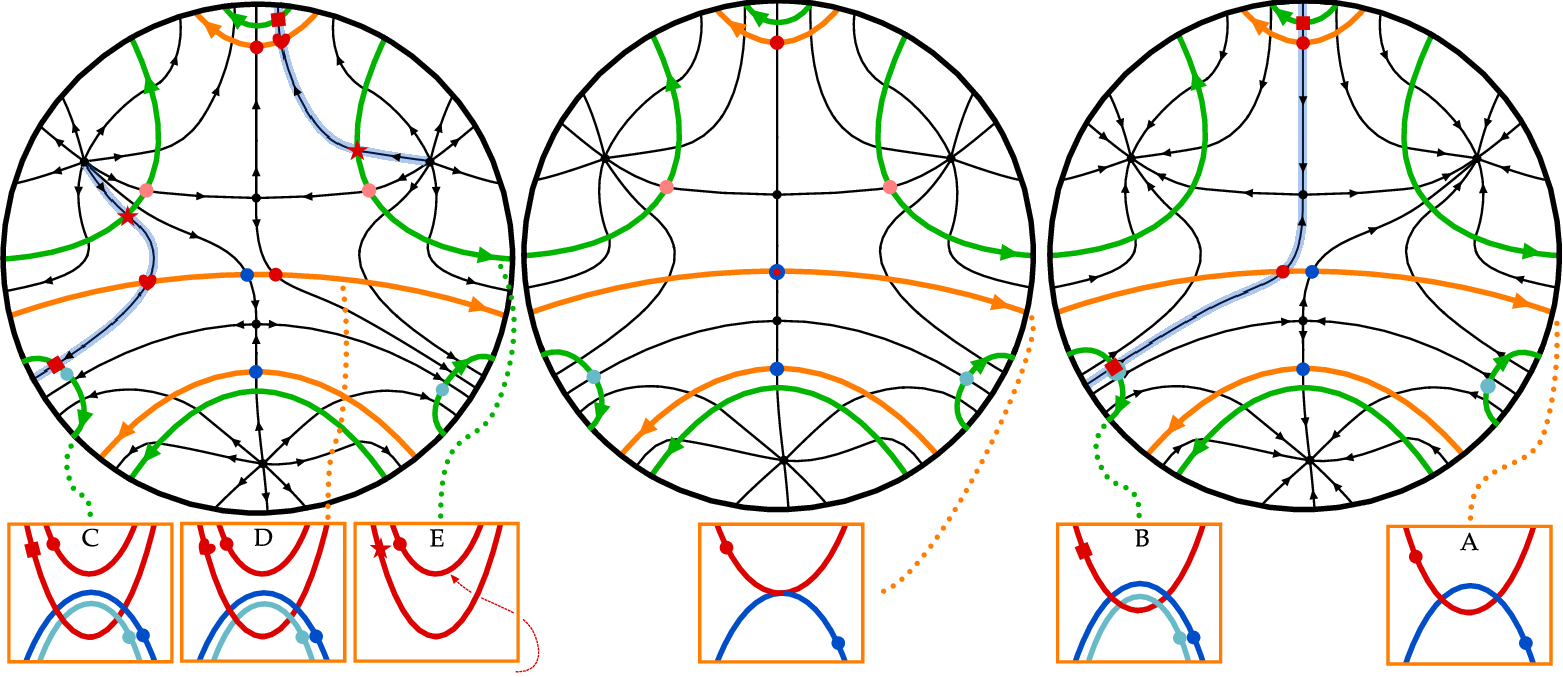}
        \put(0,42){\tiny $t=-t_0$}
        \put(34,42){\tiny $t=0$}
        \put(67,42){\tiny $t=t_0$}
        
        \put(44,27){\footnotesize \textcolor{Orange}{$C_2$}}
        \put(49,-0.5){\tiny \textcolor{Orange}{$C_2$}}
        \put(50.5,20.5){\tiny \textcolor{darkblue}{$\Lambda_+^{\dagger}$}}
        \put(47.5,24){\tiny $q_+$}
        \put(50.5,31.75){\tiny $q_-$}
        \put(47.5,38.5){\tiny \textcolor{darkred}{$\Lambda_-^{\dagger}$}}
        \put(44,31.75){\tiny \textcolor{Salmon}{$\Lambda_-$}}
        \put(61,22){\tiny \textcolor{Cyan}{$\Lambda_+$}}
        \put(50.5,2){\tiny \textcolor{darkblue}{$\Lambda_+^{\dagger}$}}
        \put(47.5,7.5){\tiny \textcolor{darkred}{$\Lambda_-^{\dagger}$}}

        \put(93,-0.5){\tiny \textcolor{Orange}{$C_2$}}
        \put(72,-0.5){\tiny \textcolor{divgreen}{$K_3$}}
        \put(75,-0.5){\tiny \textcolor{darkblue}{$(-1)$}}

        \put(4.5,-0.5){\tiny \textcolor{divgreen}{$K_3$}}
        \put(7.5,-0.5){\tiny \textcolor{darkblue}{$(-1)$}}

        \put(15.5,-0.5){\tiny \textcolor{Orange}{$C_2$}}
        \put(18.5,-0.5){\tiny \textcolor{darkblue}{$(-1)$}}
        \put(11.5,-0.5){\tiny \textcolor{Cyan}{$(+1)$}}

        \put(29.25,-0.5){\tiny \textcolor{darkred}{$(-1)$}}
        \put(26.5,-0.5){\tiny \textcolor{divgreen}{$K_1$}}
	\end{overpic}
	\caption{The analysis of $\op{stab}_S(q_-^{t_0})$, highlighted in blue; c.f.\ the first column of \cref{fig:family-of-spheres-3d}.}
	\label{fig:analysis}
\end{figure}
The analysis is summarized in \cref{fig:analysis}, which requires some explanation: First, three low-dimensional schematic copies of $\widehat{B}_t$ are drawn (c.f.\ \cref{fig:b-hat-foliation-3d}) with the various contact level sets corresponding to the folded Weinstein structure of $\Sigma_0$ in green and orange. The arrows indicate the Reeb direction of each level set induced by the global contact form. The characteristic foliation on $t=\pm t_0$ is oriented  according to the orientation of the sphere $S$, so that the left panel of the figure agrees with \cref{fig:b-hat-neg-3d}, while the right panel of the figure has a foliation opposite to that of \cref{fig:b-hat-pos-3d}. For reference, the unoriented foliation on $t=0$ is drawn, along with the main Legendrian spheres of interest: $\Lambda_{\pm}$ and $\Lambda_{\pm}^{\dagger}$. While the low-dimensional pictures are schematic for the purpose of keeping track of the relevant shifts, the local spin-symmetric front projections at the bottom of the figure are accurate. Each of these front projections depicts a collection of Legendrian spheres in $C_2$; after performing the indicated surgeries, we are left with Legendrian spheres in the contact manifold indicated below the front projection.

The backward passage of $\op{stab}_S(q_-^{t_0})$ on $S$ through various contact level sets is tracked sequentially through the surgery diagrams labeled A, B, C, D, E. We begin with A, where $\op{stab}_S(q_-^{t_0})\cap C_2 = (\Lambda_-^{\dagger})^{-t_0/2}$. The level set $K_3$ is obtained from $C_2$ by a contact $(-1)$-surgery along $(\Lambda_+^{\dagger})^{t_0}$, corresponding to attachment of the Weinstein handle associated to $W_3$; this produces panel B, and $\op{stab}_S(q_-^{t_0})$ intersects along the red Legendrian sphere in $B$, indicated with a small red box. For reference, we have also drawn a copy of $\Lambda_+^{-t_0}$, a negative Reeb shift of the cocore, in light blue. Continued backward flow along $\op{stab}_S(q_-^{t_0})$ pushes across a small portion of $W_4$ before meeting $\partial\widehat{B}_{t_0}$, where the vertical portion of $S$ induces a small negative Reeb shift.  Upon reaching $\partial\widehat{B}_{-t_0}$, the flow lines are reversed and $\op{stab}_S(q_-^{t_0})$ again meets $K_3$ in panel C as the box-labeled Legendrian.  Panel C also includes a red dot-labeled Legendrian, which is a copy of $(\Lambda^\dagger_-)^{t_0}$, for reference. Backwards flow across $W_3$ induces a contact $(+1)$-surgery on $\Lambda_+^{-t_0}$, producing the surgery diagram in panel D, where $\op{stab}_S(q_-^{t_0})$ meets $C_2$ as the heart-labeled Legendrian. We pass through $W_2$ to reach $K_1$ by performing a contact $(-1)$-surgery along $(\Lambda_-^{\dagger})^{t_0}$, which is the red dot-labeled Legendrian in panels C, D, and E. Canceling the blue $(\pm 1)$ surgeries gives the final surgery diagram presentation of $K_1$ in panel E, along which $\op{stab}_S(q_-^{t_0})$ intersects as the star-labeled negative Reeb shift of $\Lambda_-$, i.e., the belt sphere of the handle attached along $(\Lambda_-^{\dagger})^{t_0}$. Pushing into $W_1$, we find that the boundary of $\op{stab}_{S}(q_-^{t_0})$ is the $(n-1)$-sphere $\tilde{\Lambda}_-$, and that $\op{stab}_{S}(q_-^{t_0})$ contains a unique flow line from $e_-^{-t_0}$ to $q_-^{t_0}$.  

Finally, we apply the Elimination Lemma to the pairs $\{q_+^{-t_0},e_+^{-t_0}\}$ and $\{q_-^{t_0},e_-^{-t_0}\}$, as in the proof of \cref{lemma:standard-sphere-3d}, to conclude that $R_+(S)$ is deformation equivalent to a standard Weinstein ball.
\end{proof}

Next, we construct the region $T\subset\widehat{B}\times I_{t_0}$ as $S\times[0,1]_\mu$ as in the $3$-dimensional case.  Specifically, $S_\mu$ is obtained by  slightly retracting $\widehat{B}$ by an amount that is proportional to $\mu$ and rounding the corners of $-\partial(\widehat{B}\times[t_-(\mu),t_+(\mu)])$, where the monotonic functions $t_\pm:[0,1]\to [-t_0,t_0]$,
\begin{equation} \label{eqn: t sub pm second version}
t_-(\mu) = -t_0 + \tfrac{3}{2}\mu t_0
\quad\text{and}\quad
t_+(\mu) = t_0 - \tfrac{1}{4}\mu t_0
\end{equation}
are defined as before.  The region $G:=(\widehat{B}\times I_{t_0})\setminus T$ is then contained in $\Sigma\times[t_0/2,t_0]$ and thus carries a $t$-invariant contact structure.  It remains, then, to verify that $T$ is contactomorphic to a trivial bypass attached to $S$ --- that is, to generalize \cref{lemma:family-of-spheres3D} to the $\op{dim} M>3$ case.

\begin{lemma}\label{lemma:family-of-spheres-all-dim}
The spheres $S_\mu$ can be made simultaneously convex after a small perturbation of $\xi$ supported in the interior of $T$.
\end{lemma}

\begin{proof}
As with the proof of \cref{lemma:standard-sphere-all-dim}, the $3$-dimensional argument readily generalizes to all dimensions, provided we carefully analyze the characteristic-stable and characteristic-unstable manifolds.  

The critical points of $(S_\mu)_\xi$ lie on the disks $\widehat{B}_{t_+(\mu)}$ and $\widehat{B}_{t_-(\mu)}$.  There are ten critical points for each $0\leq \mu\leq 1$, five of which are positive:
\begin{itemize}
    \item critical points $q_+^{t_-(\mu)}\in\widehat{B}_{t_-(\mu)}$ and $q_-^{t_+(\mu)}\in\widehat{B}_{t_+(\mu)}$ of index $n$, projecting to $q_+$ and $q_-$ in $\widehat{B}_0$, respectively;
    \item critical points $e_+^{t_-(\mu)},e_-^{t_-(\mu)}\in\widehat{B}_{t_-(\mu)}$ of index $n-1$, projecting to  $e_+$ and $e_-$ in $\widehat{B}_0$, respectively;
    \item a critical point $e^{t_-(\mu)}\in\widehat{B}_{t_-(\mu)}$ of index $0$, projecting to $e$ in $\widehat{B}_0$.
\end{itemize}
As in the $3$-dimensional case, the characteristic foliation $(S_\mu)_\xi$ will feature a retrogradient when $\mu=\tfrac{2}{3}$, and another bifurcation may occur for some value $\tfrac{2}{3}<\mu^*<1$.

\s\n
{\bf The $R_+(S_\mu)$ case.} We will show that, for each $0\leq \mu\leq 1$, there exists a unique flow line from $e_+^{t_-(\mu)}$ to $q_+^{t_-(\mu)}$ and from $e_-^{t_-(\mu)}$ to $q_-^{t_+(\mu)}$. The Elimination Lemma then implies that $R_+(S_\mu)$ is deformation equivalent to the standard Weinstein $2n$-ball.

As in the proof of \cref{lemma:standard-sphere-all-dim}, the characteristic-stable manifolds of $e_{\pm}^{t_-(\mu)}$ and $e^{t_-(\mu)}$ together form spheres in $\widehat{B}_{t_-(\mu)}$:
\[
\tilde{\Lambda}_- = \op{stab}_{S_\mu}(e_-^{t_-(\mu)})\cup \op{stab}_{S_\mu}(e^{t_-(\mu)})
\quad\text{and}\quad
\tilde{\Lambda}_+^\dagger = \op{stab}_{S_\mu}(e_+^{t_-(\mu)})\cup \op{stab}_{S_\mu}(e^{t_-(\mu)}).
\]
Again, these are spheres contained in the interior of $W_1$ and we obtain a neighborhood of $\tilde{\Lambda}_-$ (resp.\ $\tilde{\Lambda}_+^\dagger$) from the characteristic-unstable manifolds of $e^{t_-(\mu)}$ and $e_-^{t_-(\mu)}$ (resp.\ $e^{t_-(\mu)}$ and $e_+^{t_-(\mu)}$).

Next we consider $\op{stab}_{S_\mu}(q_+^{t_-(\mu)})$, which varies with $\mu$: 
Although $\op{stab}_{S_\mu}(q_+^{t_-(\mu)})\cap C_2$ is a possibly trivial Reeb pushoff of $\Lambda_+^\dagger$, regardless of the value of $0\leq\mu\leq 1$, the result of flowing this intersection backwards across $W_2$ to $K_1=\partial W_1\subset\widehat{B}_{t_-(\mu)}$ is:
\begin{itemize}
    \item Legendrian isotopic to $(\Lambda_+^\dagger)'$ for $0\leq\mu < \tfrac{2}{3}$;
    \item the punctured sphere $(\Lambda_+^\dagger)'\setminus\Lambda_-$ for $\mu=\tfrac{2}{3}$; and
    \item Legendrian isotopic to the Legendrian sum $(\Lambda_+^\dagger)'\uplus\Lambda_-$ for $\tfrac{2}{3}\leq\mu\leq 1$.
\end{itemize}
Recall that $(\Lambda_+^\dagger)'$ is the image of $(\Lambda_+^\dagger)^{-t_0/2}\subset K_2$ under the backwards flow of $(S_0)_\xi$ to $K_1$. In particular, $(S_{2/3})_\xi$ features a retrogradient from $q_-^{t_-(\mu)}$ to $q_+^{t_-(\mu)}$, $K_1$ is obtained from $C_2$ by a contact $(-1)$-surgery along $\Lambda_-^{\dagger}$ as in panel E of \cref{fig:analysis}, and passing through the retrogradient corresponds to handlesliding $(\Lambda_+^{\dagger})'$ up across the surgery; see \cref{fig:analysis2}.  

\begin{figure}[ht]
	\centering
    \begin{overpic}[width=0.75\textwidth]{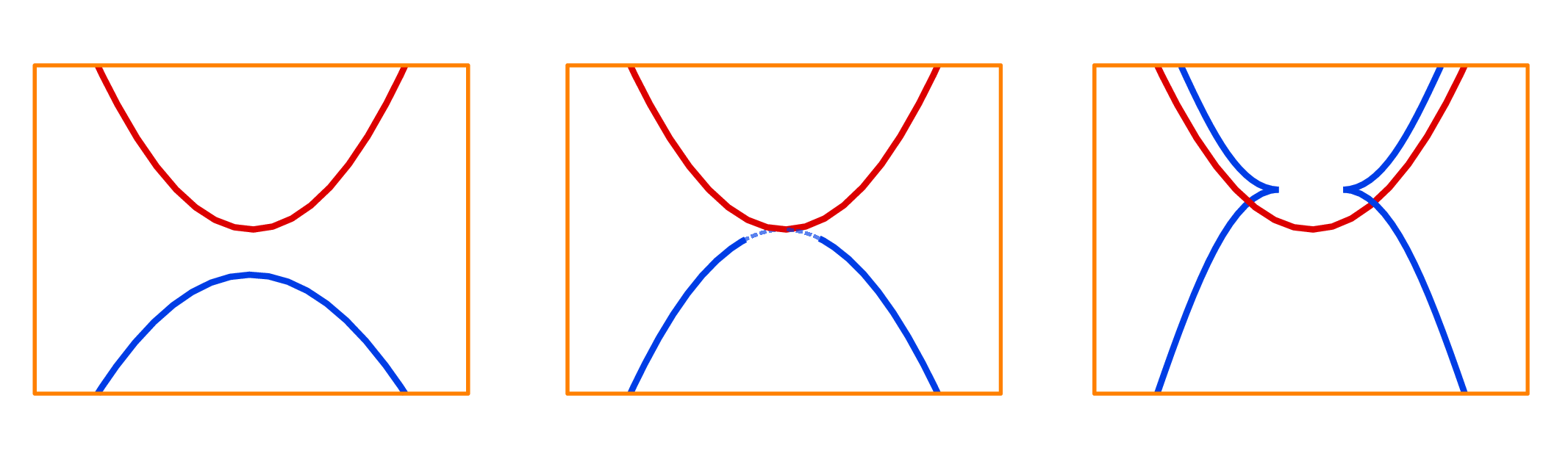}
       \put(49,1){\small \textcolor{divgreen}{$K_1$}}
       \put(14,1){\small \textcolor{divgreen}{$K_1$}}
       \put(84,1){\small \textcolor{divgreen}{$K_1$}}

       \put(10,26.5){\footnotesize $0\leq \mu < \tfrac{2}{3}$}
       \put(48,26.5){\footnotesize $\mu = \tfrac{2}{3}$}
       \put(78,26.5){\footnotesize $\tfrac{2}{3}< \mu \leq 1$}

       \put(60,5){\small \textcolor{Orange}{$C_2$}}
       \put(26,5){\small \textcolor{Orange}{$C_2$}}
       \put(93.75,5){\small \textcolor{Orange}{$C_2$}}

       \put(58,20){\tiny \textcolor{darkred}{$(-1)$}}
       \put(24,20){\tiny \textcolor{darkred}{$(-1)$}}
       \put(91.75,20){\tiny \textcolor{darkred}{$(-1)$}}
	\end{overpic}
	\caption{Analyzing $\op{stab}_{S_\mu}(q_+^{t_-(\mu)})$. Each copy of $K_1$ seen here lies in the lower hemisphere $\widehat{B}_{t_-(\mu)}$ of $S_\mu$, and the red and blue Legendrians are $(\Lambda_-^\dagger)^{-t_-(\mu)}$ and $\op{stab}_{S_\mu}(q_+^{t_-(\mu)})\cap K_1$, respectively.}
	\label{fig:analysis2}
\end{figure}

Continuing to flow backwards into $W_1$, the flow lines passing through $\op{stab}_{S_\mu}(q_+^{t_-(\mu)})\cap K_1$ will all limit in backward time to $e^{t_-(\mu)}$, $e_-^{t_-(\mu)}$, or $e_+^{t_-(\mu)}$; relevant for us is the existence of a unique flow line limiting in backward time to $e_+^{t_-(\mu)}$ --- corresponding to the critical point of $\tilde{\Lambda}_+^\dagger$ --- for all values of $0\leq \mu\leq 1$.

The treatment of $\op{stab}_{S_\mu}(q_-^{t_+(\mu)})$, $0\leq \mu \leq 1$, is analogous: For each $\mu$, the intersection $\op{stab}_{S_\mu}(q_-^{t_+(\mu)})\cap K_1$ is a Reeb pushoff of $\Lambda_-$ and flows backwards to the sphere $\tilde{\Lambda}_-$, meaning that there is a unique flow line from $e_-^{t_-(\mu)}$ to $q_-^{t_+(\mu)}$.  Applying the Elimination Lemma to the pairs $\{q_+^{t_-(\mu)},e_+^{t_-(\mu)}\}$ and $\{q_-^{t_+(\mu)},e_-^{t_-(\mu)}\}$ then allows us to see that $R_+(S_\mu)$ is Weinstein homotopic to the standard Weinstein $2n$-ball with unique critical point.

\s\n
{\bf The $R_-(S_\mu)$ case.} The negative region $R_-(S_\mu)$ is addressed similarly, though, as in dimension $3$, a second bifurcation is possible at some parameter value $\tfrac{2}{3}<\mu^*<1$, and we assume that the Elimination Lemma is applied to $R_+(S)$ before we address $R_-(S)$.  

We explain the second bifurcation, which in dimension $3$ occurs in the fourth column of \cref{fig:family-of-spheres-3d}. Here, $\op{unstab}_{S_\mu}(q_-^{t_-(\mu)})$ intersects $C_2\subset K_2$ along a Legendrian isotopic to $\Lambda_-^\dagger$ for all $\tfrac{2}{3}<\mu\leq 1$.  Pushing across $W_3$ corresponds to a surgery, and $\op{unstab}_{S_\mu}(q_-^{t_-(\mu)})$ meets $K_3$ along a sphere Legendrian isotopic to $\Lambda_+\uplus\Lambda_-^\dagger$, as shown in \cref{fig:analysis3}, which depicts a surgery diagram for $K_3$ set in $C_2$.

\begin{figure}[ht]
	\centering
    \begin{overpic}[width=0.75\textwidth]{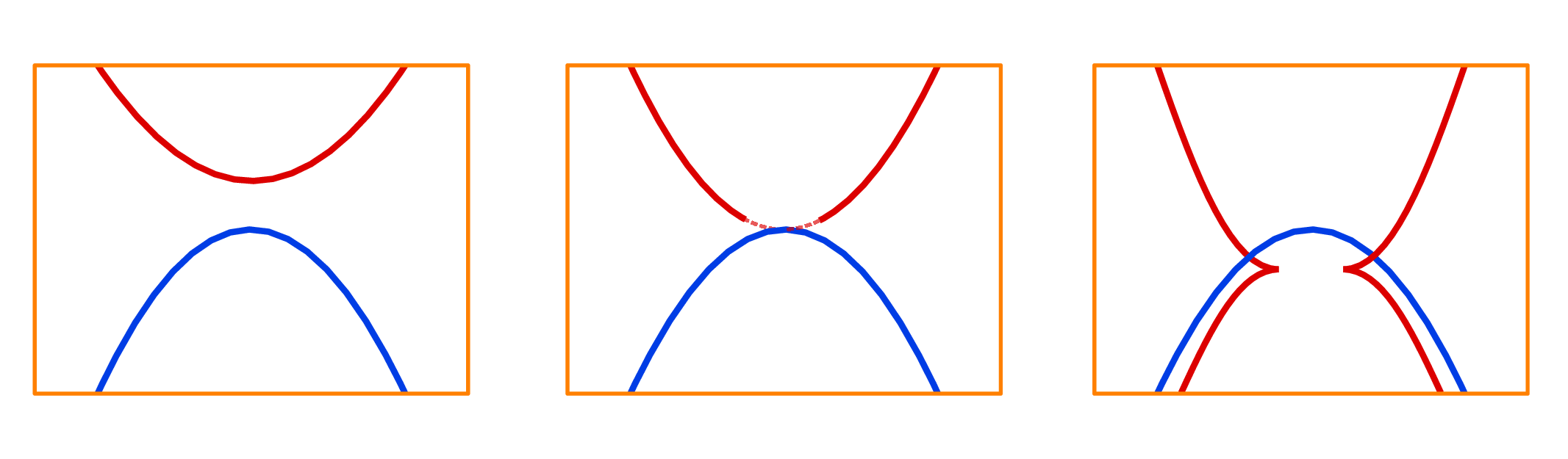}
       \put(49,1){\small \textcolor{divgreen}{$K_3$}}
       \put(14,1){\small \textcolor{divgreen}{$K_3$}}
       \put(84,1){\small \textcolor{divgreen}{$K_3$}}

       \put(10,26.5){\footnotesize $0\leq \mu < \tfrac{2}{3}$}
       \put(48,26.5){\footnotesize $\mu = \tfrac{2}{3}$}
       \put(78,26.5){\footnotesize $\tfrac{2}{3}< \mu \leq 1$}

       \put(60,5){\small \textcolor{Orange}{$C_2$}}
       \put(26,5){\small \textcolor{Orange}{$C_2$}}
       \put(93.75,5){\small \textcolor{Orange}{$C_2$}}

       \put(58,10){\tiny \textcolor{darkblue}{$(-1)$}}
       \put(24,10){\tiny \textcolor{darkblue}{$(-1)$}}
       \put(91.75,10){\tiny \textcolor{darkblue}{$(-1)$}}
	\end{overpic}
	\caption{Analyzing $\op{unstab}_{S_\mu}(q_-^{t_-(\mu)})$. Each copy of $K_3$ seen here lies in the lower hemisphere $\widehat{B}_{t_-(\mu)}$ of $S_\mu$, and the blue and red Legendrians are $(\Lambda_+^\dagger)^{t_-(\mu)}$ and $\op{unstab}_{S_\mu}(q_-^{t_-(\mu)})\cap K_3$, respectively.}
	\label{fig:analysis3}
\end{figure}

This intersection occurs within $\widehat{B}_{t_-(\mu)}$; continued flow takes us into $W_4$, across the vertical portion of $\partial S$, and back into $K_3$, this time within $\widehat{B}_{t_+(\mu)}$.  From here, the behavior of $\op{unstab}_{S_\mu}(q_-^{t_-(\mu)})$ depends on whether its intersection with $K_3$ is a positive, negative, or trivial Reeb pushoff of $\Lambda_+\uplus\Lambda_-^\dagger$.  If the intersection is precisely $\Lambda_+\uplus\Lambda_-^\dagger$, then we will find a unique flow line between $q_+^{t_+(\mu)}$ and $q_-^{t_-(\mu)}$, while an intersection which is a positive (respectively, negative) Reeb pushoff of $\Lambda_+\uplus\Lambda_-^\dagger$ results in a unique flow line between $e_+^{t_+(\mu)}$ (respectively, $e_-^{t_+(\mu)}$) and $q_-^{t_-(\mu)}$.  We define $\mu^*$ so that the case $\op{unstab}_{S_\mu}(q_-^{t_-(\mu)})\cap K_3=\Lambda_+\uplus\Lambda_-^\dagger$ corresponds to $\mu=\mu^*$, and this is our second bifurcation.

As in the $3$-dimensional case, however, this second bifurcation is of no concern.  The continuous elimination of the pairs $\{e_+^{t_+(\mu)},q_+^{t_+(\mu)}\}$ and $\{e_+^{t_-(\mu)},q_+^{t_-(\mu)}\}$ precludes this bifurcation, and the flow line of $\op{unstab}_{S_\mu}(q_-^{t_-(\mu)})$ emanating from $q_-^{t_-(\mu)}$ which variously converges to $e_-^{t_+(\mu)}$, $e_+^{t_+(\mu)}$, or $q_+^{t_+(\mu)}$ becomes a unique flow line to $e_-^{t_+(\mu)}$, for all $0\leq\mu\leq 1$.

Finally, we apply the Elimination Lemma as in the previous cases to conclude that $S_\mu$ is convex for all $0\leq\mu\leq 1$. 
\end{proof}

Hence $\xi$ is $\mu$-invariant on $T\simeq S\times[0,1]_\mu$ and $(T,\xi)$ is contactomorphic to a trivial bypass attachment.

\s\n
\textbf{Comparing to the bypass attachment along $\mathbf{(\Lambda_{\pm};D_{\pm})}$.}
\s
As in the $3$-dimensional case, we have normalized and decomposed $(M_\Sigma=\Sigma\times[-\delta,\delta],\xi=\ker\alpha_\Sigma)$ into three regions:
\begin{itemize}
    \item[(R1)] $G$, on which $\xi$ is $t$-invariant;
    \item[(R2)] $T=S\times [0,1]_\mu$, on which, by \cref{lemma:family-of-spheres-all-dim}, $\xi$ is standard in the sense that each $S_\mu$ is convex and $R_\pm(S_\mu)$ is deformation equivalent to the standard Weinstein ball with a unique critical point;
    \item[(R3)] $M_\Sigma\setminus(G\cup T)$, on which $\xi$ is standard in the sense that each $\Sigma_t\setminus(G\cup T)$ is convex.
\end{itemize}

On the other hand, we may assume the following holds for the abstract bypass cobordism $(M_\Sigma, \xi')$:
\begin{enumerate}[label=(\alph*)]
    \item $\xi=\xi'$ on $M_\Sigma\setminus T$;
    \item the bypass is attached to $\widehat{B}_{-t_0/2}\subset \Sigma_{-t_0/2}$ with data $(\Lambda_{\pm};D_{\pm})$ as given in the statement of \cref{prop:bb-correspondence}; and
    \item the contact $1$- and $2$-handles determined by $(\Lambda_{\pm};D_{\pm})$ are contained in $T$.
\end{enumerate} 

It remains to show that $\xi|_T$ and $\xi'|_T$ are contact isotopic relative to the boundary.

\begin{claim} \label{claim: trivial}
    When viewed as a bypass on $S$, the bypass data $(\Lambda_{\pm}; D_{\pm})$ is trivial of type (TB1). 
\end{claim}

See \cref{def:bypass} for the definition of a trivial bypass of type (TB1). Strictly speaking, in view of (b) above and \cref{eqn: t sub pm second version}, we take $S$ to be $S_{\mu=1/3}$.

\begin{proof}
We argue that $\Lambda_+$ is a standard Legendrian unknot filled by a standard Lagrangian disk $D_+$, and that $\Lambda_+$ is below $\Lambda_-$ in the sense of \cref{def:bypass}. While this is possible to confirm visually in dimension $3$ (c.f.\ \cref{fig:bypass-data-on-s-3d}), some more care must be taken in higher dimensions. 

First consider the folded Weinstein hypersurface $\widehat{B}_0\subset \Sigma_0$, after having applied the Creation Lemma in $W_1$.  Recall that the construction of $\widehat{B}_0$ creates a critical point $e_+$ in $W_1$ which is in canceling position with $q_+$, and that $\widehat{B}_0$ contains a neighborhood of the characteristic-stable manifolds of $q_+, q_-$, and $e_+$. Let $W_1'\subset W_1$ be the Weinstein subdomain such that $W_1$ is obtained from $W_1'$ by attaching an $(n-1)$-handle corresponding to $e_+$, and let $M^{2n-1}=\bdry W_1'$.  Note that we considered no such $W_1'$ in the 3-dimensional case, where $n=1$.

Next let $\mathcal{U} \subset M$ be a neighborhood of $(\op{stab}_{\Sigma_0}(e_+) \cup \op{stab}_{\Sigma_0}(q_+)) \cap M$. Since the handles associated to $q_+$ and $e_+$ are in geometrically canceling position by assumption, $\mathcal{U}$ is contactomorphic to a Darboux ball such that the attaching sphere of $q_+$ and the attaching region of $e_+$ are modeled on the spin-symmetric front projection on the right-hand side of \cref{fig:compare1}. A schematic depiction is on the left side of the figure, where the gray sheet (tipped vertically to more easily draw the subsequent handle attachments) represents the contact level set $M$, and the region between $M$ and $K_1$ is the Weinstein $(n-1)$-handle associated to $e_+$. Moreover, since the retrogradient from $q_-$ to $q_+$ meets $K_2$ along the $\xi_{K_2}$-transverse intersection $\Lambda_-^{\dagger} \pitchfork \Lambda_+$, the characteristic-stable manifold $\op{stab}_{\Sigma_0}(q_-)$, which by definition intersects $K_1$ along $\Lambda_-$, descends further and intersects $\mathcal{U}$ along a $\xi_{M}$-transverse intersection with the attaching sphere of $q_+$. (In the schematic left side of \cref{fig:compare1}, the negative $n$-handle with critical point $q_-$ is represented by the red stable and unstable disks. The positive $n$-handle with critical point $q_+$ is attached above $C_2$ so that its stable disk is in geometrically canceling position with the $(n-1)$-handle and simultaneously induces a retrogradient from $q_-$.) By shrinking $\mathcal{U}$ and performing a small isotopy if necessary, we may assume $\op{stab}_{\Sigma_0}(q_-)$ intersects the front projection on the right-hand side of \cref{fig:compare1}. 

\begin{figure}[ht]
	\centering
    \begin{overpic}[width=\textwidth]{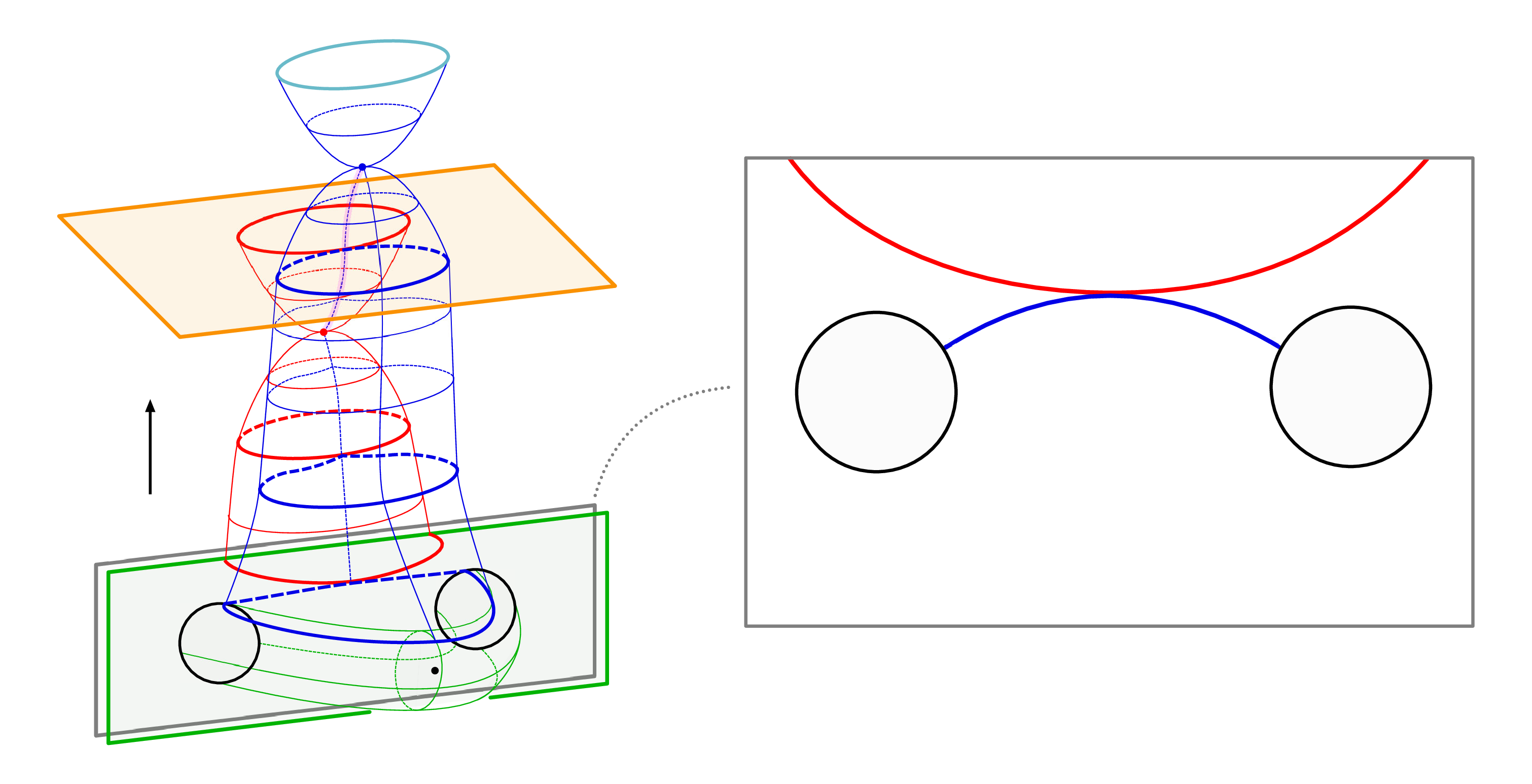}
       \put(4, 15){\footnotesize \textcolor{gray}{$M$}}
       \put(14, 1.25){\footnotesize \textcolor{divgreen}{$K_1$}}
       \put(4, 38.5){\footnotesize \textcolor{Orange}{$C_2$}}
       \put(30, 48){\footnotesize \textcolor{Cyan}{$\Lambda_+$}}
       \put(7.5,26){\footnotesize $(\Sigma_0)_{\xi}$}

       \put(19.75, 40){\tiny \textcolor{darkblue}{$q_+$}}
       \put(15, 28){\tiny \textcolor{darkred}{$q_-$}}
       \put(29.5, 7){\tiny \textcolor{black}{$e_+$}}

       \put(69.5, 7){\textcolor{gray}{$\mathcal{U}\subset M$}}
       \put(66.5, 28){\footnotesize \textcolor{darkblue}{$\op{stab}_{\Sigma_0}(q_+) \cap M$}}
       \put(66.5, 36){\footnotesize \textcolor{darkred}{$\op{stab}_{\Sigma_0}(q_-) \cap \mathcal{U}$}}
       
	\end{overpic}
	\caption{On the left is a schematic depiction of the folded Weinstein structure on $\Sigma_0$, after the Creation Lemma has been applied. The vertical arrow indicates the general direction of $(\Sigma_0)_\xi$ and the retrogradient from $q_-$ to $q_+$ is highlighted in pink. On the right is the neighborhood $\mathcal{U}\subset M$.}
	\label{fig:compare1}
\end{figure}

Next, we ``shift down'' and consider the characteristic foliation on $\widehat{B}_{-t_0/2}\subset \Sigma_{-t_0/2}$.  As described by \eqref{bbc:eq:b-hat-dynamics}, this shift perturbs the characteristic foliation of $\widehat{B}_{0}$ in a neighborhood of $K_2$, inducing a small negative Reeb shift to $\op{stab}_{\Sigma_{0}}(q_+)$.  Note that $\op{stab}_{\Sigma_{-t_0/2}}(q_-)$ avoids $K_2$, and is thus unaffected by the perturbation.  The resulting intersections $\op{stab}_{\Sigma_{-t_0/2}}(q_-)\cap\mathcal{U}$ and $\op{stab}_{\Sigma_{-t_0/2}}(q_+)\cap \mathcal{U}$ are seen in the first panel of \cref{fig:compare2}. Next, we (locally) obtain the dividing set of $S$ by attaching to $M$ the $(n-1)$-handle associated to $e_+$ to obtain $K_1$, attaching the Weinstein handle associated to $q_+$, and removing the (upside down) Weinstein handle associated to $q_-$. This gives the surgery diagram presentation of the dividing set $\Gamma$ in the middle frame. 

\begin{figure}[ht]
	\centering
    \begin{overpic}[width=\textwidth]{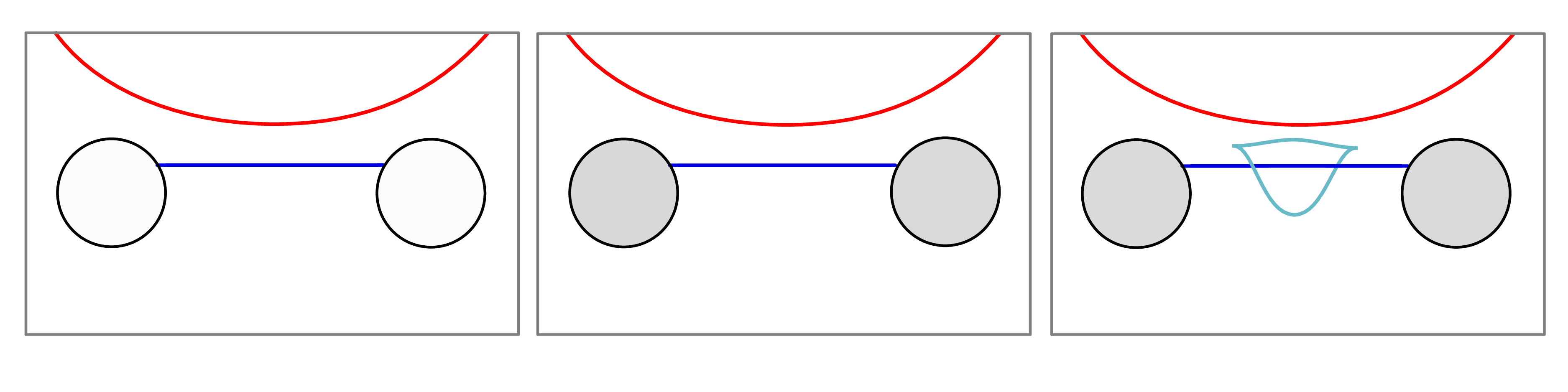}
        \put(16,1){\small \textcolor{gray}{$M$}}  
        \put(29,5){\small \textcolor{gray}{$M$}}  
        \put(49,1){\small \textcolor{divgreen}{$\Gamma$}} 
        \put(52,12.5){\footnotesize \textcolor{darkblue}{$(-1)$}} 
        \put(52,19){\footnotesize \textcolor{darkred}{$(-1)$}} 
        \put(61.75,5){\small \textcolor{gray}{$M$}} 
        \put(82.5,1){\small \textcolor{divgreen}{$\Gamma$}}
        \put(94.5,5){\small \textcolor{gray}{$M$}} 
        \put(85.25,12.5){\footnotesize \textcolor{darkblue}{$(-1)$}} 
        \put(85.25,19){\footnotesize \textcolor{darkred}{$(-1)$}} 
        \put(82,9){\footnotesize \textcolor{Cyan}{$\Lambda_+$}} 
	\end{overpic}
	\caption{In the middle panel we attach the handle associated to $e_+$ (denoted by shading the attaching region gray), attach the Weinstein handle associated to $q_+$, and remove the negative Weinstein handle associated to $q_-$. The resulting surgeries present the dividing set $\Gamma$ of the convex sphere.}
	\label{fig:compare2}
\end{figure}

In the front projection, the positive part of the bypass data $(\Lambda_{\pm}; D_{\pm})$ admits the following description: The disk $D_+$ is the cocore of the handle associated to $q_+$ and $\Lambda_+$ is its belt sphere; a slight down-shift of this data is given by the cocore and belt sphere of the blue surgery in \cref{fig:compare2}. By \cite{casals2019fronts}, this data is Legendrian isotopic to the standard disk filling of the arbitrarily small meridional standard unknot drawn in the third frame. In particular, since the negative bypass data is contained in the red surgery region, we see that (after canceling the handles which are in canceling position) $\Lambda_+$ is a standard unknot with a standard disk filling which is below $\Lambda_-$.
\end{proof}

By \cref{claim: trivial}, $(\Lambda_{\pm}; D_{\pm})$ defines a trivial bypass and hence $(T,\xi'|_T)$ is also standard (see (R2)).  This completes the proof of \cref{prop:bb-correspondence}.

\part{Applications}\label{part:OBD}

\section{The existence of (partial) open book decompositions} \label{sec:giroux correspondence}

The goal of this section is to prove \cref{cor:OBD existence}, a stronger and more precise version of \cref{cor:POBD existence}, {and \cref{cor: Legendrians on a page}.} The proofs are, again, essentially the same as the proofs in the $3$-dimensional case; see \cite{Gi02} for the absolute case and \cite{HKM09} for the relative case.

\begin{proof}[Proof of \cref{cor:OBD existence}]
Let $(M,\xi)$ be a {closed connected} contact manifold of dimension $2n+1$. Choose a generic self-indexing Morse function $f: M \to \R$. Then the regular level set $\Sigma \coloneqq f^{-1} (n+\tfrac{1}{2})$ is a smooth hypersurface which divides $M$ into two connected components {$M \setminus \Sigma = Y_0 \cup Y_1$.} It follows that $Y_i, i=0,1$, deformation retracts (along $\pm \nabla f$) to the skeleton $\Sk(Y_i)$, which is a finite $n$-dimensional CW-complex.

Writing $Y$ for either of $Y_0$ or $Y_1$, we now construct $N(\Sk(Y))$ as a compact contact handlebody.  There exists a neighborhood of the $0$-cells of $Y$ that can be written as a contact handlebody $H_0=[-1,1]\times W_0$, where $W_0$ is Weinstein.  Arguing by induction, assume that  a neighborhood of  the $k$-skeleton of $\Sk(Y)$ can be realized as a contact handlebody $H_k=[-1,1]\times W_k$, where $W_k$ is Weinstein and $\Gamma_k=\{0\}\times \bdry W_k$ is the dividing set of $\bdry H_k$.  We explain how to attach the $(k+1)$-handles to $\bdry H_k$, where $k+1\leq n$,  in a contact manner.  Write $K$ for the {core disk} of a $(k+1)$-handle. Then $\dim \bdry K=k$ and by dimension reasons $\bdry K\subset \bdry H_k$, after possible perturbation, can be isotoped into $\Gamma_k$ using the Liouville flow on $W_k$. Let $v$ (resp.\ $w$) be a nonvanishing ($TM$-valued) vector field along $\Gamma_k$ that is tangent to $\xi$, transverse to $\bdry H_k$ (resp.\ transverse to $\Gamma_k$ and tangent to $\bdry H_k$), and is symplectically orthogonal to $\xi|_{\Gamma_k}$. 

Next we would like to isotop $\bdry K$ to an isotropic submanifold $\bdry K'$ in $\Gamma_k$ (it may be Legendrian if $k+1=n$) and then isotop $K$ to an isotropic submanifold {$K'\subset Y\setminus \op{int}H_k$} with boundary $\bdry K'$, using Gromov's $h$-principle \cite[p.\ 339]{Gro86} for isotropic submanifolds in a contact manifold. To this end we show that:

\begin{claim} \label{claim: formal}
There is a formal isotopy $\phi_t:TK\to TM$, $t\in[0,1]$, such that:
\be
\item $\phi_0$ is the derivative of the inclusion map;
\item $\phi_t$ is an injective bundle map for all $t\in[0,1]$;
\item the fibers of $\phi_1(TK)$ are $\xi$-isotropic; 
\item the fibers of $\phi_t(TK)$ for all $t\in [0,1]$ along $\bdry K$ have the form $\R\langle v\rangle$ times a plane of $\xi':=\xi\cap T\Gamma_k$; and
\item when $t=1$, the planes in (4) are $\xi'$-isotropic.
\ee
\end{claim}

\begin{proof}[Proof of \cref{claim: formal}]
We will explain the Legendrian (i.e., $k+1=n$) case, which is the hardest case.  Since $K$ is a disk, it is clearly formally Legendrian inside its disk neighborhood $N(K)$.  The key point is to make $\bdry K$ formally isotropic as well.

Let $\tau$ be a trivialization of $\xi|_{N(K)}$. Projecting out the Reeb direction and using the trivialization $\tau$, the embedding $K\hookrightarrow N(K)$ can be converted into the map $\phi^\flat_0: K\to G(n,2n)$, where $G(n,2n)$ is the Grassmannian of $n$-planes in $\R^{2n}$. Since $K$ is a disk, $\phi^\flat_0$ is homotopic to $\phi^\flat_{1/2}: K\to \mathcal{L}_n$, where $\mathcal{L}_n\subset G(n,2n)$ is the Lagrangian Grassmannian.  Next, we would like to further homotop $\phi^\flat_{1/2}$ to $\phi^\flat_1:K\to\mathcal{L}_n$ such that $v(x)\in \phi^\flat_1(x)$ for all $x\in \bdry K$. At this point we note that over $\bdry K$ the trivial complex bundle $\xi$ satisfies
$$\xi\simeq \R\langle v,w\rangle \oplus \xi'$$ 
and that the classification of complex vector bundles over $\bdry K \simeq S^{n-1}$ is given by $\pi_{n-2}U\simeq 0$ or $\Z$. In the former case, $\xi'$ is a trivial complex vector bundle; in the latter case, $\xi'$ is classified by its $(n-1)/2$nd Chern class, but then $\C \oplus \xi'$ is trivial, so the Chern class must vanish and $\xi'$ must be trivial.  Hence we can view the desired map $\phi^\flat_1$ as restricting to $\bdry K\to \mathcal{L}_{n-1}\subset \mathcal{L}_n$, corresponding to a standard inclusion $\R^{2n-2}\hookrightarrow \R^{2n}$.

We now claim that $\pi_{n-1}\mathcal{L}_{n-1}\to \pi_{n-1}\mathcal{L}_n$ is surjective.  Using the fact that $\mathcal{L}_n= U(n)/O(n)$, we have:
$$\begin{CD}
 \pi_{n-1} U(n) @>>> \pi_{n-1} \mathcal{L}_n @>>> \pi_{n-2} O(n) @>>> \pi_{n-2} U(n) \\
  @ AAaA @AA b A @ AAcA @AAdA\\
 \pi_{n-1} U(n-1) @>>> \pi_{n-1} \mathcal{L}_{n-1} @>>> \pi_{n-2} O(n-1) @>>> \pi_{n-2} U(n-1)
\end{CD}$$
Using the homotopy exact sequences for $U(n)/U(n-1)= S^{2n-1}$ and $O(n)/O(n-1)=S^{n-1}$, it follows that $a,c$ are surjective and $d$ is injective.  The claim then follows from the five lemma.

The claim then allows us to homotop $\phi^\flat_{1/2}$ to $\phi^\flat_1$ such that, viewed as family of maps $\phi_t: TK\to TM$, $t\in[0,1]$, Conditions (1)--(3) and (5) hold and (4) holds for $t=0,1$. It remains to modify $\phi_t$ so that (4) holds for all $t\in[0,1]$: Observe that $Z:=\sqcup_{t\in[0,1]}\phi_t(TK|_{\bdry K})$ has dimension $2n$. Since $\phi_t$ may be taken to be generic, $Z\pitchfork \R\langle w\rangle$ and projecting to $\R\langle v\rangle \oplus \xi'$ along $\R\langle w\rangle$ is an isomorphism for each $\phi_t(T_xK)$, $t\in[0,1]$, $x\in \bdry K$. Hence (4) is also satisfied and \cref{claim: formal} follows.
\end{proof}

Hence  a neighborhood of the $(k+1)$-skeleton of $\Sk(Y)$ can be realized as a contact handlebody $H_{k+1}=[-1,1]\times W_{k+1}$  and $N (\Sk(Y_0)) \cup N (\Sk(Y_1))$ can be realized as compact contact handlebodies with sutured convex boundary and its complement in $M$ has sutured concave boundary.

Now identify $M \setminus (N (\Sk(Y_0)) \cup N (\Sk(Y_1)))$ with $\Sigma \times [0,1]$ such that if we write $\Sigma_t \coloneqq \Sigma \times \{t\}$, then $\Sigma_i = \p N (\Sk(Y_i)), i=0,1$, are convex with dividing sets corresponding to the sutures. By \cref{thm:family genericity}, $\xi|_{\Sigma \times [0,1]}$ is given by a finite sequence of bypass attachments, which can be further turned into a sequence of modifications of the trivial Weinstein POBD of
\begin{equation*}
N (\Sk(Y_0)) = [-1,1] \times W_{0,n},
\end{equation*}
according to \cref{lem:bypass to POB} (here $W_{0,n}$ is $W_n$ for $Y_0$). In this way we obtain a  $1$-Weinstein POBD of $M \setminus N (\Sk(Y_1))$, viewed as a contact manifold with sutured concave boundary. 
It remains to fill in $N (\Sk(Y_1))$ in the obvious manner to get a compatible  $1$-Weinstein  OBD.  (Alternatively, we can attach all the contact $n$-handles of the bypass attachments to $N(\Sk(Y_0))$, which gives a generalized contact handlebody, and all the contact $(n+1)$-handles of the bypass attachments to $N(\Sk(Y_1))$, which also gives a generalized contact handlebody.  Hence we can view the OBD as consisting of two halves and each half is a generalized contact handlebody foliated by $1$-Weinstein domains.) 
\end{proof}

Next we turn to the relative case, i.e., to contact manifolds with boundary. The following theorem is a more precise version of \cref{cor:POBD existence} (there, the boundary condition was vaguely stated). 

\begin{theorem} \label{thm:strong POBD existence}
If $(M,\xi)$ is a compact contact manifold with sutured concave boundary  and $R_\pm(\bdry M)$ are Weinstein, there exists a compatible $1$-Weinstein POBD where each of $R_+(\bdry M)$ and $R_-(\bdry M)$ extends to a page.
\end{theorem}

\begin{proof}
Choose a generic self-indexing Morse function $f:M \to [\tfrac{1}{2},\infty)$ such that $f \equiv \tfrac{1}{2}$ on $\p M$. In other words, $f$ has no index $0$ critical points. As in the absolute case, consider the hypersurface $\Sigma \coloneqq f^{-1} (n+\tfrac{1}{2})$ which divides $M$ into two components $Y_i, i=0,1$, such that $Y_0$ contains all the critical points of index at most $n$ and $Y_1$ contains all the critical points of index at least $n+1$. By the handle attachment discussion in the proof of \cref{cor:OBD existence}, we can turn the critical points in $Y_0$ into isotropic handles attached to $\p M$ along the suture which we still denote by $N(\Sk(Y_0))$ although it is no longer a contact handlebody, and the critical points in $Y_1$ into the handle decomposition of a contact handlebody $N (\Sk(Y_1))$ with sutured convex boundary, as in the closed case. The rest of the proof proceeds as in the closed case.
\end{proof}

\begin{proof}[Proof of \cref{cor: Legendrians on a page}]
{We modify the proof of \cref{cor:OBD existence} as follows: In the decomposition of $M$ into contact handlebodies $Y_0$ and $Y_1$, we first take a standard contact neighborhood $N(\Lambda)$ of the Legendrian $\Lambda$ and a corresponding contact Morse function $f$ on $N(\Lambda)$ that is constant on $\bdry N(\Lambda)$. We then extend $f$ arbitrarily to a Morse function which is self-indexing on $M\setminus N(\Lambda)$, realize the $k$-handles with $k\leq n$ as contact handles, and attach them to $N(\Lambda)$ to obtain $Y_0$.  The rest of the proof is the same as that of \cref{cor:OBD existence}.}
\end{proof}

\clearpage 
\appendix
\addtocontents{toc}{\protect\vskip12pt}
\crefalias{section}{appendix}

\section{Bypass attachments in higher dimensions}\label{appendix}

As defined in \cite{Hon00}, a \emph{bypass half-disk} for a convex surface $\Sigma$ in a contact $3$-manifold $(M, \xi)$ is a convex $2$-disk with piecewise-smooth boundary $a \cup b$, where 
\begin{enumerate}
    \item $a$ is an arc embedded on $\Sigma$, transversally intersecting the dividing set of the latter thrice (at $\partial a$, and once in the interior of $a$), 
    \item $b$ is properly embedded in the complement of $\Sigma$, and 
    \item the dividing set of the half-disk consists of a single arc with endpoints connecting the two components of $a \setminus \Gamma_{\Sigma}$. 
\end{enumerate}
The third author showed in \cite{Hon00} that if one isotopes $\Sigma$ past the half-disk, the result is a new convex surface $\Sigma'$ whose dividing set differs from that of $\Sigma$ via a combinatorial modification relative to the arc $a$; see \cref{fig:bypass-half-disk}. 

\begin{figure}[ht]
	\centering
	\begin{overpic}[scale=.42]{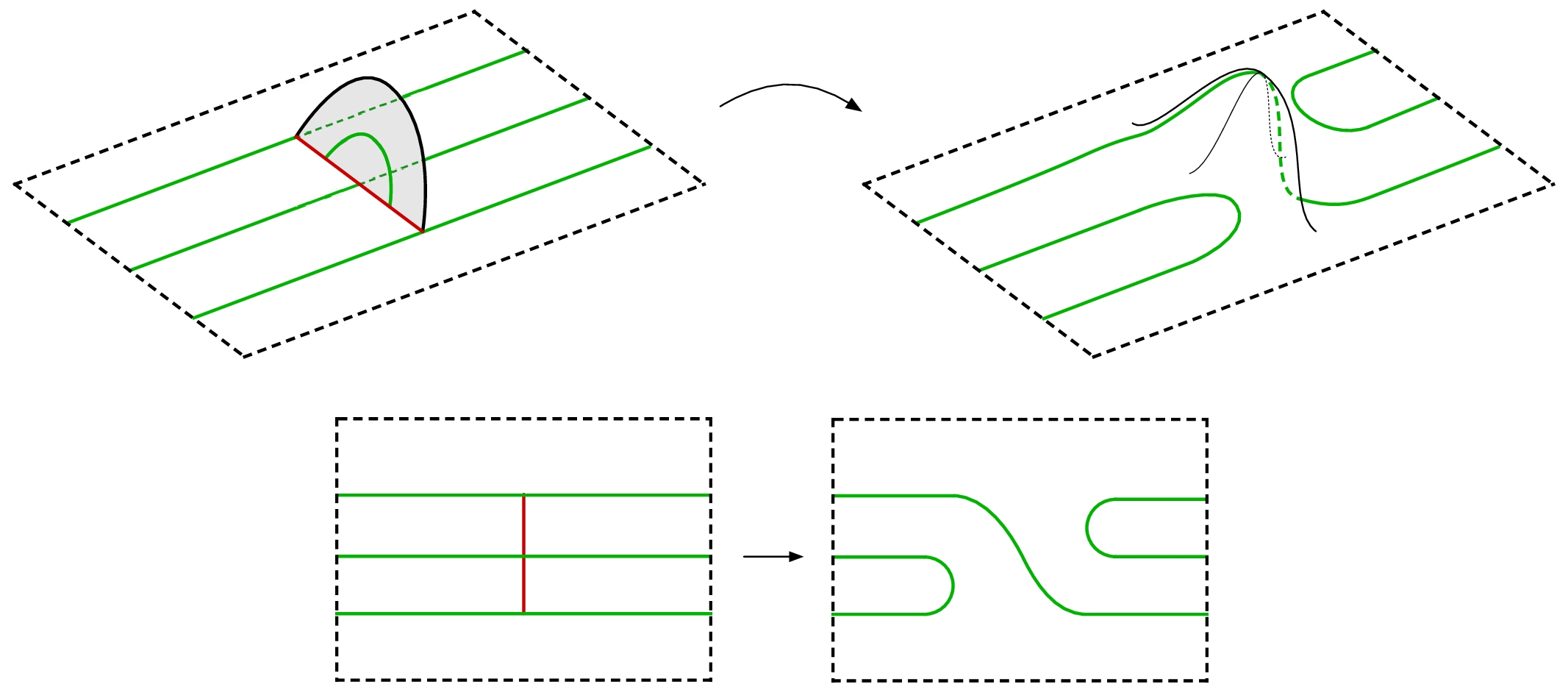}
        \put(32.75,14){\textcolor{darkred}{$a$}}
        \put(4,24){$\Sigma$}
        \put(89,24){$\Sigma'$}
	\end{overpic}
	\caption{Isotoping past a bypass half-disk.}
	\label{fig:bypass-half-disk}
\end{figure}

The trace of the isotopy from $\Sigma$ to $\Sigma'$, after a perturbation, can be assumed to be diffeomorphic to an embedding $\Sigma \times [0,1]_t \hookrightarrow M$ where $\Sigma \times \{0\} = \Sigma$ and $\Sigma \times \{1\} = \Sigma'$; we will refer to such a submanifold as a \emph{bypass cobordism} associated to a bypass half-disk. As the dividing sets of $\Sigma \times \{0\}, \Sigma \times \{1\}$ may differ, $\Sigma \times \{t\}$ is not necessarily convex for all $t$ (otherwise the dividing sets of $\Sigma$ and $\Sigma'$ would be the same). In fact, one can arrange for convexity to fail at precisely $t=\frac{1}{2}$, and nowhere else. A bypass cobordism from $\Sigma$ to $\Sigma'$ therefore describes a momentary failure of convexity in a $1$-parameter family of surfaces; see the \textit{isotopy discretization} of Colin \cite{colin1997discretization,honda2002gluing}. \cref{part:BBC} shows that, in fact, every instance of failure of convexity, in any dimension, is described by a higher-dimensional version of a bypass cobordism (\cref{thm:BBC}, \cref{prop:bb-correspondence}). 

The perspective of bypasses most appropriate for us --- specifically, the perspective which motivates the higher-dimensional definition of a bypass --- is the decomposition of a bypass cobordism into a sequence of smoothly canceling contact handles. In dimension $3$, this was elucidated by Ozbagci \cite{ozbagci2011contact} and is a reformulation of Giroux's contact cell decompositions \cite{Gi02}. Briefly, a $3$-dimensional contact $1$-handle is a smooth $1$-handle endowed with a contact structure $\xi$ and contact vector field $v$ so that $v$ is inwardly transverse to the attaching region, outwardly transverse to the co-attaching region, has an index $1$ critical point in its interior, and whose dividing curves with respect to $v$ are as depicted in the top middle frame of \cref{fig:bypass-contact-handles}. Contact topologically, a contact $1$-handle is a standard jet-bundle neighborhood of its core Legendrian arc. The model for a contact $2$-handle is obtained by considering the contact vector field $-v$ and reversing the roles of the attaching and co-attaching region. Contact handle attachment is performed in a way which respects the dividing sets, positive, and negative regions. The attaching sphere of a contact $2$-handle is $D_+ \cup D_-$, where $D_{\pm}$ is a properly embedded arc in $R_{\pm}$ of the attaching region, and the core may be taken to be foliated by Legendrian arcs interpolating from $D_+$ to $D_-$; see \cref{fig:n+1}. 

\begin{figure}[ht]
	\centering
	\begin{overpic}[scale=.42]{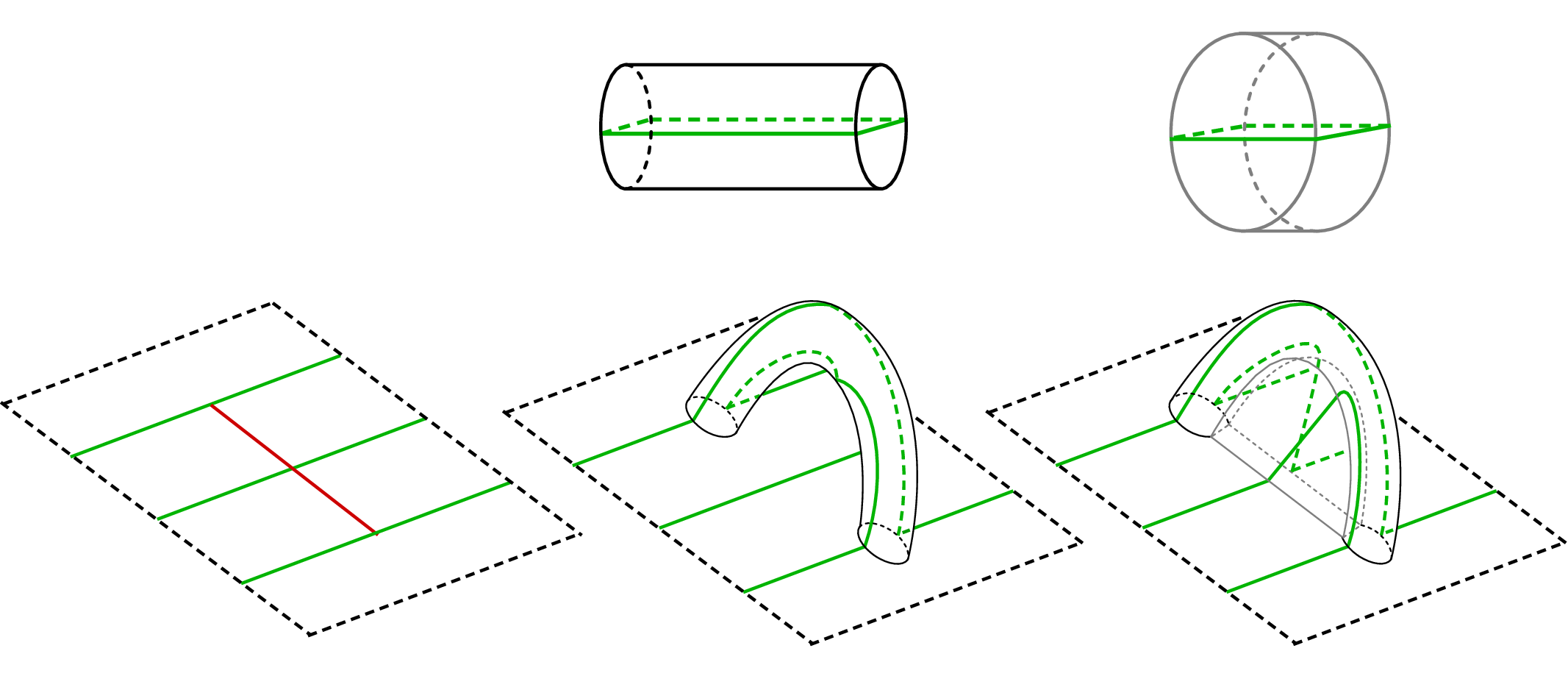}
       
	\end{overpic}
	\caption{A $3$-dimensional bypass cobordism described as a contact $1$-handle and contact $2$-handle attachment.}
	\label{fig:bypass-contact-handles}
\end{figure}

The bypass cobordism associated to a bypass-half disk is given by first attaching a contact $1$-handle along $\partial a$ with core $b$. Along the boundary of the $1$-handle, the dividing curves make a left-handed half-rotation relative to $\Sigma$, as depicted in the lower middle frame of \cref{fig:bypass-contact-handles}. A smoothly-canceling contact $2$-handle, whose core is approximately the bypass half-disk itself, completes the bypass attachment as depicted on the right side of the same figure. 

The reason we adopt the contact handle formulation of a bypass attachment is because one can translate naturally between the language of contact handle decompositions and supporting open book decompositions: in dimension $3$, a contact $1$-handle adds a $1$-handle to the page of an associated (partial) open book decomposition, and a contact $2$-handle builds in partially-defined monodromy from a neighborhood of $D_+$ to a neighborhood of $D_-$; again, see \cref{fig:n+1}. From this perspective, a \textit{trivial bypass} --- one which does not change the isotopy type of the dividing curves --- corresponds to a positive (partial) open book stabilization.

\begin{figure}[ht]
	\centering
	\begin{overpic}[scale=.2]{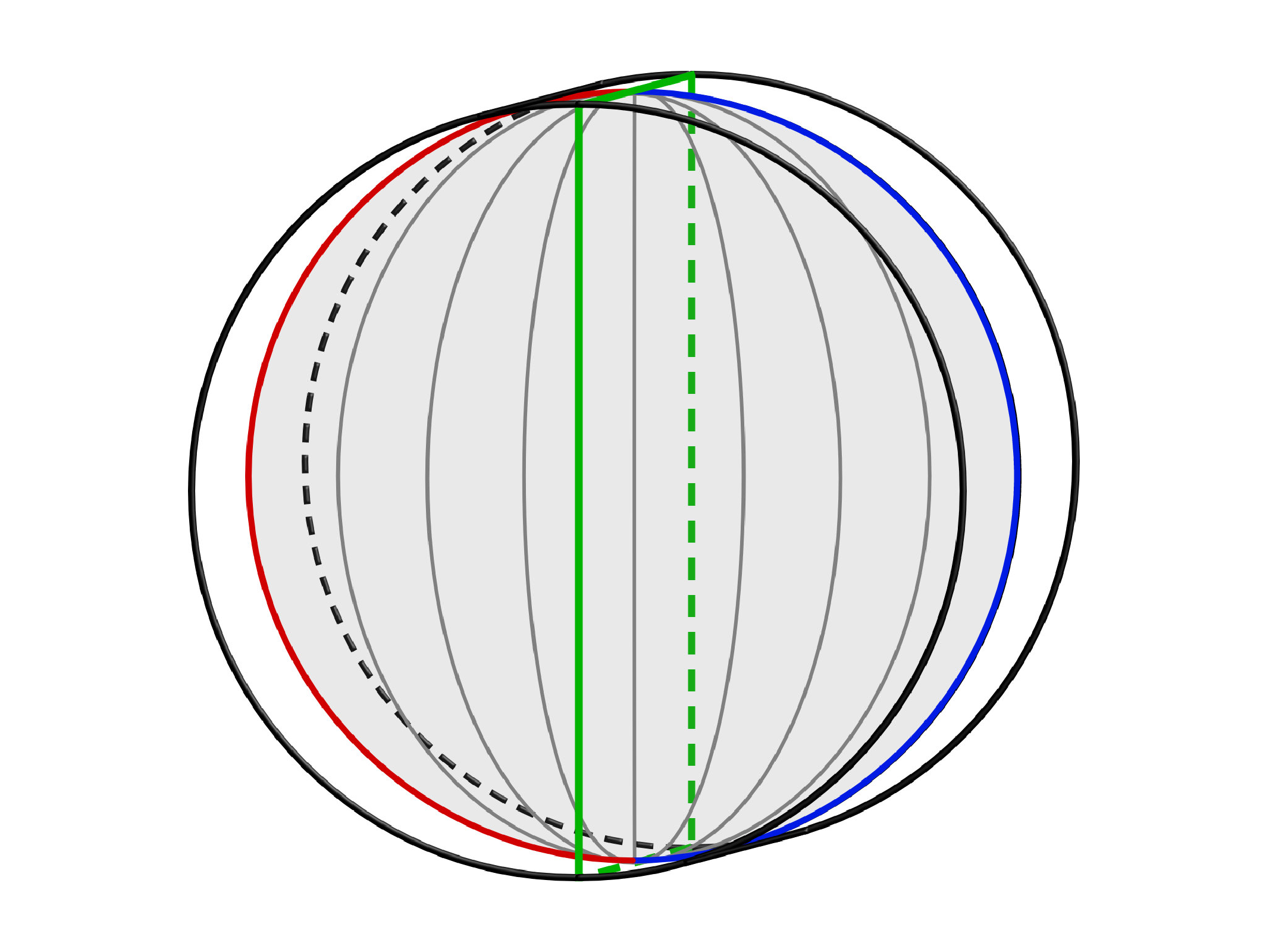}
        \put(10,24){\footnotesize \textcolor{darkred}{$D_-$}}
        \put(85,50){\footnotesize \textcolor{darkblue}{$D_+$}}
	\end{overpic}
	\caption{The structure of a contact $2$-handle, with core disk in gray foliated by Legendrian arcs interpolating from $D_+$ to $D_-$. }
	\label{fig:n+1}
\end{figure}

The goal of the present appendix is to develop this framework in higher dimensions. Much of this material originally appeared in the preprint \cite{HH18} and was reformulated in the preliminary material of sequel papers such as \cite{BHH23,breen2027bypass}. Here we collect and further reformulate all definitions, statements, and proofs which are necessary to independently support the results in \cref{part:BBC} and the applications in \cref{part:OBD}.

\s\n
{\em Organization.} In \cref{subsec:contact-handles} we detail the construction of contact $n$-handles and contact $(n+1)$-handles, with the main results being \cref{prop:n_handle} and \cref{prop:(n+1)_handle}. \cref{subsec:POBD} then discusses partial open book decompositions, in particular their interaction with contact handles (\cref{lemma:handle-to-POBD}) and the notion of positive stabilization (\cref{lem:positive_stabilization}). Finally, in \cref{subsec:bypass-appendix} we define the higher-dimensional bypass attachment. In the intermediate it is necessary to recall some notions of Legendrian Kirby calculus largely developed by Casals-Murphy \cite{casals2019fronts}; these results are collected in \cref{subsubsec:sums-slides}. The main result describing a bypass attachment is then \cref{prop:bypass_attachment}.

\begin{remark}
In the present appendix, we will typically not distinguish between a Liouville domain and its (ideal) completion (c.f.\ \cite{giroux2020ideal}). 
\end{remark}

\subsection{Contact handles}\label{subsec:contact-handles}

A bypass cobordism will eventually be defined as a contact $n$-handle followed by a smoothly canceling contact $(n+1)$-handle. Here we describe these handles in detail; see also \cite{Sac19}.

\subsubsection{Contact $n$-handles}\label{subsubsec:n_handle}

One ``sutured" model for a contact $n$-handle is a contactization of a critical Weinstein handle: if $(h_n^{2n}, \lambda_n)$ is a Weinstein $n$-handle with Liouville vector field $X_{\lambda_n}$, then one may consider the contact manifold with contact vector field given by
\[
([-1,1]_z\times h_n^{2n},\, \alpha_n = dz + \lambda_n,\, v_n = z\, \partial_z + X_{\lambda_n}).
\]
To be explicit in a manner conducive to the requisite technical details, we instead define a \emph{contact $n$-handle} as a triple $(H_n, \alpha_n, v_n)$, where 
\begin{equation} \label{eqn: n-handle}
    H_n = \left\{ |x| \leq 1 \right\} \times \left\{ z^2 + |y|^2 \leq 1 \right\} \subset \R_x^n \times \R_y^n \times \R_z
\end{equation}
is a smooth $n$-handle, $\alpha_n = dz - 2y \cdot dx - x \cdot dy$ is a contact form, and $v_n = -x \cdot \partial_x + 2y \cdot \partial_y + z \,\partial_z$ is a contact vector field which is gradient-like for the Morse function $z^2+|y|^2-|x|^2$ with a unique critical point of index $n$ at the origin.  The core disk $D = \{z = |y|= 0\}$ of $H_n$ is Legendrian and there is a contact embedding $H_n\hookrightarrow J^1(D^n)$, where $J^1(D^n) \cong \R \times T^*D^n$ is the standard $1$-jet space of $D^n$ and $D$ is taken to the $0$-section.

We have the following description of a contact $n$-handle attachment to a convex hypersurface:

\begin{proposition} \label{prop:n_handle}
Let $(M,\xi)$ be a $(2n+1)$-dimensional contact manifold with convex boundary $\Sigma = \partial M = R_+ \cup \Gamma \cup R_-$, where $\Gamma=\bdry R_\pm$. Let $\Lambda$ be an $(n-1)$-dimensional Legendrian sphere in $(\Gamma, \xi_{\Gamma})$. Then a contact $n$-handle can be attached to $(M, \xi)$ along $\Lambda$ to yield a new contact manifold $(M', \xi')$ with convex boundary $\Sigma' = \partial M' = R'_+ \cup  \Gamma'\cup R'_-$ such that:
		\begin{enumerate}
			\item $R'_+$ is obtained by attaching a Weinstein $n$-handle to $R_+$ along $\Lambda\subset \Gamma=\partial R_+$;
		
			\item $R'_-$ is obtained by attaching a Weinstein $n$-handle to $R_-$ along $\Lambda \subset \Gamma= \partial R_-$;
		
			\item as a contact manifold, $\Gamma'$ is obtained from $\Gamma$ by performing a contact $(-1)$-surgery along $\Lambda \subset \Gamma$; and

			\item as a smooth manifold, $\Sigma'$ is diffeomorphic to the union of $R'_+$ and $R'_-$, glued along their boundaries by the identity map.
		\end{enumerate}
\end{proposition}

Before proving the proposition, we proceed with some preliminary work. We decompose the boundary of a contact $n$-handle as $\partial H_n = \partial_1 H_n \cup \partial_2 H_n$, where
\begin{align*}
    \partial_1 H_n &= \left\{ |x| = 1 \right\} \times \left\{ z^2 + |y|^2 \leq 1 \right\}, \\
    \partial_2 H_n &= \left\{ |x| \leq 1 \right\} \times \left\{ z^2 + |y|^2 = 1 \right\}.
\end{align*}
Both $\partial_1 H_n$ and $\partial_2 H_n$ are convex with respect to $v_n$, with $v_n$ pointing into $H_n$ along $\partial_1 H_n$ and out of $H_n$ along $\partial_2 H_n$.

We analyze $\partial_1 H_n$ in some detail. Observe that $f(x,y,z)=z$ is the contact Hamiltonian for the contact vector field $v_n$. Since $\alpha_n(v_n)=0$ if and only if $z=0$, the $v_n$-dividing set is
\begin{equation*}
	\Gamma_{\partial_1 H_n} = \left\{ z=0 \right\} \times \left\{ |x| =1 \right\} \times \left\{ |y| \leq 1 \right\},
\end{equation*}
equipped with the induced contact form $\alpha_n|_{\Gamma_{\partial_1 H_n}} = -2y \cdot dx - x \cdot dy$. Consider $J^1(S^{n-1})$ with the standard contact form $du - p \cdot dq$, where $u \in \R, q,p \in \R^n$ are coordinates such that $|q|=1$ and $p \cdot q = 0$. Then there exists a contact embedding
\begin{gather}\label{eqn:attaching_region_n}
	\phi: (\Gamma_{\partial_1 H_n}, \alpha_n|_{\Gamma_{\partial_1 H_n}}) \hookrightarrow J^1(S^{n-1}),\\
\nonumber (x,y)\mapsto (u,q,p)=(-x\cdot y,\,x, y-(x\cdot y)\, x),
\end{gather}
which sends the boundary $\partial D$ of the core disk to the $0$-section of $J^1(S^{n-1})$. Moreover, if we identify $J^1(S^{n-1})$ with the ideal boundary of a positive half-symplectization $[c,\infty)\times J^1(S^{n-1})$, then $R_+ (\partial_1 H_n)$ is identified with a collar neighborhood of 
$$S^{n-1} \subset  \left\{ \infty \right\}\times J^1(S^{n-1}) \subset [c,\infty]\times J^1(S^{n-1}),$$ 
where the first inclusion is given by the $0$-section.  A similar identification holds for $R_- (\partial_1 H_n)$.

\begin{claim} \label{claim: def equiv}
The symplectic manifolds $(R_+ (\partial_2 H_n), \alpha_n|_{R_+ (\partial_2 H_n)})$ and $(R_- (\partial_2 H_n), \alpha_n|_{R_- (\partial_2 H_n)})$ are exact deformation equivalent (relative to the boundary) to the critical Weinstein handle $(T^\ast D^n, -2p \cdot dq - q \cdot dp)$.
\end{claim}

\begin{proof}
Again, since $f(x,y,z)=z$ is the contact Hamiltonian for $v_n$, 
$$R_+(\partial_2 H_n)= \bdry_2 H_n \cap \{z>0\}.$$ Hence we can use coordinates $(x,y)$ on $R_+(\partial_2 H_n)$ and view $z$ as a function $z(x,y)$. Then
\[
\alpha_n|_{R_+ (\partial_2 H_n)}=dz-2y\cdot dx - x\cdot dy.
\]
We can then interpolate from $dz-2y\cdot dx - x\cdot dy$ to $-2y\cdot dx - x\cdot dy$ by taking $tdz -2y\cdot dx-x\cdot dy$ for $t\in[0,1]$. The $R_-$ case is analogous.
\end{proof}

\begin{proof}[Proof of \cref{prop:n_handle}.]
Fix an identification $i:\partial D\stackrel\sim\to\Lambda$. By the above discussion, $i$ can be extended to a contactomorphism $i:\Gamma_{\partial_1 H_n}\stackrel\sim\to N_\Gamma(\Lambda)$, where $N_\Gamma(\Lambda)$ is a standard tubular neighborhood of $\Lambda$ in $\Gamma$. Next, $i$ extends to an identification of contact germs on $\partial_1 H_n$ and a tubular neighborhood of $N_\Gamma(\Lambda)$ in $\Sigma$. Finally, using $i$ we attach $H_n$ to $(M,\xi)$ along $\Lambda \subset \Sigma$ and ``round the corners'' as in the symplectic case to obtain $(M',\xi')$ with convex boundary.  More precisely, we slightly enlarge $H_n$ to $H_n'$ near $\bdry (\bdry_1 H_n)$ so that attaching $H_n'$ yields $(M',\xi')$ with smooth and convex boundary. Letting $\bdry_1 H_n'$ be the portion of $\bdry H_n'$ that is attached to $(M,\xi)$ and $\bdry_2 H_n'$ its complement, \cref{claim: def equiv} also applies to $R_\pm(\bdry_2 H_n')$. (We may need to slightly adjust the contact vector field $v_n$ on $H_n$ so that it agrees with the assumed contact vector field witnessing convexity of $\partial M$ on the overlap.)

Claims (1) and (2) follow from \cref{claim: def equiv}, which identifies $R_+ (\partial_2 H_n)$ with (the completion of) a Weinstein handle. Since $H_n$ is attached along $\Gamma$, $R'_+ = R_+ (\Sigma')$ is obtained by attaching a Weinstein handle to $R_+$ along $\Lambda$. Claims (3) and (4) are immediate.
\end{proof}

\subsubsection{Contact $(n+1)$-handles}\label{subsec:(n+1)_handle}

One can quickly define a model for a contact $(n+1)$-handle by taking a contact $n$-handle $(H_n, \alpha_n, v_n)$ and considering the triple $(H_n, \alpha_n, -v_n)$. Observe that $-v_n$ is a gradient-like contact vector field for the Morse function $|x|^2-|y|^2-z^2$, which has a unique critical point of index $n+1$ at the origin. This point of view, however, has the drawback that the contact germ on the attaching region of the $(n+1)$-handle is harder to characterize. Instead, we consider the standard contact neighborhood of an $(n+1)$-dimensional disk foliated by $n$-dimensional Legendrian disks. This approach is more complicated but will be useful for constructing bypass attachments.

\begin{definition}
An {\em $(n+1)$-dimensional $\Theta$-disk} is an embedded $(n+1)$-dimensional disk $D$ in a contact $(2n+1)$-manifold $(M,\xi)$ such that $\xi\cap TD$ is diffeomorphic to the singular foliation $\ker \beta$ of the model disk $\left\{ z^2 + |x|^2 \leq 1 \right\} \subset \R^{n+1}_{z,x}$, where
\begin{equation} \label{eqn:unknot_foliation}
	\beta = (z^2 - |x|^2 +1) \, dz + 2zx \cdot dx= (z^2-|x|^2+1)\, dz + z\, d(|x|^2).
\end{equation}
\end{definition}

The model foliation is a singular, radially (in the $|x|$ variable) symmetric foliation on $\Theta$ such that $\partial \Theta$ is a closed leaf, all other leaves are disks, and the singular locus is precisely $\left\{ z=0, |x|=1 \right\} \subset \partial \Theta$; see \cref{fig:std_foliation}. 
According to \cite[Lemma~3.8]{Hua15}, there exists a unique contact germ on $\Theta \subset \Theta \times \R^{n}_{y}$, embedded as the $0$-section, such that all leaves are Legendrian.

\begin{figure}[ht]
	\begin{overpic}[scale=.18]{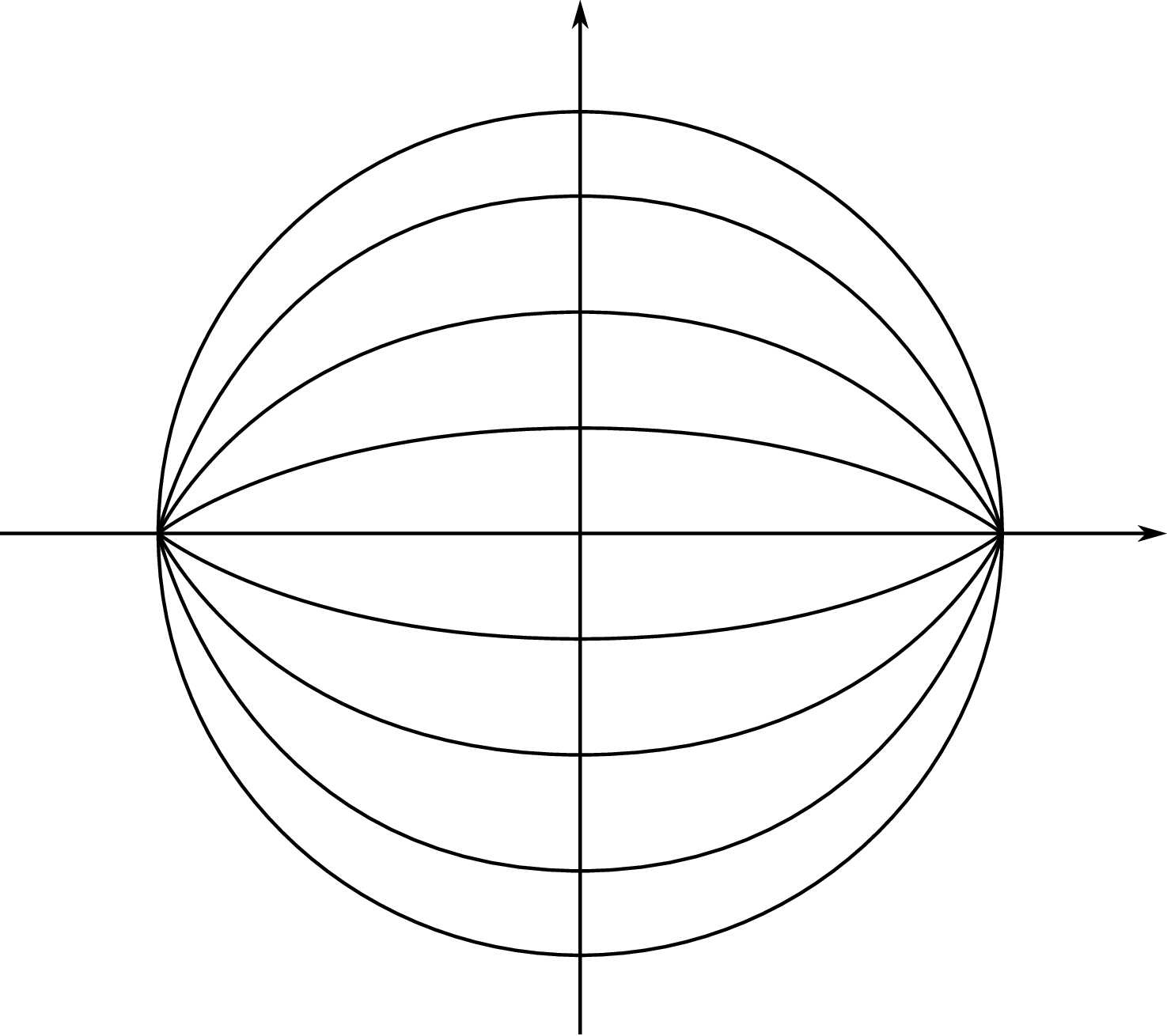}
		\put(45,90){$z$}
		\put(100,39){$x$}
	\end{overpic}
	\caption{A $\Theta$-disk in the $n=1$ case. The higher-dimensional case can be obtained by applying an $S^{n-1}$-spin symmetry about the $z$-axis.}
	\label{fig:std_foliation}
\end{figure}

We define the contact form 
	\begin{equation*}
		\alpha_{n+1} = (z^2 - |x|^2 +1) \, dz + 2zx \cdot dx + x \cdot dy
	\end{equation*}
on $\Theta \times \R^n$, whose contact condition the reader can readily verify using 
$$d\alpha_{n+1}=-4x\cdot dxdz +dx\cdot dy.$$
Observe that $\alpha_{n+1}|_{\Theta} = \beta$, which implies that all leaves of the induced foliation are Legendrian as desired. Moreover, since each $\partial_{y_i}$ is a contact vector field, we can assume that $\Theta \subset \Theta \times \R^n$ has an arbitrarily large neighborhood with respect to the contact form $\alpha_{n+1}$.

Next we construct a contact vector field $v_{n+1}$ on $(\Theta \times \R^n, \alpha_{n+1})$ which is gradient-like for some Morse function on $\Theta \times \R^n$ with a unique critical point of index $n+1$ at the origin.  Let $v_{n+1}$ be the contact vector field associated to the contact Hamiltonian function $f(x,y,z) = -2z + x \cdot y$ and contact form $\alpha_{n+1}$ (i.e., $\alpha_{n+1}(v_{n+1})=f$). One can compute that:
\begin{align*}
	(z^2 + 3|x|^2 +1)\, v_{n+1}  = & -(2 - 2|x|^2) z\,  \partial_z - (z^2 - |x|^2 +3) \, x \cdot \partial_x\\
 & \qquad + \left( (z^2 + 3|x|^2 +1)\, y - 4zx \right) \cdot \partial_y.
\end{align*}

Observe that $v_{n+1}$ has a unique zero at the origin. Moreover, in a small neighborhood of the origin, $v_{n+1}$ is approximated, up to first order and rescaling, by
	\begin{equation*}
		\widetilde{v}_{n+1} = -2z \,\partial_z -3x \cdot \partial_x + y \cdot \partial_y.
	\end{equation*}
Since $\widetilde{v}_{n+1}$ has a nondegenerate zero at the origin of index $n+1$, so does $v_{n+1}$.

We define a \emph{contact $(n+1)$-handle} as a triple $(H_{n+1}, \alpha_{n+1}, v_{n+1})$, where $\alpha_{n+1}, v_{n+1}$ are as above and
$$H_{n+1} = \left\{ z^2 + |x|^2 \leq 1 \right\} \times \left\{ |y| \leq 2 \right\} \subset \Theta \times \R^n.$$
The rest of the present subsection is then devoted to proving the following analog of \cref{prop:n_handle} for $(n+1)$-handles:

\begin{proposition} \label{prop:(n+1)_handle}
Let $(M,\xi)$ be a $(2n+1)$-dimensional contact manifold with convex boundary $\Sigma = \partial M = R_+ \cup \Gamma \cup R_-$. Let $D_\pm \subset R_\pm$ be Lagrangian disks with cylindrical ends that limit to the same Legendrian sphere $\Lambda \subset \Gamma = \partial R_\pm$. Then a contact $(n+1)$-handle can be attached to $(M, \xi)$ along $D_+ \cup D_-$ to yield a new contact manifold $(M', \xi')$ with convex boundary $\Sigma' = \partial M' = R'_+ \cup \Gamma' \cup R'_-$, where:
		\begin{enumerate}
            \item the disks $D_+$ and $D_-$ have standard neighborhoods $N(D_+)\subset R_+$ and $N(D_-)\subset R_-$ such that $N(\Lambda)=\op{int}(\overline{N(D_+)}\cap \overline{N(D_-)})$ is a standard neighborhood of $\Lambda\subset\Gamma$;

            \item $R'_\pm$ is obtained by removing $\overline{N(D_\pm)}$ from $R_\pm$, so that 
            $$\partial \overline{R}'_\pm=(\partial \overline{R}_\pm\cup\bdry \overline{N(D_\pm)})\setminus N(\Lambda),$$ 
            and there is a contactomorphism $\psi_\pm\colon \D J^1(S^{n-1})\to \bdry \overline{N(D_\pm)}\setminus N(\Lambda)$, where $\D J^1(S^{n-1})$ is the disk 1-jet bundle of $S^{n-1}$;
		
			\item as a contact manifold, $\Gamma'$ is obtained from $\Gamma$ by performing a contact $(+1)$-surgery along $\Lambda \subset \Gamma$; and
		
			\item as a smooth manifold, $\Sigma'$ is diffeomorphic to $\overline{R}'_+\cup_{\psi}\overline{R}'_-$, where $\psi\colon\partial \overline{R}_-'\to\partial \overline{R}_+'$ satisfies
            \[
            \psi\mid_{\partial \overline{R}_-'\cap\partial \overline{R}_-} = \mathrm{id}_{\Gamma\setminus N(\Lambda)}
            \quad\text{and}\quad
            \psi\mid_{\partial \overline{R}_-'\cap\partial \overline{N(D_-)}} =  \psi_+\circ\psi_-^{-1}.
            \]
		\end{enumerate}
\end{proposition}

We proceed by analyzing the boundary of a contact $(n+1)$-handle. As in \cref{subsubsec:n_handle}, let us decompose the boundary as $\partial H_{n+1} = \partial_1 H_{n+1} \cup \partial_2 H_{n+1}$, where
\begin{gather*}
\partial_1 H_{n+1} = \left\{ z^2 + |x|^2 = 1 \right\} \times \left\{ |y| \leq 2 \right\},\\
\partial_2 H_{n+1} = \left\{ z^2 + |x|^2 \leq 1 \right\} \times \left\{ |y| = 2 \right\}.
\end{gather*}

\begin{claim} \label{claim: transverse}
$\partial_1 H_{n+1}$ and $\partial_2 H_{n+1}$ are $v_{n+1}$-convex, with $v_{n+1}$ pointing into $H_{n+1}$ along $\partial_1 H_{n+1}$ and out of $H_{n+1}$ along $\partial_2 H_{n+1}$.
\end{claim}

\begin{proof}
The fact that $v_{n+1}$ is negatively transverse to $\partial_1 H_{n+1}$ follows from
	\begin{equation*}
		\left( z^2 + 3|x|^2 +1 \right) v_{n+1} \left( z^2 + |x|^2 \right) = -4 \left( 1 - |x|^2 \right) z^2 - 2 \left( z^2 - |x|^2 +3 \right) |x|^2 < 0
	\end{equation*}
near $\left\{ z^2 + |x|^2 =1 \right\}$. Similarly, the fact that $v_{n+1}$ is positively transverse to $\partial_2 H_{n+1}$ follows from
	\begin{align*}
		\left( z^2 + 3|x|^2 +1 \right) v_{n+1} (|y|^2) &= 2\left(\left( z^2 + 3|x|^2 +1 \right) |y|^2 - 4zx \cdot y \right)\\
										  &\geq 2\left( (z^2 + 3|x|^2 +1) |y|^2 - 4 |z| |x| |y|\right) \\
										  &\geq 2\left( (z^2 + 3|x|^2 +1) |y|^2 - 2\left( z^2 + |x|^2 \right) |y|\right) >0
	\end{align*}
near $\left\{ |y|=2 \right\}$.
\end{proof}

\s
\n
\textbf{Description of $\partial_1 H_{n+1}$.}
\s

Since $\alpha_{n+1}(v_{n+1})=f$, the $v_{n+1}$-dividing set of $\partial_1 H_{n+1}$ is $\Gamma_{\partial_1 H_{n+1}} = \left\{ 2z = x \cdot y \right\} \subset \partial_1 H_{n+1}.$

\begin{claim} \label{claim: Legendrian ruling}
$R_+ (\partial_1 H_{n+1})$ and $R_-(\partial_1 H_{n+1})$ are exact deformation equivalent (relative to the boundary) to $T^\ast D^n$ with the standard Liouville form $p \cdot dq$ and Liouville vector field $p\cdot \partial_p$.  Moreover, $\overline{{R}_{\pm} (\partial_1 H_{n+1})}$ is the ideal compactification of $R_{\pm} (\partial_1 H_{n+1})$, obtained by compactifying each fiber to a closed disk by adding the sphere at infinity.
\end{claim}

\begin{proof}
By construction $\partial_1 H_{n+1}$ is foliated by Legendrian spheres
$$S^{n}_{y=b}=\{z^2+|x|^2=1\}\times\{y=b\}$$
such that each $S^{n}_{y=b}$ intersects $\Gamma_{\partial_1 H_{n+1}}$ in an equatorial sphere $S^{n-1}_{y=b}$ which is Legendrian in $\Gamma_{\partial_1 H_{n+1}}$. The claim is a consequence of the following general lemma.
\end{proof}

\begin{lemma}\label{lemma: normalization of foliation}
Let $\alpha$ be a Liouville form on $\R^n\times D^n$ with coordinates $(x,y)$ such that the pullback of $\alpha$ to each $y=b$, $b\in D^n$, is zero and the Liouville vector field is positively transverse to each $S^{n-1}_{|x|=r}\times D^n$ for $r\geq 1$. Then after applying a fiberwise diffeomorphism of $\R^n\times D^n$ (i.e., taking each $\R^n\times\{y=b\}$ to itself), $\alpha$ is exact deformation equivalent to the Liouville form $x \cdot dy$.
\end{lemma}

\begin{proof}
This is the relative version of the Arnold-Liouville theorem \cite[\S 49]{arnold1989mathematical}.  Observe that, since the pullback of $\alpha$ to each $y=b$ vanishes, we have $\alpha=\sum_i f_i(x,y)\, dy_i$.  Since
\begin{equation}\label{d alpha}
d\alpha=\sum_i d_x f_i \wedge dy_i + \sum_i d_y f_i\wedge dy_i
\end{equation}
is symplectic, it follows that $d_xf_1\wedge \dots \wedge d_xf_n$ is nowhere vanishing, where $d_x$ and $d_y$ refer to the exterior derivatives in the $x$- and $y$-directions.  Hence $F: \R^n \to \R^n$, $(x,y)\mapsto (f_1(x,y),\dots,f_n(x,y),y)$, is a local diffeomorphism.

Next we normalize the Liouville vector field.  Writing it as $X+Y$, where $X$ and $Y$ are components in the $\partial_x$- and $\partial_y$-directions, we claim that $Y=0$.  Indeed, 
\[
i_Xd\alpha=\sum_i X(f_i) \, dy_i \quad \text{and} \quad  i_Yd\alpha= -\sum_i Y(x_i) \, d_x f_i+\, \cdots,
\]
where the dots refer to terms in $dy_i$.  Then $i_{X+Y}d\alpha=\alpha$ implies $Y=0$, keeping in mind $d_x f_1\wedge\dots\wedge d_x f_n\not=0$. Hence, after applying a fiberwise diffeomorphism of $\R^n\times D^n$, we may assume that $X=\sum_i x_i\, \partial_{x_i}$. Integrating along $X$ we obtain a radial coordinate $s$ and write $\alpha= e^s (g_1\, dy_1+\dots +g_n \, dy_n)$, where $(g_1,\dots,g_n)$ are functions of coordinates on $S^{n-1}$.

Finally, the contact condition for $\alpha$ implies that the map $G=(g_1,\dots,g_n): S^{n-1}\to \R^n$ avoids the origin and descends to a local diffeomorphism $\overline G: S^{n-1} \to S^{n-1}=(\R^n-\{0\})/ \R^+$.  If $n>2$, then this implies that $\deg\overline G=\pm 1$, which in turn implies that $F$ is a diffeomorphism.  Outside of a compact region, we can therefore arrange $\alpha$ to be of the form $x\cdot dy$.  Inside the compact region, we can interpolate between $\alpha$ and $x\cdot dy$, which gives the exact deformation of Liouville forms.
\end{proof}

\begin{remark} \label{rmk: Legendrian ruling}
The foliation $\{S^n_{y=b}\}_{b\in D^n}$ is the higher-dimensional analog of the {\em Legendrian ruling} in dimension three~\cite[Proof of Prop. 3.1]{Hon00}.
\end{remark}

\s
\n
\textbf{Description of $\partial_2 H_{n+1}$.}
\s
Next let us turn to $\partial_2 H_{n+1}$, which is an $S^{n-1}$-family of $(n+1)$-dimensional $\Theta$-disks $\Theta_y$, $y\in S^{n-1}$. The $v_{n+1}$-dividing set is
$$\Gamma_{\partial_2 H_{n+1}} = \left\{ 2z = x \cdot y \right\} \subset \partial_2 H_{n+1},$$
and the restriction $\Theta'_y:=\Gamma_{\partial_2 H_{n+1}}\cap \Theta_y$ is an $n$-dimensional $\Theta$-disk.
There exists a neighborhood $N$ of the $0$-section of the $1$-jet space $J^1(S^{n-1})$ which is an $S^{n-1}$-family of $n$-dimensional $\Theta$-disks and which is contactomorphic to $\Gamma_{\partial_2 H_{n+1}}$. (Proof left to the reader.)
 
\begin{remark} $\mbox{}$
    \be
    \item A key observation is that we are using the {\em same} contact vector field $v_{n+1}$ to define {\em both} $\Gamma_{\bdry_1 H_{n+1}}$ and $\Gamma_{\bdry_2 H_{n+1}}$, and hence $\Gamma_{\bdry_1 H_{n+1}}=\Gamma_{\bdry_2 H_{n+1}}$ along the corner $\bdry(\bdry_1 H_{n+1})=\bdry(\bdry_2 H_{n+1})$, and no contact-geometric ``edge-rounding'' is necessary.
    \item Alternatively, we can apply an edge-rounding procedure which generalizes that of \cite[Lemma 3.11]{Hon00}, using the higher-dimensional analog of Legendrian rulings (see \cref{rmk: Legendrian ruling}).
    \ee
\end{remark}

\begin{claim}\label{claim: calc of char fol}
The characteristic foliation $(\bdry_2 H_{n+1})_\xi$ is parallel to the vector field:
$$X=(4x\cdot y) x\cdot \bdry_x - (z^2+3|x|^2+1) y\cdot \bdry_x +(2x\cdot y) z \bdry_z.$$
\end{claim}

\begin{proof}
We compute 
    \begin{align*}
        X\lrcorner d\alpha_{n+1} &= ((4x\cdot y)x\cdot \bdry_x - (z^2+3|x|^2+1) y\cdot \bdry_x +(2x\cdot y)z \bdry_z) \lrcorner (-4x\cdot dxdz +dx\cdot dy) \\
        &=-4(4x\cdot y)|x|^2 dz+ (4x\cdot y) x\cdot dy - (z^2+3|x|^2+1) (-4x\cdot y)dz +4 (2x\cdot y) zx\cdot dx\\
        &=(4x\cdot y)( x\cdot dy + (z^2-|x|^2 +1) dz + 2zx\cdot dx) =(4x\cdot y) \alpha_{n+1},
    \end{align*}
    since $y\cdot dy=0$ on $\bdry_2 H_{n+1}=\Theta\times S^{n-1}$.
\end{proof}

Consider the singular foliation $\ker(\eta)$ on $\bdry_2 H_{n+1}$ determined by
\[
\eta=-f d(|x|^2+z^2) - (1-|x|^2-z^2)df,
\]
where $f(x,y,z)=-2z+x\cdot y$ is the contact Hamiltonian function identified above, so that $\alpha_{n+1}(v_{n+1})=f$.
By construction, the interior of the dividing set $\Gamma_{\bdry_2 H_{n+1}}=\{f=0\}$ forms a leaf of $\ker(\eta)$, while the singular locus of $\ker(\eta)$ coincides with $\bdry \Gamma_{\bdry_2 H_{n+1}}$. Indeed, $\ker(\eta)$ provides a genuine (non-singular) foliation of $\op{int}(\Theta)\times S^{n-1}$ and taking the union of the singular locus of $\ker(\eta)$ with any one of these non-compact leaves produces a manifold diffeomorphic to $\Gamma_{\bdry_2 H_{n+1}}$.

\begin{claim}\label{claim:char-fol-transverse-to-leaves}
The vector field $X$ is transverse to the leaves of $\ker(\eta)$ on $\op{int}(\Theta)\times S^{n-1}$. In particular, $\eta(X)>0$ on $\op{int}(\Theta)\times S^{n-1}$.
\end{claim}

\begin{proof}
Notice that
\begin{align*}
X\lrcorner d(|x|^2) &= X\lrcorner(2x\cdot dx)\\
    &= 8(x\cdot y)|x|^2-2(z^2+3|x|^2+1)(x\cdot y)\\
    &= 2(x\cdot y)(|x|^2-z^2-1)
\end{align*}
and $X\lrcorner d(z^2) = 4(x\cdot y)z^2$, so
\[
X\lrcorner d(|x|^2+z^2) = -2(x\cdot y)(1-|x|^2-z^2).
\]
At the same time,
\begin{align*}
X\lrcorner df &= X\lrcorner(x\cdot dy + y\cdot dx - 2dz)\\
    &=X\lrcorner (y\cdot dx) - 2X\lrcorner dz\\
    &= 4(x\cdot y)^2 - (z^2+3|x|^2+1)|y|^2 - 4(x\cdot y)z.
\end{align*}
Recalling that $|y|=2$, we have
\begin{align*}
X\lrcorner\eta &= -(1-|x|^2-z^2)(-2(x\cdot y) f + X\lrcorner df)\\
    &= -(1-|x|^2-z^2)(4(x\cdot y)z-2(x\cdot y)^2 + 4(x\cdot y)^2-4z^2-12|x|^2-4-4(x\cdot y)z)\\
    &= (1-|x|^2-z^2)(4z^2+12|x|^2+4-2(x\cdot y)^2).
\end{align*}
Now $1-|x|^2-z^2>0$ throughout $\op{int}(\Theta)\times S^{n-1}$ and the Cauchy-Schwarz inequality tells us that $(x\cdot y)^2\leq 4|x|^2$, so that $4z^2+12|x|^2+4-2(x\cdot y)^2\geq 4(z^2+|x|^2+1)>0$.
\end{proof}

\begin{figure}[ht]

\begin{tikzpicture}[
  scale=0.6,
  line cap=round,
  line join=round,
  >={Stealth[length=4mm,width=2.4mm]}
]
\tikzset{->-/.style={decoration={
  markings,
  mark=at position #1 with {\arrow{>}}},postaction={decorate}}}
\tikzset{-<-/.style={decoration={
  markings,
  mark=at position #1 with {\arrow{<}}},postaction={decorate}}}

\definecolor{FigureGreen}{RGB}{93,184,70}
\definecolor{FigureGray}{RGB}{150,150,150}

\def\A{6.2}              
\def\B{2.25}             
\def\TopY{4.65}          
\def\BotY{-1.55}         

\def\BlackLW{1pt}
\def\GrayLW{0.6pt}
\def\GreenDashLW{0.84pt}
\def\GreenChordLW{2.4pt}
\def\PointRadius{1.32pt}

\def\TopLeftAngle{215}
\def\TopRightAngle{35}
\def\BotLeftAngle{145}
\def\BotRightAngle{325}

\newcommand{\EPoint}[3]{%
  \coordinate (#3) at ($(#1)+({\A*cos(#2)},{\B*sin(#2)})$);
}

\coordinate (TopC) at (0,\TopY);
\coordinate (BotC) at (0,\BotY);

\coordinate (TopL) at ($ (TopC) + (-\A,0) $);
\coordinate (TopR) at ($ (TopC) + (\A,0) $);
\coordinate (BotL) at ($ (BotC) + (-\A,0) $);
\coordinate (BotR) at ($ (BotC) + (\A,0) $);

\EPoint{TopC}{\TopLeftAngle}{TL}
\EPoint{TopC}{\TopRightAngle}{TR}
\EPoint{BotC}{\BotLeftAngle}{BL}
\EPoint{BotC}{\BotRightAngle}{BR}

\draw[line width=\BlackLW]
  ($ (TopC)+(\A,0) $) arc[start angle=0,end angle=-360,x radius=\A,y radius=\B];

\draw[line width=\BlackLW,dashed]
  ($ (BotC)+(-\A,0) $) arc[start angle=180,end angle=0,x radius=\A,y radius=\B];
\draw[line width=\BlackLW]
  ($ (BotC)+(\A,0) $) arc[start angle=0,end angle=-180,x radius=\A,y radius=\B];

\draw[line width=\BlackLW] (TopL) -- (BotL);
\draw[line width=\BlackLW] (TopR) -- (BotR);

\draw[-<-=.5,FigureGray,line width=\GrayLW] (TopL) -- (TopR);
\draw[->-=.5,FigureGray,line width=\GrayLW] (BotL) -- (BotR);


\foreach \yy in {
0.75, 0.5, 0.25
}{
\draw[-<-=.5,FigureGray,line width=\GrayLW]
  ($ (TopC)+(-\A,0) $) arc[start angle=180,end angle=0,x radius=\A,y radius=\yy*\B];
\draw[-<-=.5,FigureGray,line width=\GrayLW]
  ($ (TopC)+(-\A,0) $) arc[start angle=180,end angle=0,x radius=\A,y radius=-\yy*\B];
\draw[->-=.5,FigureGray,line width=\GrayLW]
  ($ (BotC)+(-\A,0) $) arc[start angle=180,end angle=0,x radius=\A,y radius=\yy*\B];
\draw[->-=.5,FigureGray,line width=\GrayLW]
  ($ (BotC)+(-\A,0) $) arc[start angle=180,end angle=0,x radius=\A,y radius=-\yy*\B];
}

\newcommand{\GreenChord}[2]{%
  \draw[FigureGreen,line width=\GreenChordLW] (#1) -- (#2);
  \fill[FigureGreen] (#1) circle (\PointRadius);
  \fill[FigureGreen] (#2) circle (\PointRadius);
}


\draw[FigureGreen,line width=\GreenDashLW,dashed]
  (TL) arc[start angle=\TopLeftAngle-360+10,end angle=\TopRightAngle-10,x radius=1.02*\A,y radius=1.02*\B];
\draw[FigureGreen,line width=\GreenDashLW,dashed]
  (TL) arc[start angle=\TopLeftAngle-360+22.5,end angle=\TopRightAngle-22.5,x radius=1.08*\A,y radius=1.08*\B];
\draw[FigureGreen,line width=\GreenDashLW,dashed]
  (TL) arc[start angle=\TopLeftAngle-360+45,end angle=\TopRightAngle-45,x radius=1.41*\A,y radius=1.41*\B];
\draw[FigureGreen,line width=\GreenDashLW,dashed]
  (TL) arc[start angle=\TopLeftAngle-360+45+22.5,end angle=\TopRightAngle-45-22.5,x radius=2.63*\A,y radius=2.63*\B];

\draw[FigureGreen,line width=\GreenDashLW,dashed]
  (TR) arc[start angle=\TopRightAngle+10,end angle=\TopLeftAngle-10,x radius=1.02*\A,y radius=1.02*\B];
\draw[FigureGreen,line width=\GreenDashLW,dashed]
  (TR) arc[start angle=\TopRightAngle+22.5,end angle=\TopLeftAngle-22.5,x radius=1.08*\A,y radius=1.08*\B];
\draw[FigureGreen,line width=\GreenDashLW,dashed]
  (TR) arc[start angle=\TopRightAngle+45,end angle=\TopLeftAngle-45,x radius=1.41*\A,y radius=1.41*\B];
\draw[FigureGreen,line width=\GreenDashLW,dashed]
  (TR) arc[start angle=\TopRightAngle+45+22.5,end angle=\TopLeftAngle-45-22.5,x radius=2.63*\A,y radius=2.63*\B];
  
\GreenChord{TL}{TR}


\draw[FigureGreen,line width=\GreenDashLW,dashed]
  (BL) arc[start angle=\BotLeftAngle+10,end angle=\BotRightAngle-10,x radius=1.02*\A,y radius=1.02*\B];
\draw[FigureGreen,line width=\GreenDashLW,dashed]
  (BL) arc[start angle=\BotLeftAngle+22.5,end angle=\BotRightAngle-22.5,x radius=1.08*\A,y radius=1.08*\B];
\draw[FigureGreen,line width=\GreenDashLW,dashed]
  (BL) arc[start angle=\BotLeftAngle+45,end angle=\BotRightAngle-45,x radius=1.41*\A,y radius=1.41*\B];
\draw[FigureGreen,line width=\GreenDashLW,dashed]
  (BL) arc[start angle=\BotLeftAngle+45+22.5,end angle=\BotRightAngle-45-22.5,x radius=2.63*\A,y radius=2.63*\B];

\draw[FigureGreen,line width=\GreenDashLW,dashed]
  (BR) arc[start angle=\BotRightAngle-360+10,end angle=\BotLeftAngle-10,x radius=1.02*\A,y radius=1.02*\B];
\draw[FigureGreen,line width=\GreenDashLW,dashed]
  (BR) arc[start angle=\BotRightAngle-360+22.5,end angle=\BotLeftAngle-22.5,x radius=1.08*\A,y radius=1.08*\B];
\draw[FigureGreen,line width=\GreenDashLW,dashed]
  (BR) arc[start angle=\BotRightAngle-360+45,end angle=\BotLeftAngle-45,x radius=1.41*\A,y radius=1.41*\B];
\draw[FigureGreen,line width=\GreenDashLW,dashed]
  (BR) arc[start angle=\BotRightAngle-360+45+22.5,end angle=\BotLeftAngle-45-22.5,x radius=2.63*\A,y radius=2.63*\B];
  
\GreenChord{BL}{BR}

\node[above,FigureGreen] at (7.5,6) {$\Gamma_{\partial_2 H_{n+1}}\cap\{y=2\}$};
\node[above,FigureGreen] at (7.5,-4) {$\Gamma_{\partial_2 H_{n+1}}\cap\{y=-2\}$};

\end{tikzpicture}
	\caption{The foliation $\ker(\eta)$ on $\bdry_2 H_{n+1}$, in the case $n=1$. The green dashed curves represent $\ker(\eta)$, while the gray flow lines indicate the oriented characteristic foliation. The characteristic foliation on $\bdry_1 H_{n+1}$ is omitted. According to \cref{claim:char-fol-transverse-to-leaves}, the leaves of $\ker(\eta)$ form a 1-parameter family through which $\Gamma_{\bdry_2 H_{n+1}}$ may be pushed to either $\bdry(\bdry_2 H_{n+1})\cap\{f\geq 0\}$ or $\bdry(\bdry_2 H_{n+1})\cap\{f\leq 0\}$.}
	\label{fig:outgoing-boundary}
\end{figure}

In light of \cref{claim:char-fol-transverse-to-leaves}, we may use $X$ to flow $\Gamma_{\bdry_2 H_{n+1}}$ backwards along the characteristic foliation to the portion of $\bdry(\bdry_2 H_{n+1})$ where $f=-2z+x\cdot y$ is nonnegative; notice that this is an isotopy rel boundary. Alternatively, we may use $X$ to flow $\Gamma_{\bdry_2 H_{n+1}}$ forward across $R_-(\bdry_2 H_{n+1})$ to the portion of $\bdry(\bdry_2 H_{n+1})$ where $f$ is nonpositive. See \cref{fig:outgoing-boundary}.

\begin{proof}[Proof of \cref{prop:(n+1)_handle}]
We may assume that $D_+\cup D_-$ has a Legendrian ruled neighborhood $N(D_+\cup D_-)$ in $\Sigma$.  By \cref{claim: Legendrian ruling} the contact germ on $\partial_1 H_{n+1}$ can be identified with the restriction of the contact germ on $\Sigma$ to $N(D_+\cup D_-)$. The analysis of the contact germ on $\partial_2 H_{n+1}$ from above shows that $R'_+ = R_+ (\Sigma')$ is obtained from $R_+$ by first removing a standard neighborhood of the Lagrangian disk $D_+ \subset R_+$ and then (partially) completing the Liouville domain. This is equivalent to modifying the canonical $1$-form $\lambda=-pdq$ near the vertical boundary $\pi^{-1}(\bdry N)$ of the disk cotangent bundle $\pi:\D^*N\to N$ of a manifold $N$ with nonempty boundary $\bdry N$ so that the Liouville vector field becomes positively transverse to $\bdry \D^*N$; see \cite[Lemma 12.8]{CE12}. The $R'_-$ case is analogous.
\end{proof}

\subsection{Partial open book decompositions}\label{subsec:POBD}

The relative notion of an open book decomposition for a contact manifold with boundary was first considered in dimension $3$ by Honda-Kazez-Matić \cite{HKM09}. Here we reformulate the higher-dimensional version first described in \cite{HH18}, drawing from the presentation in \cite{BHH23}.

\begin{definition}
Given a Weinstein domain $W$, a \emph{cornered Weinstein subdomain} $S \subset W$ is a (possibly empty) codimension-$0$ submanifold with corners which satisfies the following properties:
\begin{enumerate} 
\item There exists a decomposition $\partial S = \partial_{\inward} S \cup \partial_{\outward} S$ such that
\begin{enumerate}
    \item $\partial_{\inward} S$ and $\partial_{\outward} S$ are compact manifolds with smooth boundary that intersect along their boundaries;
\item $\partial(\partial_{\inward} S) = \partial(\partial_{\outward} S)$ is the codimension-$1$ corner of $\partial S$; and
\item $\partial_{\outward} S = S\cap \partial W$ and is a proper subset of each component of $\partial W$.
\end{enumerate}
\item The Liouville vector field $X_{\lambda}$ on $W$ is inward-pointing along $\partial_{\inward} S$ and outward-pointing along $\partial_{\outward} S$.
\item $W\setminus S$ is a Weinstein domain after smoothing.
\end{enumerate}
The region $S$ (without reference to the space $W$) is called a {\em cornered Weinstein cobordism}.
\end{definition}

The main example of a cornered Weinstein subdomain is the standard neighborhood of a regular \cite{EGL18} Lagrangian disk.

\begin{definition} \label{defn: POBD}
A {\em strongly Weinstein partial open book decomposition (POBD)} for a compact cooriented contact manifold $(M^{2n+1},\xi)$ with Weinstein convex boundary is a smoothing of the manifold with corners
\[
\left(W\times[0,\tfrac{1}{2}]\right) \, \cup\, \left(S\times[\tfrac{1}{2},1]\right)  \, \cup\, \left(D^2 \times \partial W\right)\, / \sim'_{\phi},
\]
where:
\begin{enumerate}
\item $W_t:=W\times \{t\}$ is a $1$-Weinstein domain for each $t\in [0,\tfrac{1}{2}]$ and $S_t:=S\times\{t\}$ is a $1$-Weinstein cornered Weinstein cobordism for each $t\in[\tfrac{1}{2},1]$;
\item viewing $S$ as a cornered subdomain of $W$, $\sim'_\phi$ identifies $S_{1/2}$ with a cornered Weinstein subdomain of $W_{1/2}$ via the identity and $S_1$ with a cornered Weinstein subdomain of $W_0$ via an embedding $\phi: S\hookrightarrow W$ that is the identity on $\bdry_{\op{out}}S$;
\item there exists a contact form $\alpha$ on the glued manifold which restricts to the $1$-Weinstein Liouville forms on each $W_t$ and $S_t$ and whose Reeb vector field $R_\alpha$ is given by $\partial_t$ on $(\partial W)\times [0,\tfrac{1}{2}]$ and $(\partial S)\times [\tfrac{1}{2},1]$; and
\item $\sim'_\phi$ identifies $\bdry D^2\times \{p\}$ with $([0,1]/0\sim 1)\times \{p\}$ for each $p\in \bdry W$ and the contact structure on $D^2 \times \partial W$ is  a ``standard" contact structure on a neighborhood of the contact submanifold $\partial W$.
\end{enumerate}
We denote a POBD in abstract form by $(W, \phi: S\to W)$ and refer to $(M, \xi)$ as its \emph{total space}.
\end{definition}  

The identifications from (2) will be denoted $\sim_\phi$, to distinguish from $\sim'_\phi$ which consists of identifications from (2) and (4).

Note that the boundary $\partial M$of the total space of a POBD is convex with decomposition $R_+ \cup \Gamma \cup R_-$, where $R_+ = W \setminus S$ and $R_- = W \setminus \phi(S)$.

\begin{example}\label{ex:POBD_ex}
Let $W = \D^*S^n$ be the standard Weinstein structure on the disk cotangent bundle and $S\subset W$ be a neighborhood of a fiber $D_+:= D_pS^n$. Let $\Phi: W \to W$ be the symplectic Dehn twist (supported) around the $0$-section, and define $\phi: S \to W$ by $\phi:= \Phi\mid_S$. We claim that the total space $(M, \xi)$ of $(W, \phi:S \to W)$ is contactomorphic to a ball with standard convex boundary. Indeed, note that after smoothing corners, the piece $W \times [0, \tfrac{1}{2}] \cong J^1(S^n)$ is given by a contact $0$-handle (a Darboux ball), followed by a contact $n$-handle, whose core is obtained by puncturing the $0$-section of $W$. The piece $S\times [\tfrac{1}{2},1]$ may then be interpreted as a contact $(n+1)$-handle with attaching sphere $D_+ \cup D_-$, where $D_- := \phi(D_+)$. The contact $(n+1)$-handle smoothly cancels the $n$-handle, and by \cref{prop:(n+1)_handle}, $R_+$ of the resulting boundary is obtained by removing $S$ from $W$, hence is a standard symplectic $2n$-ball. See \cref{fig:POBD_ex}.

In fact, the total space $(M, \xi)$ is contactomorphic to a Darboux ball. Observe that we can glue on another Darboux ball (i.e., a contact $(2n+1)$-handle) to $(M, \xi)$ to obtain a closed contact manifold. Moreover, the codimension-$1$ foliation of the Darboux ball by symplectic disks upgrades the POBD $(W, \phi:S \to W)$ to a full open book decomposition with page $\D^*S^n$ and monodromy given by a symplectic Dehn twist. The resulting total space is well-known to be contactomorphic to $(S^n, \xi_{\mathrm{st}})$. This restricts to a contactomorphism of $(M, \xi)$, the total space of the POBD $(W, \phi: S\to W)$, with a Darboux ball.
\end{example}
\begin{figure}[ht]
	\begin{overpic}[scale=.43]{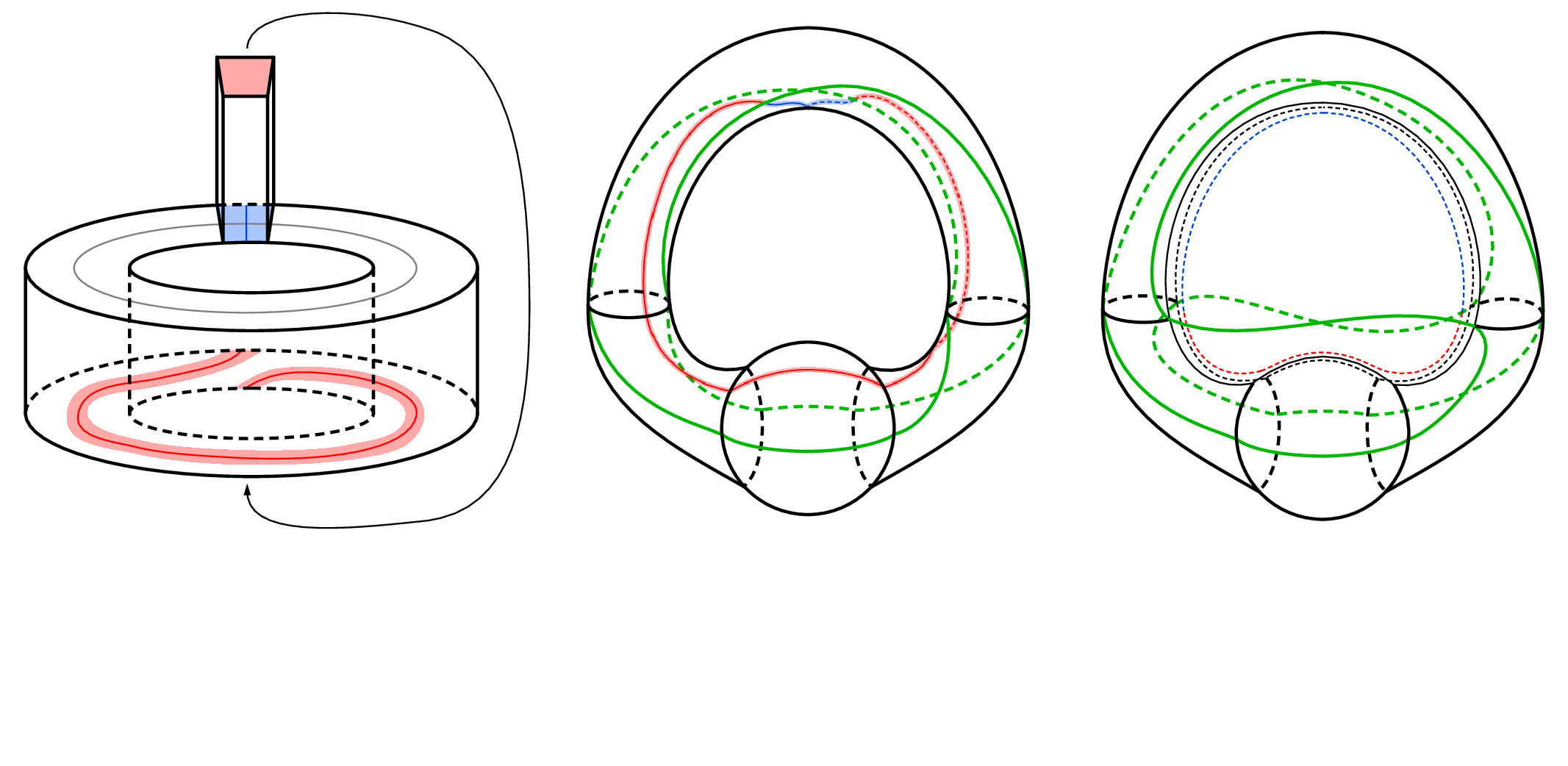}
		\put(23,13){\small $\phi$}
        \put(18,39){\tiny $S \times [\tfrac{1}{2},1]$}
	\end{overpic}
    \vskip-1.75cm
	\caption{The abstract POBD $(W, \phi:S \to W)$ on the left, and resulting contact manifold with boundary from \cref{ex:POBD_ex}. In the first panel, $S_{1/2}$ is shaded in blue. The middle panel depicts a contact $0$-handle and a(n embedded) contact $n$-handle giving $J^1(S^n)$; the attaching sphere of the contact $(n+1)$ is indicated in blue and red. The third panel depicts the result of the contact $(n+1)$-handle attachment, after some perturbation.}
	\label{fig:POBD_ex}
\end{figure}

\cref{ex:POBD_ex} hints at a more general relationship between contact handles and POBDs, given by the following lemma.

\begin{lemma}\label{lemma:handle-to-POBD}
Let $(M, \xi)$ admit a POBD $(W, \phi:S \to W)$ with Weinstein convex boundary $\partial M = R_+ \cup \Gamma \cup R_-$. 
\begin{enumerate}
    \item If $\Lambda \subset \Gamma$ is a Legendrian sphere in the dividing set and $(M',\xi')$ denotes the contact manifold obtained by a contact $n$-handle $H_n$ attached along $\Lambda$, then $(M',\xi')$ is supported by the POBD 
    \[
    (W\cup h_n, \,\phi:S \to W \cup h_n),
    \]
    where $h_n$ is the underlying Weinstein handle attachment to $W$ with attaching sphere $\Lambda$. 

    \item If $D_{\pm} \subset R_{\pm}$ are regular Lagrangian disks with $\partial D_+ = \partial D_- \subset \Gamma$ and $(M',\xi')$ denotes the contact manifold obtained by a contact $(n+1)$-handle $H_{n+1}$ attached along $D_+ \cup D_-$, then $D_+$ can be taken to be disjoint from $S$ and $(M',\xi')$ is supported by the POBD 
    \[ (W, \,  \phi\cup \psi: S \cup N(D_+) \to W),\]
    where $N(D_{\pm})$ is a standard Weinstein neighborhood of $D_{\pm}$ and $\psi: N(D_+) \to N(D_-)$ is an exact symplectomorphism that takes $D_+$ to $D_-$. 
\end{enumerate}
\end{lemma}

\begin{proof}
(1) We perturb the corners of the contact $n$-handle model in \cref{subsubsec:n_handle} (see Equation \eqref{eqn: n-handle}) and view it as 
$$H_n = [-1,1]_z \times \{|x| \leq 1\} \times \{|y| \leq 1\} \subset \R_z \times \R^n \times \R^n$$ with contact form $\alpha_n = dz - 2y\cdot dx - x\cdot dy$. Note that each $\{z\} \times \{|x| \leq 1\} \times \{|y| \leq 1\}$ has the structure of an index-$n$ Weinstein handle $h_n$ whose attaching sphere is (a parallel shift of) $\Lambda$. Attachment of the contact handle $H_n$ to $M$ then results in a POBD with an extra Weinstein handle $h_n$ attached to the page with no additional monodromy. This gives (1). 

(2) Since $D_+\subset R_+$, $\op{int}(D_+)$ is disjoint from $S$.  By dimension reasons and considerations of descending submanifolds of critical points on $S$, we may assume that $\bdry D_+$ and $\bdry_{\op{in}}S$ are disjoint in $\Gamma$. Hence $D_+$ and $S$ can be taken to be disjoint.

The contact $(n+1)$-handle $H_{n+1}$ is obtained by reversing the contact vector field in the above contact $n$-handle model $H_n$, and is foliated by Weinstein $n$-handles $\{z\} \times \{|x| \leq 1\} \times \{|y| \leq 1\}$. The attaching region of the $(n+1)$-handle contains the union of $\{\pm 1\}\times \{|x| \leq 1\} \times \{|y| \leq 1\}$, and the Weinstein $n$-handles have core $|x|=0$ (instead of $|y|=0$ as in the $n$-handle case). Identifying the cores $|x|=0$ with (slight retractions of) $D_{\pm}$, we see that a contact $(n+1)$-handle attachment along $D_+ \cup D_-$ then induces the monodromy from $N(D_+)$ to $N(D_-)$, yielding (2).
\end{proof}

As with open book decompositions of closed manifolds, there is a positive stabilization operation preserving the contactomorphism type of the total space of a POBD, in fact modeled on \cref{ex:POBD_ex}. 

\begin{definition}\label{def:POBD_stab}
Let $L\subset W$ be a regular Lagrangian disk with Legendrian boundary $\subset \partial W$.  Then the POBD
\begin{align*}
    (W\cup h, \,  \tau\circ (\phi\cup\mathrm{id}_h) : S\cup h \to W\cup h)
\end{align*}
is called a {\em (positive) stabilization of $(W, \phi:S\to W)$ along $L$}, where: 
\begin{enumerate}
\item $h$ is a Weinstein $n$-handle with core Lagrangian disk $L'$ such that $\partial L=\partial L'$, and
\item $\tau: W\cup h \to W\cup h$ is a (positive symplectic) Dehn twist about the Lagrangian sphere $L\cup L'$. 
\end{enumerate}
\end{definition}

\begin{remark}
A second and equivalent model for POBD stabilization is given by precomposing the monodromy rather than postcomposing (c.f.\ \cite[Definition 1.2.3]{BHH23}): 
\[
(W\cup h,\, (\phi\cup \op{id}_h) \circ \tau: \tau^{-1}(S) \cup N(L)  \to W\cup h),
\]
where $N(L)\subset W$ is the Weinstein $n$-handle neighborhood of $L$.
For the sake of simplicity, we will focus on the model in \cref{def:POBD_stab}. 
\end{remark}

\begin{lemma}\label{lem:positive_stabilization}
If $(M, \xi)$ admits a POBD $(W, \phi:S \to W)$ and $(M', \xi')$ is obtained by performing a positive POBD stabilization along $L$, then $(M', \xi')$ is contactomorphic to $(M, \xi)$. 
\end{lemma}

\begin{proof}
The proof is analogous to, but more complicated than, the closed case (c.f.\ \cite[Theorem 2.17]{Et06}). 
	
Consider the POBD from \cref{ex:POBD_ex} given (in different notation) by $(W_0,\phi_0:S_0 \to W_0)$, where $W_0 = \D^\ast S^n$ is the disk cotangent bundle with the standard Weinstein structure, $S_0$ is the standard neighborhood of a Lagrangian fiber $L_0=D^*_p S^n$, and $\phi_0$ is the partially-defined positive Dehn twist along the $0$-section. As observed in \cref{ex:POBD_ex}, the total space is contactomorphic to a Darboux ball.
    
We now describe the ``plumbing" 
$$(W', \phi':S'\to W')=(W, \phi: S\to W)*(W_0,\phi_0:S_0 \to W_0).$$  
Let $R_0=W_0\setminus S_0$ and let $R\subset W$ be a standard cornered neighborhood of the disk $L$ along which we are stabilizing. Write $\partial R= \partial_{\mathrm{in}} R\cup \partial_{\mathrm{out}} R$, where $\partial_{\mathrm{out}} R=\partial R\cap \partial W$ and $\partial_{\mathrm{in}} R=\partial R\setminus \op{int} \bdry_{\op{out}}R$. 
We then set $W'=W_0\cup_{R_0=R} W$ so that each cotangent disk fiber of $R_0$ intersects $L$ exactly once and $S'= S_0\cup S$. After plumbing, $S_0$ becomes the $n$-handle $h$ in \cref{def:POBD_stab} and $L_0$ its cocore disk. Let $L'$ denote the core disk of the $n$-handle $h$ (i.e., the intersection of $S_0$ with the $0$-section of $\D^*S^n$).

We then construct $M'$ from
\[
\left(W'\times[0,\tfrac{1}{2}]\right) \, \cup\, \left(S'\times[\tfrac{1}{2},1]\right) \,\cup\, (D^2\times \bdry W')\, / \sim'_{\phi \, \cup\, \mathrm{id}_h}, 
\]
by applying contact $(-1)$-surgery to $(L \cup L') \times \{\tfrac{1}{4}\}$. Viewing $W\subset W'=W\cup h$ and $S\subset S' = S \cup h$, we can decompose $M'=H_0\cup H_1$, where
\[
H_0 = \left(W\times [0,\tfrac{1}{2}]\right) \,\cup\, \left(S\times[\tfrac{1}{2},1]\right) \, \cup \,  (D^2\times (\partial W'\setminus h)) \setminus \left(R\times[\tfrac{1}{8},\tfrac{3}{8}]\right) \, / \sim'_{\phi \, \cup\, \mathrm{id}_h},
\]
and $H_1$ is obtained from 
\[
\left(h\times [0,1]\right) \, \cup \,\left(R\times[\tfrac{1}{8},\tfrac{3}{8}]\right) \,\cup\,  (D^2\times ((\partial W')\cap h))  \, / \sim'_{\phi \, \cup\, \mathrm{id}_h}
\]
by performing a contact $(-1)$-surgery along $(L \cup L') \times \{\tfrac{1}{4}\}$.

We now consider several cases: (i) Assume that $\bdry L\subset  \bdry_{\op{out}} S$.  Then
\[
\partial H_0 \cap \partial H_1 =(\partial_{\mathrm{in}} R \times [\tfrac{1}{8},\tfrac{3}{8}])\, \cup\, (R\times \{\tfrac{1}{8},\tfrac{3}{8}\}) \, \cup\, (\partial_{\mathrm{out}}R \times  ([0,\tfrac{1}{8}]\cup[\tfrac{3}{8},1]))
\]
is a standard convex $S^{2n}$, modulo corner rounding. Hence $H_0$ is contactomorphic to $(M,\xi)$ with a Darboux ball removed, in view of the condition $\bdry_{\op{out}}S\supset \bdry_{\op{out}} R$, which follows from $\bdry L\subset  \bdry_{\op{out}} S$.  On the other hand, $H_1$ is supported by the POBD in \cref{ex:POBD_ex} and is contactomorphic to a Darboux ball.
It follows (see \cite[\S 5]{BEM15}) that $(M',\xi')$ is contactomorphic to the connected sum of $(M,\xi)$ and the standard contact $S^{2n+1}$, which in turn is contactomorphic to $(M,\xi)$. 

(ii) Assume that $\bdry L \cap \bdry_{\op{out}} S=\emptyset$. Then we replace $(W, \phi:S \to W)$ by $(W, \phi\cup \op{id}_{S_{\bdry L}}: S\cup S_{\bdry L}\to W)$, where $S_{\bdry L}$ is given as follows: Let $N(\bdry L)$ be a standard neighborhood of the Legendrian $\bdry L$ in $\bdry W$.  Then let $S_{\bdry L}$ be a collar neighborhood of $N(\bdry L)$ in $W$ with a subset of corners smoothed so that $S_{\bdry L}$ is a cornered Weinstein subdomain of $W$ with $\bdry_{\op{out}}S_{\bdry L}= N(\bdry L)$. One can verify that $(M,\xi)$ and the contact manifold corresponding to $(W, \phi\cup \op{id}_{S_{\bdry L}}: S\cup S_{\bdry L}\to W)$ are contactomorphic. We are now in Case (i).

(iii) In the general case, we may arrange so that $\bdry L\pitchfork \bdry (\bdry_{\op{out}} S)$.  Assuming $\phi=\op{id}$ near $\bdry W$, we replace  $(W, \phi:S \to W)$ by $(W, \phi\cup \op{id}_{S_{\bdry L}}: S\cup S_{\bdry L}\to W)$ as before, where $S\cup S_{\bdry L}$ is given the appropriate corner smoothing.  Again one can verify that $(M,\xi)$ and the contact manifold corresponding to $(W, \phi\cup \op{id}_{S_{\bdry L}}: S\cup S_{\bdry L}\to W)$ are contactomorphic and we are in Case (i).
\end{proof}

\subsection{Bypass attachments}\label{subsec:bypass-appendix}

Let $\Sigma$ be a convex hypersurface with convex decomposition $\Sigma = R_+ \cup \Gamma \cup R_-$. Recall that in dimension $3$, the classical notion of a bypass attaching arc is one which intersects the dividing curve thrice, two times at its endpoints. In other words, a bypass attaching arc is the union of two arcs $D_+ \cup D_-$, where $D_{\pm}\subset R_{\pm}$ share one endpoint; see \cref{fig:bypass-new-persp}. Additionally, a \emph{trivial} bypass attaching arc \cite{Hon00} is one equivalent to either of the two models on the right side of \cref{fig:bypass-new-persp}. This perspective is what generalizes to all dimensions in the following definition. 

 \begin{figure}[ht]
	\begin{overpic}[scale=0.43]{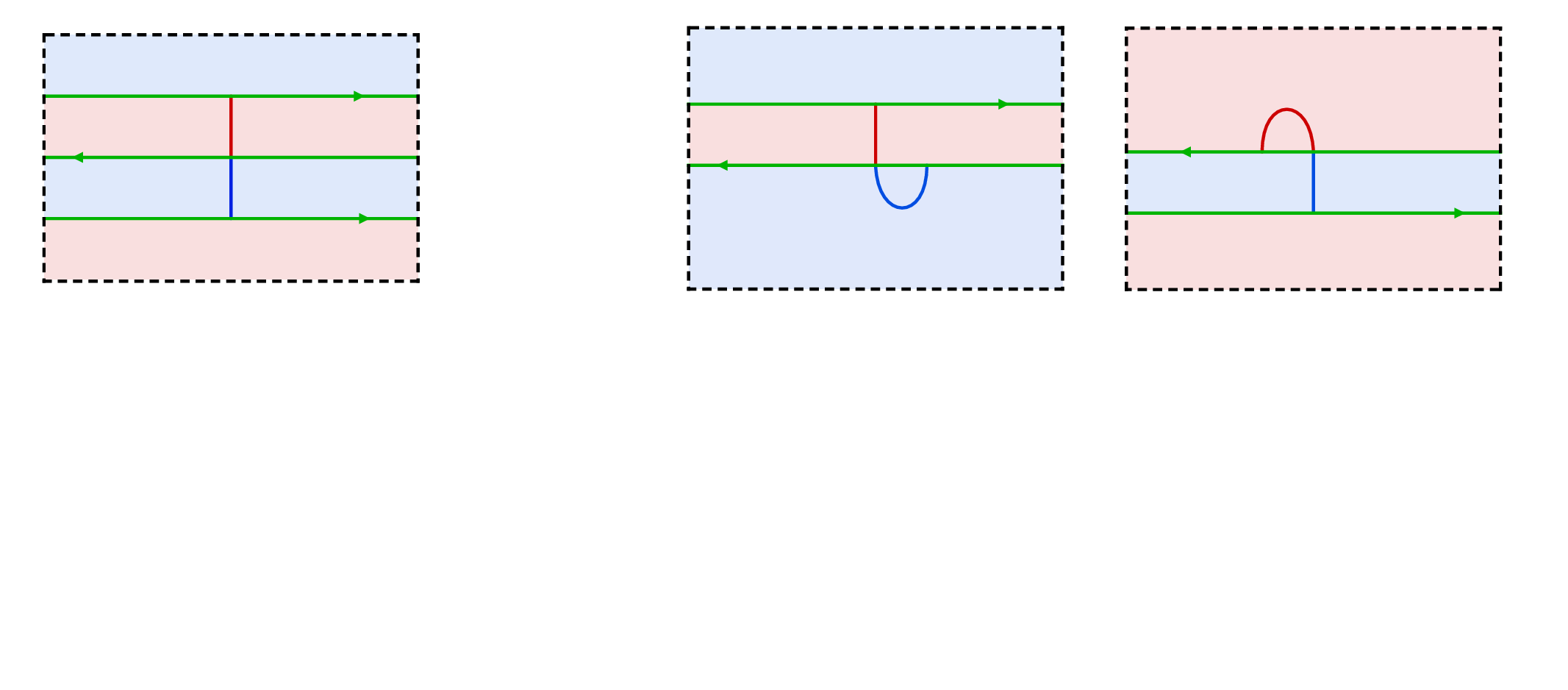}
	\put(15.5,31.5){\small \textcolor{darkblue}{$D_+$}}
    \put(15.5,35.25){\small \textcolor{darkred}{$D_-$}}
	\end{overpic}
    \vskip-3.5cm
	\caption{A new perspective on the classical bypass attaching arc; compare with \cref{fig:bypass-half-disk}. On the right are the two formulations of a trivial bypass arc. The arrows indicate the induced Reeb orientation of the dividing set.}
	\label{fig:bypass-new-persp}
\end{figure}

\begin{definition}\label{def:bypass}
A \emph{bypass attachment data} is a tuple $(\Lambda_{-}, \Lambda_+; D_{-}, D_+)$, where $D_{\pm}\subset R_{\pm}$ is a properly embedded Lagrangian disk filling of $\Lambda_{\pm}\subset\Gamma$ such that $\Lambda_-$ and $\Lambda_+$ intersect $\xi_{\Gamma}$-transversely (by this we mean $T\Lambda_-$ and $T\Lambda_+$ span $\xi_\Gamma$ at each point of intersection) at one point; see the middle panel of \cref{fig:bypass_attachment}. 

The bypass attachment data is \emph{trivial} if either of the following holds: 
\begin{itemize}
    \item[(TB1)] $(\Lambda_+; D_+)$ is a pair consisting of a standard Legendrian unknot and standard Lagrangian disk such that $\Lambda_+$ is below $\Lambda_-$; when $n=1$, this means there exists a disk in $R_+$ with boundary $\gamma\cup D_+$, where $\gamma\subset \Gamma$ is an arc with upper endpoint $\Lambda_+\cap \Lambda_-$ with respect to the Reeb direction.
    \item[(TB2)] $(\Lambda_-; D_-)$ is a pair consisting of a standard Legendrian unknot and standard Lagrangian disk such that $\Lambda_-$ is above $\Lambda_+$.
\end{itemize}
See \cref{fig:trivial_data}.   
\end{definition}

\begin{figure}[ht]
	\begin{overpic}[scale=0.4]{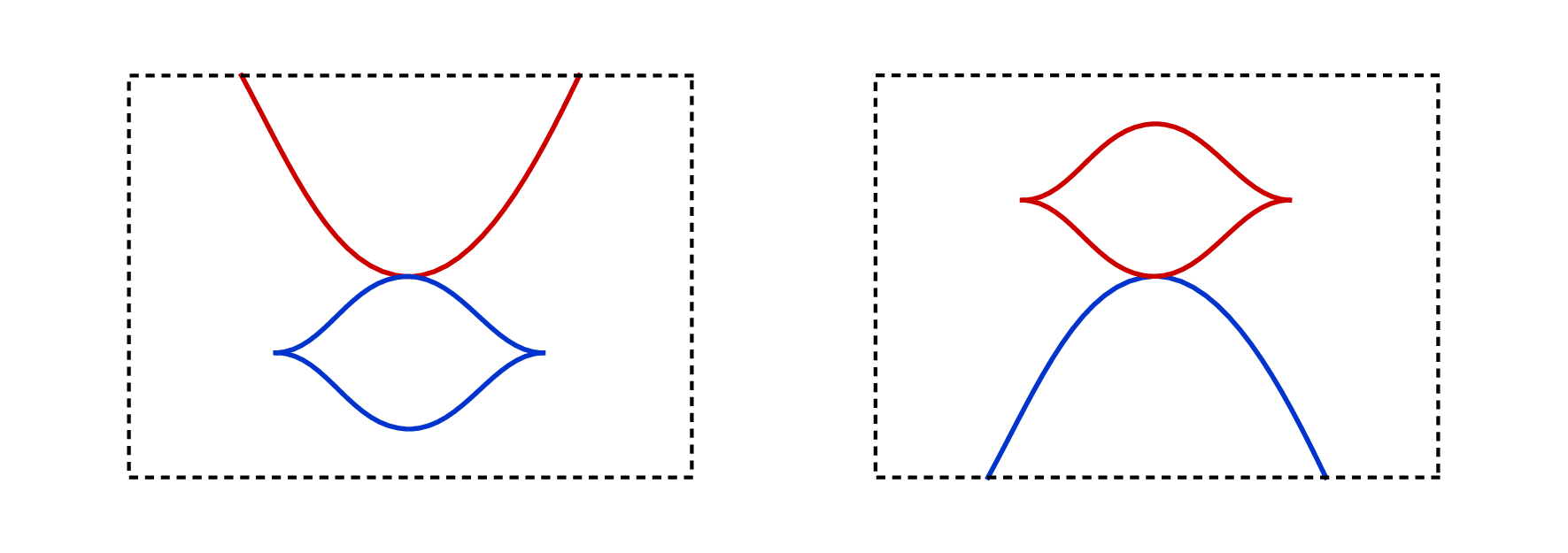}
	    \put(23,31.5){\small (TB1)}
	    \put(32,8){\small \textcolor{darkblue}{$\Lambda_+$}}
	    \put(64,25){\small \textcolor{darkred}{$\Lambda_-$}}
	    \put(71,31.5){\small (TB2)}
	\end{overpic}
	\caption{Trivial bypass data when viewed in the spin-symmetric front projection of the dividing set (see \cref{subsubsec:sums-slides}). On the left, $\Lambda_+$ bounds the standard disk filling in $R_+$, and on the right, $\Lambda_-$ bounds the standard disk filling in $R_-$.}
	\label{fig:trivial_data}
\end{figure}

In dimension $3$, a bypass attachment begins with a contact $(n=1)$-handle attached along the endpoints of the attaching arc. The perspective which generalizes to higher dimensions is that the endpoints of the attaching arc are the connected sum of the Legendrian $0$-spheres $\partial D_+$ and $\partial D_-$ in the dividing set. In the forthcoming description of a bypass attachment (\cref{prop:bypass_attachment}), a contact $n$-handle will be attached along a certain Legendrian that we denote by $\partial D_- \uplus \partial D_+$. While a direct definition of the $\uplus$ operation was described in \cite{HH18}, it is sufficient for our purposes to use a diagrammatic definition by appealing to the results of Casals-Murphy \cite{casals2019fronts}, described in the following.

\subsubsection{Legendrian sums and handleslides}\label{subsubsec:sums-slides} We adopt the convention, described in more generality and detail in \cite{casals2019fronts}, of drawing $n$-dimensional front projections of a $(2n-1)$-dimensional contact manifold $(\Gamma,\xi)$ with implicit $S^{n-1}$-spin symmetry. We will use boxes to identify regions of such spin symmetry; for example, see \cref{fig:spin1}. With a single exception (\cref{fig:bypass-slide}), all of our front projections exhibit global spin symmetry.

\begin{figure}[ht]
	\begin{overpic}[scale=0.35]{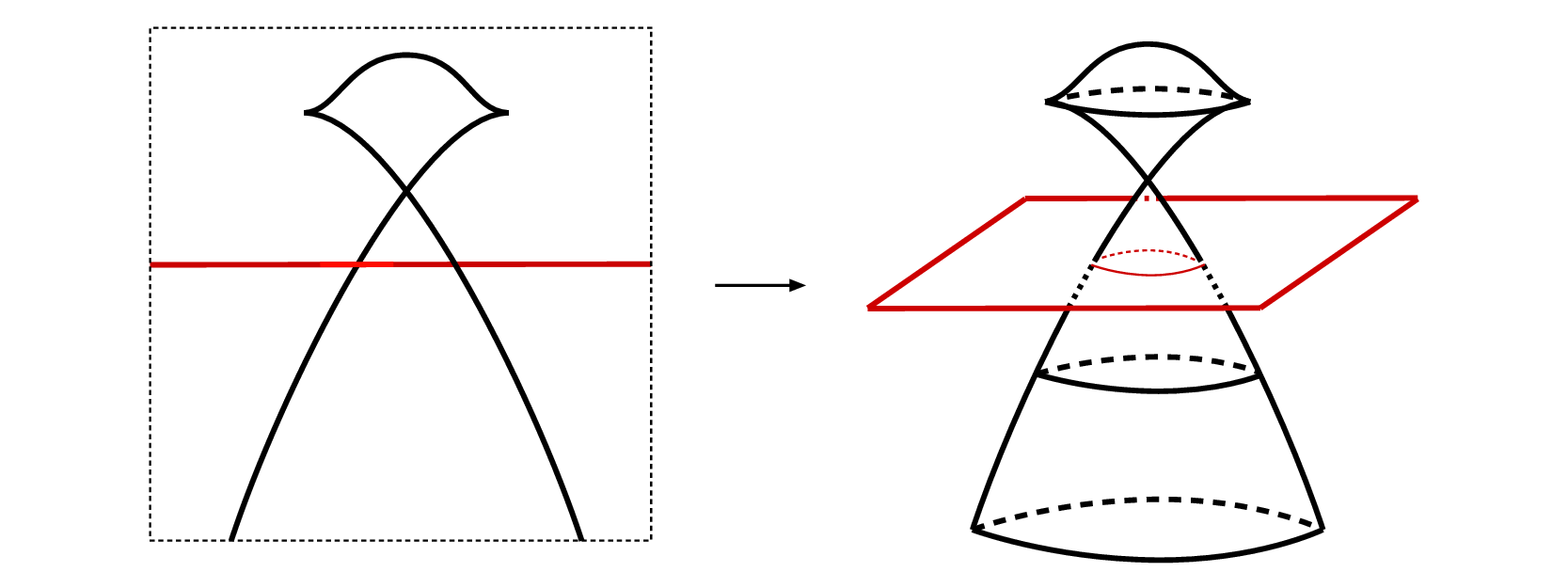}
	\end{overpic}
	\caption{A planar front projection implicitly $S^1$-spun around the central vertical axis to produce a Legendrian surface front projection.}
	\label{fig:spin1}
\end{figure}

\begin{definition}\label{def:uplus}
Let $\Lambda_1, \Lambda_2\subset \Gamma$ be Legendrians intersecting $\xi$-transversely at a point. Define the operations $\Lambda_1 \uplus\Lambda_2$ and $\Lambda_2 \uplus\Lambda_1$ diagrammatically in a spin-symmetric local front projection according to \cref{fig:uplus}.    
\end{definition}

\begin{figure}[ht]
	\begin{overpic}[scale=0.35]{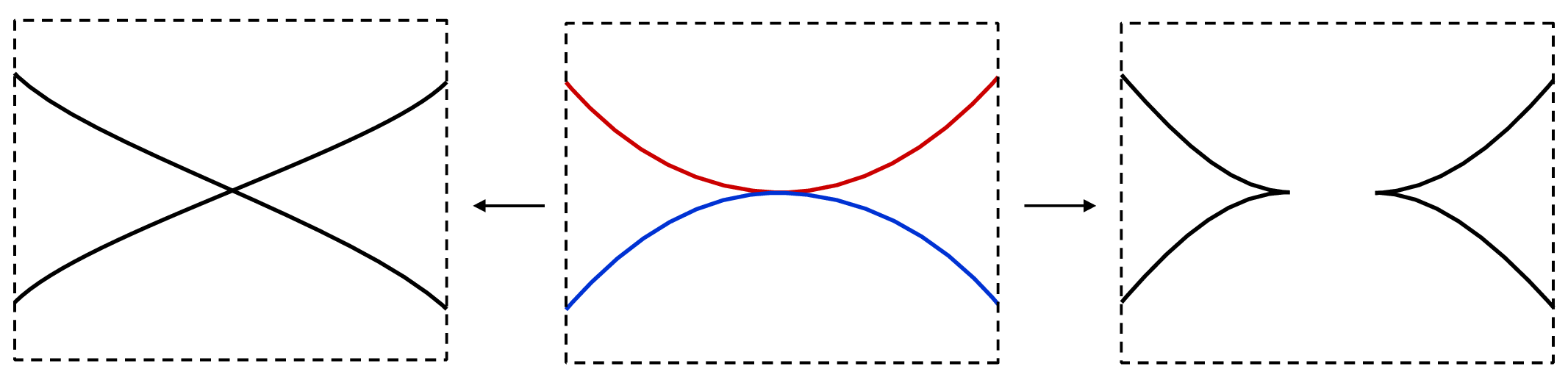}
	    	\put(55,5.5){\small \textcolor{darkblue}{$\Lambda_1$}}
  \put(55,17){\small \textcolor{darkred}{$\Lambda_2$}}
  \put(80.25,17){\small $\Lambda_1 \uplus \Lambda_2$}
  \put(10.25,17){\small $\Lambda_2 \uplus \Lambda_1$}
	\end{overpic}
	\caption{Diagrammatically defining the $\uplus$ operation.}
	\label{fig:uplus}
\end{figure}

\cref{def:uplus} is the contact analog of Polterovich's \cite{Pol91} Lagrangian connected sum for transverse Lagrangian intersections in symplectic manifolds. We write $\Lambda_1 \uplus \Lambda_2$ instead of $\Lambda_1 \# \Lambda_2$ since the latter is commonly used to denote the Legendrian connected sum, {\em which is a different operation,} although smoothly $\Lambda_1 \uplus \Lambda_2$ is diffeomorphic to the connected sum of $\Lambda_1$ and $\Lambda_2$. The definition is justified by \Cref{prop:handleslide} below, which is a restatement of \cite[Proposition 2.17]{casals2019fronts} and gives local front projections for slides of Legendrians across contact $(\pm 1)$ surgeries, generalizing work of Ding-Geiges \cite{Ding2009HandleMI} in dimension $3$. 

\begin{proposition}\label{prop:handleslide}
Let $\Lambda_0, \Lambda$ be Legendrian spheres intersecting $\xi$-transversely at a single point. After performing a contact $(-1)$-surgery along $\Lambda_0$, the Legendrian $\Lambda^{-\epsilon}$ is Legendrian isotopic in the surgered manifold to $(\Lambda \uplus \Lambda_0)^{\epsilon}$; see \cref{fig:slide-sum}.
\end{proposition}

Here we are using the following ``Reeb shift'' notation:

\begin{notation}[Reeb shift notation]\label{remark:reeb_shift}
Given a Legendrian $\Lambda$ in a contact manifold with a chosen contact form, let $\Lambda^t$ denote the time-$t$ Reeb shift of $\Lambda$. If additionally $\Lambda$ (asymptotically) bounds a Lagrangian disk $D$ in a Liouville manifold, we let $D^t$ denote a Lagrangian disk filling $\Lambda^t$ obtained by extending the time-$t$ Reeb flow into the cylindrical end of the Liouville manifold via a Hamiltonian isotopy.      
\end{notation}

More generally, we have the following:

\begin{claim}\label{claim:disk_isotopy}
If $\Lambda$ (asymptotically) bounds a Lagrangian disk $D$ in a Liouville manifold, then any Legendrian isotopy $\Lambda_s$, $s\in [0,1]$, in the ideal contact boundary with $\Lambda_0=\Lambda$ may be extended to a Hamiltonian isotopy of $D$.
\end{claim}

\begin{proof}
    It is a standard fact that (a $C^0$-approximation of) the trace of a Legendrian isotopy generates a Lagrangian concordance in the symplectization, which is Hamiltonian isotopic to the identity concordance; see  e.g.\ \cite[Lemma 6.1]{EHK16} or \cite[Theorem 2.23]{casals2019fronts}. Stacking this concordance on top of $D$ gives the claim.
\end{proof}

\begin{figure}[ht]
	\begin{overpic}[scale=0.43]{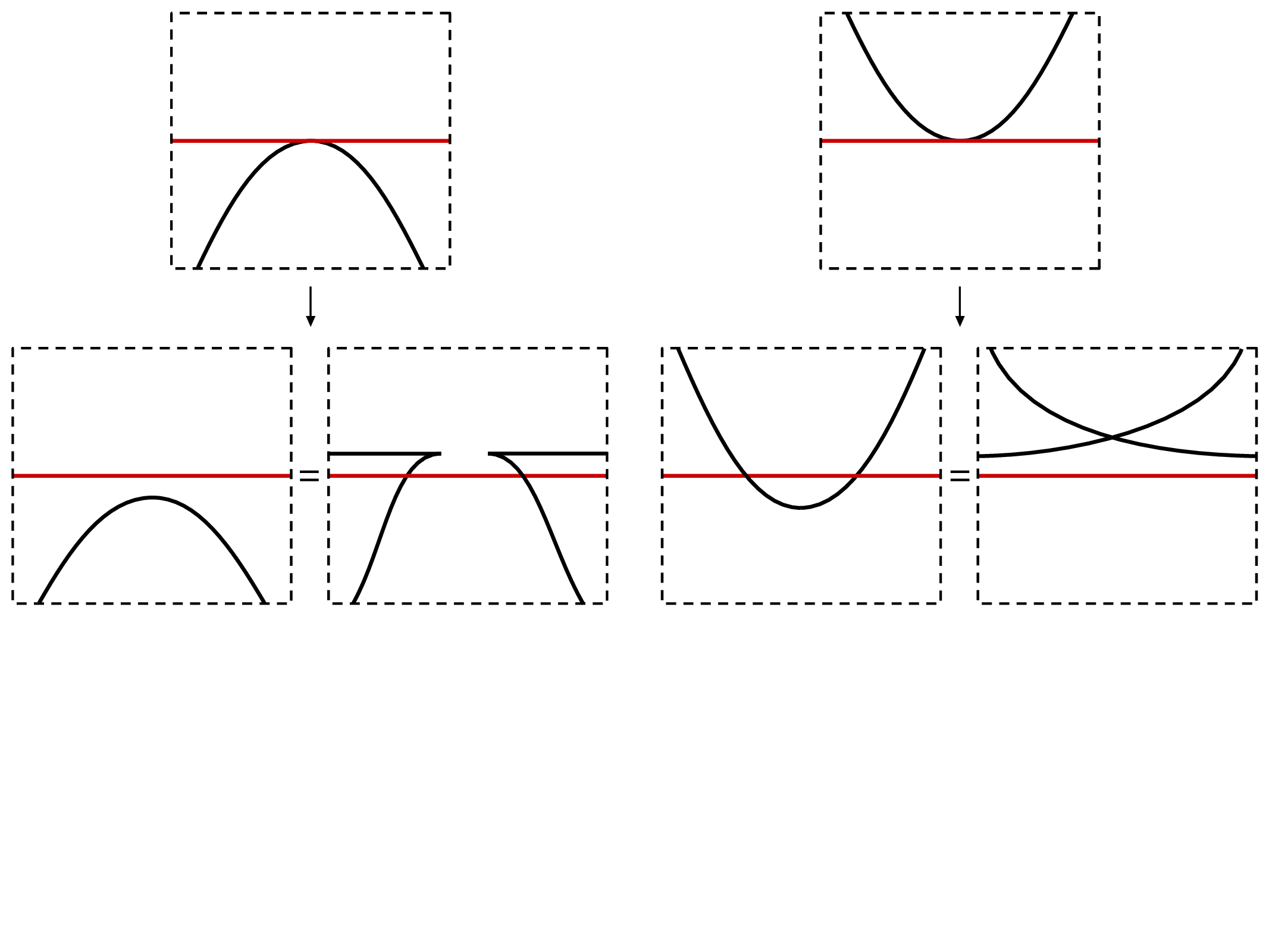}

    \put(14.5,61.5){\footnotesize \textcolor{darkred}{$\Lambda_0$}}
    \put(18.5,55){\footnotesize $\Lambda$}
    \put(66.5,61.5){\footnotesize \textcolor{darkred}{$\Lambda_0$}}
    \put(70.5,70){\footnotesize $\Lambda$}

  \put(18,35){\footnotesize \textcolor{darkred}{$(-1)$}}
  \put(43,35){\footnotesize \textcolor{darkred}{$(-1)$}}
  \put(69.25,35){\footnotesize \textcolor{darkred}{$(-1)$}}
  \put(94,35){\footnotesize \textcolor{darkred}{$(-1)$}}

  \put(2,35){\footnotesize \textcolor{darkred}{$\Lambda_0$}}
  \put(27,35){\footnotesize \textcolor{darkred}{$\Lambda_0$}}
  \put(53.25,35){\footnotesize \textcolor{darkred}{$\Lambda_0$}}
  \put(78,35){\footnotesize \textcolor{darkred}{$\Lambda_0$}}

   \put(6,28.5){\footnotesize $\Lambda^{-\epsilon}$} 
   \put(33,42){\footnotesize $(\Lambda \uplus \Lambda_0)^{\epsilon}$} 
   \put(55.5,45){\footnotesize $\Lambda^{-\epsilon}$} 
   \put(84,44){\footnotesize $(\Lambda \uplus \Lambda_0)^{\epsilon}$} 
  
	\end{overpic}
    \vskip-3.75cm
	\caption{The statement of \cref{prop:handleslide}: Legendrian handlesliding $\Lambda^{-\epsilon}$ up across contact $(-1)$-surgery along $\Lambda_0$ to produce $(\Lambda \uplus \Lambda_0)^{\epsilon}$.}
	\label{fig:slide-sum}
\end{figure}

While \cref{prop:handleslide} describes the effect of handleslides on the Legendrians, we also need to keep track of Lagrangian disk fillings under such slides. The following lemma, together with \cref{claim:disk_isotopy}, suffices for proving the triviality of trivial bypass attachments below.

\begin{lemma}\label{lemma:disk_slide}
Let $W^{2n}$ be a Weinstein domain, $\Lambda \subset \partial W$ a standard Legendrian unknot, and $D\subset W$ a standard Lagrangian disk filling of $\Lambda$. Let $W'$ be the Weinstein domain obtained by attaching a critical handle along a Legendrian $\Lambda_0$ as in \cref{fig:cocore}. Then the completion of $D$ is Hamiltonian isotopic in the completion of $W'$ to the completion of the cocore of the handle.  
\end{lemma}

\begin{figure}[ht]
	\begin{overpic}[scale=0.43]{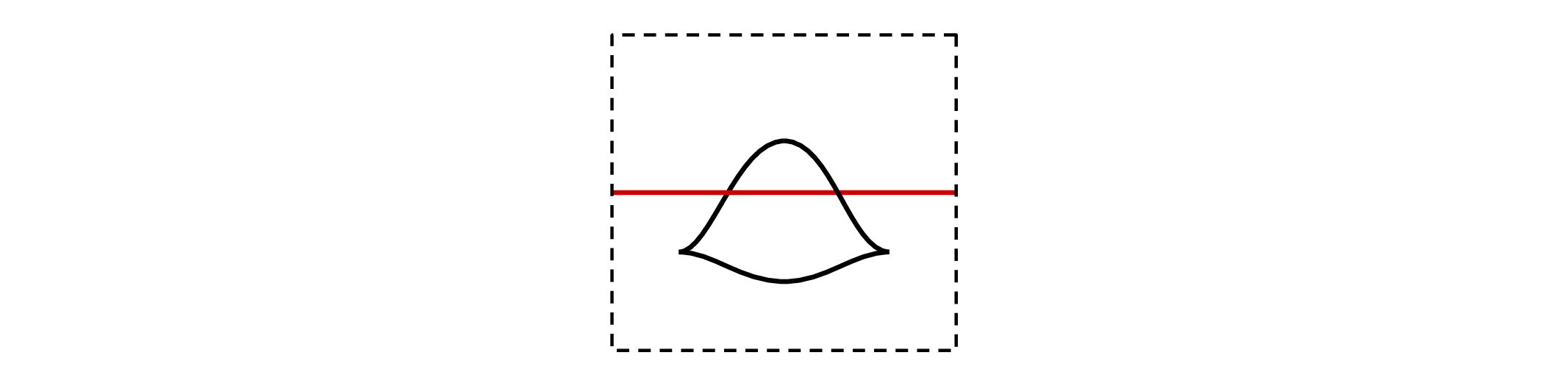}

    \put(35.5,11.5){\small \textcolor{darkred}{$\Lambda_0$}}
     \put(49,16.5){\small $\Lambda$}
    \put(62,11.5){\small \textcolor{darkred}{$(-1)$}}

	\end{overpic}
    
	\caption{The standard Lagrangian filling of the ``meridian'' standard unknot is isotopic to the cocore of a Weinstein handle.}
	\label{fig:cocore}
\end{figure}

\begin{proof}
We take the Weinstein handle attached along $\Lambda_0$ to be the standard one
\[
h= \{|x| \leq 1\} \times \{|y| \leq 1\} \subset \R^n_x \times \R^n_y \cong T^*\R^n_x
\]
with Liouville form $\lambda = -x\cdot dy - 2y\cdot dx$, so that the core is $K = \{|x| \leq 1\} \times \{y = 0\}$ and the cocore is $C = \{x = 0\}\times \{|y| \leq 1\}$. Writing $X$ and $X'$ for the Liouville vector fields on $W$ and $W'$, we have $X'|_h=-x\cdot \partial_x + 2y\cdot \partial_y$.

Since $D\subset W$ is a standard disk, we may assume that $D$ is the concatenation of:
\begin{itemize}
\item[(i)] a Lagrangian annulus $A_1$ corresponding to the trace of a Legendrian isotopy which shrinks the Legendrian unknot $\Lambda$; and 
\item[(ii)] an arbitrarily small Lagrangian disk $D_0$ which transversely intersects the core $K$, extended backwards along $X$.
\end{itemize}

\begin{figure}[ht]
	\begin{overpic}[scale=0.36]{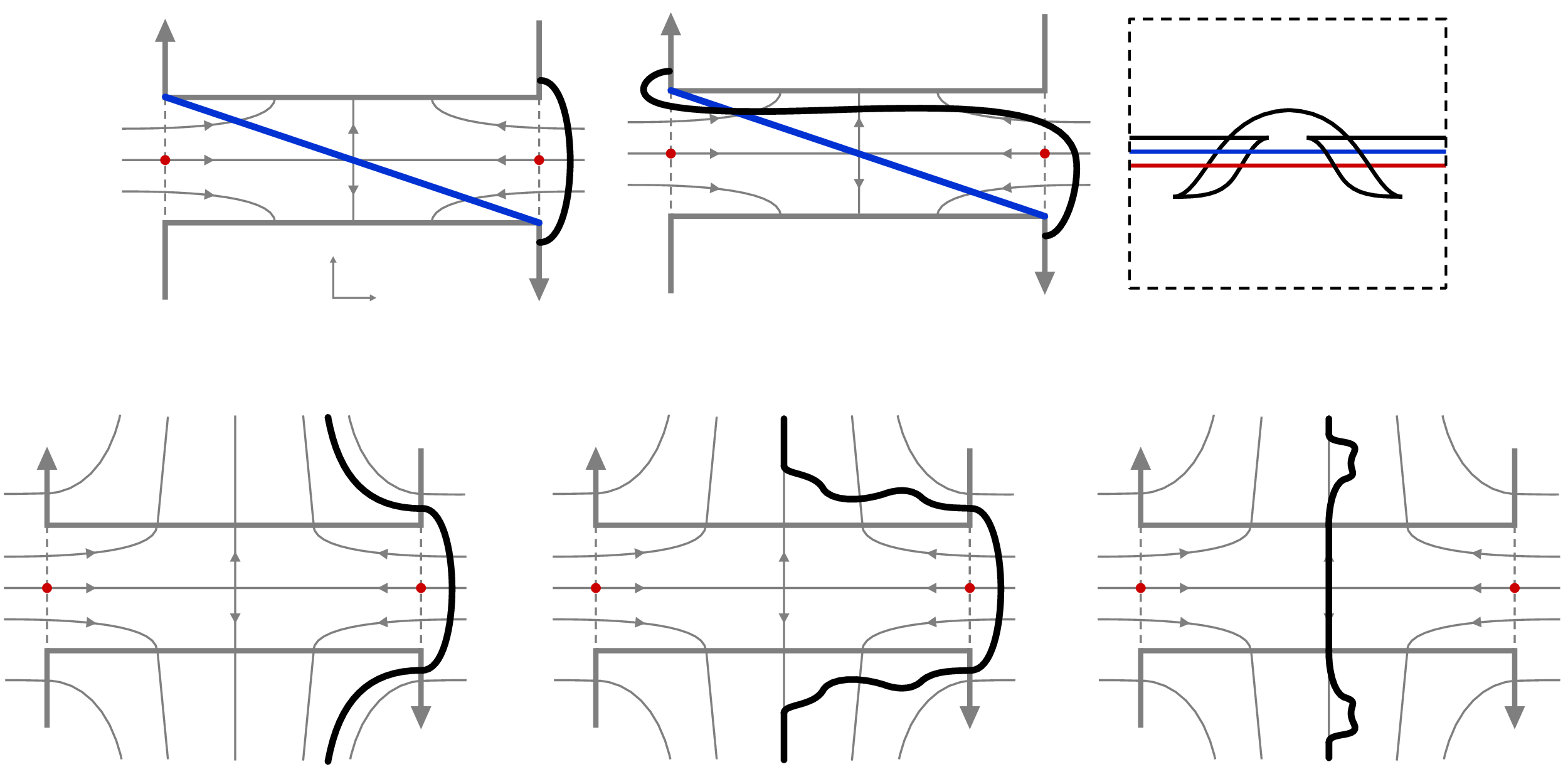}

    \put(81.5,36.5){\small \textcolor{darkred}{$\Lambda_0$}}
     \put(89,42.5){\small $\Lambda$}
    \put(93,38.5){\footnotesize \textcolor{darkred}{$(-1)$}}

    \put(14.5,45.5){\small \textcolor{darkblue}{$C_1$}}
    \put(35.5,46.5){\small \textcolor{black}{$D$}}

     \put(24.5,30.75){\tiny \textcolor{gray}{$x$}}
     \put(20.75,34.25){\tiny \textcolor{gray}{$y$}}
  
	\end{overpic}
    
	\caption{The proof of \cref{lemma:disk_slide}. The bottom row depicts an application of \cref{claim:disk_isotopy} and the effect of the Liouville flow on $D_0$.}
	\label{fig:cocore-proof}
\end{figure}

We now apply \cref{claim:disk_isotopy} to the trace of the Legendrian isotopy from $\bdry C$ to $\Lambda$ to obtain the annulus $A_2$. One way to see that $\bdry C$ and $\Lambda$ are Legendrian isotopic is to observe that the disks $C_t = \{x = -ty\}\subset h$, ${0\leq t \leq 1}$, are Lagrangian with Legendrian boundary $\bdry C_t\subset \bdry W'$ taking $\bdry C= \bdry C_0$ to $\bdry C_1$, and that the negative Reeb flow on $\bdry W'$ takes $\bdry C_1$ to $\Lambda$; see the top row of \cref{fig:cocore-proof}. 
We then isotop $D_0\cup A_1\cup A_2$ using the time-$\tau$ Liouville flow for large $\tau$, which in the handle is $(x,y)\mapsto (e^{-\tau}x, e^{2\tau}y)$, so that $D_0$ is moved arbitrarily close to the cocore disk $C$.
At the same time the Liouville flow pushes $A_1\cup A_2$ into the completion as an exact Lagrangian concordance (more or less) from the belt sphere to itself; see the bottom half of \cref{fig:cocore-proof}.
It follows that the completion of $D$ is Hamiltonian isotopic to the completion of the cocore.
\end{proof}

\subsubsection{Bypass attachment}

The following proposition summarizes a bypass attachment as a pair of contact handles. It originally appeared as a combination of \cite[Theorem 5.1.3, Proposition 8.3.2]{HH18}; here we include the restatement adapted from the preliminary material of \cite{breen2027bypass}.

\begin{proposition}[Bypass attachment]\label{prop:bypass_attachment}
Let $\Sigma^{2n} = R_+ \cup \Gamma \cup R_-$ be convex with bypass attachment data $(\Lambda_{-}, \Lambda_+; D_{-}, D_+)$. Then a pair of smoothly canceling contact $n$- and ($n+1$)-handles can be attached according to either of the following models:
\begin{enumerate}
    \item[($R_+$)] Attach a contact $n$-handle along $\Lambda_- \uplus \Lambda_+$ and a contact ($n+1$)-handle along $\tilde{D}_- \cup D_+^{-\epsilon}$, where $\tilde{D}_-$ is obtained by sliding $D_-^{\epsilon}$ down across the $(-1)$-surgery along $\Lambda_- \uplus \Lambda_+$. 
    \item[($R_-$)] Attach a contact $n$-handle along $\Lambda_- \uplus \Lambda_+$ and a contact ($n+1$)-handle along $D_-^{\epsilon} \cup \tilde{D}_+$, where $\tilde{D}_+$ is obtained by sliding $D_+^{-\epsilon}$ up across the $(-1)$-surgery along $\Lambda_- \uplus \Lambda_+$. 
\end{enumerate}
The two models, called the {\em $R_{\pm}$-centric models,} respectively, are identified by a handleslide of the ($n+1$)-handle across the $n$-handle; see \cref{fig:bypass_attachment}. Moreover, if the bypass attachment data is trivial, the resulting contact cobordism $\Sigma \times [0,1]$ has a vertically invariant contact structure.
\end{proposition}

\begin{figure}[ht]
	\begin{overpic}[scale=0.51]{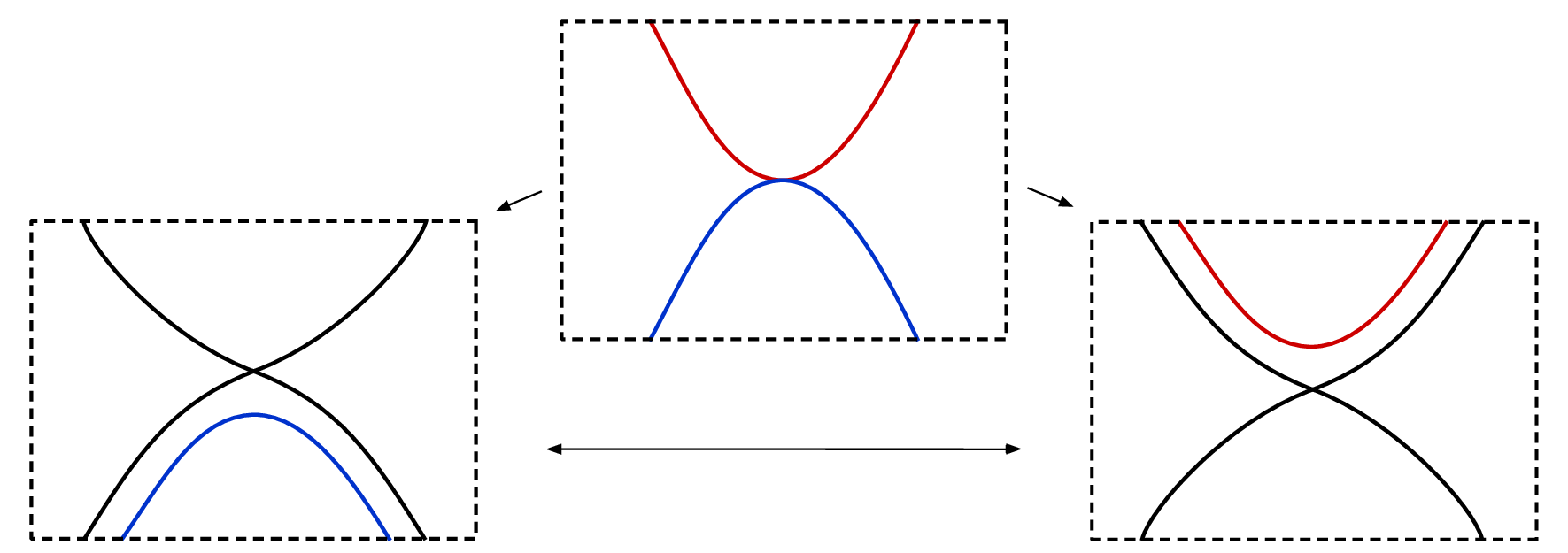}
	    \put(36.75,35){\small (Bypass attachment data)}
	    \put(45,29.5){\small {\color{darkred}  $\Lambda_-$}}
	    \put(45,18){\small \textcolor{darkblue}{$\Lambda_{+}$}}
	    \put(78,22){\small ($R_-$-centric)}
	    \put(92.75,12){\tiny $(-1)$}
	    \put(92.75,15){\tiny {\color{darkred}  $(+1)$}}
	    \put(79.5,17){\small {\color{darkred}  $\Lambda_-^{\epsilon}$}}
	    \put(79.5,4){\small $\Lambda_- \uplus \Lambda_+$}
	    \put(10.5,22){\small ($R_+$-centric)}
	    \put(25,10){\tiny $(-1)$}
	    \put(25,7){\tiny \textcolor{darkblue}{$(+1)$}}
	    \put(12,3.5){\small \textcolor{darkblue}{$\Lambda_+^{-\epsilon}$}}
	    \put(11.75,17){\small $\Lambda_- \uplus \Lambda_+$}
		\put(45,3.75){\small Handleslide}	
	\end{overpic}
	\caption{Two models of a bypass attachment viewed in the dividing set.}
	\label{fig:bypass_attachment}
\end{figure}

\begin{proof}
By \cref{prop:n_handle}, a contact $n$-handle $H_n$ can be attached to $\Sigma$ along $\Lambda_- \uplus \Lambda_+$ to yield a convex hypersurface $S=(\Sigma \setminus \partial_1 H_n) \cup \partial_2 H_n$. Let $S \setminus \Gamma_S = R_+ (S) \cup R_- (S)$ be its convex decomposition. Then $R_\pm(S)$ is obtained from $R_\pm(\Sigma)$ by a Weinstein handle attachment along $\Lambda_- \uplus \Lambda_+$, respectively, and $\Gamma_S$ is obtained from $\Gamma_{\Sigma}$ by a contact $(-1)$-surgery along $\Lambda_- \uplus \Lambda_+$.

Next we describe the attaching locus of the contact $(n+1)$-handle on $S$. We claim that by handlesliding $\Lambda_+^{-\epsilon}$ up across the contact $(-1)$-surgery along $\Lambda_- \uplus \Lambda_+$, one obtains a sphere Legendrian isotopic to $\Lambda_-^\epsilon$. (Note that we have chosen the surgery region to be disjoint from $\Lambda_{\pm}^{\mp \epsilon}$.) That this is true can be verified diagrammatically in the spin-symmetric front projections using the local model for a contact $(-1)$-handleslide in \cref{prop:handleslide}; this is shown in \cref{fig:bypass-slide}. In the top right of the figure, the nested boxes of spin symmetry should be interpreted as follows. Assume we are constructing fronts in a chart $\R^{n+1}_{(z,x_1, \dots,x_n)}$ and that the the horizontal coordinate in the diagram is $x_1$, oriented from left to right. Further assume that the nodal singularity of the front of $\Lambda_- \uplus \Lambda_+$ occurs at $x_1 = \cdots = x_n = 0$. Let $\mathcal{F}_{\mathrm{in}}$ denote the spin-symmetric front produced by the inner box and $U_{\mathrm{in}}$ its support neighborhood, with axis of symmetry centered at $x_1 = 1, x_2 = \cdots = x_n = 0$ and radius $\tfrac{1}{2}$. Let $\mathcal{F}_{\mathrm{out}}$ be the spin-symmetric front obtained by spinning $\{x_1\leq 0, x_2 = \cdots = x_n = 0$\} around the $z$-axis. Note that $\mathcal{F}_{\mathrm{out}}$ and $\mathcal{F}_{\mathrm{in}}$ agree along $\partial\overline{U}_{\mathrm{in}}$. The front depicted by the top right of \cref{fig:bypass-slide} is then $(\mathcal{F}_{\mathrm{out}} \cap U_{\mathrm{in}}^c) \cup (\mathcal{F}_{\mathrm{in}} \cap U_{\mathrm{in}})$.

\begin{figure}[ht]
	\begin{overpic}[scale=0.35]{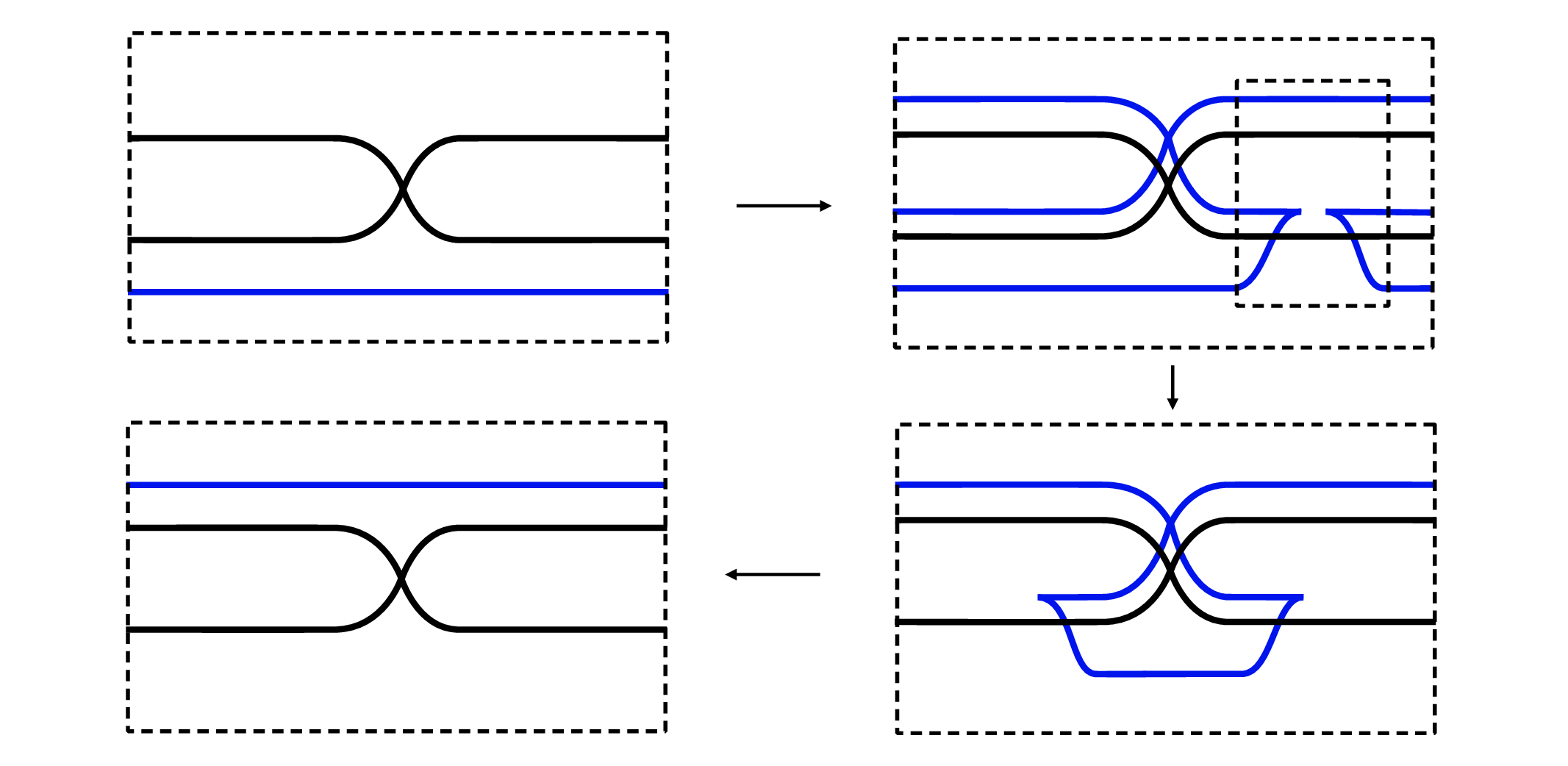}
        \put(10,13){\tiny $(-1)$}
        \put(10,38){\tiny $(-1)$}
        \put(59,13){\tiny $(-1)$}
        \put(59,38){\tiny $(-1)$}
    
	\end{overpic}
	\caption{Sliding $\Lambda_+^{-\epsilon}$ up across the contact $(-1)$-surgery along $\Lambda_- \uplus \Lambda_+$ to produce $\Lambda_-^{\epsilon}$.}
	\label{fig:bypass-slide}
\end{figure}

This Legendrian isotopy can be extended to a Hamiltonian isotopy of the Lagrangian disk $D_+^{-\epsilon}$, producing a Lagrangian disk $\tilde{D}_+ \subset R_+(S)$ with cylindrical end such that $\partial \tilde{D}_+ = \Lambda^{\epsilon}_- = \partial D^{\epsilon}_-$. After a perturbation of $S$ in its invariant neighborhood, we may take $D'_+$ to be Legendrian.  By \cref{prop:(n+1)_handle}, one can then attach a contact $(n+1)$-handle $H_{n+1}$ along the Legendrian sphere $\tilde{D}_+ \cup_{\Lambda^{\epsilon}_-} D^{\epsilon}_- \subset S$.

Note that, instead of sliding from $\Lambda^{-\epsilon}_+$ to $\Lambda^{\epsilon}_-$, one can also slide from $\Lambda^{\epsilon}_-$ to $\Lambda^{-\epsilon}_+$, which yields a Lagrangian disk $\tilde{D}_- \subset R_-(S)$ such that $\partial \tilde{D}_- = \Lambda^{-\epsilon}_+$. Then a contact $(n+1)$-handle can be attached along $D^{-\epsilon}_+ \cup_{\Lambda^{-\epsilon}_+} \tilde{D}_- \subset S$. The reader can verify that these two contact $(n+1)$-handle attachments are contactomorphic, and give the $R_{\mp}$-centric models respectively. For clarity, \cref{fig:centric-models-dim3} depicts these two bypass attachment models in dimension $3$.

\begin{figure}[ht]
	\begin{overpic}[scale=0.43]{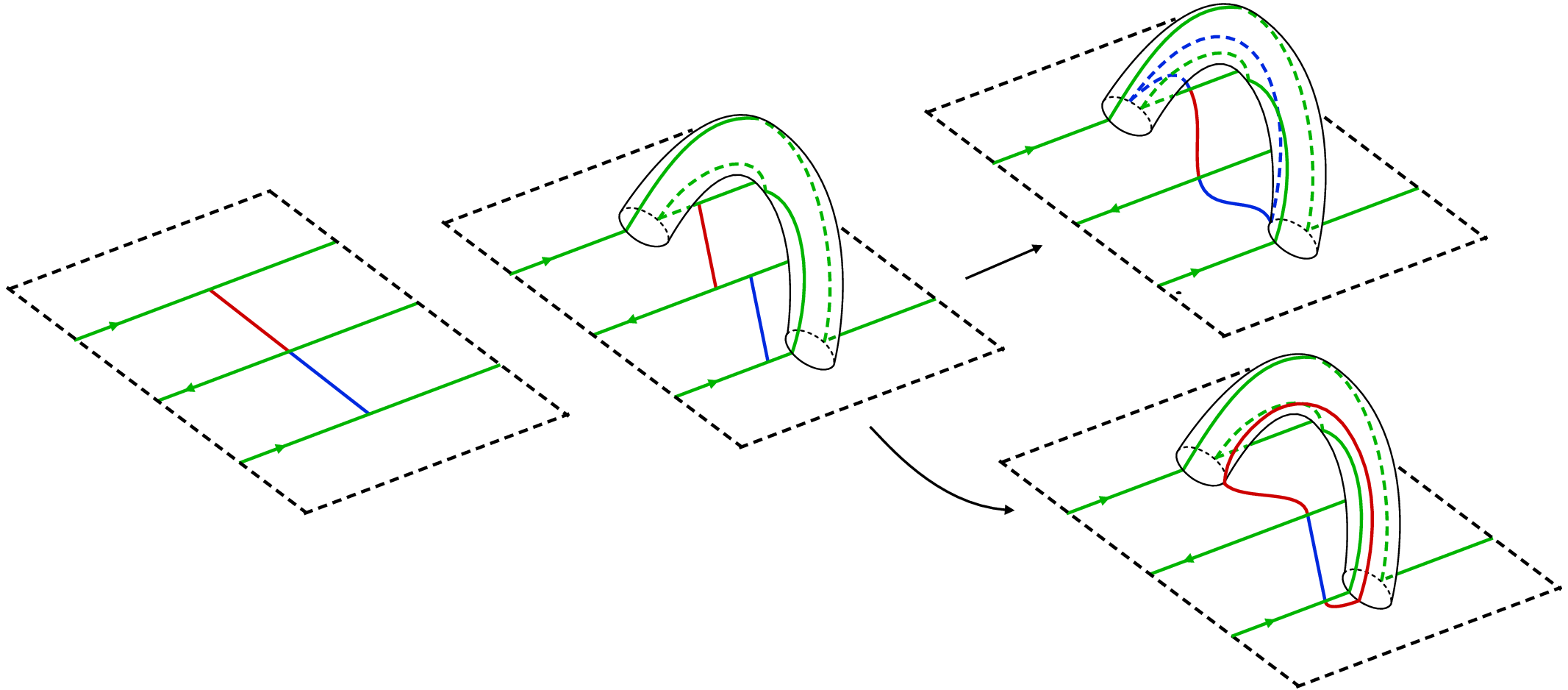}
	   \put(19,18){\tiny \textcolor{darkblue}{$D_+$}}
       \put(15.75,24.25){\tiny \textcolor{darkred}{$D_-$}}

       \put(44.75,22){\tiny \textcolor{darkblue}{$D_+^{-\epsilon}$}}
       \put(45.75,28.5){\tiny \textcolor{darkred}{$D_-^{\epsilon}$}}

       \put(76.75,29){\tiny \textcolor{darkblue}{$\tilde{D}_+$}}
       \put(76.75,35.5){\tiny \textcolor{darkred}{$D_-^{\epsilon}$}}

       \put(80,7){\tiny \textcolor{darkblue}{$D_+^{-\epsilon}$}}
       \put(79.75,13.25){\tiny \textcolor{darkred}{$\tilde{D}_-$}}

       \put(51,12){\tiny $R_+$-centric}
       \put(63.5,25){\tiny $R_-$-centric}
	\end{overpic}
	\caption{The $R_{\pm}$-centric models of a bypass attachment in dimension $3$.}
	\label{fig:centric-models-dim3}
\end{figure}

Next we show that the handles are smoothly canceling. It suffices show that up to smooth (but not necessarily contact) isotopy, the belt sphere of $H_n$ geometrically intersects the attaching locus of $H_{n+1}$ in one point. Up to smooth isotopy, the attaching sphere of $H_{n+1}$ can be obtained by taking the union of $D_+^{-\epsilon}$, $D_-^{\epsilon}$ and the trace of the handleslide from $\Lambda_+^{-\epsilon}$ to $\Lambda_-^{\epsilon}$ in $\Gamma_{\partial_2 H_n} \subset \Gamma_S$, where $\partial_2 H_n$ is defined in \cref{subsubsec:n_handle}. Clearly $D_+^{-\epsilon} \cup D_-^{\epsilon}$ is disjoint from the belt sphere of $H_n$. Then the assertion follows from the observation that the trace of the handleslide intersects the belt sphere of $H_n$ in precisely one point.

Finally, we show that trivial bypass data results in a vertically invariant contact structure. Specifically, we will show that a trivial bypass attachment results in a POBD stabilization of an appropriately defined local POBD, and then will appeal to \cref{lem:positive_stabilization} (or rather, its proof). We assume trivial bypass data of type (TB1), so that $\Lambda_+$ is the standard Legendrian unknot bounding the standard Lagrangian disk $D_+$, and $\Lambda_+$ is below $\Lambda_-$; see the top left of \cref{fig:triv-pobd}. Furthermore, we will consider an $R_-$-centric bypass attachment. Type (TB2) can be treated in a similar manner. 

Consider a small standard cornered Weinstein neighborhood $N(D_-)\subset R_-$ of $D_-$ and a small ``tubular neighborhood'' $N(\Lambda_-) \subset R_+$ of $\Lambda_-$ in the sense that $\overline{N(\Lambda_-)}$ is a (half-)tubular neighborhood of $\Lambda_-$ in the manifold with boundary $\overline{R}_+$. Since $\Lambda_+$ is the standard unknot bounding a standard disk $D_+$, we may assume that $D_+ \subset N(\Lambda_-)$. As a bypass attachment is a semi-local operation which modifies the contact structure only on a neighborhood of $D_+ \cup D_-$, it suffices to restrict to the locally-defined hypersurface $\Sigma_0 := \overline{N(D_-)} \cup \overline{N(\Lambda_+)}$ and show that the bypass attachment produces a vertically invariant structure on $\Sigma_0 \times [0,1]$. 

To that end, we construct a sutured model for a vertically invariant collar neighborhood $\Sigma_0 \times [-\delta,0]$ of $\Sigma_0 = \Sigma_0 \times \{0\}$  as follows. Let $N(\Gamma)\subset \Sigma$ be a small tubular neighborhood of $\Gamma$, and set $N_0(D_-) = N(D_-) \setminus N(\Gamma), N_0(\Lambda_-) = N(\Lambda_-) \setminus N(\Gamma)$. Then consider 
\[
\left(N_0(D_-)\times [-1, 0]\right)\, \cup \, \left(N_0(\Lambda_-) \times [0,1]\right),
\]
with contact form $\alpha = \tfrac{1}{\epsilon}\, dt + \lambda$, where $t$ is the $[-1,1]$-interval coordinate and $\lambda$ is the Liouville form on $N_0(D_-)$ inherited from that of $R_-$. Under this model, the suture neighborhood $N(\Gamma)\cap \Sigma_0$ is identified with $\partial_{\mathrm{out}} N_0(D_-) \times [-1,1]$, $N_0(\Lambda_-) \subset R_+$ is identified with $N_0(\Lambda_-) \times \{1\}$, and $N_0(D_-) \subset R_-$ is identified with $N_0(D_-) \times \{-1\}$; see the top right of \cref{fig:triv-pobd}. The model additionally identifies $\partial_{\mathrm{out}} N_0(D_-) \times \{\pm 1\}$ contactomorphically with $\Gamma \cap \Sigma_0$, which in turn is identified with $\partial_{\mathrm{out}} N_0(D_-) \times \{0\}$. Moreover, as the characteristic foliation on $\partial_{\mathrm{out}} N_0(D_-) \times [0,1]$ has negative $\Gamma$-Reeb holonomy from $t=-1$ to $t=1$ (as depicted by the thin, dashed lines in \cref{fig:triv-pobd}), $\left(D_- \cap N_0(D_-)\right) \times \{-1\}$ is naturally identified with a positive Reeb-shift $D_-^{\epsilon}$ of $D_-$ (where both are viewed on $t=0$) and $\left(D_+ \cap N(\Lambda_-)\right) \times \{1\}$ is naturally identified with a negative Reeb-shift $D_+^{-\epsilon}$ (viewed on $t=0$). Finally, note that the sutured model may be viewed as a locally-defined POBD with empty monodromy.

\begin{figure}[ht]
	\begin{overpic}[scale=0.43]{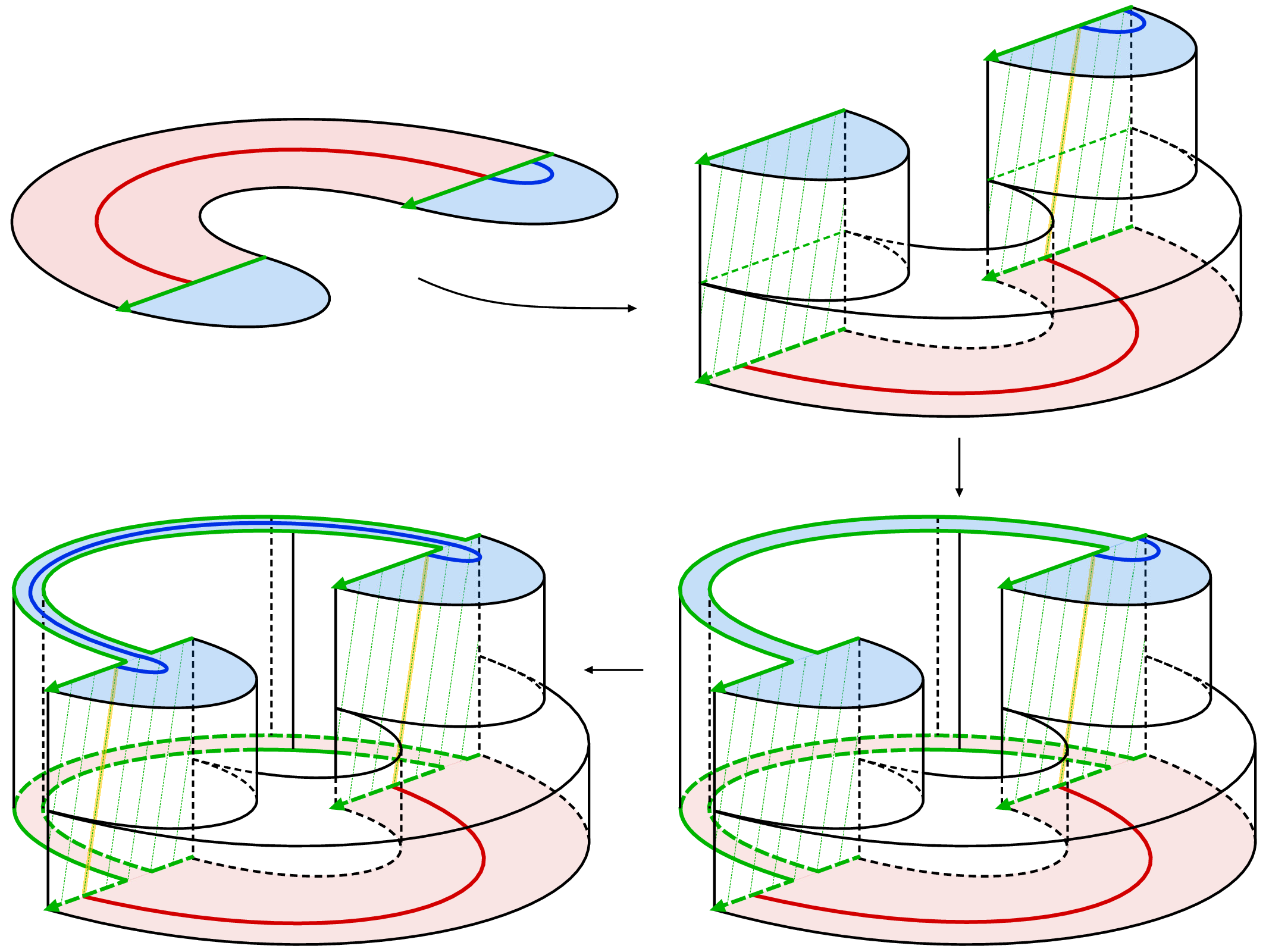}
      \put(2,50){$\Sigma_0$}
	  \put(77,38){\tiny attach $h\times [-1,1]$}
      \put(4,59){\footnotesize \textcolor{darkred}{$D_-$}}
      \put(64,46.5){\footnotesize \textcolor{darkred}{$D_-^{\epsilon}$}}
      \put(37,58.5){\footnotesize \textcolor{darkblue}{$D_+$}}
      \put(81,74.5){\footnotesize \textcolor{darkblue}{$D_+^{-\epsilon}$}}
      \put(30,35.5){\footnotesize \textcolor{darkblue}{$\tilde{D}_+$}}
	\end{overpic}
	\caption{A trivial bypass attachment as a (local) POBD stabilization.}
	\label{fig:triv-pobd}
\end{figure}

The trivial bypass attachment along $(\Lambda_+, \Lambda_-; D_+, D_-)$ first involves a contact $n$-handle attached along $\Lambda_- \uplus \Lambda_+ \cong \Lambda_-$, given in sutured form as $h\times [-1,1]$ in the bottom right of \cref{fig:triv-pobd}. Then on $\{t=1\}$ we may isotop $\Lambda_+^{-\epsilon}$ up across $h\times \{1\}$ to obtain $\tilde{D}_+ \times\{1\}$, so that the characteristic foliation on the suture neighborhood identifies $\partial \tilde{D}_+ \times \{1\}$ with $\Lambda_-^{\epsilon} \times \{-1\}$. By \cref{lemma:handle-to-POBD}, the contact $(n+1)$-handle attachment then introduces a partially defined monodromy from a small neighborhood $S := n(\tilde{D}_+)$ of $\tilde{D}_+$ to a small neighborhood $n(D_-^{\epsilon}) \subset N(D_-)$. Note that, by \cref{lemma:disk_slide}, $\tilde{D}_+$ is in fact Hamiltonian isotopic to the cocore disk of $h\times \{1\}$.

Let $L' \subset h$ denote the Lagrangian core disk of the Weinstein handle $h$, so that $L' \cup D_-$ is a Lagrangian sphere. Then, up to a Reeb shift to account for the sutured model and viewing a parallel copy of $S$ in the level $\{t=-1\}$, we claim that $\tau_{L' \cup D_-}(S)= n(D_-^{\epsilon})$. To see this, consider the model Weinstein $n$-handle as in the proof of \cref{lemma:disk_slide}: 
\[
h= \{|x| \leq 1\} \times \{|y| \leq 1\} \subset \R^n_x \times \R^n_y \cong T^*\R^n_x
\]
with Liouville form $\lambda = -x\cdot dy - 2y\cdot dx$. Then $L' = \{|x| \leq 1\}\times \{y=0\}$, and by the preceding discussion, we may take $\tilde{D}_+ = \{x=0\}\times \{|y| \leq 1\}$ as the cocore. Also choose $\epsilon>0$ small and let $h^{\op{ext}}= \{|x| \leq 1+2\epsilon\} \times \{|y| \leq 1\}$ be a slight extension of $h$ into the Weinstein domain to which $h$ is being attached. Here we have $h^{\op{ext}}\cap D_-=\{1\leq |x|\leq 1+2\epsilon\}\times\{y=0\}$. Performing a symplectic Dehn twist of $\tilde{D}_+$ around $L' \cup D_-$ is achieved by Polterovich surgery on $L'\cup D_-$ and $\tilde{D}_+$; see \cite[Proposition 8.6]{seidel1999lagrangianspheres}. The result is 
\[
\tau_{L' \cup D_-}(\tilde{D}_+)= (L' \cup D_-) \# \tilde{D}_+ = D_- \cup (L' \#\tilde{D}_+) = (D_-\setminus h^{\op{ext}}) \cup A,
\]
where $A =(D_-\cap h^{\op{ext}}) \cup (L' \#\tilde{D}_+)\subset h^{\op{ext}}$ is the Lagrangian annulus defined as follows. Let $t\mapsto (t,f(t))$ for $t\in [0,1+2\epsilon]$ be a parametric curve in $\R^2$ so that $f(t) = -1+t$ for $t\in [0,\epsilon]$, $f(t) = 0$ for $t\in[1+\epsilon,1+2\epsilon]$, and
\[
A = \{(tx, f(t)x)\, :\, |x| = 1, t\in [0,1+2\epsilon]\}.
\]
See the middle of \cref{fig:cocore-twist}. Note that, restricted to $0<|x|\leq 1+2\epsilon$, $A$ has the form $y = \tfrac{f(t)}{t}x$  and is graphical over the core of $h$. We may modify $f(t)$ so that $f(t)$ is close to $-1$ for $t\in [0, 1]$. Applying the Hamiltonian isotopy induced by the Hamiltonian vector field $-y\cdot \bdry_x$ which takes the cocore disk $\{x=0\}$ to the diagonal disk $\{x=-y\}$ then takes $\tau_{L' \cup D_-}(\tilde{D}_+) = D_- \cup A$ to a Reeb shift of $D_-$; see the right side of \cref{fig:cocore-twist}. This proves the claim.

\begin{figure}[ht]
	\begin{overpic}[scale=0.35]{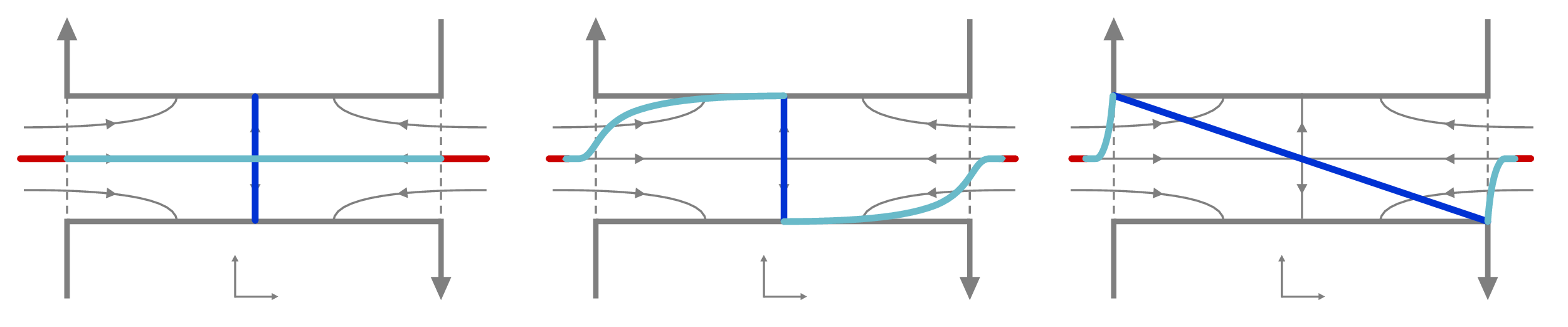}
     \put(14.5,5.25){\tiny \textcolor{gray}{$y$}}
     \put(18,1.5){\tiny \textcolor{gray}{$x$}}
     \put(0,6){\footnotesize \textcolor{darkred}{$D_-$}}
     \put(10,8.5){\footnotesize \textcolor{Cyan}{$L'$}}
     \put(15.5,16){\footnotesize \textcolor{darkblue}{$\tilde{D}_+$}}

     \put(48.25,5.25){\tiny \textcolor{gray}{$y$}}
     \put(51.75,1.5){\tiny \textcolor{gray}{$x$}}
     \put(46,8.5){\footnotesize \textcolor{Cyan}{$A$}}

     \put(81.25,5.25){\tiny \textcolor{gray}{$y$}}
     \put(84.75,1.5){\tiny \textcolor{gray}{$x$}}
	\end{overpic}
	\caption{Dehn twisting a cocore via Lagrangian surgery.}
	\label{fig:cocore-twist}
\end{figure}

The trivial bypass attachment therefore induces a positive stabilization of the (local) POBD, hence by (the proof of) \cref{lem:positive_stabilization} induces a vertically invariant contact structure on the bypass cobordism.
\end{proof}

Finally, we observe that the proof of the triviality above suggests the more general principle that a bypass attachment to a POBD produces a new POBD. This is an immediate consequence of \cref{lemma:handle-to-POBD} and the fact that a bypass attachment is by definition a pair of contact handles. 

\begin{corollary}\label{lem:bypass to POB}
If $(M, \xi)$ is supported by a POBD and $(M', \xi')$ is obtained from $(M, \xi)$ by a bypass attachment, then $(M',\xi')$ is supported by a POBD. 
\end{corollary}

\noindent It is of course possible to interpret the specific effects of of the handles involved on the POBD using \cref{lemma:handle-to-POBD}, but for the purpose of \cref{part:OBD} it suffices to acknowledge that a bypass produces some POBD.

\bibliography{references}
\bibliographystyle{amsalpha}

\end{document}